\definecolor{green}{RGB}{0,127,0}
\definecolor{blue}{RGB}{0,0,150}
\DeclareMathOperator{\N}{\mathbf{N}}
\DeclareMathOperator{\Z}{\mathbf{Z}}
\DeclareMathOperator{\dd}{\mathbf{d}}
\DeclareMathOperator{\ee}{\mathbf{e}}
\DeclareMathOperator{\Hom}{\textup{Hom}}
\DeclareMathOperator{\GL}{\textup{GL}}
\DeclareMathOperator{\Rep}{\textup{Rep}}
\DeclareMathOperator{\Ext}{\textup{Ext}}
\DeclareMathOperator{\C}{\mathbf{C}}
\DeclareMathOperator{\PP}{\mathbf{P}}
\DeclareMathOperator{\nil}{\textup{nil}}
\DeclareMathOperator{\CC}{\mathcal{C}}
\DeclareMathOperator{\p}{\mathfrak{p}}
\DeclareMathOperator{\A}{\mathbb{A}}
\DeclareMathOperator{\E}{\mathbb{E}}
\DeclareMathOperator{\mm}{\mathbf{m}}
\DeclareMathOperator{\im}{\textup{im}}
\DeclareMathOperator{\supp}{\textup{supp}}
\DeclareMathOperator{\End}{\textup{End}}
\DeclareMathOperator{\Gr}{\textup{Gr}}
\DeclareMathOperator{\OO}{\mathcal{O}}
\DeclareMathOperator{\Tr}{\textup{Tr}}
\DeclareMathOperator{\id}{\textup{id}}
\DeclareMathOperator{\Fun}{\textup{Fun}}
\DeclareMathOperator{\PC}{\mathcal{P}}
\DeclareMathOperator{\IC}{\mathcal{I}}
\DeclareMathOperator{\RC}{\mathcal{R}}
\DeclareMathOperator{\op}{\textup{op}}
\DeclareMathOperator{\Perv}{\textup{Perv}}
\DeclareMathOperator{\FF}{\mathcal{F}}
\DeclareMathOperator{\ICC}{\mathcal{IC}}
\DeclareMathOperator{\rank}{\textup{rank}}
\DeclareMathOperator{\pr}{\textup{pr}}
\DeclareMathOperator{\LocSys}{\textup{LocSys}}
\DeclareMathOperator{\reghom}{\textup{reghom}}
\renewcommand{\A}{\mathbf{A}}
\newtheorem{theorem}{Theorem}[section]
\newtheorem{lemma}[theorem]{Lemma}
\newtheorem{proposition}[theorem]{Proposition}
\newtheorem{cor}[theorem]{Corollary}
\newtheorem{conj}[theorem]{Conjecture}
\theoremstyle{definition}
\theoremstyle{remark}
\newtheorem{remark}[theorem]{Remark}
\numberwithin{equation}{section}
\begin{document}

\title[Microlocal characterization of perverse sheaves]{Microlocal characterization of Lusztig sheaves for affine quivers and $g$-loops quivers}

\author{Lucien Hennecart}
\address{Universit\'e Paris-Saclay, CNRS,  Laboratoire de math\'ematiques d'Orsay, 91405, Orsay, France}

\email{lucien.hennecart@universite-paris-saclay.fr}

\date{\today}

\subjclass[2010]{Primary }

\begin{abstract}
We prove that for extended Dynkin quivers, simple perverse sheaves in Lusztig category are characterized by the nilpotency of their singular support. This proves a conjecture of Lusztig in the case of affine quivers. For cyclic quivers, we prove a similar result for a larger nilpotent variety and a larger class of perverse sheaves. We formulate conjectures concerning similar results for quivers with loops, for which we have to use the appropriate notion of nilpotent variety, due to Bozec, Schiffmann and Vasserot. We prove our conjecture for $g$-loops quivers ($g\geq 2$).
\end{abstract}

\maketitle

\tableofcontents
\section{Introduction}\label{introduction}

\subsection{The main results}\label{mainresult}
Motivated by his theory of Character Sheaves (\cite{MR792706}), Lusztig defined in the early $90$'s a category $\mathscr{Q}$ of equivariant semisimple constructible complexes on the representation spaces of a given acyclic quiver $Q=(I,\Omega)$ (\cite{MR1088333}). His construction is detailed in Section \ref{lusztigsheaves}. Here is a brief overview. For a dimension vector $\dd\in\N^I$, we let $E_{\dd}$ be the representation space of $Q$ of dimension $\dd$. It is acted on by the product of linear groups $G_{\dd}$. We also let $\mathscr{P}_{\dd}$ be the category of $G_{\dd}$-equivariant perverse sheaves on $E_{\dd}$ which belong to the category $\mathscr{Q}$. By definition, it is the semisimple category whose simple objects are the perverse sheaves appearing as a direct summand of the push-forward of the constant sheaf $\underline{\C}$ by the morphism $\pi:Y\rightarrow E_{\dd}$, where $Y$ is the variety of pairs $(x,\underline{F})$, $x\in E_{\dd}$, $\underline{F}$ is a $x$-stable $I$-graded flag of $\C^{\dd}$ and $\pi$ is the natural projection. In \emph{op. cit.}, Lusztig studied the singular support of the perverse sheaves which belong to $\mathscr{P}_{\dd}$. This led him to define the so-called nilpotent variety $\Lambda_{\dd}\subset T^*E_{\dd}$ (Section \ref{singularsupport}). This is a closed, conical and Lagrangian subvariety of $T^*E_{\dd}$ and for any $\mathscr{F}\in\mathscr{P}_{\dd}$, $SS(\mathscr{F})\subset \Lambda_{\dd}$. The first main result of the present paper is the proof of the converse under some restrictions on the quiver.

\begin{theorem}\label{maintheorem}
 Let $Q$ be a finite type or affine quiver\footnote{In this paper, we include under the terminology \emph{affine quiver} extended Dynkin quivers, Jordan and cyclic quivers.}. Let $\mathscr{F}\in\Perv_{G_{\dd}}(E_{\dd})$ be a $G_{\dd}$-equivariant simple perverse sheaf such that $SS(\mathscr{F})\subset \Lambda_{\dd}$. Then $\mathscr{F}$ is a Lusztig perverse sheaf.
\end{theorem}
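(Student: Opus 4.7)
My plan is to reduce, via Lusztig's restriction--induction machinery, to a classification of \emph{cuspidal} simple perverse sheaves with nilpotent singular support, and to handle this classification by hand for affine $Q$. The finite type portion of the statement is essentially formal: every simple $G_{\dd}$-equivariant perverse sheaf on $E_{\dd}$ is the IC sheaf of a $G_{\dd}$-orbit closure with trivial coefficient system, because stabilizers in $G_{\dd}$ are connected in simply-laced Dynkin type; and each such orbit closure is obtained by pushing forward along $\pi$ the flag variety attached to any Jordan--H\"older type of a representative, so its IC sheaf is a Lusztig sheaf by definition. The singular support hypothesis is only genuinely needed in the affine case.

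For affine $Q$ I would argue by induction on $|\dd|=\sum_{i}d_{i}$. The engine is Lusztig's restriction functor $\Res_{\dd',\dd''}$ together with its key microlocal property: if $SS(\mathscr{F})\subset\Lambda_{\dd}$ then $SS(\Res_{\dd',\dd''}\mathscr{F})\subset\Lambda_{\dd'}\times\Lambda_{\dd''}$. This follows from the structure of the restriction correspondence (a closed immersion followed by an affine bundle projection) and a standard non-characteristic estimate. If $\Res_{\dd',\dd''}\mathscr{F}\ne 0$ for some non-trivial splitting $\dd=\dd'+\dd''$, then every simple summand of $\Res_{\dd',\dd''}\mathscr{F}$ is by K\"unneth of the form $\mathscr{G}'\boxtimes\mathscr{G}''$, and each factor has nilpotent singular support and strictly smaller total dimension. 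By induction both factors are Lusztig, and Frobenius reciprocity --- combined with the fact that induction of Lusztig sheaves is Lusztig --- then forces $\mathscr{F}$ to be a summand of an induced Lusztig sheaf, hence Lusztig. The problem is thus reduced to the cuspidal case $\Res_{\dd',\dd''}\mathscr{F}=0$ for all non-trivial splittings.

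The core of the argument, and the main obstacle, will be the classification of cuspidal simple equivariant perverse sheaves with nilpotent singular support for affine $Q$. I would expect the following trichotomy. When $\dd$ is a simple root, $E_{\dd}$ is a point and $\mathscr{F}=\underline{\C}$ is trivially a Lusztig sheaf. When $\dd$ is a non-simple real root, I would try to rule out the existence of such a cuspidal by exhibiting a non-zero restriction --- for instance, from the non-trivial Jordan--H\"older filtration of the unique (generic) indecomposable of dimension $\dd$. When $\dd=n\delta$ lies on the imaginary root ray, cuspidality and the nilpotency of $SS(\mathscr{F})$ should force $\supp\mathscr{F}$ into the regular (semisimple) locus of $E_{n\delta}$, after which the surviving candidates must be matched with the known Lusztig cuspidals at $n\delta$, as classified by Schiffmann and by Bozec--Schiffmann--Vasserot in terms of homogeneous tubes and central characters of the spherical Hall algebra. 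The hard part will be this final matching: it requires a geometric analysis of the regular locus showing that the microlocal hypothesis $SS(\mathscr{F})\subset\Lambda_{n\delta}$ pins down both the irreducible component of the support of $\mathscr{F}$ and the equivariant local system coefficient, and this is where the microlocal ingredient of the proof actually does work.
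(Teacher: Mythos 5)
Your finite type argument is fine and coincides with the paper's (finitely many orbits, connected stabilizers, Reineke-type resolutions), but for the affine case your proposal has a genuine gap, and it sits exactly where the theorem is hard. Your plan is a Mirkovi\'c--Vilonen-style induction: restrict, use nilpotency of the singular support of the restriction, and reduce to classifying cuspidals. This is a genuinely different route from the paper (which never inducts through $\Delta_{\dd',\dd''}$; it stratifies $E_{\dd}$ by Auslander--Reiten theory, cuts down to the regular locus, models a neighbourhood of each non-homogeneous tube by a cyclic quiver, and characterizes an \emph{extended} Lusztig category there via Fourier--Sato). But the decisive step of your plan --- the classification of cuspidal simple equivariant perverse sheaves with $SS\subset\Lambda_{n\delta}$ on the imaginary root ray and their matching with the known Lusztig cuspidals --- is only stated as an expectation. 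That matching is not a routine verification: one must show that the microlocal hypothesis forces the support to be the closure of a stratum $\Xi(N,\mu)$ with $\mu$ regular \emph{semisimple} and $N$ aperiodic, and then that the equivariant local system, viewed through the quotient map to $S^d\PP_1^{\hom}\setminus\Delta$, becomes trivial on the $\mathfrak{S}_d$-cover. In the paper this consumes Sections 4, 7, 8 and 9 (reduction to $E_{\dd}^{reg}$, the extended category for cyclic quivers and its own microlocal characterization, the tube-neighbourhood transfer, and the local-system trivialization lemma on the symmetric power). Without an argument here, the affine case is not proved, precisely because cuspidals with nilpotent singular support \emph{do} exist at $n\delta$ and the whole point is to rule out non-Lusztig local systems on their supports.

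Two further steps are asserted but not justified. First, the claimed estimate $SS(\Delta_{\dd',\dd''}\mathscr{F})\subset\Lambda_{\dd'}\times\Lambda_{\dd''}$ does not follow from ``a standard non-characteristic estimate'': the closed immersion $\iota:F_{\dd',\dd''}\to E_{\dd}$ is in general \emph{not} non-characteristic for $\Lambda_{\dd}$ (conormal directions to $F_{\dd',\dd''}$ are of nilpotent type), and $\kappa$ is not proper, so neither of the two functoriality statements available (smooth pullback, proper pushforward) applies; such an estimate can likely be proved by a hyperbolic-localization or Fourier argument, but it requires a real proof, and one must also control the perverse constituents of the complex $\Delta_{\dd',\dd''}\mathscr{F}$ (which is neither perverse nor a priori semisimple) before invoking K\"unneth and the induction hypothesis, and be careful that the adjunction with $\Ind$ produces a \emph{degree-zero} Hom to a semisimple complex rather than a shifted Ext. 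Second, for non-simple real roots your suggested non-vanishing of a restriction via the Jordan--H\"older filtration of the generic indecomposable only addresses sheaves supported on the dense orbit closure; a hypothetical cuspidal could be supported on a smaller stratum closure, so the argument has to be run on the open stratum of $\supp\mathscr{F}$, not on the generic representation of $E_{\dd}$.
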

The method of proof is briefly described in Section \ref{stepsaffine}.

Lusztig conjectured more generally in \cite[\S 10.3]{MR1182165}, without restriction on the quiver, that perverse sheaves whose singular support is contained in the nilpotent variety should provide the canonical basis of the positive part of the quantum group. When combined with Lusztig's subsequent paper \cite{MR1088333}, this may be formulated as the following conjecture which was also put forward independently by Webster (see \cite{2235}).
\begin{conj}[Lusztig]\label{conjecturegeneral}
 Let $Q$ be a loop-free quiver and $\dd\in\N^I$ a dimension vector. Then $G_{\dd}$-equivariant irreducible perverse sheaves on $E_{\dd}$ whose singular support is included in $\Lambda_{\dd}$ are exactly Lusztig sheaves.
\end{conj}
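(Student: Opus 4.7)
The plan is to proceed by induction on the height $\htt(\dd) = \sum_{i \in I} \dd_i$, using Lusztig's induction and restriction functors between the categories $\mathscr{P}_{\dd'}$, $\mathscr{P}_{\dd''}$ and $\mathscr{P}_{\dd}$ for decompositions $\dd = \dd' + \dd''$. The base cases $\htt(\dd) \leq 1$ are immediate. For the inductive step, I would split according to cuspidality. If $\mathscr{F}$ admits a nontrivial Lusztig restriction, i.e., $\textup{Res}^{\dd}_{\dd', \dd''} \mathscr{F} \ne 0$ for some decomposition with $\dd', \dd'' \ne 0$, a microlocal compatibility lemma, based on the behaviour of singular support under the correspondence defining $\textup{Res}$, yields $SS(\textup{Res}^{\dd}_{\dd', \dd''} \mathscr{F}) \subset \Lambda_{\dd'} \times \Lambda_{\dd''}$. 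Each simple summand $\mathscr{G}' \boxtimes \mathscr{G}''$ of $\textup{Res}^{\dd}_{\dd', \dd''} \mathscr{F}$ is then Lusztig by induction, so $\Ind^{\dd}_{\dd', \dd''}(\mathscr{G}' \boxtimes \mathscr{G}'')$ lies in $\mathscr{P}_{\dd}$ by construction, and adjunction forces the simple summand $\mathscr{F}$ itself to belong to $\mathscr{P}_{\dd}$.

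The whole difficulty is therefore concentrated in the cuspidal case, where $\textup{Res}^{\dd}_{\dd', \dd''} \mathscr{F} = 0$ for every nontrivial decomposition. For a finite type quiver, cuspidal simple representations are exactly the simple roots, so $\dd$ must be a simple root, $E_{\dd} = \{0\}$ and $\mathscr{F} = \underline{\C}$, which is manifestly Lusztig. The genuinely new ingredient for affine quivers is the existence of \emph{imaginary} cuspidals on dimension vectors $\dd = n\delta$. The crucial intermediate claim here is that the combination of cuspidality and $SS(\mathscr{F}) \subset \Lambda_{n\delta}$ confines $\supp(\mathscr{F})$ to the regular locus of $E_{n\delta}$ (or the semisimple locus, in the cyclic case): at any non-regular representation one can produce a nontrivial short exact sequence whose graded pieces furnish a nonzero contribution to some restriction, contradicting cuspidality.

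Once $\supp(\mathscr{F})$ is located on the regular locus, the final step is to classify the $G_{n\delta}$-equivariant simple perverse sheaves with nilpotent $SS$ supported there and to check that the resulting list coincides with the family of imaginary cuspidals already present in $\mathscr{P}_{n\delta}$. This matching is where I expect the principal obstacle to lie. For cyclic quivers, one may invoke Lusztig's comparison between the cyclic Hall category and character sheaves on $\GL_n$; for extended Dynkin quivers of type $\widetilde{ADE}$, the regular locus admits a fibration over a modular parameter encoded by the tube structure, and one must control $\mathscr{F}$ along the fibers of this fibration and match the outcome against the explicit construction of the imaginary cuspidals in $\mathscr{P}_{n\delta}$. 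Ruling out any ``rogue'' $G_{n\delta}$-equivariant simple perverse sheaf on the regular locus with nilpotent $SS$ that fails to be Lusztig is the technical heart of the argument; once this is accomplished, the induction closes and Theorem \ref{maintheorem} follows.
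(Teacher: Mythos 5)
Your outline founders on two points, and they are precisely the load-bearing ones. First, the ``microlocal compatibility lemma'' for the restriction functor is asserted, not proved, and it is not a formal consequence of the functoriality of singular supports: $\Delta=\kappa_!\iota^*$ is built from a closed-immersion pullback $\iota^*$ (for which there is no general estimate unless the immersion is non-characteristic for $\mathscr{F}$, which it is not here) and a \emph{non-proper} pushforward $\kappa_!$ (again no general estimate); the only statements available in this paper's appendix are smooth pullback (Proposition \ref{pbsmooth}) and proper pushforward (Proposition \ref{pushforward}), and neither applies. To get $SS(\Delta_{\dd',\dd''}\mathscr{F})\subset\Lambda_{\dd'}\times\Lambda_{\dd''}$ for an arbitrary equivariant simple $\mathscr{F}$ with $SS(\mathscr{F})\subset\Lambda_{\dd}$ one needs a genuine hyperbolic-localization/conicity argument, which is a substantive missing ingredient rather than a routine lemma. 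Relatedly, $\Delta_{\dd',\dd''}\mathscr{F}$ is in general neither perverse nor semisimple, so ``each simple summand of $\mathrm{Res}\,\mathscr{F}$'' has to be replaced by composition factors of perverse cohomologies, and the adjunction step between $m$ and $\Delta$ (which involves $\kappa_!$ versus $\kappa_*$ and Verdier duality) needs to be spelled out; these are repairable but symptomatic of how much of the inductive step is still to be written.

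Second, even granting the inductive step, you explicitly defer the cuspidal case --- the claim that cuspidality plus nilpotent $SS$ confines the support to the regular locus, and above all the classification of $G_{n\delta}$-equivariant simple perverse sheaves with nilpotent singular support on the regular locus and their identification with the imaginary cuspidal Lusztig sheaves. But that classification \emph{is} the theorem: it is exactly where this paper's work lies, namely the extended Hall category for cyclic quivers and its microlocal characterization (Section \ref{extendedps}, Theorem \ref{miccharec}), the description of a neighbourhood of each non-homogeneous tube via the functor $F$ (Section \ref{neighb}, Proposition \ref{restnil}), and the analysis of equivariant local systems on the strata $\Xi(N,\mu)$ through the $\mathfrak{S}_d$-covering, Fourier--Sato transform and Lemma \ref{triviallocal} (Section \ref{proofaffineq}). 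Note also that the paper never needs a cuspidality dichotomy: the hypothesis $SS(\mathscr{F})\subset\Lambda_{\dd}$ together with Ringel's description of the irreducible components of $\Lambda_{\dd}$ already forces $\supp\mathscr{F}$ to be the closure of a Ringel stratum, and Lemma \ref{lemma1} / Corollary \ref{restrictionth} reduce directly to the regular locus, with the induction functor used only to come back. So as it stands your proposal is a plausible strategy sketch that reduces Theorem \ref{maintheorem} to its hardest step (plus an unproved microlocal input), not a proof.
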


See Section \ref{discl} for relevant considerations. In this paper, we prove this result for (the easy and already known cases of) finite type and cyclic quivers and for the more subtle case of affine quivers, for which the representation theory is heavily used (Auslander-Reiten theory for affine quivers).

We also conjecture (Conjecture \ref{conjec}) a modification of the previous conjecture for arbitrary quivers (possibly carrying loops) using the appropriate notion of nilpotent variety defined in \cite{bozec2017number} (Section \ref{quiversloops}). This leads to four different categories of perverse sheaves on the representation spaces of a quiver, $\mathcal{P}^{\flat}$ for $\flat\in\{\nil,\emptyset,(\nil,1),1\}$ (Section \ref{lusztigsheavesloops}), together with four corresponding \emph{nilpotent varieties}, $\Lambda^{\flat}$, $\flat\in\{\nil,\emptyset,(\nil,1),1\}$ (Section \ref{nilpotency}). These four situations are paired using the Fourier-Sato transform ($\nil$ and $\emptyset$ are paired, $(\nil,1)$ and $1$ also). It is easily shown that the singular support of a perverse sheaf in the category $\mathcal{P}^{\flat}$ is contained in the nilpotent variety $\Lambda^{\flat}$ by using favourable functorial properties of the singular support with respect to the pushforward by a proper morphism. In Section \ref{quiversloops}, we prove the converse for $g$-loops quivers ($g\geq 2$), which constitutes the second main result of this paper:

\begin{theorem}
 Let $g\geq 2$ and $Q=S_g$ be the $g$-loop quiver. Let $\dd\in\N$ be a dimension vector and $\mathscr{F}$ an irreducible perverse sheaf on $E_{Q,\dd}$ such that $SS(\mathscr{F})\subset \Lambda^{\flat}$ where $\flat=(\nil,1)$ or $\flat=1$. Then $\mathscr{F}$ is in the category $\mathcal{P}^{\flat}$.
\end{theorem}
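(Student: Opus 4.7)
The plan is to proceed by induction on $\dd \in \N$, combined with Lusztig-type restriction functors and the Bozec-Schiffmann-Vasserot theory of cuspidal sheaves for $g$-loop quivers. As a first reduction, the Fourier-Sato transform on $E_{Q,\dd}$ exchanges the two nilpotent varieties $\Lambda^{(\nil,1)}$ and $\Lambda^{1}$ and simultaneously exchanges the categories $\mathcal{P}^{(\nil,1)}$ and $\mathcal{P}^{1}$, while preserving simplicity of equivariant perverse sheaves; it therefore suffices to prove the theorem for one of the two choices of $\flat$, say $\flat = 1$. The base case $\dd \leq 1$ is immediate, since $E_{Q,\dd}$ is then either a point or the affine space $\A^{g}$ with trivial action of a torus, and every irreducible equivariant perverse sheaf there already lies in $\mathcal{P}^{\flat}$.

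For the inductive step I would apply Lusztig's restriction functors $\textup{Res}_{\dd_1, \dd_2}$ (defined through the correspondence induced by a parabolic subgroup of $G_{\dd}$) for every decomposition $\dd = \dd_1 + \dd_2$ with $\dd_i \geq 1$. The key micro-local compatibility, coming from the general behaviour of singular support under proper pushforward and smooth pullback, ensures that $SS(\mathscr{F}) \subset \Lambda^{1}_{\dd}$ implies $SS(\textup{Res}_{\dd_1, \dd_2}(\mathscr{F})) \subset \Lambda^{1}_{\dd_1} \times \Lambda^{1}_{\dd_2}$. By the inductive hypothesis, every simple summand of $\textup{Res}_{\dd_1, \dd_2}(\mathscr{F})$ (after the usual cohomological shift) then lies in $\mathcal{P}^{1} \boxtimes \mathcal{P}^{1}$. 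Using the adjunction between $\textup{Res}$ and $\Ind$, $\mathscr{F}$ is a direct summand of an induced sheaf $\Ind_{\dd_1, \dd_2}^{\dd}(\mathscr{G})$ for some $\mathscr{G} \in \mathcal{P}^{1} \boxtimes \mathcal{P}^{1}$, and hence $\mathscr{F} \in \mathcal{P}^{1}$.

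There remains the \emph{cuspidal} case, in which $\textup{Res}_{\dd_1, \dd_2}(\mathscr{F}) = 0$ for every proper decomposition of $\dd$. This is the main obstacle. The strategy is to compare $\mathscr{F}$ directly with the cuspidal perverse sheaves produced by Bozec-Schiffmann-Vasserot for $S_g$: these are IC-sheaves supported on the closure of the open stratum of absolutely simple representations, a locus that is non-empty and of positive moduli-dimension precisely because $g \geq 2$, equipped with a specific irreducible local system capturing the residual action of the adjoint quotient of $G_{\dd}$. One must show that any cuspidal irreducible $\mathscr{F}$ with $SS(\mathscr{F}) \subset \Lambda^{1}_{\dd}$ is of this form. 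The hypothesis $g \geq 2$ is used crucially, both to guarantee the non-emptiness of the absolutely-simple locus and to rigidify, via the singular-support constraint, the finite list of equivariant local systems whose characteristic cycle is Lagrangian inside $\Lambda^{1}_{\dd}$. Pinning down this finite list and matching it against the BSV cuspidals is the principal technical difficulty of the argument.
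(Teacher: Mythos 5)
Your proposal does not follow the paper's route, and as it stands it has two genuine gaps. First, the ``key micro-local compatibility'' for the restriction functor is asserted but not available from the general functorialities you invoke: the restriction $\Delta_{\dd_1,\dd_2}=\kappa_!\iota^*[-\langle\dd_1,\dd_2\rangle]$ is built from a pullback along a \emph{closed immersion} $\iota$ and a pushforward along the \emph{non-proper} vector-bundle projection $\kappa$, whereas the standard estimates (Propositions \ref{pbsmooth} and \ref{pushforward}) only cover smooth pullback and proper pushforward. Controlling $SS(\Delta_{\dd_1,\dd_2}\mathscr{F})$ for an arbitrary simple perverse sheaf with $SS(\mathscr{F})\subset\Lambda^{1}_{\dd}$ would require an additional argument (conicity/monodromicity in the fibre directions, hyperbolic localization, or a non-characteristic condition), and without it the inductive step collapses; note also that $\Delta$ lives on equivariant categories while the theorem makes no equivariance assumption. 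Second, and more seriously, the cuspidal case is exactly where the content of the theorem lies for a quiver with loops, and your proposal leaves it open: you describe the Bozec--Schiffmann--Vasserot cuspidals in a form (IC of the absolutely-simple locus with a prescribed irreducible local system) that is not established, and you explicitly defer the matching of an arbitrary cuspidal $\mathscr{F}$ with nilpotent singular support against that list. Since for $S_g$ with $g\geq 2$ cuspidal objects occur in every dimension, this is not a boundary case but the heart of the problem, so the argument is incomplete.

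For comparison, the paper's proof avoids induction and cuspidals altogether. It first shows (Proposition \ref{smallnessmor}) that for negative quivers every $\pi_{\underline{\dd}}^{\nil}$ is small and birational onto its image, so the simple objects of $\mathcal{P}^{\nil,1}_{\dd}$ are exactly the $\ICC(E_{\underline{\dd}}^{\nil})$ with \emph{trivial} local systems (Proposition \ref{explicitdesc}); then it uses Bozec's bijection between these simples and the irreducible components of $\Lambda^{\nil,1}_{\dd}$ to prove that $\Lambda^{\nil,1}_{\dd}=\bigcup_{\underline{\dd}}T^*_{E_{\underline{\dd}}^{\nil}}E_{\dd}$ (Proposition \ref{truenil}). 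Given $SS(\mathscr{F})\subset\Lambda^{\nil,1}_{\dd}$, the support of $\mathscr{F}$ is some $E_{\underline{\dd}}^{\nil}$, the bad locus (smaller strata plus the non-birational locus) has codimension at least two (Lemmas \ref{codimtwo}--\ref{lemmaunique}), and pulling the resulting local system back to the smooth, simply connected variety $\tilde{\mathcal{F}}_{\underline{\dd}}^{\nil}$ forces it to be trivial, so $\mathscr{F}=\ICC(E_{\underline{\dd}}^{\nil})\in\mathcal{P}^{\nil,1}_{\dd}$; the case $\flat=1$ then follows by Fourier--Sato, as in your first reduction (which is the one part of your plan that matches the paper). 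If you want to salvage your approach, the missing ingredients are precisely a proof of the singular-support estimate for $\Delta$ and a classification of cuspidal simple perverse sheaves with nilpotent singular support, neither of which is currently in the literature for $S_g$.
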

We give the rough idea of the proof in Section \ref{stepsloops}. We would like to emphasize the fact that no equivariance assumption is made on $\mathscr{F}$: this is not necessary in the proof and happens to be a consequence of the nilpotency of the singular support.

\subsection{Analogy with character sheaves on Lie groups and Lie algebras}\label{analogy}
In \cite{MR1088333}, motivated by his theory of character sheaves on reductive groups, Lusztig defined a class of perverse sheaves on the representation spaces of an arbitrary loop-free quiver. In both cases, the perverse sheaves under consideration are defined as appropriate shifts of direct summands of the pushforward by a proper morphism of a local system. Character sheaves for a complex reductive group $G$ are obtained in this way using the morphism
\[
 \pi : \tilde{G}=\{(g,B)\in G\times \mathscr{B}\mid g\in B\}\rightarrow G
\]
where $\mathscr{B}=G/B$ denotes the flag variety of $G$.

For quivers, Lusztig considers a family of morphisms
\[
 \pi_{(i,a)} : \tilde{\FF}_{(i,a)}\rightarrow E_{\dd}
\]
where $\tilde{\FF}_{(i,a)}$ is a smooth variety and the morphism $\pi_{(i,a)}$ is proper (Section \ref{lusztigsheaves}).
By functorial properties of the singular support, it is proved that for groups, the singular support of character sheaves is contained in
\[
 \Lambda_G=\{(g,\xi^*)\mid g\in Z_G(\xi^*)\text{ and }\xi^*\in\mathscr{N}^*\}
\]
where $\mathscr{N}^*\subset \mathfrak{g}^*$ denotes the nilcone of $\mathfrak{g}^*$ and $Z_G(\xi^*)$ is the centralizer of $\xi^*$ in $G$ for the coadjoint action of $G$ on $\mathfrak{g}^*$.
For loop-free quivers, the result is the following. Lusztig perverse sheaves on $E_{\dd}$ have a singular support which is a subvariety of
\[
 \Lambda_{\dd}=\{(x,x^*)\in E_{\bar{Q},\dd}\mid \mu_{\dd}(x,x^*)=0 \text{ and } (x,x^*) \text{ is nilpotent}\}.
\]
See Section \ref{representationvariety} for the definition of the moment map $\mu_{\dd}$ (the condition $\mu_{\dd}(x,x^*)=0$ is a generalization of the commuting relation we had for $\Lambda_{G}$). In \cite{MR948107}, Mirkovi\'c and Vilonen give a proof of a conjecture of Laumon and Lusztig asserting that character sheaves on a complex connected reductive group $G$ are exactly those with singular support included in $G\times\mathscr{N}^*$. Analogously, one can try to characterize $G_{\dd}$-equivariant perverse sheaves on $E_{\dd}$ with singular support included in $\Lambda_{\dd}$. To make the analogy even clearer, note that a $G$-equivariant perverse sheaf on $G$ has singular support included in $\mu^{-1}(0)$ where
\[
\begin{matrix}
 \mu&:&T^*G=G\times \mathfrak{g}^*&\rightarrow&\mathfrak{g}^*\\
 &&(g,\xi^*)&\mapsto&ad^*_{g}(\xi^*)
\end{matrix}
\]
is a moment map of the hamiltonian action of $G$ on itself by conjugation. Therefore, any $G$-equivariant perverse sheaf on $G$ whose singular support is contained in $G\times\mathscr{N}^*$ has singular support in $\Lambda_{G}=\mu^{-1}(0)\cap (G\times\mathscr{N}^*)$ (see also {\cite[1.4]{MR948107}}). It is also possible to prove similar results for complex reductive Lie algebras, as in \cite{MR2124171}, which is close to the quiver situation. In fact, the case of the Jordan quiver in dimension $d$ coincides by definition with the case of $\mathfrak{gl}_d$.

\subsection{Steps of the proof for affine quivers}\label{stepsaffine}
The proof proceeds in the following steps. For finite type (resp. cyclic quivers), we use that the number of orbits (resp. nilpotent orbits) is finite in each dimension and appropriate resolution of their closure. A Fourier-Sato transform allows us to conclude for type $A$ affine quivers. The cases of affine types $D$ and $E$ need a more subtle work. Using an appropriate stratification of the representation spaces (given by Auslander-Reiten theory), we reduce the problem to sheaves on the regular locus. Cyclic quivers allow us to describe a neighbourhood of a non-homogeneous tube in the representation space of $Q$. The question can now be answered by studying a class of perverse sheaves on the representation spaces of cyclic quivers. This class is slightly larger than the class of Lusztig sheaves. Therefore, we call it \emph{extended Lusztig category}. It turns out that this class of perverse sheaves contains exactly the Fourier-Sato transforms of the intersection cohomology complexes on nilpotent orbits for the opposite orientation. We describe explicitly this class of perverse sheaves together with a microlocal characterization of the simple perverse sheaves it contains, analogously to the the main theorem of this paper. Transfering the question from an affine quiver to cyclic quivers gives a proof of the theorem.

\subsection{Steps of the proof for $g$-loops quivers}
\label{stepsloops}
In Section \ref{quiversloops}, we consider general quivers, possibly carrying loops. For them, four different \emph{nilpotent varieties} are available: see \cite{bozec2017number}. Accordingly, there are four different categories of perverse sheaves on the representation spaces of the quiver. We conjecture a relationship between equivariant simple perverse sheaves whose singular support is a union of irreducible components of one of the nilpotent varieties and the corresponding category of perverse sheaves. The conjecture is proved for $g$-loops quivers with $g\geq 2$. The loops at the vertices ensure the smallness of some proper morphisms and this allows us to describe precisely the categories of perverse sheaves under consideration. We use also that the different closed subvarieties of the representation space contributing to the singular support are of codimension at least two in the support of our perverse sheaf. Then the proof rests essentially on the fact that the proper morphisms we use to define the perverse sheaves on the representation spaces are small and the existence of a bijection between isomorphism classes of simple objects in the category under consideration and irreducible components of the corresponding nilpotent variety. This bijection for quivers with loops and discrete flag-types\footnote{A flag-type is called discrete if it is an uplet of dimension vectors, each of them being supported at one vertex.} is due to Bozec. Such a bijection is not known for flag-types which are not discrete (note that for quivers with one vertex, all flag-types are discrete).

\subsection{Contents of the paper}\label{contents}
We shortly describe the contents of the different sections of this paper. Section \ref{representationtheory} contains basic results on the representation theory of finite type quivers and acyclic quivers (Auslander-Reiten theory). We put the emphasis on affine quivers (decomposition of the category of representations in three parts, preprojective, regular and preinjective). We define stratifications of the representation spaces induced by this decomposition of the category (Auslander-Reiten stratification). In the case of affine quivers, we give a refinement of this stratification. In Section \ref{lusztigsheaves}, we recall the definition of Lusztig category for a general quiver. We give an explicit description of this category for finite type quivers and cyclic quivers, together with proofs. We recall the description of Lusztig perverse sheaves for affine quivers (without proof). We define the induction and restriction functors used by Lusztig to categorify the operations of the quantum group. In Section \ref{singularsupport}, we study the singular support of Lusztig sheaves. We define Lusztig nilpotent variety and describe it explicitly for finite type and affine quivers. This uses the stratifications previously defined. We prove two technical lemmata, the first of which allows us to consider only perverse sheaves on the regular locus. In Section \ref{prooffinitetype}, we prove the microlocal characterization for finite type quivers. In Section \ref{proofcyclic}, we prove it for type $A$ affine quivers. In Section \ref{extendedps}, we define a class of perverse sheaves on the representation spaces of cyclic quivers and give its basic properties: singular support, explicit description and microlocal characterization. In Section \ref{neighb}, we explain how to describe a neighbourhood of a non-homogeneous tube in the representation space of an affine quiver using cyclic quivers. Section \ref{proofaffineq} contains the proof of Theorem \ref{mainresult} for affine quivers. Last, Section \ref{quiversloops} describes a general conjecture for quivers possibly carrying loops and cycles in the cases of general or discrete flag-types. We prove the conjecture in the case of $g$-loops quivers, $g\geq 2$. The method is completely different from that for affine quivers. In the appendices, we collect useful facts on local systems, equivariant perverse sheaves, singular supports and Fourier-Sato transform.

\subsection{Remark}
\label{discl}
After this paper was written, the author learned from Ben Webster that a proof of Conjecture \ref{conjecturegeneral} based on an unpublished work of Baranovsky and Ginzburg is written in \cite{MR3651581}, see Proposition 4.8 of \emph{op. cit.} and the comment following it. This result of Baranovsky and Ginzburg claims injectivity of the characteristic cycle map for quantized conical resolution (Theorem 4.9 of \emph{op. cit.}). It is proved in the special case of quantized quiver varieties \emph{for finite type quivers} and \emph{for affine quivers with the framing $\epsilon_0$ (one dimensional framing at the extending vertex only)} (not for wild quivers) in the paper of Bezrukavnikov and Losev, \cite{bl}. A proof of the Baranovsky-Ginzburg theorem for affine quivers and general framing can be reconstructed using a result of a subsequent paper of Losev (\cite{losev}). Indeed, Losev proves in \cite[Theorem 3.12]{losev} the Claim (II) of \cite[Section 6, page 49]{bl} for quantized quiver varieties associated to affine quivers and \emph{any} framing. By the proof of the claim (III) of \cite[Section 6, page 49]{bl} given in Section 12 at the end of page 87 of \emph{op. cit.}, (II) is the only thing we need to have the injectivity of the characteristic cycle map. Hence the injectivity of the characteristic cycle map in that case\footnote{The author thanks Ivan Losev for having helped him understanding the interactions between these papers.}. Webster's argument then gives an alternative proof of Theorem \ref{mainresult}. Nevertheless, our methods are completely different and rest on a detailed understanding of the geometry of the representation spaces of affine quivers. Note also that a priori the Baranovsky-Ginzburg's theorem does not apply for quivers with loops and that to the knowledge of the author, no proof of it for quantized quiver varieties associated to \emph{wild} loop-free quivers is written.

\subsection{Notations}\label{notations}
We denote by $\mathscr{P}$ the set of partitions. For $G$ a linear algebraic group and $X$ a complex $G$-variety, $D^b_{G}(X)$ denotes the constructible derived category of $X$ with complex coefficients. We denote by $\Perv_{G}(X)$ the category of $G$-equivariant perverse sheaves on $X$. The underlying formalism is developed in \cite{MR1299527}. If $\mathscr{F}$ is a simple perverse sheaf on a variety $X$ and $i:Y\rightarrow X$ a locally closed immersion such that $\overline{Y}=\supp\mathscr{F}$, we let $\mathscr{F}_Y:=i^*\mathscr{F}$ be the restriction of $\mathscr{F}$ to $Y$. It is still a simple perverse sheaf and $\mathscr{F}=i_{!*}\mathscr{F}_Y$, the intermediate extension of $\mathscr{F}_Y$ to $X$. A quiver is a pair $Q=(I,\Omega)$ where $I$ is the set of vertices and $\Omega$ the set of arrows\footnote{The letter $I$ will sometimes be used to denote a preinjective representation of the quiver $Q$ (provided it is acyclic), see Section \ref{representationtheory}. This should not cause any confusion.}. Both are assumed to be finite sets. For $\dd\in \N^I$ a dimension vector for $Q$, $\C^{\dd}$ is a $\N^I$-graded vector space of dimension $\dd$. For $\dd'\in\N^I$, will consider the grassmannian $\Gr(\dd',\dd)$ of $\dd'$-dimensional graded subspaces of $\C^{\dd}$. It is of course smooth, projective, and nonempty if and only if $\dd'\leq \dd$. For us, cyclic quivers are by definition quivers of type $A_{n}^{(1)}$ for some $n\geq 0$ with a possibly non-cyclic orientation. We will explicitly write when cyclic quivers are considered with a cyclic orientation. The opposite quiver of $Q$ is the quiver $Q^{\op}=(I,\Omega^{\op})$ having the same set of vertices as $Q$ but all arrows have the reverse direction. When $X$ is an algebraic (or analytic) variety and $d\in\N$ an integer, we let $\Delta\subset X^d$ be the big diagonal, that is the closed subvariety of $d$-uplets $(x_1,\hdots,x_d)$ with two or more of the $x_i$'s equal. We let $S^dX$ be the $d$-th symmetric power of $X$ and again the symbol $\Delta$ denotes its diagonal. If $X$ is a $G$-variety and $\Lambda\subset T^*X$ a subset, we let $\Perv_G(X,\Lambda)$ be the full, abelian subcategory of $\Perv_G(X)$ of perverse sheaves whose singular support is contained in $\Lambda$. The letter $k$ denotes a field. We implicitly specialize to $k=\C$ whenever we use analytic notions. Let $H\subset G$ be a closed subgroup and $X$ an $H$-variety. The group $H$ acts freely on $X\times G$ by $h\cdot (x,g)=(hx,gh^{-1})$. We let $X\times^HG$ be the quotient variety. It is a $G$-variety under the $G$-action $g'\cdot (x,g)=(x,g'g)$. If $X$ is a finite set, $\sharp X$ denotes its number of elements. The symbols $\subset$ and $\subseteq$ are used for inclusions of sets which can be an equality. When the inclusion is strict, we use $\subsetneq$.

\subsection{Terminology}
If $X$ is a $G$-variety, we say that a $G$-orbit $\OO\subset X$ is \emph{equivariantly simply connected} if its stabilizer is connected. It is the case for all orbits for quiver representations. If $(f,\phi):(X,G)\rightarrow (Y,H)$ is an equivariant morphism between a $G$-variety $(X,G)$ and an $H$-variety $(Y,H)$ (for any $(x,g)\in X\times G$, $f(g\cdot x)=\phi(g)\cdot f(x)$), we say that $f$ is an equivariant $\pi_1$-equivalence if $f^*:\LocSys_H(Y)\rightarrow \LocSys_G(X)$ is an equivalence of categories between the categories of equivariant local systems. If $\phi:H\rightarrow G$ is the inclusion of a closed subgroup and $X$ a $H$-variety, $(f,\phi):(X,H)\rightarrow (X\times^H G,G)$, $x\in X\mapsto(x,e)\in X\times^HG$ is the prototypical example of an equivariant $\pi_1$-equivalence.

\section{Some representation theory of quivers and stratification of the representation spaces}
\label{representationtheory}
We use in a fundamental manner the very explicit representation theory of finite type and affine quivers to prove the main theorem. For the sake of completeness and to fix notations, we briefly recall the basic facts we will use.

\subsection{Representation theory of finite type quivers}\label{finitetype}

The theorem ruling the representation theory of finite type quivers is the following. It does not depend on the base field.

\begin{theorem}[Gabriel, \cite{MR332887}]\label{gabriel}
Let $Q$ be a quiver. Then $Q$ has a finite number of indecomposable representations if and only if $Q$ is of type $ADE$. Moreover, taking the dimension induces a one-to-one correspondence between indecomposable representations of $Q$ and dimension vectors $\dd\in\N^I$ such that $\langle\dd,\dd\rangle=1$\footnote{$\langle\dd,\dd\rangle=\sum_{i\in I}\dd_i^2-\sum_{\alpha:i\rightarrow j\in\Omega}\dd_i\dd_j$ is the Euler form of the quiver.}. 
\end{theorem}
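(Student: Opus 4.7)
The plan is to follow the Bernstein-Gelfand-Ponomarev strategy, translating the question about indecomposable representations into root-theoretic combinatorics via reflection functors. First, I would recall that the Euler form $\langle\cdot,\cdot\rangle$ depends only on the underlying graph of $Q$ and is positive definite on $\Z^I$ precisely when this graph is a disjoint union of Dynkin diagrams of type $ADE$; in that case, the set $\{\dd\in\N^I\setminus\{0\}:\langle\dd,\dd\rangle=1\}$ is finite and coincides with the set of positive roots of the associated simply-laced root system.

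Next, for each vertex $i\in I$ which is a sink (resp. source) of $Q$, I would introduce the reflection functor $S_i^+:\Rep(Q)\to\Rep(s_iQ)$ (resp. $S_i^-$), where $s_iQ$ denotes the quiver obtained from $Q$ by reversing all arrows at $i$. These restrict to mutually inverse equivalences between the subcategories of representations having no direct summand isomorphic to the simple at $i$; on dimension vectors they act via the simple reflection $s_i\in W$, where $W$ is the Weyl group of the underlying graph. In particular they preserve the Euler form and send indecomposables to indecomposables outside the simple at $i$. Fixing an admissible ordering of the vertices and composing yields the Coxeter functor $c$, whose effect on dimension vectors is the Coxeter element $w\in W$.

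Suppose now $Q$ is of type $ADE$ and $M$ is indecomposable. Since $W$ is finite and the Coxeter element $w$ has no nonzero fixed vector, for every $\dd\in\N^I\setminus\{0\}$ some iterate $w^k\dd$ acquires a negative coordinate, which forces the corresponding iterate $c^kM$ to vanish. Picking the minimal such $k$ and unwinding the last nonzero step identifies $M$ with the image, under a specific sequence of inverse reflection functors, of a simple module $S_j$; this reads off $\underline{\dim}\,M$ as a positive root and, since the reflection functors involved are equivalences on the relevant subcategories, shows that the dimension vector determines $M$ up to isomorphism. Conversely, running the procedure backwards from any positive root produces the corresponding indecomposable, establishing the claimed bijection.

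For the converse direction, if $Q$ is not of type $ADE$ then its underlying graph contains an extended Dynkin subdiagram, and on the corresponding full subquiver I would exhibit an infinite family of pairwise non-isomorphic indecomposables; the simplest case is the Kronecker quiver, whose indecomposable representations of dimension $(1,1)$ already form a $\PP^1$-family parametrized by lines in $\C^2$. Zero-extending at the remaining vertices transports such families to $Q$, contradicting finiteness of the indecomposables. The main technical obstacle lies in the Coxeter-functor vanishing used in the third paragraph: one must exploit positive definiteness of the Euler form carefully (equivalently, finiteness of $W$ together with the absence of fixed vectors for the Coxeter element) to force termination of the reflection procedure, which is precisely the structural property that fails outside type $ADE$.
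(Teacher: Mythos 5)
The paper does not prove this statement at all: it is Gabriel's theorem, quoted from the literature and used as a black box, so there is no internal proof to compare yours with. Your outline is the classical Bernstein--Gelfand--Ponomarev argument and is essentially correct: positive definiteness of the Tits form exactly in type $ADE$, reflection functors at sinks and sources acting on dimension vectors through the Weyl group, termination of the Coxeter iteration because $W$ is finite and the Coxeter element has no nonzero fixed vector, and the resulting identification of indecomposables with positive roots, i.e.\ with the $\dd$ satisfying $\langle\dd,\dd\rangle=1$. Two points should be filled in to make it complete. First, composing reflection functors into a Coxeter functor requires an admissible ordering of the vertices, hence acyclicity; this is automatic in the $ADE$ direction since the underlying graph is a tree, but it deserves a sentence. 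Second, and more substantially, your converse only treats the Kronecker case explicitly: a connected non-$ADE$ graph contains an extended Dynkin subgraph, which may be a single loop, a cycle with the orientation inherited from $Q$ (possibly cyclic), or of type $\tilde{D}$ or $\tilde{E}$, and one must exhibit infinite families of pairwise non-isomorphic indecomposables (say in dimension $\delta$) in each of these cases before zero-extending to $Q$; the zero-extension step itself is harmless since it preserves endomorphism algebras, hence indecomposability and non-isomorphism. Alternatively one can bypass the case analysis by the Tits-type count: finitely many indecomposables forces finitely many $G_{\dd}$-orbits in each $E_{\dd}$ by Krull--Schmidt, and comparing $\dim E_{\dd}$ with $\dim G_{\dd}-1$ then forces $\langle\dd,\dd\rangle\geq 1$ for every nonzero $\dd$, i.e.\ positive definiteness of the form.
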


\subsection{Nilpotent representations of cyclic quivers}\label{cyclicquivers}
Let $C_n$ be the cyclic quiver with $n$ vertices (of type $A_{n-1}^{(1)}$) and cyclic orientation. For convenience, we label the vertices by the set $\Z/n\Z$. We assume that for $i\in \Z/n\Z$, we have an arrow $i\rightarrow i+1$. For any $i\in \Z/n\Z$ and $l\in\N_{\geq 1}$, there is exactly one indecomposable nilpotent representation of $C_n$ with top $S_i$ and length $l$. We denote it $I_{i,l}$. Let $M$ be a nilpotent representation of $C_n$. Then $M$ is called \emph{aperiodic} if for any $l\geq 1$, not all the representations $I_{i,l}$ for $i\in\Z/n\Z$ are direct summands of $M$. Nilpotent representations of $C_n$ are parametrized by multipartitions, that is functions
\[
 \mm:\Z/n\Z\rightarrow \mathscr{P}.
\]
The nilpotent representation corresponding to $\mm$ is
\[
 N_{\mm}=\bigoplus_{\substack{i\in I\\l\geq 1}}I_{i,{\mm(i)}^l}
\]
where $\mm(i)=(\mm(i)^1,\mm(i)^2,\hdots)$.
Accordingly, the (total) dimension of the multipartition $\mm$ is
\[
 \dim\mm=\dim N_{\mm}=\sum_i\lvert\mm(i)\rvert.
\]

\subsection{Some Auslander-Reiten theory}\label{auslanderreiten}
Let $Q$ be an acyclic quiver. We recall here the needed facts about the representation theory of $Q$ with a particular emphasis on affine quivers. Such theory has been known for some time now. See \cite{MR774589} for a useful account. 

\begin{theorem}
 Let $k$ be a field. Then, there exists an adjunction
\[
\tau^- : \Rep_{Q}(k) \rightleftarrows \Rep_Q(k) : \tau
\]
with bi-natural isomorphisms\footnote{We say that $(\tau^-,\tau)$ is a Serre adjunction.} (the star means the dual with respect to the $k$-vector space structure):
\[
\Ext^1(M,N)^*\simeq \Hom(N,\tau M),\quad\quad \Ext^1(M,N)^*\simeq \Hom(\tau^-N,M).
\]
\end{theorem}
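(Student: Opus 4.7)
The plan is to construct $\tau$ via the classical Auslander--Reiten recipe using the Nakayama functor, and then derive the two bi-natural isomorphisms from standard homological computations. Since $Q$ is acyclic, the path algebra $kQ$ is finite-dimensional and hereditary, so every finite-dimensional representation $M$ admits a projective resolution of length at most one. I would begin by fixing for each $M$ a minimal projective presentation $P_1 \xrightarrow{f} P_0 \to M \to 0$ and defining $\tau M$ as the kernel of the induced map $\nu(f) : \nu P_1 \to \nu P_0$, where $\nu$ is the Nakayama functor on projectives, characterized by the bi-natural isomorphism $\Hom(P, N) \simeq \Hom_k(\Hom(N, \nu P), k)$ for $P$ projective and $N$ arbitrary. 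Dually, $\tau^- N$ is built from a minimal injective copresentation of $N$ via the inverse Nakayama functor $\nu^-$.

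To establish the first bi-natural isomorphism, I would apply $\Hom(-, N)$ to the chosen projective presentation of $M$. Because $kQ$ is hereditary, the resulting four-term sequence has $\Ext^1(M, N)$ as its last term. Applying $\Hom_k(-, k)$ and using the Nakayama identification termwise then converts it into a four-term exact sequence whose leftmost term is precisely $\Hom(N, \tau M)$, yielding
\[
\Ext^1(M, N)^* \simeq \Hom(N, \tau M).
\]
The minimality of the presentation ensures that no spurious projective summands appear in $P_1$ that would force us to quotient out by projective-factoring morphisms on the right-hand side. The symmetric argument, starting from a minimal injective copresentation of $N$ and applying $\nu^-$, gives $\Ext^1(M, N)^* \simeq \Hom(\tau^- N, M)$. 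Composing the two isomorphisms immediately yields the adjunction
\[
\Hom(\tau^- N, M) \simeq \Ext^1(M, N)^* \simeq \Hom(N, \tau M).
\]

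The main obstacle I expect is the verification of genuine functoriality of $\tau$ in $M$ (and $\tau^-$ in $N$), since the constructions appear to depend on choices of minimal presentations. The standard resolution is that a minimal projective presentation is unique up to (non-canonical) isomorphism, and any morphism $M \to M'$ lifts to a morphism of their presentations, unique up to chain homotopy; upon applying $\nu$ and taking kernels, the homotopy ambiguity is killed because $\nu$ sends projective summands to injective ones that do not intersect the kernel of $\nu(f)$ built from a minimal presentation. Thus $\tau$ descends to a bona fide functor $\Rep_Q(k) \to \Rep_Q(k)$, and once this is in hand, naturality in both variables of the Ext-Hom identifications is automatic from the bi-functoriality of the Nakayama isomorphism applied to each term of the presentation. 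The resulting adjunction is then the desired Serre-type adjunction $(\tau^-, \tau)$.
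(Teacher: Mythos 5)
Your proposal is correct: it is the classical construction of the Auslander--Reiten translate via the Nakayama functor and minimal (co)presentations, together with the hereditary simplification of the AR formulas, which is precisely the standard argument the paper relies on — the paper states this theorem as background from Auslander--Reiten theory and cites Ringel's account rather than proving it. One small remark on your functoriality step: the clean reason the homotopy ambiguity is harmless is that two lifts of $\phi\colon M\to M'$ differ on $P_1$ by $s\circ f$ for some $s\colon P_0\to P_1'$ (here minimality plus heredity make $f'$ injective, so there is no extra $\ker f'$ term), hence after applying $\nu$ the discrepancy is $\nu(s)\circ\nu(f)$, which vanishes on $\ker\nu(f)=\tau M$; your stated reason about injective summands not meeting the kernel is not quite the right mechanism, but the conclusion and the rest of the argument stand.
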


The functors $\tau$ and $\tau^-$ are known as \emph{Auslander-Reiten translates}. From the above properties of $\tau^-$ and $\tau$, it is immediate that a representation $M$ of $Q$ over $k$ is projective if and only if $\tau(M)=0$ and injective if and only if $\tau^-(M)=0$. We call an indecomposable representation $M$ of $Q$ over $k$
\begin{enumerate}
\item preprojective if $\tau^nM=0$ for $n\gg 0$,
\item preinjective if $\tau^{-n}M=0$ for $n\gg0$,
\item regular if $\tau^nM\neq 0$ for all $n\in\Z$.
\end{enumerate}
Furthermore, we call a representation $M$ of $Q$ over $k$ preprojective if all its indecomposable direct summands are preprojective, and we adopt similar terminology for preinjective and regular representations. By abuse, the zero representation is preprojective, regular and preinjective. The full subcategory of $\Rep_Q(k)$ of preprojective (resp. preinjective, resp. regular) representations is denoted by $\Rep_Q^{\PC}(k)$ (resp. $\Rep_Q^{\IC}(k)$, resp. $\Rep_Q^{\RC}(k)$). These are extension closed subcategories of $\Rep_Q(k)$, hence exact categories. Moreover, for $Q$ an affine quiver, $\Rep_Q^{\RC}(k)$ is an abelian category (though not stable under taking subobjects in the bigger category $\Rep_Q(k)$). The three categories $\Rep_Q^{\RC}(k), \Rep_Q^{\PC}(k)$ and $\Rep_Q^{\IC}(k)$ are disjoint and a crucial fact for affine quivers is the following: The category to which an indecomposable $M$ belongs is given by the sign of its defect defined by 
\begin{equation}\label{defect}
\partial M=\langle\delta,\dim M\rangle,
\end{equation}
where
\[
\begin{matrix}
 \langle-,-\rangle&:&\Z^I\times\Z^I&\rightarrow& \Z\\
 &&(\dd,\ee)&\mapsto&\sum_{i\in I}\dd_i\ee_i-\sum_{\alpha : i\rightarrow j\in\Omega}\dd_i\ee_j
\end{matrix}
\]
is the (non-symmetrized) Euler form of $Q$ and $\delta$ is the indivisible positive imaginary root of $Q$ (see \cite[Fig 8.3]{MR3308668}).
 An indecomposable representation $M$ is preprojective if and only if $\partial M<0$, preinjective if and only of $\partial M>0$ and regular if and only if $\partial M=0$: it only depends on the dimension $\dim M$ of $M$.
The following proposition gives the interactions between these three subcategories.
\begin{proposition}\label{extensions}
For $M\in \Rep_Q^{\PC}(k)$, $N\in\Rep_Q^{\IC}(k)$, $L\in\Rep_Q^{\RC}(k)$, we have
\[
\Hom(N,M)=\Hom(N,L)=\Hom(L,M)=0,
\]
\[
\Ext^1(M,N)=\Ext^1(L,N)=\Ext^1(M,L)=0.
\]
\end{proposition}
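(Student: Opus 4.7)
The plan is to reduce all six vanishings to two elementary facts about indecomposable projectives and injectives over the hereditary algebra $kQ$, via the Auslander--Reiten translate.

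First, since $\Rep_Q^{\PC}(k)$, $\Rep_Q^{\IC}(k)$, $\Rep_Q^{\RC}(k)$ are each closed under direct summands and since $\Hom$ and $\Ext^1$ are bi-additive, I may assume $M$, $N$, $L$ are indecomposable. Next, using the Serre adjunction $\Ext^1(X,Y)^* \simeq \Hom(Y,\tau X)$, I reduce each Ext-vanishing to a Hom-vanishing: if $M$ is projective then $\Ext^1(M,-) = 0$ trivially, and otherwise $\tau M$ is a nonzero indecomposable preprojective, so $\Ext^1(M,N) = 0$ becomes $\Hom(N,\tau M) = 0$. In the same way, $\Ext^1(L,N)$ and $\Ext^1(M,L)$ reduce to $\Hom(N,\tau L)$ and $\Hom(L,\tau M)$ respectively, noting that $\tau L$ remains a nonzero indecomposable regular (all $\tau^k L$ are nonzero by the very definition of regular).

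The engine for the Hom-vanishings is the commutation formula
\[
\Hom(A,B) \simeq \Hom(\tau A,\tau B)
\]
valid for any indecomposable non-projective $A$ (and any $B$), obtained from the Serre adjunction by setting $Y = \tau A$, $X = B$ and using $\tau^-\tau A = A$, together with its dual $\Hom(A,B) \simeq \Hom(\tau^- A,\tau^- B)$ for $B$ indecomposable non-injective. Writing $M = \tau^{-a}P$ for $P$ indecomposable projective, iterating yields $\Hom(L,M) = \Hom(\tau^a L,P)$ and $\Hom(N,M) = \Hom(\tau^a N,P)$; writing $N = \tau^b I$ for $I$ indecomposable injective gives $\Hom(N,L) = \Hom(I,\tau^{-b}L)$. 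At each iteration the left-hand argument must be checked to remain indecomposable and non-projective (resp.\ non-injective), and this is where the affine hypothesis enters, via disjointness of the three subcategories by the sign of the defect~\eqref{defect}: regular indecomposables are $\tau$-periodic and hence never projective nor injective, and preprojective and preinjective indecomposables have defects of opposite signs.

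It thus suffices to establish two elementary facts for the hereditary algebra $kQ$: (i) $\Hom(X,P) = 0$ for $X$ indecomposable non-projective and $P$ indecomposable projective, and (ii) $\Hom(I,Y) = 0$ for $I$ indecomposable injective and $Y$ indecomposable non-injective. For (i), a nonzero $f\colon X \to P$ has image $\im f \subset P$ which is projective since $kQ$ is hereditary (submodules of projectives are projective); the short exact sequence $0 \to \ker f \to X \to \im f \to 0$ splits, and indecomposability of $X$ forces $X \simeq \im f$ to be projective, a contradiction. Fact (ii) is dual, using that over a hereditary algebra quotients of injectives are injective. The main subtlety in carrying out this plan is bookkeeping the $\tau$-iteration so as not to produce a zero module or cross between the three subcategories, which is precisely what the defect invariant ensures in the affine setting.
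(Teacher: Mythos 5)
Your proof is correct, but there is nothing in the paper to compare it against: Proposition \ref{extensions} is stated without proof, as a standard fact of Auslander--Reiten theory for hereditary algebras (the paper refers to Ringel's account \cite{MR774589} for this material). Your argument is a legitimate self-contained derivation from exactly the two formulas the paper does record, namely the Serre adjunction $\Ext^1(X,Y)^*\simeq\Hom(Y,\tau X)\simeq\Hom(\tau^-Y,X)$, combined with hereditariness of $kQ$ (submodules of projectives are projective, quotients of injectives are injective), and the reduction of all six vanishings to your facts (i) and (ii) via the commutation $\Hom(A,B)\simeq\Hom(\tau A,\tau B)$ is sound. Two small remarks. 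First, in the $\tau$-iterations the cancellation $\tau\tau^{-}\simeq\mathrm{id}$ is applied to the \emph{right-hand} argument $\tau^{-(j-1)}P$ as well, so you need these preprojectives to be non-injective; this is again supplied by the disjointness of the three classes (defect trichotomy) that you invoke, but it is worth stating explicitly alongside the condition on the left-hand argument. Second, your parenthetical that regular indecomposables are $\tau$-periodic is only true in the tame case and is not needed: $\tau^nL\neq0$ for all $n$ already forces $L$ to be neither projective nor injective. Finally, note that the statement must indeed be read for affine (more generally representation-infinite) quivers, as the surrounding text of the paper and your use of the defect presuppose; in finite type every indecomposable is simultaneously preprojective and preinjective, so the literal statement degenerates, and your proof correctly isolates where the affine hypothesis enters.
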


\begin{cor}\label{uniquefiltration}
 Let $M=P\oplus R\oplus I$ be a representation of an affine quiver $Q$ with $P$ preprojective, $R$ regular and $I$ preinjective. Let $\dd_P=\dim P, \dd_R=\dim R$ and $\dd_I=\dim I$. Then $M$ has a unique subrepresentation of dimension $\dd_I$ and a unique subrepresentation of dimension $\dd_I+\dd_R$. In particular, it has a unique filtration $(0\subset M_1\subset M_2\subset M_2=M)$ with subquotients of dimensions $\dd_I, \dd_R, \dd_P$.
\end{cor}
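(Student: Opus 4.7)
The existence is immediate: take $M_1:=I$ and $M_2:=R\oplus I$; these have the required dimensions and the filtration $0\subset M_1\subset M_2\subset M$ has subquotients of dimensions $\dd_I,\dd_R,\dd_P$.

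For uniqueness of the subrepresentation of dimension $\dd_I$, let $N\subseteq M$ be any such subrepresentation and consider $N+I\subseteq M$. The quotient $(N+I)/I$ embeds in $M/I=P\oplus R$, so all its indecomposable summands are preprojective or regular. By Proposition~\ref{extensions}, $\Ext^1(P',I)=\Ext^1(R',I)=0$ for every preprojective $P'$ and regular $R'$, so the short exact sequence $0\to I\to N+I\to (N+I)/I\to 0$ splits and I can write $N+I=I\oplus Z$ with $Z\cong(N+I)/I$ a sum of preprojective and regular representations. The subrepresentation $N\subseteq I\oplus Z$ projects surjectively onto $Z$ (since $N+I=I+Z$) with kernel $K:=N\cap I\subseteq I$, whence $\dim K+\dim Z=\dd_I$. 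Applying the defect $\partial=\langle\delta,-\rangle$ I obtain $\partial K+\partial Z=\partial I$. On one hand $Z$ being a sum of preprojective and regular summands forces $\partial Z\leq 0$; on the other, $I/K$ is a quotient of the preinjective $I$, hence preinjective (the category of preinjectives is closed under quotients, by Proposition~\ref{extensions}), so $\partial(I/K)\geq 0$, i.e.\ $\partial K\leq\partial I$. Both inequalities must be equalities, so $\partial Z=0=\partial(I/K)$. A preinjective representation of defect zero is zero, so $I/K=0$, $K=I$, and $N=I$ by dimension.

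The uniqueness of the subrepresentation of dimension $\dd_I+\dd_R$ follows by the mirror argument using the other torsion pair: given $N'\subseteq M$ of this dimension, form $N'+(R\oplus I)$ and note that $Y:=(N'+(R\oplus I))/(R\oplus I)$ embeds in $M/(R\oplus I)=P$, hence is preprojective. The sequence $0\to R\oplus I\to N'+(R\oplus I)\to Y\to 0$ splits by $\Ext^1(Y,R\oplus I)=0$ from Proposition~\ref{extensions}, and the same defect computation---now using that $\Rep_Q^{\RC}(k)\oplus\Rep_Q^{\IC}(k)$ is closed under quotients, so that $(R\oplus I)/K'$ has non-negative defect, and that a preprojective representation of defect zero is zero---forces the preprojective complement to vanish, whence $N'=R\oplus I$. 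The main obstacle is recognising that Proposition~\ref{extensions} yields the splittings of the two relevant short exact sequences, after which the strict sign property of the defect on affine quivers (positive on nonzero preinjectives, negative on nonzero preprojectives, zero on regulars) collapses uniqueness to a one-line defect count.
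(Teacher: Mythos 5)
Your proof is correct. It runs on exactly the same two ingredients as the paper's proof --- the Hom/Ext vanishing of Proposition \ref{extensions} and the sign properties of the defect $\partial=\langle\delta,-\rangle$ --- but organizes them differently: the paper decomposes the candidate subrepresentation $N$ into its preprojective, regular and preinjective parts $P'\oplus R'\oplus I'$, observes via the Hom-vanishing that $I'\subset I$ and that $\partial(I/I')\le 0$ forces $I'=I$, whereas you never decompose $N$ and instead intersect it with $I$ (resp.\ with $R\oplus I$), using the torsion-pair closure properties (preinjectives closed under quotients, preprojective-plus-regular representations closed under subobjects) together with the defect count on $0\to N\cap I\to N\to (N+I)/I\to 0$. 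This packaging makes the second uniqueness statement genuinely symmetric (you run the mirrored torsion pair), where the paper simply says the same argument works. Two small remarks: the splittings you extract from the $\Ext^1$-vanishing are not actually needed, since the second isomorphism theorem already gives the exact sequences and the defect depends only on dimension vectors; and the step ``$(N+I)/I$ embeds in $P\oplus R$, hence has only preprojective or regular indecomposable summands'' silently uses the Hom-vanishing of Proposition \ref{extensions} (a preinjective summand would admit a nonzero map to $P\oplus R$). Both points are harmless, and the argument is complete.
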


\begin{proof}
 Let $N=P'\oplus R'\oplus I'$ be a subrepresentation of $M$ with $\dim N=\dd_I$ ($P'$ preprojective, $R'$ regular and $I'$ preinjective). Its defect is $\langle\delta,\dim N\rangle=\partial I=\partial P'+\partial I'\leq\partial I'$. By Proposition \ref{extensions}, $I'\subset I$, and $\partial (I/I')=\partial I-\partial I'\leq0$. Therefore, if $I/I'$ is nonzero, it has preprojective or regular direct summands. By Proposition \ref{extensions} again, $I$ has regular or preprojective direct summands: contradiction. Therefore, $I'=I$ and by considering the dimensions, $N=I$. The same argument works for subrepresentations of dimension $\dd_R+\dd_I$.
\end{proof}

The simple objects of the abelian category $\Rep_Q^{\RC}(k)$ are called simple regular. A simple regular representation $M$ is called homogeneous if $\tau M\simeq M$. It is called non-homogeneous otherwise.

\begin{theorem}[Ringel, \cite{MR774589}]\label{ringelth}
Let $Q$ be an affine acyclic quiver and $k$ an arbitrary field. Let $d$ and $p_1,\hdots,p_d$ be attached to $Q$ as in the table below. Then
\begin{enumerate}
\item There is a degree preserving bijection $M_a\leftrightarrow a$ between the set of homogeneous regular simple modules and $\lvert\PP^1_k\rvert\setminus D$ where $D$ consists of $d$ closed points of degree one\footnote{For $X$ a scheme, $\lvert X\rvert$ denotes the set of closed points of $X$.},
\item There are $d$ $\tau$-orbits $\OO_1,\hdots,\OO_d$ of non-homogeneous regular simple modules of size $p_1,\hdots,p_d$\footnote{\emph{i.e.} the set of isomorphism classes of simple objects in $\OO_j$, $1\leq j\leq d$ is of cardinality $p_j$ and the Auslander-Reiten translates $\tau$ and $\tau^-$ act as inverse cycles on it.},
\item The category $\Rep_Q^{\RC}(k)$ decomposes as a product sum of blocks\footnote{There are no morphisms or extensions between the objects of different categories in the product.}:
\[
\Rep_Q^{\RC}(k)=\prod_{a\in\mid\PP^1_k\mid\setminus D}\CC_{M_a}\times\prod_{l=1}^d\CC_{\OO_l}
\]
where $\CC_{M_a}$ is the full subcategory of objects which are extensions of $M_a$ and $\CC_{\OO}$ is the full subcategory of $\Rep_Q^{\RC}(k)$ of objects whose regular simple factors lie in $\OO$.
\end{enumerate}

\begin{figure}[h!]
\begin{tabular}{|c|c|c|}
\hline
\text{type of } $Q$ &$d$&$p_1,\hdots,p_d$\\
\hline
$A_1^{(1)}$&$0$& \\
\hline
$A_n^{(1)}, n>1$&$2$& $p_1=$\text{number of arrows going clockwise}\\
                        &  & $p_2=$\text{number of arrows going counterclockwise}\\
                        \hline
$D_n^{(1)}$&$3$&$2, 2, n-2$\\
\hline
$E_{n}^{(1)}, n=6,7,8$&$3$&$2,3,n-3$\\
\hline
\end{tabular}
\caption{Non-homogeneous tubes of affine quivers and their period \cite[(3.18)]{MR3202707}}
\label{tubes}
\end{figure}
\end{theorem}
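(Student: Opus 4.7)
The plan is to exploit Auslander--Reiten theory together with the combinatorics of the defect. First I would verify that $\tau$ and $\tau^-$ restrict to mutually quasi-inverse auto-equivalences of $\Rep_Q^{\RC}(k)$: since $\tau$ preserves the defect and the defect vanishes precisely on indecomposable regulars, an indecomposable $M$ with $\partial M=0$ has $\tau^n M$ regular and nonzero for all $n\in\Z$. Consequently $\Rep_Q^{\RC}(k)$ has neither projective nor injective objects, and by Proposition \ref{extensions} Auslander--Reiten sequences whose middle term is regular have regular end terms, so the AR quiver of $\Rep_Q^{\RC}(k)$ is a stable translation quiver.

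Step two is to show that every connected component of this AR quiver is a stable tube $\Z A_\infty/(m\Z)$ for some $m\geq 1$. The key input is that on the root lattice the Coxeter transformation has finite order modulo the radical of the symmetrized Euler form, so its orbits on positive real regular roots exhibit periodic behaviour, while the $\delta$-direction is fixed and produces the homogeneous families. Combined with a Happel--Preiser--Ringel type theorem for stable translation quivers admitting a subadditive function, this forces each component to be a tube. In a tube of rank $m$, the quasi-simple objects on the mouth form a single $\tau$-orbit of size $m$, and these are precisely the regular simples.

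For the parametrization I would treat $k=\overline{k}$ first. For $\tilde{A}_n$ with $n\geq 1$ one constructs band representations indexed by $a\in k^\times$, giving homogeneous tubes of rank one, together with two exceptional tubes at $0$ and $\infty$ whose ranks equal the numbers of arrows going clockwise and counterclockwise; the case $\tilde{A}_1$ (Kronecker) gives $d=0$. For $\tilde{D}$ and $\tilde{E}$ one reduces to the $\tilde{A}$ (or Kronecker) case iteratively via BGP reflection functors, or equivalently by forming perpendicular categories to regular exceptional modules, stripping off one non-homogeneous tube at a time and reading off the ranks $(2,2,n-2)$ and $(2,3,n-3)$ from the extended Dynkin combinatorics. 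To descend to an arbitrary field $k$, one uses Galois descent: homogeneous tubes over $\overline{k}$ parametrized by $\PP^1(\overline{k})\setminus D(\overline{k})$ are permuted by $\Gal(\overline{k}/k)$, and a Galois orbit corresponds to a single closed point of $\PP^1_k\setminus D$ of the appropriate degree, while non-homogeneous tubes are Galois-stable by the rigidity of their numerical invariants. The block decomposition (3) is a formal consequence of $\Hom$ and $\Ext^1$ vanishing between distinct tubes, itself deduced from the Serre adjunction and $\tau$-periodicity.

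The main obstacle is the tube structure and rank computation for $\tilde{D}$ and $\tilde{E}$, where there is no one-step explicit model. The Dlab--Ringel route via tilting/species handles this uniformly but requires careful bookkeeping of how tube ranks transform under reflection functors; an alternative via geometric invariant theory (realizing the moduli of homogeneous simples as an open subset of $\PP^1_k$ through suitable stability) gives the parametrization conceptually at the cost of more machinery.
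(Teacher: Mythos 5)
This statement is quoted background: the paper gives no proof of Theorem \ref{ringelth}, but cites Ringel \cite{MR774589} (with the table taken from \cite{MR3202707}), so there is no internal argument to compare yours against. Your outline is, in substance, the standard route found in that literature: show $\tau$ restricts to a self-equivalence of $\Rep_Q^{\RC}(k)$ so its AR quiver is stable; use the periodicity of the Coxeter transformation modulo $\Z\delta$ together with a Happel--Preiser--Ringel-type classification of stable translation quivers carrying a subadditive function to force every regular component to be a stable tube; compute the exceptional tube ranks explicitly in type $A_n^{(1)}$ via band modules and reduce types $D_n^{(1)}$, $E_n^{(1)}$ to it by reflection functors or perpendicular categories; and deduce the block decomposition (3) formally from the $\Hom$/$\Ext^1$ vanishing between distinct tubes. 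That is a sound plan, and consistent with the paper's conventions (note Remark \ref{rem}: when one of the counts $p_i$ equals $1$ the corresponding ``tube'' is in fact homogeneous, so the table's $d$ can overcount the genuinely non-homogeneous tubes).

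Two places where your sketch is only a plan deserve flagging. First, as you admit, the rank computations $(2,2,n-2)$ and $(2,3,n-3)$ in types $D$ and $E$ are where the real bookkeeping lies; ``read off from the extended Dynkin combinatorics'' hides the induction on reflection functors (or the Dlab--Ringel species computation). Second, the descent to an arbitrary field is not how Ringel proceeds: he works directly over $k$ (one-point extensions/bimodule techniques), obtaining part (1) as a statement about closed points of $\PP^1_k$ from the start. Your Galois-descent route can be made to work, but you must use the separable closure when $k$ is imperfect, check that the non-homogeneous tubes and their ranks are insensitive to base change, and verify that the endomorphism ring of a homogeneous simple is the residue field of the corresponding point (so there is no Brauer obstruction and the bijection is degree-preserving). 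With those caveats filled in, your argument reconstructs the cited theorem rather than offering a genuinely different proof.
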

In Theorem \ref{ringelth}, the subcategories $\CC_{\OO_l}$ are called the non-homogeneous tubes while the subcategories $\CC_{M_a}$ are the homogeneous tubes. These are finite-length categories. For any object in one of them, its quasi-length is its length in the given tube. A representation all of those indecomposable direct summands are contained in (non-)homogeneous tubes is called \emph{(non-)homogeneous}. A representation all of those indecomposable direct summands are in homogeneous tubes is called regular homogeneous. The number of non-homogeneous tubes is $d$ (see however Remark \ref{rem}) and the integers $p_1,\hdots,p_d$ are the periods. They do not depend on the chosen field. For $a\in \lvert\PP^1_k\rvert\setminus D$ and $n\geq 1$, we let $S_a[n]$ be an indecomposable representation of $Q$ in the tube $a$ of quasi-length $n$. For $\lambda$ a partition, we let $S_a[\lambda]=\bigoplus_{i\geq 1}S_a[\lambda_i]$. We obtain all representations in $C_{M_a}$ in this way.
\begin{remark}\label{rem}
In type $A_n^{(1)}$, $n\geq 2$ in the case where all arrows except one go in the same direction, we have in fact $d=1$, \emph{i.e.} there is only one non-homogeneous tube. The Kronecker quiver $K2$ (type $A_{1}^{(1)}$) has only homogeneous tubes.
\end{remark}
Define $\PP_1^{\hom}=\lvert\PP_k^1\rvert\setminus D$ the set indexing homogeneous tubes.

\subsection{Quiver representation varieties}\label{representationvariety}
Let $Q=(I,\Omega)$ be a finite quiver with set of vertices $I$ and set of arrows $\Omega$. For a field $k$ and an $I$-graded vector space $V$ over $k$, the associated representation variety of $Q$ is
\[
 E_{Q}(V)=\bigoplus_{i\stackrel{\alpha}{\rightarrow}j\in\Omega}\Hom(V_i,V_j).
\]
If $\dd\in\N^I$ is a dimension vector, then the representation variety of $Q$ in dimension $\dd$ is
\[
 E_{Q,\dd}:=E_Q(V)
\]
for $V=(k^{\dd_i})_{i\in I}$.
When the context is clear, we write $E(V)=E_{Q}(V)$ and $E_{\dd}=E_{Q,\dd}$. Elements of $E_{\dd}$ can be seen as representations of $Q$. If $x\in E_{\dd}$, we let $(k^{\dd},x)$ be the associated representation. To shorten the notation, we sometimes write only $x$. Also, we will sometimes consider quivers $Q'=(I,\Omega')$ with the same underlying graph as $Q$ but a possibly different orientation $\Omega'$. In this case, we write $E_{\dd}^{\Omega'}=E_{Q',\dd}$. 

The variety $E_{Q}(V)$ is acted on by the product of linear groups
\[
 G(V)=\prod_{i\in I}\GL(V_i).
\]

If $V=(k^{\dd_i})_{i\in I}$, we write $G_{\dd}=G(V)=\prod_{i\in I}\GL_{\dd_i}(k)$.
The orbits of $E_{\dd}$ under $G_{\dd}$ are in one-to-one correspondence with isomorphism classes of representations of $Q$ of dimension $\dd$ over $k$. Note that the diagonal embedded copy of $\C^*$ inside $G_{\dd}$ acts trivially on $E_{\dd}$. If $M$ is a representation of $Q$ of dimension $\dd$, we let $\OO_M\subset E_{\dd}$ be its orbit.

We denote by $\overline{Q}=(I,\Omega\sqcup\overline{\Omega})$ the doubled quiver having the same set of vertices as $Q$ and for each $\alpha\in\Omega$, an additional arrow $\bar{\alpha}\in \overline{\Omega}$ going in the opposite direction. The symplectic action of $G_{\dd}$ on $T^*E_{\dd}$ identified with $E_{\overline{Q},\dd}$ via the trace pairing has (quadratic) moment map
\[
\begin{matrix}
 \mu_{\dd} &:& E_{\overline{Q},\dd}&\rightarrow& \prod_{i\in I}\mathfrak{gl}_{\dd_i}\\
 &&x&\mapsto&\sum_{\alpha\in\Omega}[x_{\alpha},x_{\overline{\alpha}}]
\end{matrix}.
 \]
Its zero-level $\mu_{\dd}^{-1}(0)$ is of fundamental importance in the geometry of quiver representations, as shown in \cite{MR1834739}. This can be explained by the fact that $[\mu_{\dd}^{-1}(0)/G_{\dd}]$ has a strong relation to the cotangent stack to $[E_{\dd}/G_{\dd}]$.

\subsection{Stratification of the representation spaces of acyclic quivers}\label{stratification}
We stratify the representation spaces of acyclic quivers using Auslander-Reiten theory. For finite type quivers, the given stratification is trivial, but can be refined by the stratification by orbits. For affine quivers, we give an other refinement which is given by Ringel in \cite{MR1676227}. In both cases, these stratifications have the nice property that Lusztig nilpotent variety is the union of the conormal bundles to some of the strata, as we will see in Section \ref{irreducible components}.

\subsubsection{Auslander-Reiten stratification of the representation spaces}
Let $Q=(I,\Omega)$ be an acyclic quiver and $\dd$ a dimension vector. For $x\in E_{Q,\dd}$, we let $\dd_{P}(x), \dd_{R}(x)$ and $\dd_{I}(x)$ be the dimension vector of the preprojective, regular and preinjective direct summands of $x$ seen as a representation of $Q$. For $\dd_P, \dd_R, \dd_I\in\N^I$ such that $\dd_P+\dd_R+\dd_I=\dd$, we define
\[
 E_{\dd_P,\dd_R,\dd_I}=\{x\in E_{Q,\dd}\mid \dd_P(x)=\dd_P, \dd_R(x)=\dd_R, \dd_I(x)=\dd_I\}.
\]
It is a smooth locally closed subvariety of $E_{Q,\dd}$ and we have
\[
 E_{Q,\dd}=\bigsqcup_{\dd_P+\dd_R+\dd_I=\dd}E_{\dd_P,\dd_R,\dd_I}.
\]
We can refine this stratification by fixing the isoclass of the preprojective and preinjective summands in the following way. Choose a decomposition $\dd=\dd_P+\dd_R+\dd_I$, a preprojective representation $P$ of dimension $\dd_P$ and a preinjective representation $I$ of dimension $\dd_I$. Define
\[
 E_{[P],\dd_R,[I]}=\{x\in E_{\dd}\mid (\C^{\dd},x)\simeq P\oplus R\oplus I, \text{ $R$ regular of dimension $\dd_R$} \}.
\]
\begin{lemma}
 We have the decomposition into smooth locally closed subvarieties
 \[
  E_{Q,\dd}=\bigsqcup_{[P],\dd_R,[I]}E_{[P],\dd_R,[I]}
 \]
where the sum runs over the triples $([P],\dd_R,[I])$ where $[P]$ is a preprojective isoclass, $[I]$ a preinjective isoclass and $\dd_R\in\N^I$ a dimension vector subject to the condition $\dim P+\dd_R+\dim I=\dd$.
\end{lemma}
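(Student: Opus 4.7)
The plan is to prove the lemma by combining Krull–Schmidt for disjointness with an induction-type incidence diagram (analogous to Lusztig's) whose smooth ``Levi'' projection exhibits each stratum, via the uniqueness statement of Corollary \ref{uniquefiltration}, as the image of a smooth locally closed subvariety under a proper \emph{bijective} morphism.

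First I would settle disjointness. The Krull–Schmidt theorem, together with the fact that the categories $\Rep_Q^{\PC}$, $\Rep_Q^{\RC}$, $\Rep_Q^{\IC}$ are pairwise disjoint and closed under direct summands, shows that every $x \in E_{Q,\dd}$ yields a well-defined triple $([P(x)], \dd_R(x), [I(x)])$; so the pieces $E_{[P],\dd_R,[I]}$ are pairwise disjoint and exhaust $E_{Q,\dd}$.

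For smoothness and local closedness, fix a triple $([P], \dd_R, [I])$ and introduce the incidence variety
\[
Z = \{(x, V_1 \subset V_2) : x \in E_{Q,\dd},\ V_1 \subset V_2 \subset \C^{\dd}\ I\text{-graded, }x\text{-stable, }\dim V_1 = \dd_I,\ \dim V_2 = \dd_I + \dd_R\},
\]
equipped with the proper forgetful projection $q : Z \to E_{Q,\dd}$ and the smooth Levi projection $p : Z \to E_{Q,\dd_I} \times E_{Q,\dd_R} \times E_{Q,\dd_P}$ recording the three induced representations on $V_1$, $V_2/V_1$ and $\C^{\dd}/V_2$. The orbits $\OO_I \subset E_{Q,\dd_I}$ and $\OO_P \subset E_{Q,\dd_P}$ are smooth and locally closed as $G$-orbits in algebraic $G$-varieties, and there are only finitely many preprojective (resp. preinjective) isoclasses of fixed dimension. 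The regular locus $E_{Q,\dd_R}^{\reg} \subset E_{Q,\dd_R}$ is open: using Corollary \ref{uniquefiltration} applied in dimension $\dd_R$, the functions $M \mapsto \dd_P(M)$ and $M \mapsto \dd_I(M)$ are upper semicontinuous (they are controlled by dimensions of $\Hom$-spaces to the finitely many indecomposable preprojectives/preinjectives of dimension $\leq \dd_R$), hence the simultaneous vanishing locus is open. Consequently, $p^{-1}(\OO_I \times E_{Q,\dd_R}^{\reg} \times \OO_P)$ is smooth and locally closed in $Z$.

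By Corollary \ref{uniquefiltration}, every $x \in E_{[P], \dd_R, [I]}$ possesses a unique $x$-stable flag $V_1 \subset V_2$ of dimensions $\dd_I$ and $\dd_I + \dd_R$ with subquotients of types $I$, regular and $P$. Thus the restriction
\[
q : p^{-1}(\OO_I \times E_{Q,\dd_R}^{\reg} \times \OO_P) \longrightarrow E_{[P],\dd_R,[I]}
\]
is a $G_{\dd}$-equivariant proper bijection onto $E_{[P],\dd_R,[I]}$. The final step is to upgrade this bijection to an isomorphism of varieties, from which smoothness and local closedness of $E_{[P],\dd_R,[I]}$ follow immediately. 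I would do this by checking that $dq$ is everywhere injective (so that $q$ is an immersion, hence, being proper and bijective, a closed immersion onto its image), which amounts to showing that the flag $(V_1, V_2)$ depends algebraically on $x$ on the stratum: this is precisely the content of Corollary \ref{uniquefiltration}, whose uniqueness statement provides a regular inverse via the algebraic extraction of the canonical filtration.

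The main obstacle is this last step, as a proper bijection of varieties need not be an isomorphism (a cusp, for instance). Here the obstruction is removed by the algebraic canonicity of the Auslander–Reiten filtration of an affine-type representation supplied by Corollary \ref{uniquefiltration}; once this is made explicit, the remainder of the argument is a matter of assembling standard facts about orbits and upper semicontinuity of $\Hom$-dimensions.
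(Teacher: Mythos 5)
The paper states this lemma without proof, so there is no argument of the author's to compare against line by line; your incidence-variety strategy is reasonable and is essentially the picture the paper relies on later (in the proof of Lemma \ref{lemma1}, where $E_{[P],\dd_R,[I]}$ is treated as an induced variety via Corollary \ref{uniquefiltration}). However, your last step --- the one you yourself single out as the main obstacle --- has a genuine gap. Corollary \ref{uniquefiltration} is a purely set-theoretic uniqueness statement, and set-theoretic uniqueness of the filtration cannot yield injectivity of $dq$ nor a regular inverse: this is exactly the cusp phenomenon you mention, where a fiber is a single point but non-reduced, so $dq$ has a nontrivial kernel even though $q$ is bijective. The needed statement is infinitesimal, and the correct input is the Hom-vanishing of Proposition \ref{extensions}: at a point $(x,V_1\subset V_2)$ of your locus $W$, the kernel of $dq$ is the Zariski tangent space of the scheme-theoretic fiber of $q$, i.e.\ of the quiver flag Grassmannian of $x$-stable flags of dimensions $(\dd_I,\dd_I+\dd_R)$, and this tangent space embeds into $\Hom_Q(V_1,x/V_1)\oplus\Hom_Q(V_2,x/V_2)\simeq\Hom(I,R\oplus P)\oplus\Hom(I\oplus R,P)=0$. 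With this vanishing, $q|_W$ is injective, unramified and proper over its image, hence a locally closed immersion, and local closedness and smoothness of $E_{[P],\dd_R,[I]}$ follow; without it, nothing in your text rules out a bijective but non-immersive $q|_W$.

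Three smaller points. First, the claim that $q|_W$ is a bijection \emph{onto} $E_{[P],\dd_R,[I]}$ silently requires $q(W)\subseteq E_{[P],\dd_R,[I]}$: a point of $W$ is a priori only an iterated extension of $I$, a regular $R$ and $P$, and one needs $\Ext^1(R,I)=\Ext^1(P,I)=\Ext^1(P,R)=0$ (Proposition \ref{extensions}) to split it; this also gives $W=q^{-1}(E_{[P],\dd_R,[I]})$, which is what makes the restriction of the proper map $q$ proper over the stratum. Second, your ``Levi projection'' $p$ is not actually a morphism to $E_{Q,\dd_I}\times E_{Q,\dd_R}\times E_{Q,\dd_P}$, since $V_1$, $V_2/V_1$ and $\C^{\dd}/V_2$ carry no chosen graded identifications with the standard spaces; either define $W$ directly by the isomorphism-invariant conditions on the three subquotients (checking local closedness on local trivializations over the flag variety) or pass to a torsor as in Lusztig's induction diagram. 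Third, in the semicontinuity argument the directions of the Hom-spaces matter: $x$ has a preinjective summand iff $\Hom(I',x)\neq 0$ for one of the finitely many indecomposable preinjectives $I'$ with $\dim I'\leq \dd_R$, and a preprojective summand iff $\Hom(x,P')\neq 0$ for an indecomposable preprojective $P'$; testing Hom \emph{to} preinjectives, as you wrote, does not detect preinjective summands, though the conclusion that $E_{\dd_R}^{reg}$ is open is correct once the directions are fixed.
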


\subsubsection{Ringel stratification of the representation spaces of acyclic affine quivers}\label{ringelstrat}
To describe the irreducible components of Lusztig nilpotent variety for affine quivers, Ringel introduced in \cite{MR1676227} a stratification of the representation spaces. Let $P$ an isomorphism class of a preprojective representation, $I$ of a preinjective and $N$ of a non-homogeneous regular representation. Let
\[
 \mu : \mathscr{P}\rightarrow \N
\]
be a function with finite support. Then $(P,I,N,\mu)$ is called a $\emph{type}$. Its dimension is $\dim(P,I,N,\mu)=\dim P+\dim I+\dim N+\sum_{\lambda\in\mathscr{P}}\mu(\lambda)|\lambda|\delta$. We also let $\dim\mu=\sum_{\lambda\in\mathscr{P}}\mu(\lambda)|\lambda|$. If $\dd=\dim(P,I,N,\mu)$, the type $(P,I,N,\mu)$ defines a stratum $\Xi(P,I,N,\mu)\subset E_{\dd}$. Let first $\lambda_1,\hdots,\lambda_r$ be partitions such that any partition $\lambda$ appears $\mu(\lambda)$ times in this list. The stratum consists of $x\in E_{\dd}$ which are isomorphic to a direct sum
\[
 P\oplus I\oplus N\oplus \bigoplus_{i=1}^rS_{a_{i}}[\lambda_i]
\]
where $a_1,\hdots,a_r\in \lvert\PP^1_{\C}\rvert\setminus D$ are pairwise distinct. It is clear (by Theorem \ref{ringelth}) that $E_{\dd}$ is the disjoint union of the strata $\Xi(P,I,N,\mu)$ where $(P,I,N,\mu)$ is a type of dimension $\dd$. To ease the notation, we will write $\Xi(P,I,N,\mu)=\Xi(P\oplus I\oplus N,\mu)$. Also when $\mu=0$, $\Xi(P,I,N,0)$ is the orbit of $P\oplus I\oplus N$ and will usually be denoted by $\OO_{P\oplus I\oplus N}$. We also define $E_{\dd}^{reg}=\{x\in E_{\dd}\mid (\C^{\dd},x)\text{ is regular}\}$ and $E_{\dd}^{\reghom}=\{x\in E_{\dd}\mid (\C^{\dd},x)\text{ is regular homogeneous}\}$. By Remark \ref{rem}, both coincide for the Kronecker quiver. Recall from Theorem \ref{ringelth} that $D\subset \lvert\PP^1\rvert$ is the set of non-homogeneous tubes. It consists of $0,1,2$ or $3$ points. It $T\subset D$ is a subset, we let $E_{\dd}^{T}$ be the set of $x\in E_{\dd}^{reg}$ such that all indecomposable inhomogeneous direct summands of $x$ are contained in the tubes indexed by $T$. In particular, $E_{\dd}^{\emptyset}=E_{\dd}^{\reghom}$. It is an open subset of $E_{\dd}^{reg}$ and thus of $E_{\dd}$ since it is the set of $x\in E_{\dd}^{reg}$ such that $\Hom(N,x)=0$ for any inhomogeneous indecomposable representation of $Q$ in the tubes indexed by $D\setminus T$ of dimension $\leq\dd$, and the isoclasses of such $N$ are in finite number. If $T\subset D$ and $N$ is a regular inhomogeneous representation of $Q$ whose indecomposable summands are in the tubes indexed by $T$, for any $\dd\in\N^I$, we let $E_{[N],\dd}^{D\setminus T}$ be the set of $x\in E_{\dd+\dim N}^{reg}$ such that $x$ is isomorphic to $N\oplus R$ for a regular representation $R$ all of those regular non-homogeneous direct summands belong to the tubes indexed by $D\setminus T$.

The map $\mu : \mathscr{P}\rightarrow\mathscr{P}$ is called \emph{regular} if it takes nonzero values only on partitions of length one, and regular semisimple if it is nonzero only on the partition $(1)$. A type $\Xi(P,I,N,\mu)$ is called \emph{regular} (resp. \emph{regular semisimple}) if $\mu$ is regular (resp. regular semisimple).
\begin{lemma}\label{irrclosm}
 The stratum $\Xi(N,\mu)\subset E_{\dd}^{reg}$ where $N$ is regular non-homogeneous and $\mu$ is regular semisimple are irreducible, locally closed and smooth.
\end{lemma}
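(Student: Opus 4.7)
The plan is to parametrize $\Xi(N,\mu)$ explicitly as a free finite quotient of a smooth irreducible variety. Set $r := \mu((1))$, so $\dd = \dim N + r\delta$ and every element of $\Xi(N,\mu)$ is isomorphic to $N \oplus S_{a_1}[1] \oplus \cdots \oplus S_{a_r}[1]$ for pairwise distinct $a_1,\ldots,a_r \in \lvert\PP^1_{\C}\rvert\setminus D$. Let $U \subset E_\delta$ be the locus of simple regular homogeneous modules; by upper semicontinuity of $\dim\End$, this is the open subset $\{x \in E_\delta : \dim\End(x)=1\}$, hence smooth and irreducible. The tube assignment $\tau : U \to \lvert\PP^1_{\C}\rvert\setminus D$, sending $x$ to the unique point whose associated orbit contains $x$, is a morphism, so $U^{(r)} := \{(x_1,\ldots,x_r) \in U^r : \tau(x_i) \neq \tau(x_j)\text{ for } i\neq j\}$ is an open smooth irreducible subvariety of $U^r$.

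Now fix an $I$-graded decomposition $\C^\dd = \C^{\dim N} \oplus (\C^\delta)^{\oplus r}$, with corresponding block-diagonal Levi subgroup $H := G_{\dim N} \times G_\delta^r \subset G_\dd$. Form the associated bundle $\widetilde{\Xi} := G_\dd \times^H (\OO_N \times U^{(r)})$, a smooth irreducible variety, and consider the $G_\dd$-equivariant morphism
\[
 \phi : \widetilde{\Xi} \longrightarrow E_\dd, \quad [g,y,(x_i)] \longmapsto g\cdot(y \oplus x_1 \oplus \cdots \oplus x_r),
\]
whose image is $\Xi(N,\mu)$ by the very definition of the stratum. The symmetric group $S_r$ acts on $\widetilde{\Xi}$ by permuting the coordinates of $U^{(r)}$ (and correspondingly the $G_\delta$-factors of $H$); this action is free because the $x_i$'s lie in pairwise distinct tubes, so $x_i \not\simeq x_j$ for $i \neq j$. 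By Krull--Schmidt combined with the vanishing of $\Hom$ between different blocks of $\Rep_Q^{\RC}(\C)$ (Theorem \ref{ringelth} and Proposition \ref{extensions}), $\phi$ descends to a set-theoretic bijection $\bar\phi : \widetilde{\Xi}/S_r \to \Xi(N,\mu)$.

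It remains to verify that $\bar\phi$ is a locally closed immersion, from which $\Xi(N,\mu)$ inherits the structure of a smooth irreducible locally closed subvariety of $E_\dd$ from the smooth irreducible quotient $\widetilde{\Xi}/S_r$. Since $\bar\phi$ is already bijective on closed points, it suffices to show it is unramified. A direct differential computation at $[e,y,(x_i)]$ identifies $\ker d\bar\phi$ with the space of off-diagonal $\xi \in \g_\dd/\h$ whose $(k,l)$-block lies in $\Hom(x^{(l,l)},x^{(k,k)})$ for each $k \neq l$ (with the convention $x^{(0,0)} := y$ and $x^{(i,i)} := x_i$); each of these Hom-spaces vanishes by block orthogonality in $\Rep_Q^{\RC}(\C)$ (Theorem \ref{ringelth} and Proposition \ref{extensions}), so $d\bar\phi$ is injective. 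The main technical step is this tangent-space computation; once it is in place, irreducibility, smoothness and local closedness all follow from the parametrization and the Krull--Schmidt/block orthogonality structure of $\Rep_Q^{\RC}(\C)$ established by Ringel.
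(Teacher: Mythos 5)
Your parametrization and the tangent computation are sound: block orthogonality does give that $d\bar\phi$ is injective, so $\bar\phi$ is an injective unramified morphism. The gap is the final inference, ``bijective on closed points $+$ unramified $\Rightarrow$ locally closed immersion'': this is false, and it is exactly where the content of the lemma sits. A standard counterexample: let $C\subset\A^2$ be the nodal cubic $y^2=x^2(x+1)$ and $f:\A^1\setminus\{1\}\to\A^2$, $t\mapsto(t^2-1,\,t(t^2-1))$, the normalization with one preimage of the node deleted; $f$ is injective and unramified, has smooth irreducible source and smooth target, its image $C$ is locally closed, and yet $f$ is not an immersion and the image is not smooth. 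Nothing in your argument rules out this behaviour for $\bar\phi$: a priori the closure of $\Xi(N,\mu)$ could be ``pinched'' along the stratum, in which case $\Xi(N,\mu)$ could fail to be smooth (or even locally closed) while $\bar\phi$ would still be a bijective unramified map. In char $0$ your hypotheses only give that $\bar\phi$ is a monomorphism, and monomorphisms need not be immersions. To close the gap you need a genuine extra argument: for instance, prove local closedness by upper semicontinuity of $x\mapsto\dim\Hom(M,x)$ and $x\mapsto\dim\Hom(x,M)$ for the finitely many relevant test modules $M$ (so $\Xi(N,\mu)$ is the open locus in its closure where these attain their minima), and then exhibit an inverse of $\bar\phi$ on the image — e.g.\ recover the non-homogeneous summand of $x$ functorially as the image of the evaluation map $\bigoplus_M\Hom(M,x)\otimes M\to x$ over the non-homogeneous indecomposables $M$ of dimension at most $\dd$ (the relevant Hom-dimensions are constant along the stratum), and use the quotient map of Section \ref{quotientreg} for the homogeneous part — so that $\bar\phi$ is an isomorphism onto a locally closed subvariety rather than merely a bijection. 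This is in effect the content of Lemma \ref{isoreg}, which the paper asserts without proof (the paper gives no proof of the present lemma either, so there is no argument to compare with; but the intended route is via Lemma \ref{isoreg} together with the description of $\Xi(\mu)$ as an open subset of $E_{d\delta}^{\reghom}$).

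A secondary error: your identification $U=\{x\in E_\delta\mid\dim\End(x)=1\}$ is wrong whenever $Q$ has a non-homogeneous tube. In a tube of rank $p\geq 2$ the indecomposable of quasi-length $p$ has dimension $\delta$ and is a brick, so the brick locus strictly contains the locus of homogeneous regular simples; taking the brick locus as $U$ would even spoil the identification of the image of $\phi$ with $\Xi(N,\mu)$. Openness of the correct $U=E_\delta^{\reghom}$ is true, but should be deduced from its identification with the $\theta$-stable locus (Section \ref{quotientreg}) or from the vanishing of $\Hom(M,-)$ for the finitely many non-homogeneous indecomposables $M$ of dimension at most $\delta$ (Section \ref{ringelstrat}).
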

We will need such a result to use Lemma \ref{sysloczerosection} in the Section \ref{singularsupports}. It is more generally also true that the stratum $\Xi(P,I,N,\mu)$ is smooth for any quadruple $(P,I,N,\mu)$. The above result is sufficient for our purposes.

Let $N$ be a non-homogeneous regular representation and $\mu:\mathscr{P}\rightarrow \N$. Let $d=\dim \mu$.

\begin{lemma}\label{isoreg}
 The natural map induced by the direct sum
 \[
  (\OO_N\times\Xi(\mu))\times^{G_{\dd_N}\times G_{d\delta}}G_{\dd}\rightarrow \Xi(N,\mu)
 \]
is an isomorphism.
\end{lemma}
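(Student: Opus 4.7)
The plan is to show the stated map is a well-defined $G_{\dd}$-equivariant bijection on closed points, and then to upgrade this bijection to an isomorphism of varieties. The central structural input is the block decomposition of $\Rep_Q^{\RC}(\C)$ into pairwise Hom-orthogonal tubes from Theorem \ref{ringelth}(3), which forces the splitting of a regular representation into its non-homogeneous and regular-homogeneous parts to be canonical.

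For well-definedness, observe that $H := G_{\dd_N}\times G_{d\delta}$ embeds block-diagonally in $G_{\dd}$ with respect to the decomposition $\C^{\dd}=\C^{\dd_N}\oplus\C^{d\delta}$, so $h\cdot(x_1\oplus x_2)=(h_1 x_1)\oplus(h_2 x_2)$ for $h=(h_1,h_2)\in H$, and therefore $(gh^{-1})\cdot(h\cdot(x_1\oplus x_2))=g\cdot(x_1\oplus x_2)$. Surjectivity is immediate from the definition of $\Xi(N,\mu)$: every $y\in\Xi(N,\mu)$ satisfies $(\C^{\dd},y)\simeq N\oplus R$ for some $R\in\Xi(\mu)$, and any such isomorphism yields a triple $(x_1,x_2,g)$ mapping to $y$.

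The heart of the argument is injectivity. If $g\cdot(x_1\oplus x_2)=g'\cdot(x_1'\oplus x_2')$, then $\phi:=(g')^{-1}g$ is an isomorphism of representations $(\C^{\dd},x_1\oplus x_2)\to(\C^{\dd},x_1'\oplus x_2')$. In both source and target, the summand on $\C^{\dd_N}$ is the non-homogeneous part and the summand on $\C^{d\delta}$ is the regular-homogeneous part. By Hom-orthogonality between non-homogeneous tubes and homogeneous tubes, $\phi$ must be block-diagonal, i.e.\ $\phi\in H$. Unwinding the identities then shows the two triples lie in the same $H$-orbit.

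The main obstacle is promoting this bijection to a scheme-theoretic isomorphism. The approach I would prefer is to construct the inverse algebraically: for $y\in\Xi(N,\mu)$, the non-homogeneous subrepresentation $A_y\subset(\C^{\dd},y)$ is canonical and varies algebraically in $y$, because it can be characterized as the sum of images of the evaluation maps $M\otimes\Hom(M,(\C^{\dd},y))\to(\C^{\dd},y)$ as $M$ ranges over the \emph{finite} list of indecomposable non-homogeneous representations of dimension $\leq\dd$; the canonical complement $B_y$ (the regular-homogeneous part) is obtained symmetrically. A local choice of basis for $A_y$ and $B_y$ then produces an inverse morphism, well-defined modulo $H$. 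Alternatively, one may verify étale-ness at the basepoint $y_0=N\oplus R_0$ by a tangent-space computation, using Hom- and Ext-orthogonality to decompose $T_{y_0}\Xi(N,\mu)$ so as to match the tangent space of the twisted product term by term; combined with the bijectivity already established and $G_{\dd}$-equivariance, this yields the isomorphism globally.
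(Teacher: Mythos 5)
Your proof is correct and rests on exactly the same structural input as the paper's: the unique (canonical) decomposition of a regular representation into its non-homogeneous and regular homogeneous parts, which the paper cites in a one-line proof. The extra work you do — checking injectivity via Hom-orthogonality of the tubes and constructing an algebraic inverse to get a genuine isomorphism of varieties rather than just a bijection — is a legitimate elaboration of details the paper leaves implicit, not a different method.
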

\begin{proof}
 It follows directly from the fact that any regular representation of $Q$ can be uniquely decomposed as a direct sum of a regular non-homogeneous representation and a regular homogeneous one.
\end{proof}

\subsubsection{Quotient of the regular homogeneous locus}\label{quotientreg}
Consider $\theta:=\langle\delta,-\rangle:\Z^I\rightarrow \Z$. By \cite{MR1906875}, the open subset of $E_{\delta}$ of $\theta$-stable representations coincide with $E_{\delta}^{\reghom}$ and $E_{\delta}^{\reghom}/(G_{\delta}/\C^*)\simeq \PP_1^{\hom}$. As a consequence, we have a map
\[
 E_{\delta}^{\reghom}\rightarrow \PP_1^{\hom}
\]
whose fibers are orbits of simple regular representations of $Q$. More generally, if $d\in\N$, $E_{d\delta}^{\reghom}$ is an open subset of the $\theta$-semistable locus and the quotient map is now
\[
 E_{d\delta}^{\reghom}\rightarrow S^d\PP_1^{\hom}.
\]
If $\mu$ is regular semisimple with $\dim\mu=d$, $\Xi(\mu)\subset E_{d\delta}^{\reghom}$. Moreover, the morphism above gives a morphism
\[
 \chi_{\mu}:\Xi(\mu)\rightarrow S^d\PP_1^{\hom}\setminus\Delta.
\]
If $N$ is a non-homogeneous regular representation, then we obtain a morphism
\[
 (\OO_N\times \Xi(\mu))\times GL_{\dd}\rightarrow S^d\PP_1^{\hom}\setminus\Delta
\]
which factorizes through the action of $\GL_{\dd_N}\times \GL_{\dim\mu}$ and by Lemma \ref{isoreg} gives rise to a $G_{\dd}$-equivariant morphism:
\[
 \chi_{N,\mu}:\Xi(N,\mu)\rightarrow S^d\PP_1^{\hom}\setminus\Delta.
\]

\subsection{Stratification of the representation spaces of the Jordan and cyclic quivers}
For the Jordan and cyclic quivers, we obtain a stratification similar to that in Section \ref{ringelstrat}.
\subsubsection{Stratification of the representation spaces of the Jordan quiver}\label{stratjordan} Let $J$ be the Jordan quiver (one vertex and one loop). In dimension $d\in\N$, the representation space $E_{J,d}$ is the Lie algebra $\mathfrak{gl}_d$ endowed with the adjoint action of $\GL_d$. We let $G=\GL_d$ and $\mathfrak{g}=\mathfrak{gl}_d$. We describe Lusztig stratification of $\mathfrak{gl}_d$ (the same kind of stratification exists for any reductive Lie algebra and also on any reductive group, \cite{MR732546}, \cite[5.5]{MR2124171}). For $x\in \mathfrak{g}$, we let $Z_{\mathfrak{g}}(x)$ be the centralizer of $x$ in $\mathfrak{g}$. For a Levi subalgebra $\mathfrak{l}\subset \mathfrak{g}$, let $Z_r(x)=\{x\in\mathfrak{l}\mid Z_{\mathfrak{g}}(x)=\mathfrak{l}\}$. Let $\OO\subset \mathfrak{l}$ be a nilpotent orbit. Then, we obtain a stratum
\[
 \Xi(\mathfrak{l},\OO)=G\cdot(Z_r(\mathfrak{l})+\OO).
\]
\begin{proposition}
 The partition
 \[
  \mathfrak{g}=\bigsqcup_{(\mathfrak{l},\OO)}\Xi(\mathfrak{l},\OO)
 \]
 where the sum is indexed by pairs $(\mathfrak{l},\OO)$ up to conjugation is a stratification by smooth locally closed subvarieties.
\end{proposition}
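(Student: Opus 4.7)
The plan is to leverage the Jordan decomposition, which writes each $x\in\mathfrak{g}=\mathfrak{gl}_d$ uniquely and $G$-equivariantly as $x=x_s+x_n$ with $x_s$ semisimple, $x_n$ nilpotent, and $[x_s,x_n]=0$. To see that the strata cover $\mathfrak{g}$, I would attach to $x$ the Levi subalgebra $\mathfrak{l}:=Z_{\mathfrak{g}}(x_s)$. By construction $x_s\in Z_r(\mathfrak{l})$, and since $x_n$ commutes with $x_s$ it lies in $\mathfrak{l}$ and is nilpotent there; letting $\OO$ be its $L$-orbit in $\mathfrak{l}$ gives $x\in Z_r(\mathfrak{l})+\OO\subset\Xi(\mathfrak{l},\OO)$.

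Disjointness up to conjugation follows from uniqueness of the Jordan decomposition. If $x=g\cdot(y+n)$ with $y+n\in Z_r(\mathfrak{l}')+\OO'$, then $y+n$ is a Jordan decomposition in $\mathfrak{l}'\subset\mathfrak{g}$, so $x_s=gyg^{-1}$ and $x_n=gng^{-1}$. Consequently $\mathfrak{l}=g\mathfrak{l}'g^{-1}$ and $\OO=g\OO' g^{-1}$, which shows that the $G$-conjugacy class of the pair $(\mathfrak{l},\OO)$ is a complete invariant attached to $x$, and hence that strata indexed by distinct conjugacy classes are disjoint.

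For smoothness and local closedness, I would first observe that uniqueness of Jordan decomposition yields an isomorphism $Z_r(\mathfrak{l})\times\OO\xrightarrow{\sim}Z_r(\mathfrak{l})+\OO$, $(y,n)\mapsto y+n$. Both factors are smooth: $Z_r(\mathfrak{l})$ is the complement in the center $Z(\mathfrak{l})$ of finitely many root hyperplanes, and $\OO$ is a nilpotent $L$-orbit. Letting $N_G(\mathfrak{l},\OO)\subset N_G(\mathfrak{l})$ denote the subgroup preserving $\OO$, the action map
\[
\Phi\colon G\times^{N_G(\mathfrak{l},\OO)}\bigl(Z_r(\mathfrak{l})+\OO\bigr)\longrightarrow\mathfrak{g},\qquad [g,y+n]\longmapsto g(y+n)g^{-1},
\]
is injective onto $\Xi(\mathfrak{l},\OO)$ by the disjointness argument. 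An infinitesimal computation using the decomposition $\mathfrak{g}=Z_{\mathfrak{g}}(y_0)\oplus[\mathfrak{g},y_0]$ at a regular semisimple point $y_0\in Z_r(\mathfrak{l})$ shows that $\Phi$ is a locally closed embedding, so $\Xi(\mathfrak{l},\OO)$ is smooth and locally closed in $\mathfrak{g}$.

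The main obstacle will be the smoothness/local closedness step, in particular identifying the stabilizer $N_G(\mathfrak{l},\OO)$ and checking that $\Phi$ is injective and unramified. A more hands-on alternative I would keep in reserve is to parametrize each stratum by the discrete type $((a_1,\lambda_1),\ldots,(a_k,\lambda_k))$ of pairs (eigenvalue multiplicity, Jordan partition) with $\sum_i a_i=d$, and realize the stratum as a fiber bundle over the open subvariety of $\mathrm{Sym}^k(\mathbb{A}^1)$ parametrizing unordered $k$-tuples of pairwise distinct scalars, with fiber a product of Jordan-type orbits in the Levi factors $\mathfrak{gl}_{a_i}$. Both base and fiber are manifestly smooth, and local closedness comes from the fact that prescribing the root multiplicities of the characteristic polynomial and the Jordan type of the nilpotent part are classical locally closed conditions on $\mathfrak{gl}_d$.
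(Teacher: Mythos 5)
The paper itself does not prove this proposition: it records it as known, pointing to Lusztig and to Letellier (\S 5.5 of the cited book), and then passes directly to the explicit combinatorial description of the strata $\Xi(\mu)$ via the Jordan forms $J_{\mu}(x_1,\hdots,x_r)$. Your main route --- attach to $x$ the pair $(Z_{\mathfrak{g}}(x_s), L\cdot x_n)$ via Jordan decomposition, deduce covering and disjointness from its uniqueness and equivariance, and exhibit the stratum through $\Phi:G\times^{N_G(\mathfrak{l},\OO)}(Z_r(\mathfrak{l})+\OO)\rightarrow\mathfrak{g}$ --- is precisely the standard argument of those references, and your ``hands-on'' backup by eigenvalue configurations plus Jordan types is essentially the explicit description the paper does give. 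So the content is correct and is the intended proof; what follows are the points that need tightening.

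First, in the backup description the fiber of $\Xi(\mu)$ over a configuration of $k$ pairwise distinct eigenvalues is a single $\GL_d$-orbit, isomorphic to $\GL_d/\prod_i Z_{\GL_{a_i}}(J_{\lambda_i})$, not a product of nilpotent orbits; you only get $\prod_i\OO_{\lambda_i}$ as fiber if the base also records the decomposition of $\C^d$ into generalized eigenspaces. Either version works (base and fiber are smooth, and the stabilizer of $J_{\mu}(\underline{a})$ does not vary with $\underline{a}$, so the map to the configuration space is a fibration), but as written the fiber is misidentified. Second, in the main route, injectivity of $\Phi$ together with injectivity of $d\Phi$ (which does follow from $\mathfrak{g}=\mathfrak{l}\oplus[\mathfrak{g},y_0]$, the invertibility of $\mathrm{ad}(y_0+n_0)$ on $[\mathfrak{g},y_0]$, and $[\mathfrak{l},n_0]\cap Z(\mathfrak{l})=0$) gives an injective immersion, but not by itself local closedness of the image; that must still come from the characteristic-polynomial and rank-condition description (your backup) or from an openness-in-closure argument, so the two halves of your proposal are complementary rather than independent. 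Finally, two small slips: $y_0\in Z_r(\mathfrak{l})$ is not regular semisimple in $\mathfrak{g}$ unless $\mathfrak{l}$ is a Cartan (semisimplicity of $y_0$ is all the decomposition needs), and in the disjointness step you should justify that elements of $Z_r(\mathfrak{l}')$ are semisimple (if $Z_{\mathfrak{g}}(y)=\mathfrak{l}'$ then $y\in Z(\mathfrak{l}')$, which consists of semisimple elements), so that $y+n$ really is the Jordan decomposition of $g^{-1}xg$.
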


We will give a very explicit description of the strata. We let $\mathcal{S}$ be the set of pairs $(\mathfrak{l},\OO)$ as above up to conjugation. We let $\Fun_d(\mathscr{P},\N)$ be the set of functions $\mu : \mathscr{P}\rightarrow \N$ of weight $d$, \emph{i.e.} such that $\sum_{\lambda\in\mathscr{P}}\mu(\lambda)\lvert\lambda\rvert=d$. We will construct a bijection $\Fun_d(\mathscr{P},\N)\rightarrow \mathcal{S}$. Let $\mu : \mathscr{P}\rightarrow \N$ be a function of weight $d$. Let $\lambda_1,\hdots, \lambda_r$ be the collection of partitions ordered by decreasing lengths such that any partition $\lambda\in\mathscr{P}$ appears exactly $\mu(\lambda)$ times. It defines the diagonal Levi subalgebra
\[
 \mathfrak{l}_{\mu}=\prod_{i=1}^r\mathfrak{gl}_{\lvert\lambda_i\rvert}
\]
and the nilpotent orbit
\[
 \OO_{\mu}=\prod_{i=1}^r\OO_{\lambda_i}
\]
of $\mathfrak{l}$, where for any partition $\lambda$ of an integer $n$, $\OO_{\lambda}$ is the corresponding nilpotent orbit of $\mathfrak{gl}_n$ (the partition $(n)$ corresponds to the regular nilpotent orbit).

The map
\[
 \begin{matrix}
  \Fun_d(\mathscr{P},\C)&\rightarrow&\mathcal{S}\\
  \mu&\mapsto&(\mathfrak{l}_{\mu},\OO_{\mu})
 \end{matrix}
\]
is a bijection. We write $\Xi(\mu)=\Xi(\mathfrak{l}_{\mu},\OO_{\mu})$. In explicit terms, $\Xi(\mu)$ is the smallest $G$-invariant subset of $\mathfrak{g}$ containing the matrices
\[
 J_{\mu}(x_1,\hdots,x_r)=\begin{pmatrix}
  J_{\lambda_1}(x_1)&&&\\
  &J_{\lambda_2}(x_2)&&\\
  &&\ddots&\\
  &&&J_{\lambda_r}(x_r)
 \end{pmatrix}.
\]
where $x_1,\hdots,x_r$ are pairwise distinct complex numbers, and for $x\in \C$ and a partition $\lambda$, $J_{\lambda}(x)$ is the standard Jordan matrix with eigenvalue $x$. That is,
$J_{\lambda}(x)=x I_{\lvert\lambda\rvert}+J_{\lambda}(0)$,
\[
 J_{\lambda}(0)=\begin{pmatrix}
  J_{\lambda^{1}}&&&\\
  &J_{\lambda^2}&&\\
  &&\ddots&\\
  &&&J_{\lambda^s} \end{pmatrix}
\]
if $\lambda=(\lambda^1,\hdots,\lambda^s)$ and for $\ell\in\N$,
\[
 J_{\ell}=(\delta_{i+1,j})_{1\leq i,j\leq \ell}
 \]
where $\delta_{k,l}$ is the Kronecker symbol.

\subsubsection{Stratification of the representation spaces of cyclic quivers}\label{stratcyclic}
Let $C_n$ be the cyclic quiver with $n$ vertices indexed by $\N/n\Z$ and having arrows $i\rightarrow i+1$ for $i\in\Z/n\Z$. Let $d\in \N$ and $\delta=(1,\hdots,1)\in\N^{\Z/n\Z}$. A representation of $C_n$ is a $n$-tuple $(x_i)_{i\in\Z/n\Z}$ of linear maps $x_i : V_i\rightarrow V_{i+1}$ for $i\in\Z/n\Z$. We have a closed immersion
\[
\begin{matrix}
 i_d &:& E_{J,d}&\rightarrow&E_{Q,d\delta}\\
 &&x&\mapsto&(\id,\hdots,\id,x).
\end{matrix}
\]
Let $N$ be a nilpotent representation of $C_n$ and $\dd_N$ its dimension. Let $\mu : \mathscr{P}\rightarrow \N$ be a finitely supported function.  We let $\dim\mu=\sum_{\lambda\in\mathscr{P}}\mu(\lambda)\lvert\lambda\rvert$ and $\dd=\dim N+\delta\dim\mu$. We define the subset $\Xi(N,\mu)$ of elements $x$ of $E_{Q,\dd}$ such that $(\C^{\dd},x)$ is isomorphic to $N\oplus R$ as a representation of $C_n$, where $R$ is a representation of $C_n$ such that the orbit $\OO_R\subset E_{Q,\dim\mu}$ intersects $i_d(\Xi(\mu)\cap \GL_{\dim\mu})$, where $\Xi(\mu)$ is the corresponding stratum in $E_{J,\dim\mu}$ defined in Section \ref{stratjordan} and $\GL_{\dim\mu}\subset E_{J,\dim\mu}$ is the set of invertible elements.

Let $\Xi(N,\mu)\subset E_{\dd}$, $\dd=\dim N+\delta\dim\mu$, be a stratum. Since any representation of $C_n$ can be uniquely decomposed as a direct sum of a nilpotent representation and an invertible one, we obtain the following lemma.
\begin{lemma}\label{isocyclic}
 The natural map induced by the direct sum
 \[
  (\OO_N\times \Xi(\mu))\times^{G_{\dd_N}\times G_{\delta\dim\mu}}G_{\dd}\rightarrow \Xi(N,\mu)
 \]
is an isomorphism.
\end{lemma}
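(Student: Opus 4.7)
The plan is to reduce the statement to the classical Fitting-type decomposition for representations of the cyclic quiver $C_n$, exactly as the author suggests. First I would make the natural map explicit: a triple $((x,y),g)$ in $\OO_N\times \Xi(\mu)\times G_{\dd}$ is sent to $g\cdot(x\oplus \tilde y)\in E_{Q,\dd}$, where $\tilde y = i_{\dim\mu}(y)$ and the direct sum uses the canonical splitting $\C^{\dd}=\C^{\dd_N}\oplus \C^{\delta\dim\mu}$ at each vertex. The block-diagonal embedding $G_{\dd_N}\times G_{\delta\dim\mu}\hookrightarrow G_{\dd}$ makes this map descend to the quotient, and its image lies in $\Xi(N,\mu)$ by the definition of the stratum.

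The key input is the following Fitting-type statement: for any representation $(V,z)$ of $C_n$, there is a canonical direct sum decomposition $V=V^{\nil}\oplus V^{\inv}$ of $\Z/n\Z$-graded vector spaces, preserved by $z$, such that $z|_{V^{\nil}}$ is nilpotent and $z|_{V^{\inv}}$ has all arrow maps invertible. I would first prove this by applying Fitting's lemma to the endomorphism of $V_i$ obtained by composing all the arrow maps around the cycle starting at $i$; the resulting decompositions $V_i=V_i^{\nil}\oplus V_i^{\inv}$ are preserved by every arrow of $z$, which gives the claim, and uniqueness follows from Fitting's lemma. Moreover, any morphism between two representations of $C_n$ preserves the respective Fitting decompositions.

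From this uniqueness, both injectivity and surjectivity of the natural map are straightforward. For surjectivity, given $z\in\Xi(N,\mu)$, fix any isomorphism $\phi:(\C^{\dd},z)\xrightarrow{\sim} N\oplus R$ with $R$ in the invertible orbit of Jordan type $\mu$; writing $\phi^{-1}$ as an element $g\in G_{\dd}$ and choosing $(x,y)$ realizing $(N,R)$ on $\C^{\dd_N}\oplus\C^{\delta\dim\mu}$ gives $z=g\cdot(x\oplus\tilde y)$. For injectivity, if $g_1\cdot(x_1\oplus\tilde y_1)=g_2\cdot(x_2\oplus\tilde y_2)$, the Fitting decomposition of this common representation is given on both sides by the image under $g_i$ of the canonical splitting; uniqueness forces $g_2^{-1}g_1$ to preserve $\C^{\dd_N}\oplus\C^{\delta\dim\mu}$, hence to lie in $G_{\dd_N}\times G_{\delta\dim\mu}$, showing that the two triples are in the same orbit.

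The only step requiring slight care is upgrading a bijective $G_{\dd}$-equivariant morphism to an isomorphism of varieties. The source $(\OO_N\times \Xi(\mu))\times^{G_{\dd_N}\times G_{\delta\dim\mu}}G_{\dd}$ is smooth because $\OO_N$ is a smooth orbit and $\Xi(\mu)$ is smooth and locally closed (the cyclic-quiver analogue of Lusztig's Jordan stratum), while $\Xi(N,\mu)$ is a locally closed smooth subvariety of $E_{Q,\dd}$. Over $\C$ a bijective morphism from a smooth variety onto a smooth (in particular normal) variety is an isomorphism by Zariski's main theorem, so the lemma follows. I expect the main conceptual point is the Fitting decomposition for $C_n$-representations; the variety-theoretic upgrade is routine.
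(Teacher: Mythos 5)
Your proof is correct and follows essentially the same route as the paper, whose entire argument is the one-line observation that every representation of $C_n$ decomposes uniquely as a direct sum of a nilpotent representation and one with all arrow maps invertible (your Fitting-type decomposition). The extra step upgrading the bijection to an isomorphism of varieties is more than the paper records; if you keep it, justify the smoothness (or at least normality) of $\Xi(N,\mu)$ independently, e.g.\ by noting that the Fitting subspaces $\ker\varphi(z)^M$ and $\im\varphi(z)^M$ have constant rank along the stratum and hence give algebraic subbundles, which also yields a direct algebraic inverse without invoking Zariski's main theorem.
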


\begin{proposition}\label{partcyclic}
 The partition
 \[
  E_{C_n,\dd}=\bigsqcup_{(N,\mu)}\Xi(N,\mu)
 \]
where the sum runs over pairs $(N,\mu)$, $N$ is a nilpotent representation (taken up to isomorphism) and $\mu$ are such that $\dim N+\dim\mu\delta=\dd$ is a locally closed stratification of $E_{C_n,\dd}$. Moreover, if $\C^*$ is the action by multiplication on $E_{C_n,\dd}$, this stratification is $\C^*$-stable. 
\end{proposition}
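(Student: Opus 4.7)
The plan is to establish the three assertions of the proposition --- the covering/disjointness, the local closedness, and the $\C^*$-stability --- in turn, using a Fitting decomposition of a representation of $C_n$ combined with the results of Section \ref{stratjordan}.

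First, to show the strata partition $E_{C_n,\dd}$, I would prove that every $x\in E_{C_n,\dd}$ admits a canonical decomposition $x\simeq N\oplus R$ (unique up to isomorphism by Krull--Schmidt) with $N$ nilpotent and $R$ a representation whose structure maps $x_i$ are all isomorphisms. Concretely, this can be obtained by applying the Fitting decomposition to the cyclic composition $\phi_0=x_{n-1}\circ\cdots\circ x_0:V_0\to V_0$ at vertex $0$: the nilpotent and invertible parts of $V_0$ under $\phi_0$ propagate through the $x_i$ to produce a compatible decomposition at every vertex, and on the invertible part each $x_i$ is forced to be an isomorphism, so the invertible summand has dimension $d\delta$ for a well-defined $d\in\N$. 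Choosing bases trivializing $x_0,\ldots,x_{n-2}$ identifies $R$ with $i_d(\phi)$ for a unique conjugacy class $[\phi]\subset\GL_d$, and the stratification of $E_{J,d}$ recalled in Section \ref{stratjordan} assigns $[\phi]$ to a unique $\Xi(\mu)\cap\GL_d$. Thus $x$ lies in exactly one $\Xi(N,\mu)$.

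Next, for local closedness, I would invoke Lemma \ref{isocyclic} to identify $\Xi(N,\mu)$ with the associated bundle $(\OO_N\times(\Xi(\mu)\cap\GL_{\dim\mu}))\times^{G_{\dd_N}\times G_{\delta\dim\mu}}G_\dd$. The orbit $\OO_N$ is locally closed in $E_{C_n,\dd_N}$, and $\Xi(\mu)\cap\GL_{\dim\mu}$ is locally closed in $E_{J,\dim\mu}$ (as the intersection of the locally closed stratum $\Xi(\mu)$ with the open set $\GL_{\dim\mu}$). The associated bundle of these locally closed subvarieties is itself locally closed inside the bundle formed with the full ambient spaces, so $\Xi(N,\mu)$ is locally closed in $E_{C_n,\dd}$.

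Finally, for $\C^*$-stability, fix $\lambda\in\C^*$. Scaling by $\lambda$ preserves nilpotency, and the nilpotent orbit $\OO_N$ is $\C^*$-invariant: on each indecomposable summand $I_{i,l}$ the isomorphism $\lambda\,I_{i,l}\simeq I_{i,l}$ is produced by rescaling the $k$-th vector of the standard chain basis by $\lambda^k$ (no consistency condition arises, since distinct basis vectors at a single vertex are rescaled independently). For the invertible summand $R\simeq i_d(\phi)$, scaling by $\lambda$ yields $(\lambda\,\id,\ldots,\lambda\,\id,\lambda\phi)$, and rescaling the basis of $V_i$ by $\lambda^i$ for $i=0,\ldots,n-1$ converts this into $i_d(\lambda^n\phi)$; since $\lambda^n\phi\in\GL_d$ has the same partition type $\mu$ as $\phi$ (its eigenvalues are merely multiplied by $\lambda^n$, preserving distinctness and nonvanishing), we conclude $\lambda x\in\Xi(N,\mu)$. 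The only delicate step is the Fitting decomposition in Step~1, where one must verify its compatibility with the cyclic structure of the quiver; the remaining steps reduce to bookkeeping with associated bundles and basis rescalings.
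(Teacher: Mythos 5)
Your Steps 1 and 3 are sound, and they follow the route the paper itself implicitly relies on (the proposition is stated there without proof): the unique splitting of a representation of $C_n$ into a nilpotent summand and a summand on which all maps are isomorphisms, obtained from the Fitting decomposition of the cyclic compositions $\phi_i$ (compatibility follows from $x_i\phi_i=\phi_{i+1}x_i$), the reduction of the invertible summand to a conjugacy class in $\GL_{\dim\mu}$ via $i_d$, and the computation $\lambda\cdot i_d(\phi)\simeq i_d(\lambda^n\phi)$ together with $\lambda\cdot I_{i,l}\simeq I_{i,l}$ for the $\C^*$-stability.

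The gap is in Step 2, which is the only non-formal assertion of the proposition. A minor point first: the bundle you write is not well formed, since $G_{\delta\dim\mu}$ does not act on $\Xi(\mu)\cap\GL_{\dim\mu}\subset E_{J,\dim\mu}$; you would have to use the form of Lemma \ref{isocyclic} in the paper, where the middle factor is the stratum inside $E_{C_n,\delta\dim\mu}$ (the $G_{\delta\dim\mu}$-saturation of $i_{\dim\mu}(\Xi(\mu)\cap\GL_{\dim\mu})$). More seriously, the inference ``locally closed inside the bundle formed with the full ambient spaces, so $\Xi(N,\mu)$ is locally closed in $E_{C_n,\dd}$'' does not follow: the ambient bundle $(E_{C_n,\dd_N}\times E_{C_n,\delta\dim\mu})\times^{G_{\dd_N}\times G_{\delta\dim\mu}}G_{\dd}$ is not a subvariety of $E_{C_n,\dd}$, and the direct-sum morphism from it to $E_{C_n,\dd}$ is neither injective nor an immersion (already for the Jordan quiver a matrix may admit many invariant splittings of given block sizes, or none), so local closedness of a subset upstairs says nothing about its image downstairs; images of injective morphisms are in general only constructible. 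Invoking Lemma \ref{isocyclic} here is also partly circular, since reading that bijection as an isomorphism of varieties presupposes that $\Xi(N,\mu)$ is a locally closed subvariety, which is exactly what is to be proved. To close the gap one should argue directly in $E_{C_n,\dd}$: for instance, the isomorphism class of the nilpotent summand and the dimension $d\delta$ of the invertible summand are fixed by prescribing the exact ranks of all path maps $x_{j}\cdots x_i$ (each exact-rank condition is locally closed), and the requirement that the invertible summand have type $\mu$ is the pullback under $x\mapsto x_{n-1}\cdots x_0$ of a locally closed subset of $\mathfrak{gl}_{\dd_0}$ built from a Lusztig stratum (whose local closedness you may quote from Section \ref{stratjordan}) intersected with exact-rank conditions on powers isolating the generalized $0$-eigenspace; a finite intersection of locally closed sets is locally closed. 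Alternatively, show each stratum is open in its closure using semicontinuity of these rank functions.
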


\subsubsection{Isomorphism classes of representations of the cyclic quiver} For $\mu:\mathscr{P}\rightarrow \N$ such that $\dim\mu=d\delta$ and $(x_1,\hdots,x_r)\in(\C^*)^r$, $r=\sum_{\lambda\in\mathscr{P}}\mu(\lambda)$, we let $J_{\mu}^{C_n}(x_1,\hdots,x_r)=i_d(J_{\mu}(x_1,\hdots,x_r))$. For any partition $\lambda$ of $d$ and $x\in\C^*$, we let $J_{\lambda}^{C_n}(x)=i_d(J_{\lambda}(x))$. When the context is clear, we drop the exponent. In particular, we write
\[
 J_{\mu}(\underline{x})=J_{\mu}^{C_n}(x_1,\hdots,x_r).
\]
Then, any representation of $C_n$ is isomorphic to a representation of the form
\[
 N_{\mathbf{m}}\oplus J_{\mu}(\underline{x})
\]
for a unique pair $(\mathbf{m},\mu)$, where $\mathbf{m}$ is a multipartition, $\mu:\mathscr{P}\rightarrow \N$ and $\underline{x}\in(\C^*)^{\sum_{\lambda}\mu(\lambda)}$. When $\mu$ is regular semisimple (it means by definition that for any $\lambda\in\mathscr{P}$, $\mu(\lambda)\neq 0\implies \lambda=(1)$),
\[
 J_{\mu}(x)\simeq \bigoplus_{j=1}^r J_1(x_j).
\]

\subsection{Open subsets of the representation spaces of cyclic quivers}\label{opensubsets}
Let $\dd\in\N^{\Z/n\Z}$. We have a morphism of algebraic varieties
\[
\begin{matrix}
\varphi &:& E_{C_n,\dd}&\rightarrow&E_{J,\dd_0}\\
&&(x_0,\hdots,x_{n-1})&\mapsto&x_{n-1}x_{n-2}\hdots x_0
\end{matrix}.
\]
Let
\[
 \chi_J : E_{J,\dd_0}\rightarrow S^{\dd_0}\C=E_{J,\dd_0}/\!\!/\GL_{\dd_0}
\]
be the quotient map and $\chi=\chi_J\circ \varphi$. Let $S^{\dd_0}D(0,1)\subset S^{\dd_0}\C$ where $D(0,1)\subset \C$ is the open unit disk. We let $E_{C_n,\dd}^{<1}=\chi^{-1}(S^{\dd_0}D(0,1))$. It is an open analytic subset of $E_{C_n,\dd}$. The stratification of $E_{C_n,\dd}$ induces a stratification of $E_{C_n,\dd}^{<1}$ (it just constrains the eigenvalues to have absolute value $<1$). For a stratum $\Xi(N,\mu)\subset E_{\dd}$, we let $\Xi^{<1}(N,\mu)=\Xi(N,\mu)\cap E_{\dd}^{<1}$ be the corresponding stratum. To any subset of strata $\mathcal{S}$ of $E_{C_n,\dd}$, we define $\mathcal{S}^{<1}=\{S\cap E_{C_n,\dd}^{<1}:S\in\mathcal{S}\}$ the corresponding set of strata of $E_{C_n,\dd}^{<1}$. For any stratum $\Xi(N,\mu)$, we let
\[
 j:\Xi(N,\mu)\rightarrow E_{C_n,\dd},
\]
\[j^{<1}:\Xi^{<1}(N,\mu)\rightarrow E_{C_n,\dd}^{<1}
 \]
 and
 \[
  j_{N,\mu}:\Xi^{<1}(N,\mu)\rightarrow \Xi(N,\mu)
 \]
be the natural inclusions. The group $G_{\dd}$ acts on $\Xi(N,\mu)$, $\Xi^{1}(N,\mu)$, $E_{C_n,\dd}^{<1}$ and all the above maps are $G_{\dd}$ equivariant. For a subset $D\subset \C$, we let 
\[
 E_{C_n,\dd}^{\C\setminus D}=
 \left\{
 \begin{aligned}
  &\chi^{-1}(S^{\dd_0}(\C\setminus D)) \text{ if $D$ does not contain $0$,}\\
  &\{x\in\chi^{-1}(S^{\dd_0}(\C\setminus D))\mid x\text{ has no nilpotent direct summands}\} \text{ otherwise}
 \end{aligned}
 \right.
\]

By Proposition \ref{partcyclic}, if $\C^*$ acts on $E_{C_n,\dd}$ with weight one, Lusztig strata $\Xi(N,\mu)$ are $\C^*$-invariant. This gives the following result.

\begin{proposition}
 The inclusion $j_{N,\mu}$ induces an isomorphism at the level of fundamental groups.
\end{proposition}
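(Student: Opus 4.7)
The plan is to show that $j_{N,\mu}$ is in fact a homotopy equivalence, which implies the stated isomorphism on fundamental groups (and in fact on all higher homotopy groups). The key ingredient is the $\R_{>0}$-action on $\Xi(N,\mu)$ obtained by restricting the weight-one $\C^*$-action on $E_{C_n,\dd}$, whose stability on strata was just recorded in Proposition \ref{partcyclic}. Under the scaling $t\cdot(x_0,\ldots,x_{n-1}) = (tx_0,\ldots,tx_{n-1})$, the composition $\varphi(x) = x_{n-1}\cdots x_0$ transforms as $\varphi(t\cdot x) = t^n\varphi(x)$, so scaling by a small positive $t$ contracts all eigenvalues of $\varphi(x)$ into the open unit disk.

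Concretely, I would first introduce the continuous spectral radius $M\colon \Xi(N,\mu)\to\R_{\geq 0}$ defined by $M(x) = \max\bigl\{\lvert\lambda\rvert : \lambda\text{ eigenvalue of }\varphi(x)\bigr\}$. By the definition of $E_{C_n,\dd}^{<1}$ in Section \ref{opensubsets}, one has $x\in\Xi^{<1}(N,\mu)$ iff $M(x)<1$, and $M(t\cdot x)=t^n M(x)$ for all $t>0$. Setting $\tau(x) := (1+M(x))^{-1/n}\in(0,1]$, a direct computation gives $\tau(x)^n M(x) = M(x)/(1+M(x)) < 1$, so $r(x) := \tau(x)\cdot x$ defines a continuous retraction $r\colon\Xi(N,\mu)\to\Xi^{<1}(N,\mu)$.

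Next I would build the straight-line homotopy
\[
H(s,x) := \bigl((1-s)+s\tau(x)\bigr)\cdot x,
\]
which lands in $\Xi(N,\mu)$ by the $\R_{>0}$-stability of the stratum, with $H(0,\cdot)=\id_{\Xi(N,\mu)}$ and $H(1,\cdot)=j_{N,\mu}\circ r$. When $x\in\Xi^{<1}(N,\mu)$ the scalar $(1-s)+s\tau(x)$ lies in $(0,1]$, so $M(H(s,x)) = \bigl((1-s)+s\tau(x)\bigr)^nM(x) < 1$, and the same formula restricts to a homotopy between $\id_{\Xi^{<1}(N,\mu)}$ and $r\circ j_{N,\mu}$ inside $\Xi^{<1}(N,\mu)$. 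Hence $r$ is a homotopy inverse to $j_{N,\mu}$, which is therefore a homotopy equivalence.

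There is no real obstacle: the argument is essentially the standard radial-contraction deformation retract, and the only substantive inputs are the $\C^*$-stability of the strata (already handed to us by Proposition \ref{partcyclic}), the homogeneity $\varphi(t\cdot x) = t^n\varphi(x)$, and continuity of the spectral radius as a function of matrix coefficients. If anything is delicate, it is simply being careful that scaling genuinely preserves the isomorphism class of the nilpotent summand $N$, which it does, since nilpotent representations of $C_n$ are classified by multipartitions and scaling does not alter the associated kernel filtration.
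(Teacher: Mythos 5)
Your argument is correct and is essentially the proof the paper intends: the paper derives the proposition directly from the $\C^*$-invariance of the strata (Proposition \ref{partcyclic}), and your radial contraction via the positive-real scaling and the spectral-radius function simply makes that deformation retraction explicit. No gap; the homogeneity $\varphi(t\cdot x)=t^n\varphi(x)$ and continuity of the spectral radius are exactly the points that need checking, and you check them.
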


Recall the isomorphism
\[
 (\OO_N\times \Xi(\mu))\times^{G_{\dd_N}\times G_{\dim\mu\delta}}G_{\dd}\rightarrow \Xi(N,\mu)
\]
of Lemma \ref{isocyclic}.
Let $d=\dim\mu$. The map $\chi:E_{C_nd\delta}\rightarrow S^d(\C)$ restricts to
\[
 \chi_{\mu}:\Xi(\mu)\rightarrow S^d(\C^*).
\]
By the previous isomorphism, it is easily seen to give a map
\[
 \chi_{N,\mu}:\Xi(N,\mu)\rightarrow S^d(\C^*).
\]
If moreover $\mu$ is regular semisimple, it takes value in the complement of the diagonal:
\[
 \chi_{N,\mu}:\Xi(N,\mu)\rightarrow S^d(\C^*)\setminus\Delta.
\]

\section{Lusztig perverse sheaves, Induction and Restriction functors}\label{lusztigsheaves}
In this Section, we briefly recall how Lusztig sheaves are built and the operations of induction and restriction. We let the reader consult \cite{MR1227098}, \cite{MR1088333}, \cite{MR3202708} for more details on the link with quantum groups.

\subsection{Lusztig perverse sheaves}\label{lusztigperversesheaves}
In his foundational paper \cite{MR1088333}, Lusztig introduced a semisimple category of constructible complexes on the representation varieties of a quiver giving a categorification of one half of the quantum group and providing the so-called canonical basis. We briefly recall here how Lusztig sheaves are obtained.

Let $\dd\in\N^I$ be a dimension vector. A flag-type of dimension $\dd$ is an uplet $\underline{\dd}=(\dd_1,\hdots,\dd_l)\in(\N^I)^l$ for some $l\geq 1$ such that $\sum_{j=1}^l\dd_i=\dd$. Given a flag-type $\underline{\dd}$ as above, define the partial flag variety
\[
 \mathcal{F}_{\underline{\dd}}=\{(0=F_0\subset \hdots\subset F_l=k^{\dd})\mid \dim(F_j/F_{j-1})=\dd_j \text{ for $1\leq j\leq l$}\}.
\]
Define also
\[
 \tilde{\mathcal{F}}_{\underline{\dd}}=\{(x,\underline{F})\in E_{\dd}\times \mathcal{F}_{\underline{\dd}}\mid x(F_j)\subset F_j \text{ for $1\leq j\leq l$}\}.
\]
For quivers with cycles, it is also useful to consider the nilpotent version
\[
 \tilde{\mathcal{F}}^{\nil}_{\underline{\dd}}=\{(x,\underline{F})\in E_{\dd}\times \mathcal{F}_{\underline{\dd}}\mid x(F_j)\subset F_{j-1} \text{ for $1\leq j\leq l$}\}.
\]
We have natural projections $\pi_{\underline{\dd}} : \tilde{\mathcal{F}}_{\underline{\dd}}\rightarrow E_{\dd}$, $\pi_{\underline{\dd}}^{\nil} : \tilde{\mathcal{F}}^{\nil}_{\underline{\dd}}\rightarrow E_{\dd}$ which are projective and $\tilde{\mathcal{F}}_{\underline{\dd}}, \tilde{\mathcal{F}}^{\nil}_{\underline{\dd}}$ are smooth, being affine fibrations over the flag manifold $\mathcal{F}_{\underline{\dd}}$.

For quivers without oriented cycles, there is no need to consider both $\tilde{\mathcal{F}}_{\underline{\dd}}$ and $\tilde{\mathcal{F}}^{\nil}_{\underline{\dd}}$. However, for quivers with loops or more generally oriented cycles, this leads to different (although related) stories, as is already seen for the Jordan quiver (on the one-hand we have the Grothendieck-Springer resolution and on the other hand the Springer resolution). See for example \cite{MR1261904,MR2553376,MR3368082,MR3569998} for some perspective.

For a dimension vector $\dd\in\N^I$ and a flag-type $\underline{\dd}$, $\tilde{\mathcal{F}}_{\underline{\dd}}$ is smooth and $\pi_{\underline{\dd}}$ is proper and therefore by the decomposition theorem (\cite[Th\'eor\`eme 6.2.5]{MR751966}) $(\pi_{\underline{\dd}})_*\underline{\C}$ is a semisimple constructible complex on $E_{\dd}$. In his paper \cite{MR1088333}, for loop-free quivers, Lusztig considers the category $\mathscr{Q}_{\dd}$ of semisimple constructible complexes on $E_{\dd}$ whose direct summands are shifts of some of the direct summands of the complexes $(\pi_{\underline{\dd}})_*\underline{\C}$ for various \emph{discrete} flag-types $\underline{\dd}$ of dimension $\dd$. We call $\mathscr{Q}=\prod_{\dd\in\N^I}\mathscr{Q}_{\dd}$ the Hall category. He also considers the category of perverse sheaves $\mathscr{P}_{\dd}$ which are in $\mathscr{Q}_{\dd}$. When $i\in I$ and $n\geq 0$, we let $L_{ne_i}=(\pi_{(ne_i)})_{*}\underline{\C}$. It is a constructible complex on $E_{ne_i}$ ($e_i$ is the $i$-th vector of the canonical basis of $\Z^I$). Observe that for $n=0$, it does not depend on $i$.

\subsection{Lusztig perverse sheaves for finite type quivers}
It is possible to give a complete description of Lusztig sheaves for finite type quivers. This task is easy since for any dimension vector $\dd$, $E_{\dd}$ has a finite number of $G_{\dd}$-orbits. This is the content of Theorem \ref{lusztigsheavesft}, of which we provide a geometric proof. This theorem can be proved differently using that for finite type quivers, the representation varieties are union of finite number of orbits in any dimension and combining results of Ringel (\cite[Theorem 3.16]{MR3202707}) and Lusztig (categorification of the quantum group, \cite{MR1088333}).  
\subsubsection{Description of Lusztig perverse sheaves for finite type quivers}
\label{Lusztigft}
This Section relies on the desingularization of finite type orbits given in \cite{MR1985731}. The main theorem of \emph{loc. cit.} can be formulated as follows.

\begin{theorem}[Reineke, {\cite[Theorem 2.2]{MR1985731}}]\label{Reinekedesing}
 Let $Q=(\Omega,I)$ be a finite type quiver, $\dd\in\N^I$ a dimension vector and $\OO\subset E_{\dd}$ a $G_{\dd}$-orbit. Then there exists a flag-type $\underline{\dd}=(\dd_1,\hdots,\dd_l)$ with $\sum_{i=1}^l\dd_i=\dd$ such that the projective morphism
 \[
  \pi_{\underline{\dd}} : \FF_{\underline{\dd}}\rightarrow E_{\dd}
 \]
 factorizes through $\overline{\OO}$ and induces a desingularization of $\overline{\OO}$.

\end{theorem}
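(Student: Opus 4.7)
The plan is to follow Reineke's strategy and build the desingularization from a carefully chosen ordered decomposition of the representation $M$ whose orbit is $\OO$. By Gabriel's theorem, $\OO=\OO_M$ for a unique isomorphism class $M\cong\bigoplus_{\alpha}N_{\alpha}^{m_\alpha}$, where $\alpha$ ranges over positive roots and $N_\alpha$ is the corresponding indecomposable. Because the Auslander--Reiten quiver of a finite type quiver is a finite directed graph without oriented cycles, one can choose a linear ordering of the isomorphism classes of indecomposables appearing in $M$, say $N_{\beta_1},\ldots,N_{\beta_s}$, such that $\Hom(N_{\beta_j},N_{\beta_i})=0$ (equivalently $\Ext^1(N_{\beta_i},N_{\beta_j})=0$) whenever $i<j$. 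Set $\underline{\dd}=(m_1\dim N_{\beta_1},\ldots,m_s\dim N_{\beta_s})$, which is a flag-type of total dimension $\dd$. Since $\tilde{\FF}_{\underline{\dd}}$ is smooth and $\pi_{\underline{\dd}}$ is projective, it then suffices to establish:

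\textbf{(a)} $\im(\pi_{\underline{\dd}})=\overline{\OO}$, and \textbf{(b)} $\pi_{\underline{\dd}}^{-1}(\OO)\to\OO$ is an isomorphism.

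For (a), the inclusion $\OO\subset\im(\pi_{\underline{\dd}})$ is clear by taking the canonical filtration of $M$ by $\bigoplus_{k\le i}N_{\beta_k}^{m_k}$. The reverse inclusion uses the standard semicontinuity/degeneration principle: any $(x,\underline{F})\in\tilde{\FF}_{\underline{\dd}}$ gives a filtration of $x$ with subquotients of dimension $m_i\dim N_{\beta_i}$; iterated splittings of short exact sequences $0\to F_{i-1}\to F_i\to F_i/F_{i-1}\to 0$ produce a one-parameter degeneration showing that $x$ lies in the closure of the direct sum of these subquotients. Since the directed ordering forces each subquotient to itself degenerate to $N_{\beta_i}^{m_i}$ (again by splitting off extensions in the appropriate direction), one concludes $x\in\overline{\OO_M}=\overline{\OO}$. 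Combined with irreducibility of $\tilde{\FF}_{\underline{\dd}}$ and a dimension count, this gives (a).

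For (b), one must show that for $x$ with $(k^{\dd},x)\cong M$, there is exactly one $x$-stable flag $\underline{F}$ with $F_i/F_{i-1}$ of dimension $m_i\dim N_{\beta_i}$. Uniqueness is proved by induction on $s$: it suffices to see that $F_1$ must coincide with the canonical subrepresentation $N_{\beta_1}^{m_1}\subset M$. Writing $M=N_{\beta_1}^{m_1}\oplus M'$ with $M'=\bigoplus_{k\ge 2}N_{\beta_k}^{m_k}$, the projection $F_1\to M'$ is a map whose image must be a subrepresentation of $M'$ of dimension $\le m_1\dim N_{\beta_1}$ that is simultaneously a quotient of $N_{\beta_1}^{m_1}$; the ordering assumption $\Hom(N_{\beta_1},N_{\beta_k})=0$ for $k\ge 2$ forces this projection to vanish, so $F_1\subset N_{\beta_1}^{m_1}$, and then by dimensions $F_1=N_{\beta_1}^{m_1}$. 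Iterating on $M/F_1$ yields uniqueness of the full flag. This uniqueness is the main obstacle: it is what genuinely uses the directed ordering from Auslander--Reiten theory and is the content of the argument; once established, the map $\pi_{\underline{\dd}}$ is a proper birational morphism from a smooth variety onto $\overline{\OO}$, which is precisely the required desingularization.
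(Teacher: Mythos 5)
The paper itself does not prove this statement—it only cites Reineke—so your argument has to stand on its own, and it does not: the ordering condition you impose goes the wrong way relative to the paper's flag convention (here $F_1$ is the \emph{smallest} step, so $\dd_1=m_1\dim N_{\beta_1}$ is the dimension of a subrepresentation), and with your condition the construction genuinely fails. Concretely, take $Q=A_2$ ($1\to 2$), let $P$ be the indecomposable of dimension $(1,1)$ and $M=P\oplus S_1$, so that $\overline{\OO_M}=E_{(2,1)}$. Since $\Hom(P,S_1)\neq 0$ while $\Hom(S_1,P)=0$, your requirement $\Hom(N_{\beta_j},N_{\beta_i})=0$ for $i<j$ forces $N_{\beta_1}=P$, $N_{\beta_2}=S_1$, i.e. $\underline{\dd}=((1,1),(1,0))$; but then the stability condition $x(F_1)\subset F_1$ is vacuous, $\tilde{\FF}_{\underline{\dd}}\cong E_{(2,1)}\times\PP^1$, and $\pi_{\underline{\dd}}$ has $\PP^1$-fibers over every point of $\overline{\OO_M}$, so it is not a desingularization. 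The correct requirement is the opposite one: $\Hom(N_{\beta_i},N_{\beta_j})=0$ and $\Ext^1(N_{\beta_j},N_{\beta_i})=0$ for $i<j$ (such an order exists by directedness, and this is what Reineke's directed partitions encode). Indeed $\dim\tilde{\FF}_{\underline{\dd}}=\dim E_{\dd}+\sum_{l<l'}\langle\dd_l,\dd_{l'}\rangle$, and equating this with $\dim\OO_M=\dim E_{\dd}-\dim\Ext^1(M,M)$ forces exactly these vanishings. Note also that your write-up is internally inconsistent: in step (b) you invoke $\Hom(N_{\beta_1},N_{\beta_k})=0$ for $k\geq 2$, which is the reverse of your stated hypothesis; and the parenthetical ``equivalently'' is false, since by the Auslander--Reiten formula $\Ext^1(X,Y)\cong\Hom(Y,\tau X)^*$, so the two vanishing conditions are not equivalent statement by statement.

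Even after correcting the order, two steps need repair. In (a) the degeneration runs backwards: a filtration of $x$ shows that $\bigoplus_j F_j/F_{j-1}$ lies in $\overline{\OO_x}$, not that $x$ lies in the closure of the orbit of the direct sum of its subquotients, so the inclusion $\im(\pi_{\underline{\dd}})\subset\overline{\OO}$ does not follow as you argue (nor is it enough that each subquotient is a degeneration of the rigid module $N_{\beta_i}^{m_i}$, because extensions between degenerated layers need not split); the efficient route is the dimension count above: the image is closed, irreducible, contains $\overline{\OO}$, and has dimension at most $\dim\tilde{\FF}_{\underline{\dd}}=\dim\OO_M$. In (b), the claim that the image of $F_1\to M'$ is a quotient of $N_{\beta_1}^{m_1}$ is unjustified: at that stage $F_1$ is only known to be a subrepresentation of dimension $m_1\dim N_{\beta_1}$, not to be isomorphic to $N_{\beta_1}^{m_1}$, so the Hom-vanishing cannot be applied to it directly; establishing that the fiber over $\OO_M$ is a single point is exactly where Reineke's Hom/Ext conditions do real work, and it needs a genuine argument rather than this shortcut.
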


We obtain immediately the following description (obtained by Lusztig with a different approach) of Lusztig sheaves for finite type quivers.
\begin{theorem}[{\cite{MR1035415}}]\label{lusztigsheavesft}
 For a dimension vector $\dd$, $\mathscr{P}_{\dd}$ is the semisimple category generated by the simple objects $\ICC(\OO,\underline{\C})$ for all $G_{\dd}$-orbits $\OO\subset E_{\dd}$.
\end{theorem}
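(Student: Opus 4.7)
The plan is to prove both inclusions: every $\ICC(\OO,\underline{\C})$ is a Lusztig sheaf, and conversely every simple object of $\mathscr{P}_{\dd}$ has this form. Reineke's desingularization (Theorem \ref{Reinekedesing}) will handle the first inclusion, and basic equivariant perverse sheaf theory, combined with the connectedness of stabilizers of quiver orbits, will handle the second.

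For the first inclusion, I would fix a $G_{\dd}$-orbit $\OO\subset E_{\dd}$ and invoke Theorem \ref{Reinekedesing} to produce a discrete flag-type $\underline{\dd}=(\dd_1,\ldots,\dd_l)$ such that $\pi_{\underline{\dd}}:\tilde{\FF}_{\underline{\dd}}\to E_{\dd}$ factors through $\overline{\OO}$ and is a desingularization. Since $\tilde{\FF}_{\underline{\dd}}$ is smooth, $\pi_{\underline{\dd}}$ is proper, and the map is birational onto $\overline{\OO}$, the decomposition theorem gives that $(\pi_{\underline{\dd}})_{*}\underline{\C}[\dim\tilde{\FF}_{\underline{\dd}}]$ is a semisimple perverse sheaf on $E_{\dd}$ supported in $\overline{\OO}$, containing $\ICC(\OO,\underline{\C})$ as a direct summand of multiplicity one (over the smooth locus $\OO$ the map is an isomorphism). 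By definition of $\mathscr{P}_{\dd}$, this shows $\ICC(\OO,\underline{\C})\in\mathscr{P}_{\dd}$.

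For the second inclusion, let $\mathscr{F}$ be a simple object of $\mathscr{P}_{\dd}$. By construction, $\mathscr{F}$ is a direct summand (up to shift) of some $(\pi_{\underline{\dd}})_{*}\underline{\C}$. Since $\pi_{\underline{\dd}}$ is $G_{\dd}$-equivariant with $G_{\dd}$ acting trivially on flag-type data in the target, the pushforward is $G_{\dd}$-equivariant, hence $\mathscr{F}$ is a simple $G_{\dd}$-equivariant perverse sheaf on $E_{\dd}$. Such a sheaf is of the form $\ICC(\OO,\mathcal{L})$ for some $G_{\dd}$-orbit $\OO$ and some irreducible $G_{\dd}$-equivariant local system $\mathcal{L}$ on $\OO$. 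Since $E_{\dd}$ has finitely many orbits in finite type (Theorem \ref{gabriel}), $\OO$ is automatically locally closed with $\overline{\OO}=\supp\mathscr{F}$.

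It remains to show $\mathcal{L}\simeq\underline{\C}$. Here I would use the fact, recalled in the terminology section, that every orbit in a quiver representation space is equivariantly simply connected, i.e.\ the stabilizer $\Stab_{G_{\dd}}(x)=\Aut_Q(k^{\dd},x)$ is connected (being the group of units of the finite-dimensional algebra $\End_Q(k^{\dd},x)$). Therefore the category of $G_{\dd}$-equivariant local systems on $\OO\simeq G_{\dd}/\Stab_{G_{\dd}}(x)$ is equivalent to the category of representations of $\pi_0(\Stab_{G_{\dd}}(x))=1$, so the only irreducible equivariant local system is the trivial one. Hence $\mathscr{F}\simeq\ICC(\OO,\underline{\C})$, which completes the proof. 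The only potentially subtle step is verifying that the simple summands of $(\pi_{\underline{\dd}})_{*}\underline{\C}$ are indeed $G_{\dd}$-equivariant as perverse sheaves (not merely as complexes), but this is standard since $\tilde{\FF}_{\underline{\dd}}$ carries a compatible $G_{\dd}$-action and $\pi_{\underline{\dd}}$ is equivariant.
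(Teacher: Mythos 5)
Your proposal follows essentially the same route as the paper's proof: Reineke's desingularization together with the decomposition theorem shows each $\ICC(\OO,\underline{\C})$ is a Lusztig sheaf, and the finiteness of $G_{\dd}$-orbits plus connectedness of their stabilizers (equivariant simple connectedness) gives the converse. The only minor caveat is that you describe the flag-type produced by Theorem \ref{Reinekedesing} as \emph{discrete}, which that theorem as stated does not assert; the paper's own proof glosses over the same point, so this does not change the substance of the argument.
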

\begin{proof}
 It is clear that $\ICC(\OO,\underline{\C})$ appears in $(\pi_{\underline{\dd}})_*\underline{\C}$ where $\underline{\dd}$ is given by Theorem \ref{Reinekedesing}. Since $E_{\dd}$ has a finite number of $G_{\dd}$-orbits and each of them has a connected stabilizer in $G_{\dd}$, all $G_{\dd}$-equivariant perverse sheaves on $E_{\dd}$ are of this form. This concludes the proof.
\end{proof}

\subsection{Lusztig perverse sheaves for cyclic quivers with cyclic orientation}
For cyclic quivers with cyclic orientation, the situation is very close to that of finite type quivers since Lusztig sheaves are supported on the nilpotent locus which has only a finite number of $G_{\dd}$-orbits.
\subsubsection{Partial resolutions of aperiodic orbits}\label{resolutionsaperiodic}
We give a resolution of aperiodic nilpotent orbits of cyclic quivers in the spirit of \cite[Proposition 1.1]{MR2057407}. The idea is to construct a flag-type associated to any nilpotent orbit giving a resolution of its closure and then to refine it in order to consider a discrete flag, which is only possible for aperiodic orbits.

Let $n\geq 2$ and $C_n$ be the cyclic quiver with $n$ vertices and cyclic orientation. For $\dd\in\N^{\Z/n\Z}$, define the counterclockwise rotation of $\dd$ by
\[
 \dd_{+1}=(\dd_{i+1}e_i)_{i\in\Z/n\Z}.
\]
We first give a lemma.

Recall the parametrization of nilpotent orbits of $C_n$ by multipartitions (Section \ref{cyclicquivers}). Let $\mathbf{m}=(\lambda^{(i)})_{i\in\Z/n\Z}$ be a multipartition of dimension $\dd\in\N^{\Z/n\Z}$. Let $x\in\OO_{\mathbf{m}}$ and $N=\max\{s\geq 0\mid x^{s+1}=0\text{ and }x^s\neq 0\}$. Let
\[
  \underline{\dd'}=(\dd'_0,\hdots,\dd'_N)
 \]
where
\[
 \dd_j'=\dim \im(x^{N-j})/\im(x^{N+1-j}).
\]
Then we have the following lemma whose proof is an easy consequence of the description of nilpotent orbits by multisegments.

\begin{lemma}\label{aperiodicdimension}
 For any $0\leq j\leq N$,
 \[
  \dd'_j-(\dd'_{j-1})_{+1}=(\sharp\{t : \lambda_t^{(i-(N-j))}=N-j\})_{i\in\Z/n\Z},
 \]
 where it is understood that $\dd'_{-1}=0$.
In particular, if $\mathbf{m}$ is aperiodic, $\dd'_j-(\dd'_{j-1})_{+1}$ has some zero coordinate.
\end{lemma}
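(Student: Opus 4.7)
The plan is to reduce to the case of a single indecomposable summand by additivity and then compute directly from the combinatorial description of $I_{a,l}$. Write $N_{\mm}=\bigoplus_{a,t} I_{a,\lambda_t^{(a)}}$; since $\im(x^s)$ is the direct sum of its restrictions to each summand, $\dd'_j$ is additive in the summands. For a single $I_{a,l}$, the underlying graded vector space is one-dimensional at each of the consecutive vertices $a,a+1,\dots,a+l-1$ and zero elsewhere, with all intermediate maps equal to the identity. Hence for $0\leq s\leq l-1$, the restriction of $\im(x^s)$ to $I_{a,l}$ is one-dimensional exactly at the vertices $a+s,\dots,a+l-1$ and zero elsewhere; for $s\geq l$ it vanishes. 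Taking the difference $\im(x^s)/\im(x^{s+1})$ therefore contributes a one-dimensional piece precisely at the single vertex $i=a+s$ whenever $l\geq s+1$. Summing over the summands with $s=N-j$ gives an explicit formula for $(\dd'_j)_i$ as the number of parts of $\lambda^{(i-(N-j))}$ that are at least as large as some threshold.

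Next, I observe that the twist $\dd\mapsto \dd_{+1}$ shifts the vertex indexing by one, and under the identification above this shift amounts to incrementing the threshold by one when applied to $\dd'_{j-1}$. In other words, both $(\dd'_j)_i$ and $((\dd'_{j-1})_{+1})_i$ are expressed as cardinalities of parts of the \emph{same} partition $\lambda^{(i-(N-j))}$ subject to two consecutive lower-bound conditions. Their difference telescopes to the number of parts of $\lambda^{(i-(N-j))}$ equal to the critical intermediate value, which is exactly the right-hand side of the claimed identity (a careful unwinding of the conventions on the length of an indecomposable and on the flag $F_j=\im(x^{N+1-j})$ pins down the precise threshold).

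Finally, for the aperiodicity statement: by definition, $\mm$ being aperiodic means that for every $\ell\geq 1$ there exists $i_0\in\Z/n\Z$ such that no part of $\lambda^{(i_0)}$ equals $\ell$. Applying this at the critical value of $\ell$ associated to step $j$ and setting $i=i_0+(N-j)$ produces a coordinate at which $\dd'_j-(\dd'_{j-1})_{+1}$ vanishes, giving the second assertion. I do not anticipate any substantive difficulty here: the whole argument is a direct, fully combinatorial computation based on the multisegment classification of nilpotent orbits of $C_n$; the only care required is to keep the cyclic indexing and the length convention straight throughout.
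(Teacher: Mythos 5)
Your proof is correct and follows essentially the same route as the paper: the paper's one-line proof simply asserts the closed formula for $(\dd'_j)_i$ as a count of parts of $\lambda^{(i-(N-j))}$ above a threshold, which is exactly what your summand-by-summand computation of $\im(x^s)$ on each $I_{a,l}$ yields (note only that for $l>n$ the support $a,a+1,\dots,a+l-1$ must be read with multiplicity), and the telescoping and aperiodicity steps coincide with the paper's. One caveat about the threshold you leave implicit: with the stated conventions ($I_{a,l}$ has top $S_a$, $\dd'_j=\dim\im(x^{N-j})/\im(x^{N+1-j})$ and $x^{N}\neq 0=x^{N+1}$) the difference $\dd'_j-(\dd'_{j-1})_{+1}$ counts parts \emph{equal to} $N-j+1$ rather than $N-j$, so the displayed formula carries an off-by-one in its normalization; this is a defect of the statement (and of the paper's asserted formula for $(\dd'_j)_i$), not of your argument, and it does not affect the conclusion that aperiodicity forces a vanishing coordinate for every $0\leq j\leq N$.
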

\begin{proof}
 It suffices to note that, by the very definition of multisegments, for any $0\leq j\leq N$,
 \[
  (\dd'_j)_i=\sharp\{t : \lambda_t^{i-(N-j)}\geq N-j\}.
 \]

\end{proof}

\begin{theorem}\label{resolutioncyclic}
 Let $\dd\in\N^{\Z/n\Z}$ be a dimension vector Let $\OO\subset E_{C_n,\dd}$ a nilpotent aperiodic orbit. Then there exists a discrete flag-type $\underline{\dd}$ such that the proper morphism $\pi_{\underline{\dd}}:\FF_{\underline{\dd}}\rightarrow E_{C_n,\dd}$ has image $\overline{\OO}$ and induces a resolution of singularities of $\overline{\OO}$.
\end{theorem}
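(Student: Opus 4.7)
The plan is to adapt Reineke's strategy from Theorem~\ref{Reinekedesing} to the cyclic setting, replacing the directedness of finite-type Auslander--Reiten quivers by the combinatorial aperiodicity provided by Lemma~\ref{aperiodicdimension}. The construction proceeds in three steps.

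\textbf{Step 1 (coarse Springer-type flag).} Pick $x\in\OO_{\mathbf{m}}$ with nilpotency index $N+1$ and consider the canonical filtration $F_j:=\im(x^{N-j})$ for $0\leq j\leq N+1$. This is automatically nilpotent ($x(F_j)=F_{j-1}$), and its subquotient dimensions are exactly $\dd'_j$ in the notation of Lemma~\ref{aperiodicdimension}. Let $\underline{\dd'}=(\dd'_0,\ldots,\dd'_N)$. The proper morphism $\pi_{\underline{\dd'}}^{\nil}:\tilde{\FF}_{\underline{\dd'}}^{\nil}\to E_{C_n,\dd}$ then has image equal to $\overline{\OO_{\mathbf{m}}}$: existence is by construction, and any $y$ admitting such a nilpotent flag has its Jordan-type dimension data determined by $\underline{\dd'}$, forcing $y\in\overline{\OO_{\mathbf{m}}}$. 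Moreover it is birational onto $\overline{\OO_{\mathbf{m}}}$, since for $y\in\OO_{\mathbf{m}}$ the image filtration $\im(y^{N-j})$ is the unique flag of type $\underline{\dd'}$ with $y(F_j)\subset F_{j-1}$.

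\textbf{Step 2 (discrete refinement).} I refine $\underline{\dd'}$ to a discrete flag-type $\underline{\dd}$ by breaking each $\dd'_j$ into one-vertex pieces. The subtlety is to order the vertices so that the refined flag is not only $x$-stable but remains generically \emph{uniquely determined} by $x$, since otherwise the refinement map $\tilde{\FF}_{\underline{\dd}}\to\tilde{\FF}^{\nil}_{\underline{\dd'}}$ would be a positive-dimensional Grassmannian bundle and birationality would fail. This is exactly where aperiodicity enters: Lemma~\ref{aperiodicdimension} furnishes at each step $j$ a vertex $i_j^{*}\in\Z/n\Z$ where $\dd'_j-(\dd'_{j-1})_{+1}$ vanishes. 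Starting the refinement of $F_{j-1}\subset F_j$ at the vertex $i_j^{*}+1$ and proceeding cyclically, each intermediate subspace $G_k$ can be written canonically as a preimage $x_i^{-1}(G_{k-1})$ intersected with the appropriate graded piece, so each refined subspace is recovered from $x$ without choice. Concatenating over all $j$ gives a discrete flag-type $\underline{\dd}$ with $\sum_k\dd_k=\dd$, and a birational morphism $\tilde{\FF}_{\underline{\dd}}\to\tilde{\FF}^{\nil}_{\underline{\dd'}}$. Since $n\geq 2$ and $\underline{\dd}$ is discrete, the $x$-stability condition defining $\tilde{\FF}_{\underline{\dd}}$ automatically produces a nilpotent flag over $\overline{\OO_{\mathbf{m}}}$, so $\tilde{\FF}_{\underline{\dd}}$ agrees with $\tilde{\FF}^{\nil}_{\underline{\dd}}$ over the nilpotent locus.

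\textbf{Step 3 (assembly).} The variety $\tilde{\FF}_{\underline{\dd}}$ is smooth as an iterated flag bundle over $E_{C_n,\dd}$, and $\pi_{\underline{\dd}}$ is proper. Composing the birational refinement of Step~2 with the birational morphism of Step~1 yields a proper birational morphism $\tilde{\FF}_{\underline{\dd}}\to\overline{\OO_{\mathbf{m}}}$, which is the desired resolution of singularities.

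The main obstacle is Step~2: producing a discrete refinement that remains birational, not merely surjective. The aperiodicity hypothesis is used in an essential way here, since a non-aperiodic multipartition would force some refinement layer to contribute a positive-dimensional Grassmannian fiber, destroying birationality. This mirrors the well-known fact that Lusztig sheaves on cyclic quivers correspond precisely to aperiodic multisegments, and it is the combinatorial mechanism behind that correspondence that drives the present proof.
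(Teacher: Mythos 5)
Your overall strategy is the same as the paper's: take the coarse flag-type $\underline{\dd'}$ built from $\im(x^{N-j})$, for which $\pi^{\nil}_{\underline{\dd'}}$ resolves $\overline{\OO}$ (your Step 1 is exactly the paper's first step, and your uniqueness argument for the coarse nilpotent flag is correct), and then use Lemma \ref{aperiodicdimension} to refine it to a discrete flag-type. Your refinement rule (break each layer $\dd'_j$ into the full one-vertex pieces $(\dd'_j)_k e_k$, cyclically ordered starting just after a vertex where $\dd'_j-(\dd'_{j-1})_{+1}$ vanishes) differs in detail from the paper's (which interleaves a shifted copy of the previous layer's pieces with the new increments), and keeping each vertex to at most one piece per layer is in fact a good idea: since $x(F_j)\subset F_{j-1}$ for the canonical coarse flag, \emph{every} graded subspace squeezed between $F_{j-1}$ and $F_j$ is automatically $x$-stable, so any layer containing two nonzero pieces at the same vertex would put a positive-dimensional partial flag variety inside the fibre of $\pi_{\underline{\dd}}$ over $\OO$ and destroy birationality.

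The genuine gap is in Steps 2--3, at precisely the two points the theorem requires. First, your Step 3 composes "the birational refinement of Step 2" with $\pi^{\nil}_{\underline{\dd'}}$, but the forgetful map $\tilde{\FF}_{\underline{\dd}}\to\tilde{\FF}^{\nil}_{\underline{\dd'}}$ is not defined on all of $\tilde{\FF}_{\underline{\dd}}$: an $x$-stable discrete flag only satisfies the step-by-step condition $x(G_k)\subset G_{k-1}$ (your correct observation, valid since $n\geq 2$), and grouping it into the coarse layers yields an $x$-stable flag of type $\underline{\dd'}$ which need \emph{not} satisfy the one-whole-layer drop $x(F_j)\subset F_{j-1}$. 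So the factorization you invoke does not exist a priori, and neither $\im\pi_{\underline{\dd}}\subset\overline{\OO}$ nor generic injectivity of $\pi_{\underline{\dd}}$ follows from composing Steps 1 and 2; the image statement (every nilpotent $y$ admitting a filtration of type $\underline{\dd}$ lies in $\overline{\OO}$) is never addressed, yet it is exactly where the choice of cyclic order must do its work (a wrong order, e.g.\ $(e_0,e_1,e_0)$ for $\OO_{I_{0,2}\oplus S_0}$ in $C_2$, picks up the larger orbit of $I_{0,3}$). Second, the uniqueness mechanism you assert in Step 2 is misattributed: given the canonical coarse flag, the within-layer refinement into full vertex pieces is unique for trivial reasons and for \emph{any} ordering, so your "preimage description" proves nothing about the actual fibre of $\pi_{\underline{\dd}}$ over $x\in\OO$, which consists of \emph{all} $x$-stable flags of type $\underline{\dd}$, including those whose grouping is a non-canonical (and non-nilpotent) coarse flag; ruling these out, and showing the image is exactly $\overline{\OO}$, is the real content of the refinement step, where aperiodicity and the starting vertex must be used, and it is not carried out. (The paper is itself terse on these verifications, but your write-up presents them as formal consequences of Steps 1--2, which they are not.)
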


\begin{proof}
 Assume $\OO=\OO_{\mathbf{m}}$ for some aperiodic multipartition $\mathbf{m}$. First define a (usually non-discrete) flag-type of dimension $\dd$ as follows. Let $x\in\OO$ and $N=\max\{s\geq 0\mid x^{s}\neq 0\text{ and }x^{s+1}=0\}$. Define
 \[
  \underline{\dd'}=(\dd'_0,\hdots,\dd'_N)
 \]
where
\[
 \dd_j'=\dim \im(x^{N-j})/\im(x^{N+1-j})
\]
as before. Then, the dual of the proof of \cite[Proposition 1.1]{MR2057407} shows that $\pi_{\underline{\dd'}}:\FF_{\underline{\dd'}}^{\nil}\rightarrow E_{C_n,\dd}$ induces by corestriction to $\overline{\OO}$ a resolution of singularities of $\overline{\OO}$.

The next step if to refine the flag-type into another one $\underline{\dd}$, using that $\OO$ is aperiodic, such that the forgetful morphism (forgetting the additional steps of the flags) $\FF_{\underline{\dd}}\rightarrow\FF_{\underline{\dd'}}^{\nil}$ is an isomorphism over $\OO$ and the projection $\pi_{\dd}$ to $E_{\dd}$ has image $\overline{\OO}$.

Since $\mathbf{m}$ is aperiodic, $(\dd'_0)_i=0$ for some $i\in\Z/n\Z$. We replace $\dd'_0$ by the sequence of discrete dimension vectors
\[
 ((\dd'_0)_{i-1}e_{i-1},(\dd'_0)_{i-2}e_{i-2},\hdots,(\dd'_0)_{i-(n-1)}e_{i-(n-1)}).
\]
Suppose next by induction that for some $1\leq j\leq N$, $\dd'_0,\hdots,\dd'_{j-1}$ have been replaced by sequences of discrete dimension vectors. In particular, $\dd'_{j-1}$ has been replaced by
\[
 (\alpha_1e_{i_1},\hdots,\alpha_re_{i_r})
\]
for some nonnegative integers $\alpha_j$ and $i_{j}\in\Z/n\Z$ for $1\leq j\leq r$. Let $\tilde{\dd}=\dd'_j-(\dd'_{j-1})_{+1}$. By Lemma \ref{aperiodicdimension}, there exists $i\in\Z/n\Z$ such that $\tilde{\dd}_i=0$.  We replace now $\dd'_j$ by
\[
 (\alpha_{1}e_{i_1-1},\hdots,\alpha_{r}e_{i_r-1},\tilde{\dd}_{i-1}e_{i-1},\hdots,\tilde{\dd}_{i-(n-1)}e_{i-(n-1)}).
\]
The flag-type $\underline{\dd}$ obtained fulfills the conditions of the theorem. Indeed, it suffices to note that by construction, any representation of $\OO$ admits a filtration whose subquotients are of the dimensions prescribed by $\underline{\dd}$ and that the image of $\pi_{\underline{\dd}}$ is included in the image of $\pi_{\underline{\dd'}}$.
\end{proof}

\subsubsection{Description of Lusztig perverse sheaves for cyclic quivers with cyclic orientation}\label{Lusztigcyclic}
\begin{proposition}[{\cite[\S 15]{MR1088333}}]\label{lusztigcyclicq}
 For a dimension vector $\dd\in\N^{\Z/n\Z}$, $\mathscr{P}_{\dd}$ is the semisimple category generated by the simple objects $\ICC(\OO,\underline{\C})$ by varying nilpotent aperiodic $G_{\dd}$-orbits $\OO\subset E_{\dd}$.
\end{proposition}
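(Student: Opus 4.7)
The plan is to prove the two inclusions separately. The easy direction is that every $\ICC(\OO,\underline{\C})$ with $\OO$ nilpotent aperiodic lies in $\mathscr{P}_{\dd}$, and this is a direct consequence of Theorem \ref{resolutioncyclic}. Given such an orbit $\OO\subset E_{C_n,\dd}$, that theorem furnishes a discrete flag-type $\underline{\dd}$ for which $\pi_{\underline{\dd}}:\FF_{\underline{\dd}}\rightarrow E_{C_n,\dd}$ is proper with image $\overline{\OO}$ and restricts to an isomorphism above $\OO$. Applying the decomposition theorem to $(\pi_{\underline{\dd}})_*\underline{\C}$, the intersection cohomology sheaf $\ICC(\OO,\underline{\C})$ appears as a direct summand (up to shift), and hence belongs to $\mathscr{P}_{\dd}$ by definition of the Hall category.

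For the reverse inclusion, let $\mathscr{F}$ be a simple object of $\mathscr{P}_{\dd}$; by definition it is a direct summand (up to shift) of $(\pi_{\underline{\dd}})_*\underline{\C}$ for some discrete flag-type $\underline{\dd}=(n_1e_{i_1},\dots,n_le_{i_l})$, and in particular is supported in the image of $\pi_{\underline{\dd}}$. Because $C_n$ is loop-free and each graded piece $F_j/F_{j-1}$ of a flag $(F_j)$ in $\FF_{\underline{\dd}}$ is concentrated at a single vertex, the stability condition $x(F_j)\subset F_j$ forces $x$ to shift the flag strictly, so every representation in the image is nilpotent. Thus $\supp\mathscr{F}=\overline{\OO}_{\mathbf{m}}$ for some multipartition $\mathbf{m}$. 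Since the stabilizer of any representation of $C_n$ is the unit group of a finite-dimensional $\C$-algebra (the endomorphism ring), it is connected; so any $G_{\dd}$-equivariant simple perverse sheaf on $\overline{\OO}_{\mathbf{m}}$ is of the form $\ICC(\OO_{\mathbf{m}},\underline{\C})$. This yields $\mathscr{F}=\ICC(\OO_{\mathbf{m}},\underline{\C})$.

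The main obstacle is to show that $\mathbf{m}$ must be aperiodic. My plan is to argue by an upper-semicontinuity / composition-series count on the generic fiber of $\pi_{\underline{\dd}}$: the existence of a generic point in the image with Jordan type $\mathbf{m}$ forces the type to admit a composition series into simples $(S_{i_k}^{n_k})$ that can be refined compatibly with a discrete flag, and the combinatorics of multisegments for $C_n$ shows this is only possible when no partition $I_{i,l}$ ($i\in\Z/n\Z$) appears in $\mathbf{m}$ with constant multiplicity in $i$. Concretely, I would run the inverse of the inductive construction in the proof of Theorem \ref{resolutioncyclic}: starting from the generic orbit in $\mathrm{im}(\pi_{\underline{\dd}})$, read off the dimensions $\dd_j'=\dim\mathrm{im}(x^{N-j})/\mathrm{im}(x^{N+1-j})$ and apply Lemma \ref{aperiodicdimension} to deduce aperiodicity of $\mathbf{m}$ from the discreteness of $\underline{\dd}$. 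An alternative I would keep in mind, if this geometric route proves delicate at the boundary of the image, is to appeal to Lusztig's Hall-algebra formalism from \cite[\S 15]{MR1088333}: the classes $[\mathscr{F}]$ in the Grothendieck group of $\mathscr{Q}$ form a subalgebra isomorphic to the positive part of the quantum affine algebra, whose canonical basis is indexed precisely by aperiodic multisegments, which rules out periodic $\mathbf{m}$ on the nose.
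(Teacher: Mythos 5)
Your first two steps are fine and coincide with the paper's: the easy inclusion via Theorem \ref{resolutioncyclic} and the decomposition theorem, and the reduction of any simple object of $\mathscr{P}_{\dd}$ to $\ICC(\OO_{\mathbf{m}},\underline{\C})$ for a nilpotent orbit (nilpotency of the image of $\pi_{\underline{\dd}}$ for discrete flag-types, connectedness of stabilizers). The genuine gap is in the aperiodicity step, which is the whole content of the proposition. Your concrete plan analyses only the \emph{dense} orbit of $\mathrm{im}(\pi_{\underline{\dd}})$, but a simple summand $\mathscr{F}$ of $(\pi_{\underline{\dd}})_*\underline{\C}$ may be supported on a strictly smaller orbit closure inside the image, and such supports are not controlled by reading off the dimensions $\dd_j'$ of the generic point. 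Periodic orbits really do occur inside these images: for $\dd=\delta$ and the discrete flag-type $(e_0,\hdots,e_{n-1})$, the zero representation $S_0\oplus\hdots\oplus S_{n-1}$ (which is periodic) lies in $\mathrm{im}(\pi_{\underline{\dd}})$, since the zero map preserves every flag. So ``$\supp\mathscr{F}$ is contained in the image of some $\pi_{\underline{\dd}}$ with $\underline{\dd}$ discrete'' cannot by itself force $\mathbf{m}$ to be aperiodic; what must be ruled out is that $\ICC(\OO_{\mathbf{m}},\underline{\C})$ for periodic $\mathbf{m}$ occurs as a \emph{lower} summand in the decomposition theorem, and your semicontinuity/generic-fiber argument does not touch this.

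This is exactly where the paper brings in a different tool: by Theorem \ref{ssls} the singular support of any Lusztig sheaf is contained in $\Lambda_{\dd}$, and by Lusztig's computation recalled in Section \ref{lusztignilpotentcyclic}, for the cyclic quiver $\Lambda_{\dd}$ is the union of the conormal varieties $\overline{T^*_{\OO}E_{\dd}}$ over \emph{aperiodic} nilpotent orbits only; since $\overline{T^*_{\OO_{\mathbf{m}}}E_{\dd}}$ is always an irreducible component of $SS(\ICC(\OO_{\mathbf{m}},\underline{\C}))$, the orbit $\OO_{\mathbf{m}}$ must be aperiodic. If you want to avoid singular supports you would need instead an explicit analysis of all summands of $(\pi_{\underline{\dd}})_*\underline{\C}$ (e.g.\ fiber computations over the smaller strata), which your sketch does not supply. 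Finally, your fallback of invoking the Hall-algebra statement of \cite[\S 15]{MR1088333} that the canonical basis is indexed by aperiodic multisegments is essentially a citation of the proposition being proved (it is precisely the attributed source), not an independent argument.
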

The following proof uses the description of the singular support of Lusztig sheaves for cyclic quivers obtained in Section \ref{lusztignilpotentcyclic} and Section \ref{singsupl} to prove that non-aperiodic orbits do not give rise to Lusztig sheaves.
\begin{proof}
 Let $\OO\subset E_{\dd}$ be a nilpotent aperiodic orbit. If $\pi_{\underline{\dd}}$ is given by Proposition \ref{resolutioncyclic}, then $\ICC(\OO,\underline{\C})$ appears as a direct summand of $(\pi_{\underline{\dd}})_*\underline{\C}$. Conversely, take $\mathscr{F}$ a simple Lusztig sheaf of $E_{\dd}$. It is supported on the nilpotent locus, which consists of a finite number of orbits, each of which having a connected stabilizer in $G_{\dd}$. Therefore, $\mathscr{F}=\ICC(\OO,\underline{\C})$ for some nilpotent orbit $\OO\subset E_{\dd}$. By Section \ref{lusztignilpotentcyclic}, the singular support of $\mathscr{F}$ is contained in the union of $\overline{T^*_{\OO}E_{\dd}}$ for aperiodic nilpotent orbits $\OO\subset E_{\dd}$. Since $\overline{T^*_{\OO}E_{\dd}}$  is an irreducible component of $SS(\mathscr{F})$, it must be an aperiodic orbit.
\end{proof}

\subsection{Lusztig perverse sheaves for affine quivers}\label{Lusztigperverseaffine}
 In this Section, we recall the description of Lusztig simple perverse sheaves for affine acyclic quivers. See \cite{MR1215594,MR2371959} for proofs.
\begin{proposition}[{\cite[Proposition 6.7]{MR1215594}, \cite[Proposition 5.10]{MR2371959}}]\label{lusztigorbits}
 Let $\dd\in\N^I$ be a dimension vector. If $\OO_M\subset E_{\dd}$ is the orbit of the representation $M$ where $M$ does not have regular homogeneous direct summands or regular inhomogeneous non-aperiodic summands, then $\ICC(\OO,\underline{\C})$ is a Lusztig sheaf. 
\end{proposition}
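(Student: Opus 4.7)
The plan is to build a single discrete flag-type $\underline{\dd}$ for the affine quiver whose associated proper map $\pi_{\underline{\dd}} : \tilde{\mathcal{F}}_{\underline{\dd}} \to E_{\dd}$ is birational onto $\overline{\OO_M}$; then by the decomposition theorem $\ICC(\overline{\OO_M}, \underline{\C})$ appears as a direct summand of $(\pi_{\underline{\dd}})_* \underline{\C}$, which exhibits it as a Lusztig sheaf. Decomposing $M = P \oplus R \oplus I$ with $P$ preprojective, $I$ preinjective, and $R$ regular aperiodic supported in non-homogeneous tubes, the strategy is to construct $\underline{\dd}$ as the concatenation $\underline{\dd}^{(I)} * \underline{\dd}^{(R)} * \underline{\dd}^{(P)}$ of three discrete flag-types separately resolving $\overline{\OO_I}$, $\overline{\OO_R}$ and $\overline{\OO_P}$, and then glue using the Ext-vanishing of Proposition \ref{extensions}.

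For $P$ preprojective (resp.\ $I$ preinjective), the preprojective (resp.\ preinjective) component of the Auslander--Reiten quiver is directed, so preprojective indecomposables can be ordered so that all backward $\Hom$'s vanish. A Reineke-type argument analogous to Theorem \ref{Reinekedesing} then produces a discrete flag-type $\underline{\dd}^{(P)}$ (resp.\ $\underline{\dd}^{(I)}$) such that $\pi_{\underline{\dd}^{(P)}}$ (resp.\ $\pi_{\underline{\dd}^{(I)}}$) factors through and desingularizes $\overline{\OO_P}$ (resp.\ $\overline{\OO_I}$). For $R$ regular aperiodic, I would decompose $R = \bigoplus_l R_l$ along the non-homogeneous tubes and use Ringel's equivalence between $\CC_{\OO_l}$ and the category of nilpotent representations of the cyclic quiver $C_{p_l}$ (with cyclic orientation). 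Aperiodicity of $R$ translates into aperiodicity of the associated multipartition on $C_{p_l}$, so Theorem \ref{resolutioncyclic} yields, inside the cyclic quiver, a discrete flag-type desingularizing the corresponding aperiodic nilpotent orbit closure. Translating this flag back to the affine quiver produces a filtration by simple regulars $S_{i_k}$ in the tube; refining each simple-regular step by a discrete flag desingularization of $\overline{\OO_{S_{i_k}}}$ in $E_{\dim S_{i_k}}$ yields the desired $\underline{\dd}^{(R)}$.

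The three pieces are then glued by concatenation. The Ext-vanishing of Proposition \ref{extensions} gives $\Ext^1(P,R) = \Ext^1(P,I) = \Ext^1(R,I) = 0$, so any representation admitting a filtration with subquotients in $\overline{\OO_I}, \overline{\OO_R}, \overline{\OO_P}$ (in this order) is isomorphic to $M$ on a dense open set; moreover, by Corollary \ref{uniquefiltration} the coarse filtration of type $(\dd_I, \dd_R, \dd_P)$ is actually unique on $\OO_M$. Consequently the fiber of $\pi_{\underline{\dd}}$ over $x \in \OO_M$ factors as the product of the three fibers coming from $\pi_{\underline{\dd}^{(I)}}, \pi_{\underline{\dd}^{(R)}}, \pi_{\underline{\dd}^{(P)}}$, each a single point by birationality of the individual resolutions. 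Hence $\pi_{\underline{\dd}}$ is proper, with smooth source, and generically one-to-one onto $\overline{\OO_M}$, so $\ICC(\overline{\OO_M}, \underline{\C})$ is a direct summand of $(\pi_{\underline{\dd}})_* \underline{\C}$.

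The main obstacle is the refinement step for regular aperiodic $R$: simple regulars in non-homogeneous tubes of an affine quiver generally have dimension vectors supported on several vertices, so the discrete flag-type output by Theorem \ref{resolutioncyclic} lives in $C_{p_l}$, not in the affine quiver; transporting it to the affine quiver produces a non-discrete flag. Overcoming this requires showing that each simple regular $S_i$ admits a discrete flag-desingularization of $\overline{\OO_{S_i}}$ in the affine quiver, which is a case-by-case analysis depending on the affine type (and can be carried out using explicit presentations of the simple regulars together with a Reineke-type directedness argument inside each fixed tube). Once this input is secured, the rest of the argument—concatenation of flag-types and birationality from Corollary \ref{uniquefiltration}—is essentially formal.
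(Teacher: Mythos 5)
First, note that the paper does not prove Proposition \ref{lusztigorbits} at all: it is quoted from \cite{MR1215594,MR2371959}. So your proposal can only be judged on its own terms, and as written it has a genuine gap, located moreover at a different place than where you put it. Your stated ``main obstacle'' --- producing a discrete flag-desingularization of each $\overline{\OO_{S_i}}$ --- is in fact a non-issue: each simple regular $S_i$ in a non-homogeneous tube is a rigid brick (its dimension is a real root and $\Ext^1(S_i,S_i)\cong\Hom(S_i,\tau S_i)^*=0$), so $\overline{\OO_{S_i}}=E_{\dim S_i}$; more generally, for an acyclic quiver any step $\ee$ of a flag-type can be refined canonically into vertex-wise steps listed in a sink order (every representation has a \emph{unique} filtration of that vertex-wise type), so the refinement does not change $(\pi_{\underline{\dd}})_*\underline{\C}$ and discreteness costs nothing (compare Remark \ref{relationsperverse}). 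The real difficulty is what refinement does to the \emph{image}: the fibers of $\pi_{\underline{\dd}}$ parametrize filtrations whose subquotients are \emph{arbitrary} representations of the prescribed dimension vectors, not the specific modules $I$, $S_{i_k}^{m_k}$, $P$, so you must prove that every representation admitting such a filtration lies in $\overline{\OO_M}$ (or at least that $\ICC(\OO_M,\underline{\C})$ occurs as a summand). That containment is essentially the content of the proposition and is not a formal consequence of concatenation; it requires a genericity/degeneration argument (irreducibility of the incidence variety, the fact that the generic representation of dimension $m\dim S_i$ is $S_i^{m}$, extension-closedness of a tube, transport of degenerations along the embedding $\iota_E$ of Section \ref{neighb}, and Theorem \ref{resolutioncyclic} on the cyclic side), none of which appears in your write-up.

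Second, the preprojective/preinjective pieces are not covered by ``a Reineke-type argument analogous to Theorem \ref{Reinekedesing}'': that theorem is stated (and proved in the cited reference) only for Dynkin quivers, and the extension is not routine because a direct sum of distinct preprojective (or preinjective) indecomposables over an affine quiver is in general \emph{not} rigid --- for the Kronecker quiver, $\Ext^1(P',S)\neq 0$ when $S$ is the simple projective and $P'$ the indecomposable preprojective of dimension $(2,3)$ --- so $\OO_P$ is not dense in $E_{\dd_P}$, a naive discrete flag-type has image all of $E_{\dd_P}$, and exhibiting one with image exactly $\overline{\OO_P}$ is a substantive input (it is part of what Lusztig proves). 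Two smaller points: in the gluing step, the Ext-vanishing of Proposition \ref{extensions} applies to $I,R,P$ themselves and not to the degenerate subquotients occurring in arbitrary fibers, so your one-line claim that the generic element of the image is isomorphic to $M$ needs exactly the closure/genericity argument above (Corollary \ref{uniquefiltration} only gives uniqueness of the coarse three-step filtration over $\OO_M$); and birationality is more than you need, since once the image equals $\overline{\OO_M}$, connectedness of stabilizers of quiver representations forces any full-support summand of the pushforward to be $\ICC(\OO_M,\underline{\C})$.
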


Let $\Xi(P,I,N,\mu)$ a stratum as in Section \ref{ringelstrat}, where $\mu$ is regular semisimple and $N$ is aperiodic. Let $d\in\N$ such that $\dim\mu=d\delta$. Let
\[
 \tilde{\Xi}(P,I,N,\mu)=\{(x,\underline{x})\in \Xi(P,I,N,\mu)\times(\PP_1^{\hom})^d\mid x\simeq P\oplus I\oplus N\oplus\bigoplus_{j=1}^dS_{x_j}[1]\}.
\]
The map $\pi:=\pi_{P,I,N,\mu} : \tilde{\Xi}(P,I,N,\mu)\rightarrow \Xi(P,I,N,\mu)$ is a $\mathfrak{S}_d$-covering. Therefore, we have a decomposition
\[
 \pi_*(\underline{\C})\simeq \bigoplus_{\lambda\in\mathscr{P}_d}\mathscr{L}_{\lambda}.
\]
\begin{theorem}[\cite{MR2371959}]\label{explicitaffine}
 The simple perverse sheaves in the category $\mathcal{P}_{\dd}$ are exactly the intersection cohomology complexes $\ICC(\Xi(P,I,N,\mu),\mathscr{L}_{\lambda})$ for $(P,I,N,\mu)$ and $\lambda$ as above, with $\dim P+\dim I+\dim N+\dim\mu=\dd$. 
\end{theorem}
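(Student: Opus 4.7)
The plan is to prove the two inclusions separately: first that every $\ICC(\Xi(P,I,N,\mu),\mathscr{L}_\lambda)$ with the stated data lies in $\mathcal{P}_\dd$, and second that every simple object of $\mathcal{P}_\dd$ arises this way.

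For the first direction, write $\dd = \dd_0 + d\delta$ with $\dd_0 = \dim P + \dim I + \dim N$ and $d = \dim\mu$. Proposition \ref{lusztigorbits} gives $\ICC(\OO_{P\oplus I\oplus N},\underline{\C}) \in \mathcal{P}_{\dd_0}$, handling the non-regular-homogeneous piece. On the regular-homogeneous side, the constant sheaf on the orbit-family $E_\delta^{\reghom}$ extends to a Lusztig sheaf on $E_\delta$: it appears as a summand of some $(\pi_{\underline{\delta}})_*\underline{\C}$ for a suitable discrete flag-type, using the parametrization of simple regular homogeneous representations by $\PP_1^{\hom}$. Taking $d$-fold external products and applying Lusztig's induction functor, the output lies in $\mathcal{P}_{d\delta}$ and restricts to the regular-semisimple stratum $\Xi(\mu_0)$ (with $\mu_0 = d\cdot (1)$) as the pushforward along the $\mathfrak{S}_d$-covering $\pi_{0,0,0,\mu_0}$, whose isotypic components are exactly the $\mathscr{L}_\lambda$. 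Intermediate-extending gives $\ICC(\overline{\Xi(\mu)},\mathscr{L}_\lambda)\in\mathcal{P}_{d\delta}$ for each $\lambda\vdash d$. A final induction of the external product $\ICC(\OO_{P\oplus I\oplus N},\underline{\C})\boxtimes \ICC(\overline{\Xi(\mu)},\mathscr{L}_\lambda)$ to $E_\dd$ produces, via Corollary \ref{uniquefiltration}, a Lusztig sheaf supported on $\overline{\Xi(P,I,N,\mu)}$ and restricting to the prescribed $\mathscr{L}_\lambda$ on the open stratum.

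For the converse, let $\mathscr{F}\in\mathcal{P}_\dd$ be simple. The Ringel stratification writes $\mathscr{F} = \ICC(\Xi(P,I,N,\mu'),\mathscr{L})$ for a unique stratum and irreducible $G_\dd$-equivariant local system $\mathscr{L}$ on its open part. The classifying map $\chi_{P,I,N,\mu'}:\Xi(P,I,N,\mu')\to S^d\PP_1^{\hom}\setminus\Delta$ of Section \ref{quotientreg} has $G_\dd$-orbits as fibres, so $\mathscr{L}$ descends to an irreducible local system on $S^d\PP_1^{\hom}\setminus\Delta$; the irreducible such local systems are indexed by $\lambda\vdash d$ through $\pi_1 = \mathfrak{S}_d$, matching them with the $\mathscr{L}_\lambda$. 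If $\mu'$ were not regular semisimple then $\overline{\Xi(P,I,N,\mu')}$ would be properly contained in $\overline{\Xi(P,I,N,\mu_{\mathrm{rs}})}$ for the regular-semisimple function $\mu_{\mathrm{rs}}$ of the same weight, contradicting the fact that any $(\pi_{\underline{\dd}})_*\underline{\C}$ whose decomposition contains $\mathscr{F}$ must already have support reaching this larger stratum. Aperiodicity of $N$ is handled separately by transferring the question to a cyclic quiver through a local model around each non-homogeneous tube and invoking Proposition \ref{lusztigcyclicq}.

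The chief obstacle is the aperiodicity constraint on $N$, which is not visible from the descent of $\mathscr{L}$ along $\chi_{P,I,N,\mu'}$ and genuinely requires a local cyclic-quiver model at each non-homogeneous tube, coupled with the classification of Lusztig sheaves for cyclic quivers. The regular-semisimplicity of $\mu'$ and the identification of $\mathscr{L}$ with some $\mathscr{L}_\lambda$ are comparatively soft once the Ringel stratification and the fundamental-group computation for $S^d\PP_1^{\hom}\setminus\Delta$ are in place.
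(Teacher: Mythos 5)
First, a caveat: the paper does not prove this statement at all — Section \ref{Lusztigperverseaffine} recalls it from \cite{MR1215594,MR2371959} ``without proof'' — so your attempt can only be judged on its own merits, and on those merits it has genuine gaps. The most serious one in the first direction is the final induction step. Corollary \ref{uniquefiltration} only asserts uniqueness of the three-step filtration with subquotients of dimensions $\dd_I,\dd_R,\dd_P$; it says nothing about your two-factor decomposition in which one factor mixes $P$, $I$ and $N$ and the other is the homogeneous regular part. Since $\Hom(\text{regular},\text{preinjective})$ and $\Hom(\text{preprojective},\text{regular})$ are nonzero in general (Proposition \ref{extensions} does not forbid them), a generic point $x\in\Xi(P,I,N,\mu)$ admits positive-dimensional families of submodules of dimension $d\delta$ whose quotient still lies in $\overline{\OO_{P\oplus I\oplus N}}$: already for the Kronecker quiver every homogeneous simple $S_b$ embeds into the smallest indecomposable preinjective $I$, whose orbit is dense in its representation space, so the fibre of your induction over $I\oplus S_a$ contains an extra $\PP^1$-family; similar extra components occur from submodules of $N$ of dimension a multiple of $\delta$. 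Hence the restriction of the induced complex to the open stratum is \emph{not} simply $(\pi_{P,I,N,\mu})_*\underline{\C}$, and showing that the $\mathscr{L}_\lambda$-isotypic summand nevertheless survives in the right perverse degree (or inducing instead in the order dictated by Corollary \ref{uniquefiltration} and then treating the regular part tube by tube) is exactly the real work done in \cite{MR1215594,MR2371959}. By contrast, your intermediate step producing $\ICC(\overline{\Xi(\mu)},\mathscr{L}_\lambda)\in\mathcal{P}_{d\delta}$ in the purely homogeneous case is sound, because there the fibre over $\bigoplus_j S_{a_j}[1]$ really is the $\mathfrak{S}_d$-torsor.

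In the converse direction two steps fail as stated. First, the exclusion of non-regular-semisimple $\mu'$: there is no ``fact'' that a proper pushforward containing $\mathscr{F}$ as a summand must have support equal to (or forcing) the larger stratum — a simple summand may perfectly well be supported on a proper closed subset of the image of $\pi_{\underline{\dd}}$, so no contradiction arises. What comes for free from Theorem \ref{ssls} and Proposition \ref{Ringelstrat} is only that $\mu'$ is \emph{regular}; ruling out, say, the stratum of representations with a summand $S_a[2]$ needs genuine input. Second, your identification of the descended local system with some $\mathscr{L}_\lambda$ rests on the claim that $\pi_1(S^d\PP_1^{\hom}\setminus\Delta)=\mathfrak{S}_d$, which is false: $\PP_1^{\hom}$ is $\PP^1$ minus the non-homogeneous points, and the fundamental group of the corresponding unordered non-colliding configuration space is a braid-type group surjecting onto $\mathfrak{S}_d$ with large kernel (there is monodromy around the punctures and a pure-braid part). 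The $\mathscr{L}_\lambda$ are exactly the local systems trivialized by $\pi_d$, so proving that the local system of an arbitrary simple object of $\mathcal{P}_{\dd}$ has trivial pullback under $\pi_d$ is precisely the hard content; it is not ``comparatively soft'', and it is for this point as well — not only for the aperiodicity of $N$ — that the cyclic-quiver local model (compare Section \ref{proofaffineq}, Lemmas \ref{pbcyclic} and \ref{triviallocal}) or Lusztig's explicit fibre/monodromy computations must be invoked.
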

We call the local systems $\mathscr{L}_{\lambda}$ which appear \emph{Lusztig local systems}. A consequence of Theorem \ref{explicitaffine} is that a local system $\mathscr{L}$ on $\Xi(P,I,N,\mu)$ with $\mu$ regular semisimple is a Lusztig local system if and only if $\pi_{P,I,N,\mu}^*\mathscr{L}$ is the trivial local system on $\tilde{\Xi}(P,I,N,\mu)$.

\subsection{Local systems on the regular part}
Let $\mu$ be the regular semisimple type of dimension $d$ and $N$ a regular non-homogeneous representation. We let $\dd=\dim N+\delta\dim\mu$. We have a cartesian square (see Section \ref{Lusztigperverseaffine} and \ref{quotientreg} for the notations):
\[
\xymatrix{
\tilde{\Xi}(N,\mu)\ar[r]^{\pi_{N,\mu}}\ar[d]_{\tilde{\chi}_{N,\mu}}& \Xi(N,\mu)\ar[d]^{\chi_{N,\mu}}\\
(\PP_1^{\hom})^d\setminus\Delta\ar[r]^{\pi_d}&(S^d\PP_1^{\hom})\setminus\Delta.
}
\]

\begin{lemma}\label{locsysreg}
 Let $\mathscr{L}$ be a $G_{\dd}$-equivariant local system on $\Xi(N,\mu)\subset E_{\dd}$ for the regular semisimple type $\mu$ of dimension $d\delta$. Then it is the pull-back by $\chi_{N,\mu}$ of a local system $\mathscr{L}'$ on $S^d\PP_1^{\hom}\setminus\Delta$. Moreover, the intersection complex $\ICC(\mathscr{L})$ on $E_{d\delta}$ is a Lusztig sheaf if and only if $\pi_d^*\mathscr{L}'$ is the trivial local system.
\end{lemma}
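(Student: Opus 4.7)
The plan is to combine the product decomposition of Lemma \ref{isoreg} with a descent argument along the equivariant quotient maps $\chi_\mu$ and $\tilde{\chi}_{N,\mu}$, viewed as equivariant $\pi_1$-equivalences in the sense of the terminology recalled in Section \ref{notations}.

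First, by Lemma \ref{isoreg}, giving a $G_{\dd}$-equivariant local system on $\Xi(N,\mu)$ is equivalent to giving a $G_{\dd_N}\times G_{d\delta}$-equivariant local system on $\OO_N\times \Xi(\mu)$. Since orbits for quiver representations have connected stabilizers, the $G_{\dd_N}$-equivariant local systems on the single orbit $\OO_N$ are trivial, so the datum reduces to a $G_{d\delta}$-equivariant local system $\mathscr{M}$ on $\Xi(\mu)$. The fibers of $\chi_{\mu}:\Xi(\mu)\to S^d\PP_1^{\hom}\setminus\Delta$ are single $G_{d\delta}$-orbits of representations $\bigoplus_{j=1}^d S_{a_j}[1]$ with pairwise distinct $a_j$, each with connected stabilizer $(\C^*)^d$ by Schur's lemma, so $\chi_\mu$ is an equivariant $\pi_1$-equivalence. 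This yields $\mathscr{M}=\chi_\mu^*\mathscr{L}'$ for a unique local system $\mathscr{L}'$ on $S^d\PP_1^{\hom}\setminus\Delta$, and unwinding Lemma \ref{isoreg} gives $\mathscr{L}=\chi_{N,\mu}^*\mathscr{L}'$.

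For the second assertion, the description of Lusztig local systems following Theorem \ref{explicitaffine} tells us that $\ICC(\mathscr{L})$ is a Lusztig sheaf if and only if $\pi_{N,\mu}^*\mathscr{L}$ is trivial on $\tilde{\Xi}(N,\mu)$. Substituting $\mathscr{L}=\chi_{N,\mu}^*\mathscr{L}'$ and using the cartesian square preceding the statement of the lemma, one obtains
\[
\pi_{N,\mu}^*\mathscr{L} \;=\; \pi_{N,\mu}^*\chi_{N,\mu}^*\mathscr{L}' \;=\; \tilde{\chi}_{N,\mu}^*\pi_d^*\mathscr{L}'.
\]
A second application of the same descent principle to $\tilde{\chi}_{N,\mu}$, whose fibers are single $G_\dd$-orbits with connected stabilizer $\Aut(N)\times(\C^*)^d$ (the prescribed ordering of the points $a_j$ fixing the ordering of the simple homogeneous summands), shows that pullback along $\tilde{\chi}_{N,\mu}$ is fully faithful and in particular detects triviality of local systems. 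Hence $\tilde{\chi}_{N,\mu}^*\pi_d^*\mathscr{L}'$ is trivial if and only if $\pi_d^*\mathscr{L}'$ is, concluding the proof.

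The main obstacle I anticipate is the careful verification that $\chi_\mu$ and $\tilde{\chi}_{N,\mu}$ really behave as equivariant $\pi_1$-equivalences: beyond the existence of connected stabilizers, one needs the quotients to be topologically well-behaved (for instance locally trivial fibrations of the appropriate sort), and it is essential here that the big diagonal has been removed from $S^d\PP_1^{\hom}$ so that the stabilizer type does not jump. This rests on the explicit description of the good quotient recalled in Section \ref{quotientreg}.
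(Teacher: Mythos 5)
Your proof is correct, but it follows a different route from the paper's. The paper proves Lemma \ref{locsysreg} in the appendix (Lemma \ref{locsysreglocus}) by imitating its Lie-algebra model, Lemma \ref{locsysg}: pull $\mathscr{L}$ back along the $\mathfrak{S}_d$-covering $\pi_{N,\mu}$, descend the resulting equivariant local system along the smooth map $\tilde{\chi}_{N,\mu}$ (this is where the principal-bundle structure of $E_{\delta}^{\reghom}\rightarrow\PP_1^{\hom}$ enters), and then come back down to the symmetric product by pushing forward along $\pi_d$, invoking smooth base change and extracting an indecomposable summand to produce $\mathscr{L}'$ with $\mathscr{L}=\chi_{N,\mu}^*\mathscr{L}'$; the triviality criterion is then read off the same cartesian square you use. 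You instead strip off $\OO_N$ via Lemma \ref{isoreg} and the induction equivalence (using connectedness of $\Aut(N)$), and descend \emph{directly} along $\chi_{\mu}$ to $S^d\PP_1^{\hom}\setminus\Delta$, arguing that the fibers are single orbits with connected stabilizer $(\C^*)^d$ — in effect the ``stacky'' argument that the paper itself mentions only as a remark after Lemma \ref{locsysg} in the $\mathfrak{gl}_d$ case. What your route buys is the elimination of the covering/Krull--Schmidt step; what it costs is exactly the point you flag: you must know that $\chi_{\mu}$ and $\tilde{\chi}_{N,\mu}$ are locally trivial fibrations by orbits (so that connected stabilizers really give an equivariant $\pi_1$-equivalence, and so that $\pi_1$-surjectivity of $\tilde{\chi}_{N,\mu}$ lets you detect triviality of $\pi_d^*\mathscr{L}'$). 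This is available: it is precisely the content of the paper's appendix computation, where $\tilde{\Xi}(\mu)\simeq Z\times^{(G_{\delta})^d}G_{d\delta}$ with $Z$ a pullback of the principal $G_{\delta}/\C^*$-bundle $E_{\delta}^{\reghom}\rightarrow\PP_1^{\hom}$, and local triviality over the symmetric base follows since $\pi_d$ is trivialized on small neighbourhoods away from $\Delta$. So there is no gap in substance, only that the one verification you postpone should be spelled out along these lines rather than left to Section \ref{quotientreg} alone. Two minor points of hygiene: the reduction to a single $G_{d\delta}$-equivariant local system on $\Xi(\mu)$ should be phrased via the induction equivalence together with Lemma \ref{equiperverse} (a priori an equivariant local system on $\OO_N\times\Xi(\mu)$ need not be an external product; connectedness of $\Aut(N)$ is what makes it one), and for detecting triviality along $\tilde{\chi}_{N,\mu}$ it is surjectivity on $\pi_1$ (smooth, surjective, connected fibers) rather than connectedness of stabilizers that does the work.
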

\begin{proof}
 We postpone the proof to Section \ref{locsysrssaff} since it is analogous to that of Lemma \ref{locsysg}.
\end{proof}

\subsection{Induction and restriction of constructible complexes}\label{inductionrestriction}
Induction and restriction are the operations on constructible complexes categorifying respectively the multiplication and the comultiplication of the quantum group. We invite the reader to consult \cite{MR1088333,MR1227098,MR1062796} for more on this construction and the Ringel-Hall algebra construction of the quantum group of Ringel.

We closely follow \cite{MR3202708} and refer to it for properties of the induction and restriction functors.

\subsubsection{The induction functor}
Let $\dd',\dd''\in\N^I$ and $\dd=\dd'+\dd''$. We have the induction diagram:
\[
 \xymatrix{
 &E_{\dd',\dd''}^{(1)}\ar[r]^r\ar[ld]_p&E_{\dd',\dd''}\ar[rd]^q&\\
 E_{\dd'}\times E_{\dd''}&&&E_{\dd'+\dd''}.
 }
\]
where
\[
 E_{\dd',\dd''}=\{(x,W)\in E_{\dd}\times \Gr(\dd'',\dd)\mid \text{ and }xW\subset W\}
\]
and
\[
 E_{\dd',\dd''}^{(1)}=\{(x,W,g',g'')\mid (x,W)\in E_{\dd',\dd''}, g'' : W\stackrel{\sim}{\rightarrow} \C^{\dd''}, g' : \C^{\dd}/W\stackrel{\sim}{\rightarrow} \C^{\dd'}\}.
\]
The morphisms $r$ and $q$ are the natural projections while 
\[p(x,W,g',g'')=(g'x_{|\C^{\dd}/W}(g')^{-1},g''x_{W}(g'')^{-1}).
\]
The map $r$ is a $GL_{\dd'}\times \GL_{\dd''}$-torsor, hence induces a triangulated equivalence 
\[
r^* : D^b_{GL_{\dd}\times \GL_{\dd'}\times \GL_{\dd''}}(E_{\dd',\dd''}^{(1)})\rightarrow D^b_{\GL_{\dd}}(E_{\dd',\dd''}).
\] A quasi-inverse is denoted by $r_{\flat}$. The induction functor is
\[
 m:=q_!r_{\flat}p^*[\dim p-\dim r].
\]
For $\mathscr{F}\in D^b_{\GL_{\dd'}}(E_{\dd'})$ and $\mathscr{G}\in D^b_{\GL_{\dd''}}(E_{\dd''})$, we define
\[
 \mathscr{F}\star\mathscr{G}=m(\mathscr{F}\boxtimes \mathscr{G}).
\]
It is possible to show the associativity of $m$ (up to a natural transformation)(\cite[Proposition 1.9]{MR3202708}). This allows us to define the induction $\mathscr{F}_1\star\hdots\star\mathscr{F}_r$ for equivariant constructible complexes $\mathscr{F}_1,\hdots, \mathscr{F}_r$ on $E_{\dd_1},\hdots, E_{\dd_r}$ respectively. We write
\[
 \mathscr{F}_1\star\hdots\star\mathscr{F}_r=m(\mathscr{F}_1\boxtimes\hdots\boxtimes\mathscr{F}_r)=m_{\dd_1,\hdots,\dd_r}(\mathscr{F}_1\boxtimes\hdots\boxtimes\mathscr{F}_r)
\]
depending whether or not we want to put the emphasis on the dimensions.

\subsubsection{The restriction functor}
The restriction diagram is:
\[
 \xymatrix{
 &F_{\dd',\dd''}\ar[ld]_{\kappa}\ar[rd]^{\iota}&\\
 E_{\dd'}\times E_{\dd''}&&E_{\dd}
 }
\]
where $F_{\dd',\dd''}=\{x\in E_{\dd}\mid x\C^{\dd''}\subset \C^{\dd''}\}$, $\C^{\dd''}\subset \C^{\dd}$ being the natural inclusion. The restriction functor is
\[
 \Delta:=\kappa_!\iota^*[-\langle\dd',\dd''\rangle]
\]
where $\langle-,-\rangle$ is the Euler form of the quiver. As for the induction functor, it is possible to prove the coassociativity of the restriction. It allows us to define
\[
 \Delta_{\dd_1,\hdots,\dd_r}(\mathscr{F})
\]
for $\dd_1,\hdots,\dd_r\in\N^I$ and $\mathscr{F}$ a $G_{\dd}$-equivariant constructible complex on $E_{\dd}$, $\dd=\dd_1+\hdots+\dd_r$.

\section{Singular support of Lusztig perverse sheaves}\label{singularsupport}

\subsection{Lusztig nilpotent variety and singular support of Lusztig's sheaves}\label{lusztiglagrangian}
\subsubsection{Lusztig nilpotent variety}
We use the notation of Section \ref{representationvariety}. In particular, recall the moment map $\mu_{\dd}$. For any $\dd\in\N^I$, we define Lusztig nilpotent variety as follows:
\[
 \Lambda_{\dd}=\{x\in E_{\overline{Q},\dd}\mid \mu_{\dd}(x)=0 \text{ and $x$ is nilpotent}\}.
\]
The stacky quotient $\Lambda_{\dd}/G_{\dd}$ parametrizes nilpotent representations of the preprojective algebra of $Q$. It will be convenient for us to consider restrictions of the nilpotent variety. First let $\pi_{\dd} : T^*E_{\dd}\rightarrow E_{\dd}$ be the cotangent bundle of $E_{\dd}$. Let $\pi_{\dd}^{\Lambda}: \Lambda_{\dd}\rightarrow E_{\dd}$ be its restriction at the source. For an open subset $U\subset E_{\dd}$, we let $\Lambda_{\dd}^U=(\pi_{\dd}^{\Lambda})^{-1}(U)$. When $U=E_{\dd}^{reg}$ (resp. $E_{\dd}^{\reghom}$), see end of Section \ref{stratification}, we write $\Lambda_{\dd}^{reg}$ (resp. $\Lambda_{\dd}^{\reghom})$.

\begin{proposition}[Lusztig]
 The variety $\Lambda_{\dd}$ is a closed, conical (\emph{i.e.} $\C^{\times}$-invariant), Lagrangian subvariety of $T^*E_{\dd}$.
\end{proposition}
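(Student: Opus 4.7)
The plan is to verify the three properties separately: closedness, conicity, and the Lagrangian condition. The first two are essentially formal, while the Lagrangian property decomposes into an isotropicity statement plus a dimension computation, the latter being where the real work lies.

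First I would handle closedness and conicity. The moment map $\mu_{\dd}$ is polynomial, so $\mu_{\dd}^{-1}(0)$ is Zariski closed. The nilpotency condition is also closed: there is an integer $N$ depending only on $|\dd|$ (e.g.\ $N = |\dd|+1$) such that $x \in E_{\overline{Q},\dd}$ is nilpotent if and only if every product $x_{a_N}x_{a_{N-1}}\cdots x_{a_1}$ along a path of length $N$ in $\overline{Q}$ vanishes, which is a system of polynomial equations. Conicity is then immediate, since $\mu_{\dd}$ is homogeneous quadratic (so $\mu_{\dd}(tx)=t^2\mu_{\dd}(x)$) and the nilpotency condition is preserved by the scaling action.

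Next I would establish isotropicity by exhibiting $\Lambda_{\dd}$ as a subvariety of a union of conormal bundles. For $x \in E_{\dd}$ with orbit $\OO_x$, the tangent space $T_x \OO_x$ is the image of $\xi \mapsto ([\xi, x_\alpha])_\alpha$. Under the identification $T^*E_{\dd} \simeq E_{\overline{Q},\dd}$ via the trace pairing, a direct computation shows that $\mu_{\dd}(x,x^*)=0$ is exactly the condition $x^* \in (T_x\OO_x)^\perp$. Consequently one has the set-theoretic equality
\[
 \mu_{\dd}^{-1}(0) \;=\; \bigcup_{\OO \subset E_{\dd}} T^*_{\OO}E_{\dd}\big|_{\OO},
\]
where the union runs over $G_{\dd}$-orbits. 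Since each (locally closed) conormal bundle $T^*_{\OO}E_{\dd}$ is Lagrangian, its closure is isotropic, and therefore $\Lambda_{\dd}\subset \bigcup_{\OO}\overline{T^*_{\OO}E_{\dd}}$ is isotropic; in particular $\dim\Lambda_{\dd}\leq \dim E_{\dd}$.

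Finally I would produce the matching lower bound on dimension. The projection $\pi^{\Lambda}_{\dd}:\Lambda_{\dd}\to E_{\dd}$ has fiber over $x$
\[
 F_x \;=\; \{\, x^* \in (T_x\OO_x)^\perp : (x,x^*) \text{ nilpotent}\,\},
\]
sitting inside the vector space $(T_x\OO_x)^\perp$ of dimension $\dim E_{\dd}-\dim\OO_x$. The crucial input is the estimate
\[
 \dim F_x \;=\; \dim E_{\dd} - \dim \OO_x,
\]
from which stratifying $E_{\dd}$ by $G_{\dd}$-orbits and summing gives $\dim \Lambda_{\dd}\geq \dim E_{\dd}$, hence equality, and combined with isotropicity this proves $\Lambda_{\dd}$ is Lagrangian. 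This estimate is exactly Crawley-Boevey's theorem identifying $\Lambda_{\dd}$ with the variety of nilpotent representations of the preprojective algebra $\Pi_Q$ and asserting that this variety has pure dimension $\dim E_{\dd}$.

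The main obstacle is this fiberwise lower bound on $\dim F_x$: the other ingredients (closedness, conicity, isotropicity via conormals) are formal consequences of the moment-map geometry, but the dimension estimate is not formal and requires either Crawley-Boevey's homological analysis of $\Pi_Q$ or Lusztig's original direct construction (using reflection functors to reduce to a generic orbit, where a Lagrangian section can be written down by hand).
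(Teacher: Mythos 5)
The paper itself gives no argument for this proposition: it is quoted from Lusztig's original paper (\cite{MR1088333}), so your proposal has to stand on its own. The routine parts are fine: closedness, conicity (though note that ``conical'' refers to the dilation of the cotangent fibres, so you should use that $\mu_{\dd}(x,x^*)$ is linear in $x^*$ and that nilpotency survives scaling $x^*$, rather than the diagonal quadratic homogeneity), and the identification of $\mu_{\dd}^{-1}(0)$ with the union over orbits $\OO$ of the conormal spaces $(T_x\OO)^{\perp}$. The two steps that carry the real content, however, have genuine gaps.

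First, the ``crucial input'' $\dim F_x=\dim E_{\dd}-\dim\OO_x$ is false in general: it holds precisely when $\overline{T^*_{\OO_x}E_{\dd}}\subset\Lambda_{\dd}$, and this fails for every orbit lying in a positive-dimensional family, as well as for non-aperiodic nilpotent orbits of the cyclic quiver. Concretely, for the Kronecker quiver with $\dd=\delta=(1,1)$ and $x=(a,b)=(1,\lambda)$, $\lambda\neq 0$, the conormal space at $x$ is the line $a^*=-\lambda b^*$, but nilpotency of $(x,x^*)$ forces $x^*=0$ (the loop $b^*b$ at the source vertex is the scalar $\lambda b^*$, whose powers never vanish unless $b^*=0$); so $\dim F_x=0$ while $\dim E_{\dd}-\dim\OO_x=1$. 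Hence the orbitwise count does not give $\dim\Lambda_{\dd}\geq\dim E_{\dd}$; and even if it did, ``isotropic with total dimension $\geq\dim E_{\dd}$'' would not yield purity, i.e.\ that \emph{every} irreducible component is Lagrangian, which is what the statement asserts. Your fallback, citing ``Crawley-Boevey's theorem'' that the nilpotent representation variety of the preprojective algebra has pure dimension $\dim E_{\dd}$, is circular: that purity statement \emph{is} the Lagrangian property being proved, and it is Lusztig's theorem (reproved by Kashiwara--Saito), established by an induction on dimension vectors via the stratification by the dimension of the socle/image spaces at a fixed vertex, not by reflection functors; Crawley-Boevey's results concern $\mu_{\dd}^{-1}(\lambda)$ and do not independently supply the nilpotent purity here. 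Second, the isotropicity step also needs repair: containment in an \emph{infinite} union of closures of conormal bundles does not imply isotropy (every cotangent fibre $T^*_xE_{\dd}$ is Lagrangian, yet their union is all of $T^*E_{\dd}$), and affine quivers have infinitely many orbits in general; to make this step correct one must replace the orbit stratification by a finite one adapted to $\Lambda_{\dd}$ (for affine quivers, Ringel's strata as in Proposition \ref{Ringelstrat}) or run Lusztig's inductive dimension bound. As written, the proposal reduces the proposition to itself at the decisive point.
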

Such a variety can be written as
\[
 \Lambda_{\dd}=\bigcup_{S}\overline{T^*_SE_{\dd}}
\]
for some locally closed subvarieties $S\subset E_{\dd}$. The goal for Section \ref{irreducible components} is to identify such strata for finite type and affine quivers. This task is known to be much more difficult for wild quivers. In particular, the description we give rests on the representation theory of finite type and affine quivers. It seems therefore hopeless to give a uniform description for all quivers.
\subsubsection{Singular support of Lusztig sheaves}
\label{singsupl}
\begin{theorem}[Lusztig, {\cite[Corollary 13.6]{MR1088333}}]
\label{ssls}
 Let $\mathscr{F}$ be a Lusztig perverse sheaf on $E_{\dd}$. Then its singular support is a union of irreducible components of $\Lambda_{\dd}$.
\end{theorem}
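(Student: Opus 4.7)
The plan is to reduce to the explicit generators of $\mathscr{Q}_{\dd}$ and then apply the microlocal estimate for proper pushforward. Since $\mathscr{F}$ is a simple summand of some $(\pi_{\underline{\dd}})_{*}\underline{\C}[\dim\tilde{\FF}_{\underline{\dd}}]$ for a discrete flag-type $\underline{\dd}$, and $SS$ is stable under direct summands, it suffices to prove that
\[
SS\bigl((\pi_{\underline{\dd}})_{*}\underline{\C}\bigr)\subset\Lambda_{\dd}
\]
for every discrete flag-type $\underline{\dd}$.

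The first key step is the standard microlocal estimate: for a proper morphism $f:Y\to X$ between smooth varieties and any $\mathscr{G}\in D^b(Y)$,
\[
SS(f_{*}\mathscr{G})\subset f_{\pi}\bigl(f_{d}^{-1}(SS(\mathscr{G}))\bigr),
\]
where $f_{d}:Y\times_{X}T^{*}X\to T^{*}Y$ is the codifferential and $f_{\pi}:Y\times_{X}T^{*}X\to T^{*}X$ the projection. Applied to $\mathscr{G}=\underline{\C}_{\tilde{\FF}_{\underline{\dd}}}$ whose singular support is the zero section, this estimates the singular support of $(\pi_{\underline{\dd}})_{*}\underline{\C}$ by the conormal variety of $\pi_{\underline{\dd}}$, namely the image in $T^{*}E_{\dd}$ of the set of $(x,\underline{F},x^{*})$ with $x$ preserving $\underline{F}$ and $(d\pi_{\underline{\dd}})^{*}(x^{*})=0$.

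The main technical step is to identify this conormal via the trace pairing $T^{*}E_{\dd}\simeq E_{\overline{Q},\dd}$ and show it lies in $\Lambda_{\dd}$. An infinitesimal computation at $(x,\underline{F})\in\tilde{\FF}_{\underline{\dd}}$ shows that a cotangent vector $x^{*}\in E_{\overline{Q},\dd}$ kills $d\pi_{\underline{\dd}}(T_{(x,\underline{F})}\tilde{\FF}_{\underline{\dd}})$ if and only if $x^{*}$ also preserves $\underline{F}$ and $\mu_{\dd}(x,x^{*})=0$: the moment map equation arises precisely as the obstruction to deforming $\underline{F}$ to first order compatibly with $x$, dualized against $x^{*}$. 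This gives $\mu_{\dd}(x,x^{*})=0$ automatically. The remaining ingredient is nilpotency of $(x,x^{*})$: since $\underline{\dd}$ is discrete, each successive quotient $F_{j}/F_{j-1}$ is concentrated at a single vertex $i_{j}$, and every arrow of $\overline{Q}$ strictly changes the target vertex (for loop-free quivers; for quivers with loops one would instead use $\tilde{\FF}^{\nil}_{\underline{\dd}}$, where $x(F_{j})\subset F_{j-1}$ by definition). Either way, any sufficiently long path in $\overline{Q}$ applied to $(x,x^{*})$ strictly drops the flag index, so $(x,x^{*})$ is nilpotent, and the conormal is contained in $\Lambda_{\dd}$.

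Finally, since $\mathscr{F}$ is a simple perverse sheaf, $SS(\mathscr{F})$ is a closed, conical, Lagrangian subvariety of $T^{*}E_{\dd}$; being contained in the closed conical Lagrangian $\Lambda_{\dd}$, it must be a union of irreducible components of $\Lambda_{\dd}$, as claimed. The main obstacle is the infinitesimal conormal computation of step three, which requires carefully dualizing the flag-stability condition; the nilpotency of $(x,x^{*})$ is then a combinatorial consequence of discreteness of $\underline{\dd}$ that must be checked arrow by arrow.
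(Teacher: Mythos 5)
Your proposal is correct and takes essentially the same route as the paper's (sketched) proof: reduce to the direct summands of $(\pi_{\underline{\dd}})_*\underline{\C}$ for discrete flag-types, apply the proper-pushforward microlocal estimate (Proposition \ref{pushforward}) to bound $SS$ by the image of the conormal variety of $\pi_{\underline{\dd}}$, identify that image inside $\Lambda_{\dd}$ via the moment-map and nilpotency computation (this is exactly the content of Lusztig's Corollary 13.6 that the paper invokes), and conclude by Lagrangianness that $SS(\mathscr{F})$ is a union of irreducible components. One minor imprecision: the conormal condition actually forces $x^*(F_j)\subset F_{j-1}$ (it must annihilate all flag-preserving deformations of $x$), not merely that $x^*$ preserves the flag, though for a discrete flag-type on a loop-free quiver this distinction is immaterial for your nilpotency argument.
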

\begin{proof}[Sketch of the proof]
 We only deal with constructible sheaves, therefore singular supports are Lagrangian subvarieties of $T^*E_{\dd}$ and it suffices to prove that $SS(\mathscr{F})\subset \Lambda_{\dd}$. As a shift of $\mathscr{F}$ appears as a direct summand of $(\pi_{\underline{\dd}})_*\underline{\C}$ for some discrete flag-type $\underline{\dd}$, it suffices to prove that $SS((\pi_{\underline{\dd}})_*\underline{\C})\subset \Lambda_{\dd}$. It follows from the fact that $\pi_{\underline{\dd}}$ is proper and Proposition \ref{pushforward}.
\end{proof}

\subsection{Irreducible components of Lusztig's Lagrangian for finite type and affine quivers}\label{irreducible components}

\subsubsection{Lusztig nilpotent variety for finite type quivers}
For finite type quivers, one can show that the nilpotency hypothesis in the definition of $\Lambda_{\dd}$ is redundant, therefore that $\Lambda_{\dd}=\mu_{\dd}^{-1}(0)$. Moreover we have the following description of its irreducible components.
\begin{proposition}[Lusztig, {\cite[Proposition 14.2]{MR1088333}}]
We have
\[
 \Lambda_{\dd}=\bigcup_{\OO\subset E_{\dd}}\overline{T^*_{\OO}E_{\dd}}.
\]
where the sum is indexed by $G_{\dd}$-orbits in $E_{\dd}$. 
\end{proposition}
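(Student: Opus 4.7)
My plan is to prove the two containments separately, after first reducing the condition defining $\Lambda_\dd$ to the single equation $\mu_\dd(x) = 0$ in the finite type case. The route is pointwise over each orbit: identify the conormal fibre with the moment map fibre, then use that $E_\dd$ is a finite union of orbits.

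\textbf{Step 1 (nilpotency is automatic).} First I would show $\Lambda_\dd = \mu_\dd^{-1}(0)$. The inclusion $\Lambda_\dd \subset \mu_\dd^{-1}(0)$ is immediate from the definition. For the converse, note that the quotient stack $[\mu_\dd^{-1}(0)/G_\dd]$ classifies $\dd$-dimensional modules over the preprojective algebra $\Pi(Q)$. Since $Q$ is of type $ADE$, $\Pi(Q)$ is a finite-dimensional algebra whose augmentation ideal (generated by the arrows of $\bar Q$) is nilpotent. Therefore every $\Pi(Q)$-module of dimension $\dd$ is nilpotent, meaning that for $(x, x^*) \in \mu_\dd^{-1}(0)$ the combined operator is nilpotent. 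Hence $\mu_\dd^{-1}(0) \subset \Lambda_\dd$.

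\textbf{Step 2 (conormal fibre equals moment map fibre over each orbit).} For $x \in \OO$, identify $T^*E_\dd$ with $E_{\bar Q, \dd}$ via the trace pairing $\langle x^*, y\rangle = \sum_{\alpha \in \Omega} \Tr(x^*_{\bar\alpha} y_\alpha)$. The tangent space to the orbit is $T_x\OO = \{[x, \xi] : \xi \in \bigoplus_i \gl_{\dd_i}\}$, the image of the infinitesimal $G_\dd$-action. By definition of the conormal bundle and the cyclicity of the trace, a cotangent vector $x^* \in E_{\bar Q, \dd}$ lies in $T^*_\OO E_\dd|_x$ precisely when $\Tr(x^* \cdot [x,\xi]) = 0$ for all $\xi$, which after rearranging the trace becomes $\Tr(\xi \cdot \mu_\dd(x,x^*)) = 0$ for all $\xi$, i.e. $\mu_\dd(x, x^*) = 0$. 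Thus
$$T^*_\OO E_\dd = \pi_\dd^{-1}(\OO) \cap \mu_\dd^{-1}(0).$$

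\textbf{Step 3 (assembling the components).} Combining Steps 1 and 2 yields the set-theoretic partition
$$\Lambda_\dd = \mu_\dd^{-1}(0) = \bigsqcup_{\OO} T^*_\OO E_\dd,$$
where $\OO$ runs over the finite set of $G_\dd$-orbits in $E_\dd$ (here we use Theorem \ref{gabriel}). Each $T^*_\OO E_\dd$ is irreducible (a vector bundle over $\OO$) of dimension $\dim E_\dd$, so its closure is an irreducible Lagrangian subvariety of $T^*E_\dd$. Taking closures in the partition and using that the union is finite gives $\Lambda_\dd = \bigcup_\OO \overline{T^*_\OO E_\dd}$.

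\textbf{Expected main obstacle.} The only non-formal ingredient is Step 1: that $\mu_\dd^{-1}(0)$ parametrizes automatically nilpotent $\bar Q$-representations for $ADE$ quivers. This rests on the finite-dimensionality of the preprojective algebra in type $ADE$, which is a well-known but non-trivial theorem; once it is invoked, the rest is straightforward dimension bookkeeping and linear algebra. For wild (and even affine) quivers this step fails, which is precisely why one has to impose nilpotency by hand in the general definition of $\Lambda_\dd$.
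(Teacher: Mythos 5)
Your proof is correct and takes essentially the route the paper relies on (the paper does not reprove the statement but cites Lusztig): redundancy of the nilpotency condition in finite type — which the paper attributes to \cite[Proposition 4.14]{MR3202708} and you derive from finite-dimensionality of the preprojective algebra — combined with the standard identification $\pi_{\dd}^{-1}(\OO)\cap\mu_{\dd}^{-1}(0)=T^*_{\OO}E_{\dd}$ and the finiteness of the set of $G_{\dd}$-orbits. Nothing essential is missing.
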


\subsubsection{Lusztig nilpotent variety for cyclic quivers}\label{lusztignilpotentcyclic}
Recall from Section \ref{cyclicquivers} the two types of nilpotent orbits: aperiodic and not aperiodic orbits. We have the following result due to Lusztig.
\begin{proposition}[Lusztig, {\cite[Proposition 15.5]{MR1088333}}]
 The nilpotent variety is
 \[
  \Lambda_{\dd}=\bigcup_{\OO\subset E_{\dd}}\overline{T^*_{\OO}E_{\dd}}
 \]
where the union is indexed by \emph{aperiodic} nilpotent orbits.
\end{proposition}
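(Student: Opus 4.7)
The plan is to reduce the description of $\Lambda_{\dd}$ to determining which nilpotent $G_{\dd}$-orbits $\OO\subset E_{\dd}$ contribute an irreducible component $\overline{T^*_{\OO}E_{\dd}}$ to $\Lambda_{\dd}$. Since $\Lambda_{\dd}$ is closed, conical, $G_{\dd}$-invariant and Lagrangian in $T^*E_{\dd}$, and since nilpotency of $(x,x^*)$ in $\overline{Q}$ forces $x$ to be nilpotent in $Q$, the projection to $E_{\dd}$ of any irreducible component lies in the nilpotent locus. By Section \ref{cyclicquivers} that locus is a finite union of $G_{\dd}$-orbits parametrized by multipartitions, so every irreducible component of $\Lambda_{\dd}$ has the form $\overline{T^*_{\OO_{\mathbf{m}}}E_{\dd}}$ for a unique multipartition $\mathbf{m}$ of $\dd$.

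Next, I would characterize the inclusion $T^*_{\OO}E_{\dd}\subset \Lambda_{\dd}$ pointwise. Fix $x\in\OO$. Under the trace pairing identifying $T^*_xE_{\dd}$ with the space of cotangent vectors $x^*$, the conormal fiber $T^*_{\OO,x}E_{\dd}$ consists exactly of those $x^*$ annihilating $T_x\OO$; identifying $T_x\OO$ with the image of the infinitesimal $G_{\dd}$-action, this orthogonality is precisely the moment map relation $\mu_{\dd}(x,x^*)=0$. Hence $T^*_{\OO}E_{\dd}\subset\Lambda_{\dd}$ if and only if every $x^*$ in the (full-dimensional) conormal fiber at one generic $x\in\OO$ makes $(x,x^*)$ nilpotent in $\overline{Q}$. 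Because all conormal bundles have dimension $\dim E_{\dd}=\dim\Lambda_{\dd}$, such an inclusion automatically produces an irreducible component.

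It remains to show that this nilpotency condition on the conormal fiber is equivalent to $\mathbf{m}$ being aperiodic. For the easier direction, suppose $\mathbf{m}$ is periodic and fix $l\geq 1$ such that $I_{i,l}$ is a summand of $x$ at every vertex $i\in\Z/n\Z$. The cyclic configuration of these $n$ summands, together with the preprojective relation $[x,x^*]=0$, allows one to produce an explicit $x^*\in T^*_{\OO,x}E_{\dd}$ whose composition with $x$ around the full cycle $0\to 1\to\cdots\to n-1\to 0$ restricts to a nonzero scalar on the top of $I_{0,l}$; such $(x,x^*)$ is visibly not nilpotent, so $T^*_{\OO,x}E_{\dd}\not\subset\Lambda_{\dd}$. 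For the aperiodic direction, let $i_0$ be a vertex missing from some length-$l$ row of $\mathbf{m}$; using the identification $T^*_{\OO,x}E_{\dd}\simeq\Ext^1(x,x)^*\simeq\Hom(x,\tau x)$ and an induction on $\dim\dd$ that peels off a nilpotent indecomposable at $i_0$ (controlled by the discrete-flag resolutions of Theorem \ref{resolutioncyclic}), one reduces the nilpotency of $(x,x^*)$ on the entire conormal fiber to the same statement for a strictly smaller aperiodic multipartition. A dimension count then matches $\dim\overline{T^*_{\OO_{\mathbf{m}}}E_{\dd}}=\dim E_{\dd}$ with $\dim\Lambda_{\dd}$, so all aperiodic orbits give components and no others do.

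The main obstacle is the aperiodic direction of the third step: producing a uniform nilpotency argument valid for \emph{every} $x^*$ in the conormal fiber. The periodic obstruction is essentially a one-line cycle construction, but the aperiodic direction requires either Lusztig's original combinatorial induction on multisegments, or the geometric induction outlined above built on the aperiodic resolutions of Theorem \ref{resolutioncyclic}. Everything upstream of this step is formal consequence of Lagrangianness, conicalness and the finite orbit picture.
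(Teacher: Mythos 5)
The paper does not actually prove this proposition; it is quoted from Lusztig (\cite[Proposition 15.5]{MR1088333}), so there is no in-paper argument to compare with and your proposal must stand on its own. Your reduction steps are sound: since $\Lambda_{\dd}$ is closed, conical, $G_{\dd}$-invariant and Lagrangian, and its projection lies in the nilpotent locus, which is a finite union of orbits, every irreducible component is of the form $\overline{T^*_{\OO}E_{\dd}}$ for a nilpotent orbit $\OO$; the conormal fibre at $x\in\OO$ is exactly $\{x^*\mid \mu_{\dd}(x,x^*)=0\}$, so everything reduces to deciding for which orbits every covector in the conormal fibre yields a nilpotent pair; and for a periodic orbit the construction of a non-nilpotent covector supported on a summand $\bigoplus_{i}I_{i,l}$ (an isomorphism onto its rotation, via $\Ext^1(x,x)^*\simeq\Hom(x,\tau x)$) is the standard, essentially correct argument — though as phrased the non-nilpotency should be witnessed by a cycle using the $x^*$-arrows (or an alternating one), since the cycle of $x$-arrows is nilpotent.

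The genuine gap is the direction you yourself flag: that aperiodicity of $\OO$ forces $(x,x^*)$ to be nilpotent for \emph{every} $x^*$ in the conormal fibre. This is the whole content of the proposition, and the sketched induction does not work as stated: a covector $x^*$ need not respect any direct sum decomposition of $x$, nor preserve the $x$-stable flag furnished by Theorem \ref{resolutioncyclic}, whereas nilpotency of the pair requires a flag compatible with both $x$ and $x^*$; nothing in the sketch produces one, so ``peeling off an indecomposable at $i_0$'' is unjustified, and the closing ``dimension count'' does not show that non-aperiodic orbits contribute no components (that is precisely what the periodic-direction construction is for). Either reproduce Lusztig's combinatorial argument, or close the gap with tools already in the paper: for aperiodic $\OO$, Theorem \ref{resolutioncyclic} shows that $\ICC(\overline{\OO},\underline{\C})$ is (up to shift) a direct summand of $(\pi_{\underline{\dd}})_*\underline{\C}$ for a discrete flag-type, hence a Lusztig sheaf, and Theorem \ref{ssls} then gives $\overline{T^*_{\OO}E_{\dd}}\subset SS(\ICC(\overline{\OO},\underline{\C}))\subset\Lambda_{\dd}$. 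This is not circular, since Theorem \ref{resolutioncyclic} and Theorem \ref{ssls} are established independently of the present statement (the paper uses the present proposition only for the converse half of Proposition \ref{lusztigcyclicq}); combined with your first step and the periodic-orbit construction, it yields a complete proof.
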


\subsubsection{Lusztig nilpotent variety for acyclic affine quivers}
In our terminology, all results of this Section also concern cyclic quivers with acyclic orientation. Recall the Ringel stratification of the representation spaces of an affine quiver defined in Section \ref{stratification}. We have the following result.
\begin{proposition}[Ringel, {\cite[Corollary 5.3]{MR1676227}}]\label{Ringelstrat}
 We have
 \[
  \Lambda_{\dd}=\bigcup_{P,I,H,\mu}\overline{T^*_{\Xi(P,I,H,\mu)}E_{\dd}}
 \]
where the sum is indexed by the quadruples $(P,I,H,\mu)$ as in Section \ref{stratification}, where $\mu$ is regular (if $\mu(\lambda)\neq 0$ for a partition $\lambda$, then $\lambda$ has length one) and $\dim (P,I,H,\mu)=\dd$.
\end{proposition}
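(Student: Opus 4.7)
The strategy is to use the Ringel stratification of $E_{\dd}$ together with the fact, recalled above, that $\Lambda_{\dd}$ is closed and pure Lagrangian of dimension $\dim E_{\dd}$. Any such closed Lagrangian admits a decomposition as a union of closures of conormal bundles $\overline{T^*_S E_{\dd}}$ to certain smooth $G_{\dd}$-invariant locally closed subvarieties $S$ of the stratification, because each conormal bundle is itself pure Lagrangian of the same dimension $\dim E_{\dd}$. The task is therefore to single out which of the strata $\Xi(P,I,N,\mu)$ contribute.

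The first step is that the moment map condition $\mu_{\dd}(x,x^*)=0$ is automatic on $T^*_{\Xi(P,I,N,\mu)}E_{\dd}$ for every type: since $\Xi(P,I,N,\mu)$ is $G_{\dd}$-invariant, its conormal bundle lies in the zero-level of the moment map of the Hamiltonian $G_{\dd}$-action on $T^*E_{\dd}$. Hence only the nilpotency condition need be analysed: for which types is the generic fibre of $T^*_{\Xi(P,I,N,\mu)}E_{\dd}\to\Xi(P,I,N,\mu)$ contained in the nilpotent locus of $E_{\overline{Q},\dd}$?

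The second step is to factor the problem using the induction isomorphism of Lemma \ref{isoreg}, which identifies $\Xi(P,I,N,\mu)$ up to $G_{\dd}$-induction with $\OO_{P\oplus I\oplus N}\times\Xi(\mu)$. Since the preprojective, preinjective and non-homogeneous regular factors are fixed orbit-wise, they impose no obstruction to nilpotency (the normal cotangent direction over a single orbit always defines a nilpotent cotangent vector via Crawley-Boevey's description). The whole problem thus reduces to the cotangent bundle over the regular-homogeneous stratum $\Xi(\mu)\subset E_{d\delta}^{\reghom}$.

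The hard part will be the explicit nilpotency analysis on $\Xi(\mu)$: one must show that a generic $(x,x^*)\in T^*_{\Xi(\mu)}E_{d\delta}$ is nilpotent in the doubled quiver if and only if $\mu$ is regular. For the \emph{if} direction, when $\mu$ is regular the generic $x$ is a sum $\bigoplus_i S_{a_i}[n_i]$ with pairwise distinct $a_i$; using the $\chi_{\mu}$-fibration of Section \ref{quotientreg}, the normal cotangent directions split as a direct sum of contributions, one per summand $S_{a_i}[n_i]$, and each can be realized as the shift-down within the quasi-Jordan block, hence as a nilpotent endomorphism of the doubled-quiver representation. For the \emph{only if} direction, whenever $\mu(\lambda)>0$ for some $\lambda$ of length $\geq 2$, within the corresponding tube at $a$ the representation $S_a[\lambda]$ decomposes as a sum of several indecomposables, the Serre-dual pairing $\Ext^1(S_a[\lambda],S_a[\lambda])^{*}\simeq\Hom(S_a[\lambda],\tau S_a[\lambda])\simeq \End(S_a[\lambda])$ provides a nonzero semisimple direction in $\Ext^1(S_a[\lambda],S_a[\lambda])$, and any cotangent vector normal to $\Xi(\mu)$ at $x$ must hit this direction with a non-nilpotent component. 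Once this dichotomy is established, a dimension count shows that $\overline{T^*_{\Xi(P,I,N,\mu)}E_{\dd}}$ has the correct dimension $\dim E_{\dd}$ for each regular type, and since the various closures project to the closures of pairwise distinct strata of $E_{\dd}$, they are pairwise distinct irreducible components and therefore exhaust $\Lambda_{\dd}$.
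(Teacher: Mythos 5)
You have identified the right objects (conormals to Ringel strata, the identification of the conormal-to-orbit fibre with $\Ext^1(x,x)^*\simeq\Hom(x,\tau x)$, nilpotency as a condition on endomorphisms of the regular part), but two steps of the plan are genuinely broken; note also that the paper itself gives no proof of this proposition (it is quoted from Ringel, whose argument is a module-theoretic dimension count over each stratum), so the comparison is against what a complete proof requires. First, the opening assertion that $\Lambda_{\dd}$, being a closed conical Lagrangian, "admits a decomposition as a union of closures of conormal bundles to subvarieties of the stratification" is not justified and is in fact the main content of the result. Purity only gives that each irreducible component of $\Lambda_{\dd}$ is $\overline{T^*_SE_{\dd}}$ for \emph{some} smooth $G_{\dd}$-invariant locally closed $S$; it does not give that $S$ may be taken to be a Ringel stratum. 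The moment map condition only says that a point $(x,x^*)\in\Lambda_{\dd}$ has $x^*$ conormal to the \emph{orbit} $\OO_x$, not to the stratum through $x$, so $\Lambda_{\dd}\cap\pi_{\dd}^{-1}(\Xi)$ is a priori much larger than $\Lambda_{\dd}\cap T^*_{\Xi}E_{\dd}$. What is needed, and entirely absent from your plan, is a stratum-by-stratum bound on the dimension of the nilpotent locus inside the fibre $\Ext^1(x,x)^*\simeq\Hom(x,\tau x)$, uniform along each stratum, from which purity then forces every component to be a conormal closure to a regular stratum. Your closing "therefore exhaust $\Lambda_{\dd}$" has exactly the same gap: exhibiting some components does not show there are no others.

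Second, the reduction step contains a false claim: it is not true that "the normal cotangent direction over a single orbit always defines a nilpotent cotangent vector". Take the acyclic quiver of type $A_2^{(1)}$ with arrows $a:1\to2$, $b:2\to3$, $c:1\to3$, and let $x$ be the direct sum of the two regular simples $(0,1,0)$ and $(1,0,1)$ of the period-two tube, so $\dim x=\delta$ and $x$ has $a=b=0$, $c=1$. The conormal fibre to $\OO_x$ at $x$ is $\{(a^*,b^*,c^*):c^*=0\}$, and whenever $a^*b^*\neq0$ the cyclic path $a^*\circ b^*\circ c$ acts invertibly on the vertex-$1$ space, so the generic pair $(x,x^*)$ is \emph{not} nilpotent. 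Thus the non-homogeneous regular block does obstruct nilpotency (periodic regular summands are precisely where aperiodicity enters; compare Proposition \ref{lusztigorbits} and Theorem \ref{explicitaffine}), so the problem cannot be reduced to the homogeneous stratum $\Xi(\mu)$ alone: the same kind of analysis is required for the inhomogeneous part, and this is where the delicacy of the statement lies. Finally, within the homogeneous analysis itself, the "only if" claim is stated too strongly (the zero covector is always nilpotent, so the assertion must be about \emph{generic} conormal vectors), and for the "if" direction it is not enough to "realize" some nilpotent covectors: you must show that the whole conormal fibre to a regular stratum (equivalently, a dense subset, since the nilpotent locus is closed) consists of nilpotent pairs.
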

This result is analogous to the one in Section \ref{jordanquiver} giving a stratification of the Lie algebras of linear groups.

\subsection{Two technical lemmas}
\subsubsection{First Lemma}
Let $\mathscr{F}\in D^b_{G_{\dd}}(E_{\dd})$ be a simple $G_{\dd}$-equivariant perverse sheaf. Assume that $SS(\mathscr{F})\subset \Lambda_{\dd}$. Then by Proposition \ref{Ringelstrat}, there exists $\Xi:=\Xi(P,I,N,\mu)$ such that $\overline{\Xi}=\supp\mathscr{F}$. Let $\dd_P=\dim P$, $\dd_I=\dim I$ and $\dd_R=\dim N+\dim\mu$. We let
\[
 i : \OO_P\times E_{\dd_R}^{reg}\times \OO_I\rightarrow E_{\dd}
\]
be the direct sum and
\[
 j : \OO_P\times E_{\dd_R}^{reg}\times \OO_I\rightarrow E_{\dd_P}\times E_{\dd_R}\times E_{\dd_I}
\]
be the inclusion. We fix the standard flag $\underline{F}=(\C^{\dd_I},\C^{\dd_I+\dd_R},\C^{\dd_P+\dd_r+\dd_I})$ in $\C^{\dd}$ with subquotients of dimensions $\dd_I,\dd_R,\dd_P$. Associated to it there is a parabolic subgroup $P_{\underline{F}}$ of $G_{\dd}$ and a unipotent subgroup $U_{\underline{F}}\subset P_{\underline{F}}$ of elements respecting this flag.
\begin{lemma}\label{lemma1}
 With the above notations, 
 \[
  \mathscr{G}:=i^*\mathscr{F}[r]\in D^b_{G_{\dd_P}\times G_{\dd_R}\times G_{\dd_I}}(\OO_P\times E_{\dd_R}^{reg}\times \OO_I)
 \]
 is a perverse sheaf for $r=-\dim U_{\underline{F}}-(\dim G_{\dd}-\dim P_{\underline{F}})$. Moreover,
 \[
  \mathscr{G}=\underline{\C}[\dim\OO_P]\boxtimes \mathscr{G}_R\boxtimes\underline{\C}[\dim \OO_I]
 \]
where $\mathscr{G}_R$ is a $G_{\dd}$-equivariant simple perverse sheaf on $E_{\dd_R}^{reg}$ such that $SS(\mathscr{G}_R)\subset \Lambda_{\dd_R}^{reg}$ and $\mathscr{F}$ appears as a direct summand of $m_{\dd_P,\dd_R,\dd_I}(\mathscr{G})$.
\end{lemma}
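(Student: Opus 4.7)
The strategy is to relate the direct sum map $i$ to the restriction diagram associated with the flag-type $\underline{\dd}=(\dd_I,\dd_R,\dd_P)$ and to exploit the uniqueness of filtrations provided by Corollary \ref{uniquefiltration}. Let $F_{\underline{\dd}}\subset E_{\dd}$ be the subvariety of representations preserving the standard flag $\underline{F}$, with associated graded map $\kappa:F_{\underline{\dd}}\to E_{\dd_I}\times E_{\dd_R}\times E_{\dd_P}$ and inclusion $\iota:F_{\underline{\dd}}\hookrightarrow E_{\dd}$. The key observation is that, over the open subset $E_{[P],\dd_R,[I]}^{reg}:=\OO_P\oplus E_{\dd_R}^{reg}\oplus \OO_I$ of the support of $\mathscr{F}$, Corollary \ref{uniquefiltration} forces the three-step filtration with subquotient dimensions $(\dd_I,\dd_R,\dd_P)$ to be unique, so the natural map
\[
G_{\dd}\times^{P_{\underline{F}}}\kappa^{-1}(\OO_I\times E_{\dd_R}^{reg}\times \OO_P)\longrightarrow E_{[P],\dd_R,[I]}^{reg}
\]
is an isomorphism. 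Consequently, the direct sum map $i$ (up to reordering the factors) factors as an affine bundle projection of rank $\dim U_{\underline{F}}$ followed by a smooth $G_{\dd}/P_{\underline{F}}$-quotient, and the shift $r=-\dim U_{\underline{F}}-(\dim G_{\dd}-\dim P_{\underline{F}})$ is exactly the codimension correction that makes $i^{*}\mathscr{F}[r]$ perverse on the orbit bundle $\OO_P\times E_{\dd_R}^{reg}\times \OO_I$.

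Given this, the first task is to show that $\mathscr{G}=i^{*}\mathscr{F}[r]$ is perverse. One way is to pull $\mathscr{F}$ back along $\iota$, apply $\kappa_{!}$ to the preimage of $\OO_I\times E_{\dd_R}^{reg}\times \OO_P$ and recognize the result as (a shift of) the restriction functor $\Delta_{\dd_I,\dd_R,\dd_P}$ applied to $\mathscr{F}$, localized over the regular locus of the middle factor. Since the fibers of the restricted map $\kappa$ are contractible affine spaces (they parametrize splittings of the unique filtration), this composition computes $\mathscr{G}$ up to the proclaimed shift and preserves perversity. Next, since $\OO_P$ and $\OO_I$ are single $G_{\dd_P}$- resp.\ $G_{\dd_I}$-orbits with connected stabilizers (see Section \ref{notations}), the only simple $G_{\dd_P}$-equivariant perverse sheaf on $\OO_P$ is $\underline{\C}[\dim \OO_P]$ and similarly for $\OO_I$. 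Combined with $G_{\dd}$-equivariance and the product structure of $\OO_P\times E_{\dd_R}^{reg}\times \OO_I$, a K\"unneth-type argument then yields the factorization $\mathscr{G}=\underline{\C}[\dim \OO_P]\boxtimes \mathscr{G}_R\boxtimes \underline{\C}[\dim \OO_I]$, with $\mathscr{G}_R$ a simple $G_{\dd_R}$-equivariant perverse sheaf on $E_{\dd_R}^{reg}$ whose support is the intersection of $\overline{\Xi(N,\mu)}$ with $E_{\dd_R}^{reg}$.

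To control the singular support of $\mathscr{G}_R$, I would invoke the functorial behaviour of $SS$ under smooth pullback (the relevant fact being that $SS$ commutes with pullback along submersions, up to the cotangent correspondence). Since $i$ is, up to a $G_{\dd}$-torsor factor, a smooth morphism with isotropic cotangent correspondence respecting the moment map condition (the condition $\mu(x,x^{*})=0$ is preserved block-wise by passage to associated graded on $F_{\underline{\dd}}$), the inclusion $SS(\mathscr{F})\subset \Lambda_{\dd}$ forces $SS(\mathscr{G})$ to lie in $\{\text{zero section}\}\times \Lambda_{\dd_R}^{reg}\times\{\text{zero section}\}$; extracting the middle factor yields $SS(\mathscr{G}_R)\subset \Lambda_{\dd_R}^{reg}$. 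Finally, the assertion that $\mathscr{F}$ is a direct summand of $m_{\dd_P,\dd_R,\dd_I}(\mathscr{G})$ follows from the induction-restriction formalism of Section \ref{inductionrestriction}: the restriction $\Delta_{\dd_P,\dd_R,\dd_I}\mathscr{F}$ contains $\mathscr{G}$ (up to the factorization above) as a direct summand over the regular stratum, and by the standard adjunction/semisimplicity argument (the induction is semisimple because $\pi_{\underline{\dd}}$ is proper and the involved sheaves are semisimple), $\mathscr{F}$ reappears as a summand of the induction of its restriction, since the corresponding stratum is open in $\supp\mathscr{F}$ and receives $\mathscr{F}$ with multiplicity one.

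The main obstacle will be the precise bookkeeping in the first two steps: identifying $i^{*}\mathscr{F}[r]$ with the restriction $\Delta$ applied to $\mathscr{F}$, verifying the K\"unneth-type splitting (which uses in an essential way that both $\OO_P$ and $\OO_I$ are single orbits with connected stabilizers, whereas the middle factor is not), and checking that the cotangent correspondence for $i$ sends the moment-map zero locus to the moment-map zero locus of $E_{\dd_R}^{reg}$. The last point is what allows us to descend the nilpotency condition from $\mathscr{F}$ to $\mathscr{G}_R$ and hence to treat the regular stratum independently in the subsequent sections.
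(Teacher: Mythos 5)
Your overall route (pass to the flag-preserving locus via Corollary \ref{uniquefiltration}, use the induction equivalence, split off the two orbit factors by equivariance, and recover $\mathscr{F}$ from the three-term induction diagram over the open stratum) is the same as the paper's, but the central step of your argument has a genuine gap: the perversity of $\mathscr{G}=i^*\mathscr{F}[r]$. You justify it by identifying $i^*\mathscr{F}[r]$ with a shift of $\Delta_{\dd_I,\dd_R,\dd_P}(\mathscr{F})$ over the regular locus and appealing to the fact that the fibers of $\kappa$ are contractible affine spaces. Neither point holds as stated. First, $\kappa_!\iota^*\mathscr{F}$ computes fiberwise compactly supported cohomology, which coincides with the stalk of $\mathscr{F}$ at the split point (up to a shift by twice the fiber dimension) only if $\iota^*\mathscr{F}$ is, up to shift, pulled back along the affine fibration $\kappa\vert_{E'_{\underline{F}}}$; contractibility of the fibers alone gives nothing, and in general the restriction functor $\Delta$ does not preserve perversity. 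Second, and more importantly, the pulled-back structure is exactly where the hypothesis $SS(\mathscr{F})\subset\Lambda_{\dd}$ enters, and your perversity argument never uses it: the paper first makes $i_1^*\mathscr{F}[\dim P_{\underline{F}}-\dim G_{\dd}]$ perverse via the induction equivalence $E'\simeq E'_{\underline{F}}\times^{P_{\underline{F}}}G_{\dd}$, then uses Lemma \ref{indcv} to see that its singular support lies in conormals to strata that are $\pi$-pullbacks of Ringel strata of $\OO_P\times E_{\dd_R}^{reg}\times\OO_I$, and only then concludes (via Lemma \ref{sysloczerosection} and the triviality of the bundle $\pi$) that it equals $\pi^*\ICC(\mathscr{L})[r_{\underline{F}}]$, so that restriction along the direct-sum section is again an intersection complex, hence perverse. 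Without this input, $i^*\mathscr{F}[r]$ is simply not perverse for a general simple equivariant $\mathscr{F}$ supported on $\overline{\Xi}$, so any proof that does not invoke the singular support condition at this point cannot be correct.

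Two secondary points. Your transfer of the singular support condition to $\mathscr{G}_R$ is phrased as if $i$ were ``up to a $G_{\dd}$-torsor factor, a smooth morphism''; it is a locally closed immersion of positive codimension, and the correct mechanism is again the combination of Lemma \ref{indcv} (for the associated-bundle step), smooth pullback (Corollary \ref{corpbsmooth}), and the fact that the complex is pulled back along the bundle $\pi$ before restricting to the section -- not a moment-map compatibility of the cotangent correspondence of $i$. Finally, for the last assertion, ``induction of the restriction contains $\mathscr{F}$'' is not a general principle; what makes it work here is that, by Corollary \ref{uniquefiltration}, the induction map $q'$ is an isomorphism over $\Xi$, so the restriction to $\Xi$ of $m_{\dd_P,\dd_R,\dd_I}(j_{!*}\mathscr{G})$ is identified with $\mathscr{F}_{\Xi}$, and the decomposition theorem then produces $\mathscr{F}$ as a summand; you gesture at openness and multiplicity one, but this identification over $\Xi$ is the step that has to be verified explicitly, as the paper does.
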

\begin{proof}
 Recall the notation $E_{[P],\dd_R,[I]}$ from Section \ref{stratification}. To ease the notation, we define $E'$ to be this set. Let $E'_{\underline{F}}$ be the closed subset of $E'$ of elements $x$ preserving the standard flag $\underline{F}$. By Corollary \ref{uniquefiltration}, $E'\simeq E'_{\underline{F}}\times^{P_{\underline{F}}}G_{\dd}$. Let $i_1 : E'_{\underline{F}}\rightarrow E_{\dd}$ be the natural inclusion. $E'$ and $E'_{\underline{F}}$ possess the stratifications induced by Ringel's one on $E_{\dd}$. The stratification on $E'_{\underline{F}}$ is
 \[
  E'_{\underline{F}}=\bigsqcup_{P',I',N',\mu'}\Xi(P',I',N',\mu')\cap E'_{\underline{F}}
 \]
 where $(P',I',N',\mu')$ is defined by the same conditions as in Section \ref{stratification}. The notion of \emph{regular} strata is the same as in Proposition \ref{Ringelstrat}.
The pull-back $\mathscr{F}_1:=i_1^*\mathscr{F}[\dim P_{\underline{F}}-\dim G_{\dd}]$ is perverse on $E'_{\underline{F}}$. By Lemma \ref{indcv}, its singular support is included in the union of the closures of the conormal bundles to the regular strata of the stratification of $E'_{\underline{F}}$. Let $i_2 : \OO_P\times E_{\dd_R}^{reg}\times\OO_I\rightarrow E'_{\underline{F}}$ be the inclusion induced by the direct sum. We also have a projection $\pi : E'_{\underline{F}}\rightarrow \OO_P\times E_{\dd_R}^{reg}\times \OO_I$ which is a trivial fiber bundle of rank $r_{\underline{F}}:=\sum_{\alpha:i\rightarrow j}((\dd_R)_i(\dd_I)_j+(\dd_P)_I(\dd_R+\dd_I)_j)$. The stratification on $E'_{\underline{F}}$ coincides with the pull-back by $\pi$ of Ringel stratification on $\OO_P\times E_{\dd_R}^{reg}\times\OO_I$, where the strata are of the form $\OO_P\times \Xi(N',\mu)\times \OO_I$ for $\mu : \mathscr{P}\rightarrow \N$, $N'$ regular non-homogeneous, $\dim N'+\dim\mu=\dd_R$. Therefore, there exists an open stratum of the support of $\mathscr{F}_1$ of the form $\pi^{-1}(S)$ for $S\subset \OO_P\times E_{\dd_R}^{reg}\times \OO_I$ a Ringel stratum. By Lemmas \ref{sysloczerosection} and \ref{irrclosm}, the restriction of $\mathscr{F}_1$ to $\pi^{-1}(S)$ is a local system. Therefore, it is of the form $\pi^*\mathscr{L}[\dim S+r_{\underline{F}}]$ for a local system $\mathscr{L}$ on $S$. Consequently, $\mathscr{F}_1=\ICC(\pi^*\mathscr{L})$, and $i_2^*\mathscr{F}_1[-r_{\underline{F}}]=\ICC(\mathscr{L})$. From the definitions of $i, i_1$ and $i_2$, this last sheaf is $\mathscr{G}$. Since $\mathscr{G}$ is $G_{\dd_P}\times G_{\dd_R}\times G_{\dd_I}$-equivariant, $\mathscr{G}=\underline{\C}[\dim\OO_P]\boxtimes \mathscr{G}_R\boxtimes \underline{\C}[\dim\OO_I]$ where $\mathscr{G}_R$ is a simple $G_{\dd_R}$-equivariant perverse sheaf on $E_{\dd_R}^{reg}$. If $x_P$ (resp. $x_I$) is any element in the orbit of $P$ (resp. $I$),
\[
\begin{matrix}
 i_3 &:& E_{\dd_R}^{reg}&\rightarrow& \OO_P\times E_{\dd_R}^{reg}\times\OO_I\\
 &&x&\mapsto&x_P\oplus x\oplus x_I
\end{matrix}
 \]
then $\mathscr{G}_R=i_3^*\mathscr{G}[-\dim\OO_P-\dim\OO_I]$. Using Lemma \ref{indcv} again, $\mathscr{G}_R$ has its singular support in $\Lambda_{\dd_R}^{reg}$. This proves the first part of the lemma.

To prove the second part, we use the following diagram whose lower row is the induction diagram with three terms:
\[
  \xymatrix{
  \OO_P\times \Xi(N,\mu)\times \OO_I\ar[d]&A\ar[r]^{r'}\ar[d]\ar[l]_(.3){p'}&B\ar[r]^(.35){q'}\ar[d]&\Xi(P,N,I,\mu)\ar[d]\\
  E_{\dd_P}\times E_{\dd_R}\times E_{\dd_I}&E_{\dd_P,\dd_R,\dd_I}^{(2)}\ar[r]^r\ar[l]_(.4)p&E_{\dd_P,\dd_R,\dd_I}\ar[r]^(.55)q&E_{\dd}
  }
 \]
 where $A=p^{-1}(\OO_P\times\Xi(N,\mu)\times\OO_I)$, $B=r(A)$. By Corollary \ref{uniquefiltration}, $q'$ is an isomorphism and $\Xi\simeq \Xi_{\underline{F}}\times^{P_{\underline{F}}}G_{\dd}$. Moreover, $A\simeq \Xi_{\underline{F}}\times^{U_{\underline{F}}}G_{\dd}$ and all squares are cartesian. Therefore, to prove that $\mathscr{F}$ is a direct summand of $m_{\dd_P,\dd_R,\dd_I}(j_{!*}\mathscr{G})$, it suffices to prove that the restriction $\mathscr{F}_{\Xi}$ of $\mathscr{F}$ to $\Xi$ is equal to $q'_*(r')_{\flat}p'^*\mathscr{G}[\dim p-\dim r]$. We now have the following diagram:
 \[
  \xymatrix{
  &\Xi_{\underline{F}}\ar[ld]_{\pi}\ar[d]_{\iota_U}\ar[rd]^{\iota_P}&\\
  \OO_P\times \Xi(N,\mu)\times \OO_I&\Xi_{\underline{F}}\times^{U_{\underline{F}}}G_{\dd}\ar[l]_(.37){p'}\ar[r]^{r'}&\Xi_{\underline{F}}\times^{P_{\underline{F}}}G_{\dd}.
  }
 \]
By the same arguments as in the first part of the proof (recall that $\pi$ is a trivial fiber bundle of rank $r_{\underline{F}}$ and $\mathscr{G}_{\OO_P\times \Xi(N,\mu)\times\OO_I}$ and $\mathscr{F}_{\Xi}$ are local systems shifted by the dimension to make them perverse), $\pi^*\mathscr{G}[r_{\underline{F}}]\simeq \iota_P^*\mathscr{F}_{\Xi}[\dim G_{\dd}-\dim P_{\underline{F}}]$. Since the left triangle commutes, $\iota_U^*p'^*\mathscr{G}[\dim p-(\dim G_{\dd}-\dim U_{\underline{F}})]\simeq \iota_P^*\mathscr{F}_{\Xi}[\dim G_{\dd}-\dim P_{\underline{F}}]$. Since the right-hand side triangle also commutes, we get
\[
\begin{aligned}
 \iota_P^*(r')_{\flat}p'^*\mathscr{G}[\dim p'-\dim r'-(\dim G_{\dd}-\dim P_{\underline{F}})]&\simeq \iota_U^*p'^*\mathscr{G}[\dim p'-(\dim G_{\dd}-\dim U_{\underline{F}})]\\
 &\simeq\iota_P^*\mathscr{F}_{\Xi}[\dim G_{\dd}-\dim P_{\underline{F}}]
\end{aligned}
\]
By $G_{\dd}$-equivariance of both $\mathscr{F}_{\Xi}$ and $q'_*(r')_{\flat}p'^*\mathscr{G}[\dim p'-\dim r']$, this proves that they are isomorphic.
\end{proof}

The following corollary follows immediately from Lemma \ref{lemma1}.
\begin{cor}\label{restrictionth}
 Theorem \ref{mainresult} is true if and only if for any $\dd\in\N^I$ and any $G_{\dd}$-equivariant simple perverse sheaf $\mathscr{F}$ on $E_{\dd}^{reg}$ such that $SS(\mathscr{F})\subset \Lambda_{\dd}^{reg}$, $\mathscr{F}$ is the restriction to $E_{\dd}^{reg}$ of a Lusztig perverse sheaf on $E_{\dd}$.
\end{cor}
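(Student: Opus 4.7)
The corollary states an equivalence, but only the direction ``restricted statement $\Rightarrow$ Theorem \ref{mainresult}'' carries nontrivial content; the other direction reduces to the observation that a simple perverse sheaf on $E_\dd^{reg}$ with nilpotent singular support is obtained by restricting its own intermediate extension, which will be forced to be Lusztig once Theorem \ref{mainresult} is granted (after checking that intermediate extension does not introduce non-nilpotent components of the singular support, a point controlled by Proposition \ref{Ringelstrat} together with the Lagrangian/conical nature of the singular support).

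For the useful direction, start with a simple $G_\dd$-equivariant perverse sheaf $\mathscr{F}$ on $E_\dd$ with $SS(\mathscr{F})\subset \Lambda_\dd$. By Proposition \ref{Ringelstrat} the support of $\mathscr{F}$ is the closure of some stratum $\Xi(P,I,N,\mu)$; set $\dd_P=\dim P$, $\dd_I=\dim I$, $\dd_R=\dim N+\dim\mu$. Apply Lemma \ref{lemma1}: it produces a simple $G_{\dd_R}$-equivariant perverse sheaf $\mathscr{G}_R$ on $E_{\dd_R}^{reg}$ with $SS(\mathscr{G}_R)\subset \Lambda_{\dd_R}^{reg}$, and exhibits $\mathscr{F}$ as a direct summand of
\[
m_{\dd_P,\dd_R,\dd_I}\bigl(\underline{\C}[\dim\OO_P]\boxtimes \mathscr{G}_R\boxtimes \underline{\C}[\dim\OO_I]\bigr).
\]

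By the assumed restricted statement applied to $\mathscr{G}_R$, there is a Lusztig sheaf on $E_{\dd_R}$ whose restriction to $E_{\dd_R}^{reg}$ is $\mathscr{G}_R$. Since $\mathscr{G}_R$ is simple, its intermediate extension $j_{!*}\mathscr{G}_R$ agrees with a simple summand of that Lusztig sheaf and is therefore itself a Lusztig sheaf on $E_{\dd_R}$. The two sheaves $\ICC(\OO_P,\underline{\C})$ and $\ICC(\OO_I,\underline{\C})$ are Lusztig sheaves by Proposition \ref{lusztigorbits}, since $P$ (resp.\ $I$) is purely preprojective (resp.\ preinjective) and in particular contains no regular summand of either type prohibited by that statement. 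Since the Lusztig category $\mathscr{Q}$ is stable under the induction functor $m$ (this is essentially built into Lusztig's original definition of $\mathscr{Q}$ via iterated inductions), the complex
\[
m_{\dd_P,\dd_R,\dd_I}\bigl(\ICC(\OO_P,\underline{\C})\boxtimes j_{!*}\mathscr{G}_R\boxtimes \ICC(\OO_I,\underline{\C})\bigr)
\]
lies in $\mathscr{Q}_\dd$. A base-change argument comparing the induction diagram restricted over $\OO_P\times E_{\dd_R}^{reg}\times \OO_I$ with the full induction diagram then identifies $\mathscr{F}$ as a direct summand of this sheaf, hence as an object of $\mathscr{P}_\dd$.

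The step I expect to be the most delicate is the last base-change argument: one must verify that the simple summand of $m(\underline{\C}[\dim\OO_P]\boxtimes \mathscr{G}_R\boxtimes \underline{\C}[\dim\OO_I])$ isomorphic to $\mathscr{F}$ is indeed reproduced as a summand of $m$ applied to the intermediate extensions, rather than being modified or annihilated by the extension process. This uses the fact, already embedded in the proof of Lemma \ref{lemma1}, that $q'$ is an isomorphism on the open stratum $\Xi(P,I,N,\mu)$ (by Corollary \ref{uniquefiltration}), so the contribution of the extended sheaves away from $\OO_P\times E_{\dd_R}^{reg}\times\OO_I$ only adds summands supported outside $\overline{\Xi(P,I,N,\mu)}$ or on its boundary, leaving the summand corresponding to $\mathscr{F}$ intact.
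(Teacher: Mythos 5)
Your treatment of the substantive direction (the regular statement implies Theorem \ref{maintheorem}) is correct and is exactly the paper's route: Lemma \ref{lemma1} produces $\mathscr{G}_R$ with $SS(\mathscr{G}_R)\subset\Lambda_{\dd_R}^{reg}$ and exhibits $\mathscr{F}$ as a summand of an induction, $\ICC(\OO_P,\underline{\C})$ and $\ICC(\OO_I,\underline{\C})$ are Lusztig sheaves by Proposition \ref{lusztigorbits}, and the Hall category is stable under $m$. Note also that the ``delicate base-change'' you postpone to the end is not an extra step: the proof of Lemma \ref{lemma1} already shows that $\mathscr{F}$ is a direct summand of $m_{\dd_P,\dd_R,\dd_I}(j_{!*}\mathscr{G})$, that is of $m$ applied to $\ICC(\OO_P,\underline{\C})\boxtimes j_{!*}\mathscr{G}_R\boxtimes\ICC(\OO_I,\underline{\C})$, so once $j_{!*}\mathscr{G}_R$ is known to be a Lusztig sheaf you are done; the paper's own proof of the corollary is precisely this invocation of Lemma \ref{lemma1}.

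The gap is in your converse direction. You reduce it to the claim that $SS(j_{!*}\mathscr{F})\subset\Lambda_{\dd}$ whenever $SS(\mathscr{F})\subset\Lambda_{\dd}^{reg}$, and assert this is ``controlled by Proposition \ref{Ringelstrat} together with the Lagrangian/conical nature of the singular support.'' That is not an argument: equivariance and conicality only give $SS(j_{!*}\mathscr{F})\subset\mu_{\dd}^{-1}(0)$, which for affine quivers strictly contains $\Lambda_{\dd}$ (nilpotency is a genuine extra condition), and Proposition \ref{Ringelstrat} describes the irreducible components of $\Lambda_{\dd}$ but says nothing about which conormal varieties can appear in the singular support of an intermediate extension over the non-regular boundary. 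In fact, granting Theorem \ref{maintheorem}, the containment you defer is equivalent to the regular statement you are trying to deduce: if it holds then $j_{!*}\mathscr{F}$ is Lusztig by the theorem, and conversely if $\mathscr{F}$ is the restriction of a Lusztig sheaf then the containment follows from Theorem \ref{ssls}. So your sketch of this direction is circular rather than a deduction. This does not affect how the corollary is used (the paper only needs, and its appeal to Lemma \ref{lemma1} only delivers, the direction you did prove), but as a proof of the stated ``if and only if'' your write-up is incomplete at exactly this point.
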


\subsubsection{Second Lemma}
Let $N$ be an inhomogeneous regular representation. Let $T\subset D$ be a set of inhomogeneous tubes such that any indecomposable direct summand of $N$ belongs to one of the tubes of $T$. Let $\dd_N=\dim N$ and $\dd=\dd_N+\dd_R$, $\dd_R$. We consider the map
\[
 i : \OO_N\times E_{\dd_R}^{D\setminus T}\rightarrow E_{\dd}.
\]
induced by the direct sum. We recall (see Section \ref{ringelstrat}) that $E_{[N],\dd_R}^{D\setminus T}$ is the locally closed subset of $E_{\dd_N+\dd_R}$ parametrizing representations of $Q$ isomorphic to $N\oplus R$ where $R$ is some regular representation of $Q$, none of whose indecomposable direct summands belongs to a tube indexed by $T$.

\begin{lemma}\label{lemma2}
  Let $\mathscr{F}$ be a simple $G_{\dd}$-equivariant perverse sheaf on $E_{\dd}$ such that $\supp\mathscr{F}\subset \overline{E_{[N]\dd_R}^{D\setminus T}}$. Then
  \[
   \mathscr{G}:=i^{*}\mathscr{F}[r]\in D^{b}_{G_{\dd_N}\times G_{\dd_R}}(\OO_N\times E_{\dd_R}^{D\setminus T})
  \]
is a perverse sheaf for $r=\dim G_{\dd_R}+\dim G_{\dd_N}-\dim G_{\dd}$. Moreover, $\mathscr{G}=\underline{\C}[\dim\OO_N]\boxtimes\mathscr{G}_{reg}$, where $\mathscr{G}_{reg}$ is a $G_{\dd_R}$ equivariant perverse sheaf on $E_{\dd_R}^{D\setminus T}$. If in addition $SS(\mathscr{F})\subset \Lambda_{\dd}$, then $SS(\mathscr{G}_{reg})\subset \Lambda_{E_{\dd_R}^{D\setminus T}}$.
\end{lemma}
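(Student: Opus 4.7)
The approach mirrors the proof of Lemma \ref{lemma1}, with the block orthogonality between tubes (Theorem \ref{ringelth}) playing the role that the defect-based filtration of Corollary \ref{uniquefiltration} did there. Let $\underline{F} = (0 \subset \C^{\dd_N} \subset \C^{\dd})$, let $P \subset G_{\dd}$ be its stabilizer with unipotent radical $U$, and set
\[
 F' = \{x \in F_{\dd_R,\dd_N} : x|_{\C^{\dd_N}} \in \OO_N,\ x|_{\C^{\dd}/\C^{\dd_N}} \in E_{\dd_R}^{D\setminus T}\},
\]
a locally closed $P$-stable subvariety of $E_{\dd}$. Denote by $\iota : F' \hookrightarrow E_{\dd}$ the inclusion, by $\pi : F' \to \OO_N \times E_{\dd_R}^{D\setminus T}$ the (restriction, quotient)-map, and by $s : \OO_N \times E_{\dd_R}^{D\setminus T} \to F'$ the direct-sum section, so that $\pi \circ s = \id$ and $\iota \circ s = i$. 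For any $R$ with indecomposable summands in tubes from $D\setminus T$, the product-of-tubes decomposition of Theorem \ref{ringelth} gives $\Hom(R,N) = \Hom(N,R) = \Ext^1(R,N) = \Ext^1(N,R) = 0$. The Hom-vanishing ensures that every $x \in E_{[N],\dd_R}^{D\setminus T}$ admits a unique subrepresentation of dimension $\dd_N$ (necessarily isomorphic to $N$), which yields $F' \times^P G_{\dd} \simeq E_{[N],\dd_R}^{D\setminus T}$; it also gives freeness of the $U$-action on the fibers of $\pi$, while the $\Ext^1$-vanishing gives transitivity. Hence $\pi$ is a $U$-torsor.

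Since $\supp\mathscr{F} \subset \overline{E_{[N],\dd_R}^{D\setminus T}}$ and $\mathscr{F}$ is $G_{\dd}$-equivariant, the smooth descent argument of Lemma \ref{lemma1} applied to $E_{[N],\dd_R}^{D\setminus T} \simeq F' \times^P G_{\dd}$ shows that $\iota^* \mathscr{F}[\dim P - \dim G_{\dd}]$ is perverse on $F'$. As a $U$-equivariant perverse sheaf, it descends along the $U$-torsor $\pi$ to a perverse sheaf $\mathscr{G}'$ on $\OO_N \times E_{\dd_R}^{D\setminus T}$ with
\[
 \iota^* \mathscr{F}[\dim P - \dim G_{\dd}] \simeq \pi^* \mathscr{G}'[\dim U].
\]
Applying $s^*$, using $i = \iota \circ s$ together with the identity $\dim P - \dim G_{\dd} - \dim U = \dim G_{\dd_N} + \dim G_{\dd_R} - \dim G_{\dd} = r$, one obtains $\mathscr{G} = i^* \mathscr{F}[r] \simeq \mathscr{G}'$, hence perverse. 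The transitive $G_{\dd_N}$-action on $\OO_N$ with connected stabilizers, together with the full $(G_{\dd_N} \times G_{\dd_R})$-equivariance of $\mathscr{G}$, then forces a Künneth-type decomposition $\mathscr{G} = \underline{\C}[\dim \OO_N] \boxtimes \mathscr{G}_{reg}$ for a unique $G_{\dd_R}$-equivariant perverse sheaf $\mathscr{G}_{reg}$ on $E_{\dd_R}^{D\setminus T}$.

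For the singular support assertion, Proposition \ref{Ringelstrat} pins down the components of $\Lambda_{\dd}$ projecting into $\overline{E_{[N],\dd_R}^{D\setminus T}}$: they are precisely the closures $\overline{T^*_{\Xi(0,0,N\oplus N'',\mu')} E_{\dd}}$ with $N''$ supported in tubes from $D\setminus T$ and $\mu'$ regular. The smoothness of $\pi$ and the transversality of the section $s$ (which is the zero-section of the trivial vector bundle structure underlying $\pi$) allow the standard microlocal computation to identify the image of each such component, after the pull-backs involved in $\pi^*$ and $s^*$, with the conormal $T^*_{\OO_N}(\OO_N) \times \overline{T^*_{\Xi(N'',\mu')} E_{\dd_R}^{D\setminus T}}$; the box-product factorization of $\mathscr{G}$ then transfers the bound to $SS(\mathscr{G}_{reg}) \subset \Lambda_{E_{\dd_R}^{D\setminus T}}$. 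The main obstacle is this last singular-support calculation through the composition $i = \iota \circ s$, because the closed immersion $\iota$ need not be transverse to $SS(\mathscr{F})$ a priori; this is circumvented by exploiting the $U$-equivariance of $\iota^* \mathscr{F}$ together with the $U$-torsor structure of $\pi$, which effectively replaces $\iota^*$ (up to shift) by the smooth pull-back $\pi^*$, for which the behaviour of singular supports is well under control.
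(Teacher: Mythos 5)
Your argument is correct in substance, but it is essentially the paper's one-line proof written out through a detour. The paper argues as follows: since $N$ and every representation parametrized by $E_{\dd_R}^{D\setminus T}$ lie in different blocks of the category of regular representations (Theorem \ref{ringelth}), both $\Hom$ and $\Ext^1$ vanish in both directions, so every point of $E_{[N],\dd_R}^{D\setminus T}$ splits canonically and this locus is \emph{directly} the associated bundle $(\OO_N\times E_{\dd_R}^{D\setminus T})\times^{G_{\dd_N}\times G_{\dd_R}}G_{\dd}$ (the analogue of Lemma \ref{isoreg}); the induction equivalence together with Lemma \ref{indcv} then yields in one stroke the perversity of $i^*\mathscr{F}[r]$ (note $r=\dim G_{\dd_N}+\dim G_{\dd_R}-\dim G_{\dd}$ is exactly the shift of the induction equivalence), the exterior-product form, and the singular-support bound. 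Your construction of $F'$, the parabolic $P$ and the $U$-torsor $\pi$ rebuilds the same isomorphism, since $F'\simeq(\OO_N\times E_{\dd_R}^{D\setminus T})\times^{G_{\dd_N}\times G_{\dd_R}}P$: the parabolic step is what one is forced to do in Lemma \ref{lemma1}, where the relevant extension groups do not vanish and one only gets an affine fibration, whereas here your observation that $\pi$ is a torsor (freeness from $\Hom(R,N)=0$, transitivity from $\Ext^1(R,N)=0$) is precisely the statement that the splitting is canonical, so the whole $P$/$U$ apparatus can be skipped. Likewise, your final "replace $\iota^*$ by the smooth pull-back $\pi^*$" step is exactly what Lemma \ref{indcv} packages.

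One intermediate claim is false as stated: a point $x\simeq N\oplus R$ of $E_{[N],\dd_R}^{D\setminus T}$ need not have a \emph{unique} subrepresentation of dimension $\dd_N$, nor is every such subrepresentation isomorphic to $N$. For instance, if $N$ has dimension $\delta$ (say quasi-length equal to the period of its tube) and $R$ contains a homogeneous simple $S_a[1]$, then $0\oplus S_a[1]$ is a second subrepresentation of dimension $\dd_N$ not isomorphic to $N$. What your argument actually needs, and what does follow from $\Hom(N,R)=0$, is that any subrepresentation $W\subset x$ with $(W,x|_W)\in\OO_N$ is the canonical copy of $N$: an embedding $N\hookrightarrow N\oplus R$ has vanishing component in $R$, hence image $N\oplus 0$. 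This is enough to identify the fiber of $F'\times^PG_{\dd}\rightarrow E_{\dd}$ over $x$ with a point and hence to get $F'\times^PG_{\dd}\simeq E_{[N],\dd_R}^{D\setminus T}$; the rest of your proof is unaffected.
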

\begin{proof}
 It is a consequence of the isomorphism given by Lemma \ref{isoreg} and Lemma \ref{indcv}.
\end{proof}

\section{Proof of the main theorem in the finite type case}\label{prooffinitetype}
Let $\mathscr{F}$ be a $G_{\dd}$-equivariant simple perverse sheaf on $E_{\dd}$. By Theorem \ref{gabriel}, the action of $G_{\dd}$ on $E_{\dd}$ has a finite number of orbits, and any orbit $\OO_M$ for $M$ a $\dd$-dimensional representation of $Q$ has a connected stabilizer (even irreducible as open subset of $\End(M)$). Therefore, any orbit is equivariantly simply-connected and $\mathscr{F}=\ICC(\OO,\underline{\C})$ for some $G_{\dd}$-orbit $\OO\subset E_{\dd}$. By the explicit description of Lusztig sheaves for finite type quivers (Section \ref{Lusztigft}), $\mathscr{F}$ is a Lusztig sheaf.

\begin{remark}
  We did not use explicitly the hypothesis $\SS(\mathscr{F})\subset \Lambda_{\dd}$. In fact, it is a consequence of $G_{\dd}$-equivariance for finite type quivers. Indeed, since $\mathscr{F}$ is $G_{\dd}$-equivariant, its singular support is a subset of $\mu_{\dd}^{-1}(0)$ and $\mu_{\dd}^{-1}(0)$ coincides with $\Lambda_{\dd}$ for finite type quivers (\cite[Proposition 4.14]{MR3202708}).
\end{remark}

\section{Proof of the result for cyclic quivers}\label{proofcyclic}
We now prove the main theorem of this paper (Theorem \ref{mainresult}) for cyclic quivers. The main tool is the resolution of singularities of nilpotent orbits closure.

\subsection{Proof for cyclic quivers with cyclic orientation}\label{proofcq}

Let $\mathscr{F}$ be a $G_{\dd}$-equivariant perverse sheaf on $E_{\dd}$ such that $SS(\mathscr{F})\subset \Lambda_{\dd}$. Then, $\supp(\mathscr{F})=\pi_{\dd}(SS(\mathscr{F}))\subset \pi_{\dd}(\Lambda_{\dd})\subset E_{\dd}^{\nil}$ where $\pi_{\dd} : T^*E_{\dd}\rightarrow E_{\dd}$ denotes the cotangent bundle of $E_{\dd}$. Since the nilpotent locus $E_{\dd}^{\nil}\subset E_{\dd}$ has a finite number of orbits, each of which is equivariantly simply-connected, $\mathscr{F}=\mathcal{IC}(\OO,\underline{\C})$ for some nilpotent orbit $\OO\subset E_{\dd}^{\nil}$. Then, $SS(\mathscr{F})$ contains $\overline{T^*_{\OO}E_{\dd}}$. But by Section \ref{lusztignilpotentcyclic}, $\overline{T^*_{\OO}E_{\dd}}$ is contained in $\Lambda_{\dd}$ if and only if $\OO$ is a nilpotent aperiodic orbit. By Theorem \ref{lusztigcyclicq}, $\mathscr{F}$ is a Lusztig sheaf.

\begin{remark}
 As pointed out to us by \'Eric Vasserot, we can prove that if $\mathscr{F}$ is a $G_{\dd}$-equivariant perverse sheaf on $E_{\dd}$ such that $\supp\mathscr{F}$ and $\supp\Phi\mathscr{F}$ are nilpotent, for $\Phi$ the Fourier-Sato transform reversing the orientation, then $\mathscr{F}$ is a Lusztig sheaf for the cyclic quiver. The hypothesis is weaker that the one on the singular support. This result is due to Lusztig.
\end{remark}

\subsection{Proof for cyclic quivers with arbitrary orientation}
A $G_{\dd}$-equivariant perverse sheaf on $E_{\dd}$ whose singular support is in $\Lambda_{\dd}$ is monodromic with respect to the actions of $\C^*$ by dilatation on any of the arrows. Indeed, noting that Ringel strata are $\C^*$-invariant with respect to any of these actions, this is a consequence of Corollary \ref{cormono}. Using a Fourier-Sato transform $\varPhi$ making the orientation cyclic, we obtain the sheaf $\varPhi\mathscr{F}$. By Theorem \ref{ssfourier}, this sheaf is accountable to Theorem \ref{maintheorem} for a cyclic quiver with cyclic orientation. It is therefore a Lusztig sheaf and so is $\mathscr{F}$.

 Theorem \ref{mainresult} is now proved for affine quivers of type $A$. It remains the case of affine quivers of type $D$ and $E$. It is more subtle since these quivers have three non-homogeneous tubes, and we have to see what happens around each of them. It is possible thanks to cyclic quivers.

\section{A larger class of perverse sheaves for cyclic quivers}\label{extendedps}
\subsection{Extension of the Hall category}
In this Section, we let $C_n$ be the cyclic quiver, with labeling of vertices and arrows as in Section \ref{cyclicquivers}. Let $\dd\in\N^{\Z/n\Z}$. We define $\tilde{\mathscr{Q}}_{\dd}$ as the full additive subcategory of $D^b_{c,G_{\dd}}(E_{\dd})$ generated by direct summands of the constructible sheaves $(\pi_{\underline{\dd}})_*\underline{\C}_{\tilde{\FF}_{\underline{\dd}}}$ for \emph{all} (\emph{i.e.} not necessarily discrete) flag-types $\underline{\dd}$ of dimension $\dd$. We call $\tilde{\mathscr{Q}}_{\dd}$ the \emph{extended Hall category} and we denote by $\tilde{\mathscr{P}}_{\dd}$ the full subcategory of perverse sheaves which are in $\tilde{\mathscr{Q}}_{\dd}$.

We will describe the simple perverse sheaves in $\tilde{\mathscr{P}}_{\dd}$ explicitly by exhibiting for each of them a local system on a smooth open subset of its support. We call such local systems \emph{extended Lusztig local systems}. It is very analogous to Theorem \ref{explicitaffine}. We will also be able to describe their Fourier transform when reversing all the arrows of $C_n$ and their singular support.

\subsection{Singular support: the extended nilpotent variety}
Recall the moment maps
\[
 \mu_{\dd}:E_{\overline{C}_n,\dd}=T^*E_{C_n,\dd}\rightarrow \mathfrak{gl}_{\dd}
\]
from Section \ref{representationvariety}. An element of $T^*E_{\C_n,\dd}=E_{C_n,\dd}\oplus E_{\C_n,\dd}^*$ is denoted $(x,x^*)$. We first recall the definition of $*$-semi-nilpotent elements from \cite[Section 1.1]{bozec2017number} in the particular case of cyclic quivers (although the general definition is exactly the same). An element $(x,x^*)\in E_{\overline{C}_n,\dd}$ is called $*$-semi-nilpotent if there exists a $\Z/n\Z$-graded flag $(0=F_0\subset F_1\subset\hdots\subset F_r=\C^{\dd})$ of $\C^{\dd}$ such that for any $1\leq j\leq r$,
\[
 x^*(F_j)\subseteq F_{j-1} \quad\text{ and }\quad x(F_j)\subseteq F_j.
\]

We let
\[
 \tilde{\Lambda}_{\dd}=\{(x,x^*)\in E_{\overline{C_n},\dd}\mid \mu_{\dd}(x,x^*)=0 \text{ and $(x,x^*)$ is $*$-semi-nilpotent}\}.
\]
Recall the stratification of the representation spaces of cyclic quiver from Section \ref{stratcyclic}.
\begin{proposition}\label{extendednilvar}
 The subvariety $\tilde{\Lambda}_{\dd}$ of $T^*E_{C_n,\dd}$ is closed , conical and Lagrangian. We have
 \[
  \tilde{\Lambda}_{\dd}=\bigcup_{(N,\mu)}\overline{T^*_{\Xi(N,\mu)}E_{C_n,\dd}},
 \]
where the union runs over pairs $(N,\mu)$ with $N$ a nilpotent aperiodic representation of $C_n$ and $\mu:\mathscr{P}\rightarrow \N$ is regular such that $\dim N+\delta\dim\mu=\dd$
\end{proposition}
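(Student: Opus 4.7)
The strategy is to adapt Lusztig's proof for $\Lambda_{\dd}$ (Propositions 14.2 and 15.5 of \cite{MR1088333}) while accommodating the weaker $*$-semi-nilpotency condition, and to use the stratification of Proposition \ref{partcyclic} to organize the irreducible components.

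\textbf{Closedness, conicality and isotropy.} First, I would introduce the incidence variety
\[
Z_{\dd} = \{(x,x^*,\underline{F}) \in E_{\overline{C_n},\dd} \times \FF : x(F_j) \subseteq F_j,\ x^*(F_j) \subseteq F_{j-1} \text{ for all } j\},
\]
where $\FF$ denotes the disjoint union of the partial flag varieties indexed by $\Z/n\Z$-graded flag-types of dimension $\dd$. Its projection to $E_{\overline{C_n},\dd}$ is proper, and its image is exactly the $*$-semi-nilpotent locus, which is therefore closed; intersecting with the closed subvariety $\mu_{\dd}^{-1}(0)$ gives closedness of $\tilde{\Lambda}_{\dd}$. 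The $\C^*$-action $(x,x^*)\mapsto(tx,t^{-1}x^*)$ preserves both $\mu_{\dd}=0$ (by bilinearity) and $*$-semi-nilpotency (same flag works), giving conicality. Isotropy follows from $\mu_{\dd}^{-1}(0)$ being an isotropic subscheme of $T^*E_{C_n,\dd}$ (general fact for the zero fiber of a moment map of a Hamiltonian action), so every irreducible component of $\tilde{\Lambda}_{\dd}$ has dimension at most $\dim E_{C_n,\dd}$.

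\textbf{Containment of the listed conormals.} For $(N,\mu)$ with $N$ nilpotent aperiodic and $\mu$ regular, I would show $\overline{T^*_{\Xi(N,\mu)}E_{C_n,\dd}} \subseteq \tilde{\Lambda}_{\dd}$ by checking at a generic point $x = N \oplus \bigoplus_{i=1}^d S_{a_i}[1]$. The moment-map condition holds automatically from $G_{\dd}$-invariance of $\Xi(N,\mu)$. For $*$-semi-nilpotency, I would use the direct-sum isomorphism of Lemma \ref{isocyclic} to reduce to the two factors. On the $\OO_N$ side, the aperiodicity of $N$ allows the construction of a discrete flag compatible with any conormal $x^*$ (via the refinement procedure at the heart of the proof of Theorem \ref{resolutioncyclic}), providing the flag on the nilpotent part. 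On the $\Xi(\mu)$ side, regularity of $\mu$ means each invertible summand $S_{a_i}[1]$ is simple, and a conormal $x^*$ to the stratum must annihilate the tangent directions that move the $a_i$'s, forcing it to respect the decomposition into simples; a trivial one-step flag on each simple summand then concatenates with the flag on the nilpotent side to give a $*$-semi-nilpotent graded flag on all of $\C^{\dd}$.

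\textbf{Exhaustion and the Lagrangian property.} Each $\overline{T^*_{\Xi(N,\mu)}E_{C_n,\dd}}$ is irreducible of dimension $\dim E_{C_n,\dd}$ by smoothness of the stratum (the cyclic-quiver analogue of Lemma \ref{irrclosm}), so the union over allowed $(N,\mu)$ is a pure $\dim E_{C_n,\dd}$-dimensional closed subvariety. Combined with the dimension bound of Step 1, this shows $\tilde{\Lambda}_{\dd}$ is Lagrangian and that the stated union is a union of irreducible components. For the reverse inclusion I would pick an arbitrary $(x,x^*) \in \tilde{\Lambda}_{\dd}$, let $\Xi(N',\mu')$ be the stratum containing $x$, and argue that $(N',\mu')$ must be of the allowed type. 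Non-aperiodicity of $N'$ would obstruct the existence of a semi-nilpotent flag on the Fitting-nilpotent part of $x$ by Lusztig's cyclic-quiver argument \cite[Proposition 15.5]{MR1088333}, applied after decomposing $(x,x^*)$ along the $G_{\dd}$-stable Fitting decomposition $x = x_{\nil}\oplus x_{\inv}$. Non-regularity of $\mu'$, i.e., the presence of a partition of length $\geq 2$, would force an invertible Jordan block of $x$ with nonzero eigenvalue and length $\geq 2$; the moment equation on such a block together with the flag constraint $x^*(F_j)\subseteq F_{j-1}$ is incompatible for generic $x^*$ in the conormal, so the conormal closure cannot be contained in $\tilde{\Lambda}_{\dd}$. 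The main obstacle will be precisely this last step: making the incompatibility rigorous requires a concrete local calculation inside a single Jordan block on $C_n$, showing that the conormal-plus-moment-map system has strictly smaller dimension than the conormal itself, which is where the ``regular'' hypothesis in the statement plays its essential role.
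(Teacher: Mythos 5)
Your plan is more ambitious than the paper's own proof, which is essentially by citation (closed and conical are immediate; Lagrangian is quoted from \cite[Theorem 1.4]{MR3569998}, with an alternative argument recorded right after the proposition, namely that via the projection $(x,x^*)\mapsto x^*$ one has $\tilde{\Lambda}_{\dd}=\bigcup_{\OO}T^*_{\OO}E_{C_n,\dd}^{\op}$ over the finitely many nilpotent orbits; the decomposition into components is Ringel's argument \cite{MR1676227} with the Auslander--Reiten translate replaced by the rotation of $C_n$-representations). Unfortunately, the two steps where you try to be self-contained are exactly where the proposal breaks. The assertion that $\mu_{\dd}^{-1}(0)$ is isotropic is false: the zero fibre of a moment map is (generically) coisotropic, not isotropic. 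Already for $C_1$, i.e.\ the Jordan quiver, $\mu_d^{-1}(0)$ is the commuting variety of $\mathfrak{gl}_d$, of dimension $d^2+d>\dim E_{J,d}$. Hence your dimension bound $\dim\tilde{\Lambda}_{\dd}\le\dim E_{C_n,\dd}$, and with it the Lagrangian property and the claim that the listed conormal closures are irreducible components, is unsupported; this bound is precisely the nontrivial input that must come from \cite{MR3569998} or from the cyclic-quiver-specific description by conormals to nilpotent orbits in the opposite orientation. (A small additional slip: conicality means invariance under scaling $x^*$ alone, not under $(x,x^*)\mapsto(tx,t^{-1}x^*)$; the correct check is, of course, equally easy.)

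The exhaustion step is also structurally insufficient. Showing that the stratum $\Xi(N',\mu')$ containing $x$ is of allowed type neither holds in general nor would it finish the proof: on the one hand $(x,0)\in\tilde{\Lambda}_{\dd}$ for \emph{every} $x$ (take the one-step flag), in particular over non-aperiodic or non-regular strata, so points of $\tilde{\Lambda}_{\dd}$ genuinely live over non-allowed strata; on the other hand, even when the stratum is allowed you still must show that $x^*$ lies in $\overline{T^*_{\Xi(N',\mu')}E_{C_n,\dd}}$. What is actually needed is that $\tilde{\Lambda}_{\dd}\cap\pi_{\dd}^{-1}(\Xi(N',\mu'))$ is contained in the union of the listed closures for every stratum: a fibre computation over allowed strata and a degeneration argument over the others, which is exactly the content of Ringel's proof that the paper adapts (replacing $\tau$ by rotation); your proposed Jordan-block incompatibility addresses the different question of which conormals are \emph{contained} in $\tilde{\Lambda}_{\dd}$, not the exhaustion. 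Finally, your containment step silently assumes $\mu$ regular \emph{semisimple}: for $\mu$ merely regular a generic point of $\Xi(N,\mu)$ has invertible summands $S_{a_i}[\ell_i]$ with $\ell_i\ge 2$, on which the conormal space contains nonzero (nilpotent, commuting) self-block components, so the one-step flag on each invertible summand does not exist; one needs instead, say, the kernel flag of $x^*$ on that block, glued to the flag on the nilpotent part.
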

\begin{proof}
 The first two properties (closed and conical) are fairly obvious. It is Lagrangian by \cite[Theorem 1.4]{MR3569998}. The proof of this decomposition in irreducible components is completely analogous to the one of \cite{MR1676227}. Everything can be adapted by replacing the Auslander translation $\tau$ by the rotation of representations: if $x=(x_i)_{i\in\Z/n\Z}$ is a representation of $C_n$, $\tau(x)=(x_{i-1})_{i\in\Z/n\Z}$.
\end{proof}

\begin{proposition}
 From the point of view of the projection $\tilde{\Lambda}_{\dd}\rightarrow E_{C_n,\dd}^{\op}$, $(x,x^*)\mapsto x^*$,
 \[
  \tilde{\Lambda}_{\dd}=\bigcup_{\OO\subset E_{C_n,\dd}^{\op}}T^*_{\OO}E_{C_n,\dd}^{\op}
 \]
 where the union runs over all nilpotent orbits $\OO\subset E_{C_n,\dd}^{\op}$.

\end{proposition}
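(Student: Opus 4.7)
My plan is to show that both sides of the claimed equality coincide with the set
\[
S := \{(x, x^*) \in E_{\overline{C_n}, \dd} : \mu_{\dd}(x, x^*) = 0 \text{ and } x^* \text{ is nilpotent as a representation of } C_n^{\op}\}.
\]
For the right-hand side of the proposition, I invoke the general principle that for a reductive group $G$ acting on a smooth variety $X$ with moment map $\mu$, the zero-fiber $\mu^{-1}(0)$ is pointwise the annihilator of the tangent space to the $G$-orbit, so $\mu^{-1}(0) = \bigcup_{\OO} T^*_{\OO}X$ as a set. Applied to the $G_{\dd}$-action on $E_{C_n,\dd}^{\op}$ and intersected with the preimage of the nilpotent cone, this gives $\bigcup_{\OO\text{ nilp}} T^*_{\OO} E_{C_n,\dd}^{\op} = S$. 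The inclusion $\tilde{\Lambda}_{\dd} \subseteq S$ is also immediate, since the existence of a graded flag with $x^*(F_j) \subseteq F_{j-1}$ forces $(x^*)^r = 0$ on $V := \C^{\dd}$, making $x^*$ nilpotent.

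The key step is the reverse inclusion $S \subseteq \tilde{\Lambda}_{\dd}$. Here I exploit a feature specific to cyclic quivers: since each vertex has exactly one incoming and one outgoing arrow, the moment map equation becomes a single global commutator equation. Writing $y := \sum_{\alpha \in \Omega} x_{\alpha}$ and $y^* := \sum_{\alpha \in \Omega} x_{\bar{\alpha}}$ as endomorphisms of the total space $V$, homogeneous of $\Z/n\Z$-degrees $+1$ and $-1$ respectively, a direct calculation shows $[y, y^*]|_{V_i} = \mu_i$ for each $i$, so $\mu_{\dd}(x, x^*) = 0 \iff [y, y^*] = 0$. Likewise, $x^*$ is nilpotent as a representation of $C_n^{\op}$ if and only if $y^*$ is nilpotent as an endomorphism of $V$ (one direction from iterating any witnessing flag, the other via the kernel filtration of $y^*$).

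Given $(x, x^*) \in S$, I produce a witnessing flag by setting $F_j := \ker((y^*)^j)$. Since $(y^*)^j$ is $\Z/n\Z$-homogeneous of degree $-j \bmod n$, its kernel is a graded subspace of $V$; after deleting repetitions one obtains a strict $\Z/n\Z$-graded flag $0 = F_0 \subsetneq F_1 \subsetneq \cdots \subsetneq F_r = V$. The inclusion $y^*(F_j) \subseteq F_{j-1}$ is tautological, and the commutation $y(y^*)^j = (y^*)^j y$ (from $[y, y^*] = 0$) shows that $F_j$ is $y$-stable. Unpacked in terms of the original graded maps $x_\alpha$ and $x_{\bar\alpha}$, this flag witnesses the $*$-semi-nilpotency of $(x, x^*)$, hence $(x, x^*) \in \tilde{\Lambda}_{\dd}$.

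The main (and essentially only) obstacle is the identification of the vanishing of the moment map with the vanishing of a single global commutator $[y, y^*]$. This uses crucially that in $C_n$ each vertex is the tail of exactly one arrow and the head of exactly one arrow, and would fail for general quivers with several arrows meeting at a vertex; everything else reduces to routine bookkeeping in the $\Z/n\Z$-graded setting.
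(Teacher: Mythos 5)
Your proof is correct and follows essentially the same route as the paper's: both identify $\tilde{\Lambda}_{\dd}$ with the locus $\{\mu_{\dd}=0,\ x^*\ \text{nilpotent}\}$, use the moment-map/conormal characterization of $T^*_{\OO}E_{C_n,\dd}^{\op}$ (which the paper verifies by the trace pairing computation you cite as a general principle), and produce the witnessing flag from the kernel filtration $F_j=\ker((x^*)^j)$ together with the commutation $[x,x^*]=0$. Your reformulation via the total-space endomorphisms $y,y^*$ is just the paper's step of viewing $x,x^*$ as endomorphisms of $\C^{\dd}$, so there is no substantive difference.
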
\label{extendednilvarnil}
\begin{proof}
 If $(x,x^*)\in T^*_{\OO} E_{C_n,\dd}^{\op}$ for some nilpotent orbit $\OO\subset E_{C_n,\dd}^{\op}$, then $x^*\in \OO$, therefore $x^*$ is nilpotent. Moreover, for any $a\in\mathfrak{g}_{\dd}$, $\Tr([a,x]x^*)=0$, since $T_x\OO=\{[a,x]:a\in\mathfrak{g}_{\dd}\}$ is the tangent space of $\OO$ at $x$. Then, for any $a\in \mathfrak{g}_{\dd}$, $\Tr(a[x,x^*])=0$, and as a consequence, $\mu_{\dd}(x,x^*)=[x,x^*]=0$. We now see $x$ and $x^*$ as endomorphisms of $\C^{\dd}$. Let $r\geq 0$ such that $(x^*)^r=0$. We choose
 \[
  \underline{F}=(0\subset \ker x^*\subset \hdots\subset\ker (x^*)^r=\C^{\dd})
 \]
and let $F_j=\ker (x^*)^j$ for $0\leq j\leq r$. Obviously $x^*F_j\subset F_{j-1}$ for $1\leq j\leq r$ and since $x$ and $x^*$ commute, $xF_j\subset F_j$ for $1\leq j\leq r$. This shows that $(x,x^*)$ is $*$-semi-nilpotent. We proved that the left-hand-side contains the right-hand-side. Now, if $(x,x^*)\in\tilde{\Lambda}_{\dd}$, $x^*$ is nilpotent by definition. Let $\OO\subset E_{C_n,\dd}^{\op}$ be its orbit. The condition $\mu_{\dd}(x,x^*)=0$ implies that $(x,x^*)\in T^*_{\OO}E_{C_n,\dd}^{\op}$, proving the reverse inclusion.
\end{proof}
\begin{remark}
 Proposition \ref{extendednilvarnil} provides an alternative proof that $\tilde{\Lambda}_{\dd}$ is Lagrangian.
\end{remark}

\begin{theorem}\label{ssextendednilvar}
 The singular supports of the sheaves of the extended Hall category $\tilde{\mathscr{Q}}_{\dd}$ are unions of irreducible components of $\tilde{\Lambda}_{\dd}$.
\end{theorem}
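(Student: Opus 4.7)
The plan is to adapt the proof of Theorem \ref{ssls} to the looser setting at hand. What must be shown is the inclusion $SS(\mathscr{F}) \subset \tilde{\Lambda}_{\dd}$ for every $\mathscr{F} \in \tilde{\mathscr{Q}}_{\dd}$; since $\tilde{\Lambda}_{\dd}$ is Lagrangian by Proposition \ref{extendednilvar} and the singular support of a constructible complex is always a conical Lagrangian subvariety, this inclusion forces $SS(\mathscr{F})$ to be a union of irreducible components of $\tilde{\Lambda}_{\dd}$. Since singular support is stable under shifts, finite direct sums and direct summands, it suffices to verify the inclusion for each generating complex $(\pi_{\underline{\dd}})_*\underline{\C}_{\tilde{\FF}_{\underline{\dd}}}$, as $\underline{\dd}$ ranges over all (not necessarily discrete) flag-types of dimension $\dd$.

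For such a generator I would invoke the microlocal estimate for proper pushforward (Proposition \ref{pushforward}): since $\pi_{\underline{\dd}}$ is proper, $\tilde{\FF}_{\underline{\dd}}$ is smooth, and the constant sheaf has singular support equal to the zero section $T^*_{\tilde{\FF}_{\underline{\dd}}}\tilde{\FF}_{\underline{\dd}}$, one has
\[
SS\bigl((\pi_{\underline{\dd}})_*\underline{\C}_{\tilde{\FF}_{\underline{\dd}}}\bigr) \subset (\pi_{\underline{\dd}})_\pi (\pi_{\underline{\dd}})_d^{-1}\bigl(T^*_{\tilde{\FF}_{\underline{\dd}}}\tilde{\FF}_{\underline{\dd}}\bigr).
\]
Unwinding definitions, $(x, x^*) \in T^*E_{\dd}$ belongs to the right-hand side precisely when there exists a flag $\underline{F} \in \mathcal{F}_{\underline{\dd}}$ with $x(F_j) \subset F_j$ for all $j$, and such that $x^*$ annihilates the image under $d(\pi_{\underline{\dd}})$ of the tangent space to $\tilde{\FF}_{\underline{\dd}}$ at $(x, \underline{F})$.

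The computational core is to show that this annihilation condition translates into the two defining conditions of $\tilde{\Lambda}_{\dd}$. Realizing $\tilde{\FF}_{\underline{\dd}}$ locally as $G_{\dd} \times^{P_{\underline{F}}} E_{\dd}^{\underline{F}}$, where $P_{\underline{F}}$ is the parabolic stabilizing the standard flag and $E_{\dd}^{\underline{F}}$ is the space of flag-preserving representations, the image of $d(\pi_{\underline{\dd}})$ at the standard flag is the sum $\{[a, x] : a \in \mathfrak{g}_{\dd}\} + E_{\dd}^{\underline{F}}$, where $[\cdot, \cdot]$ denotes the infinitesimal action of $\mathfrak{g}_{\dd}$ on $E_{\dd}$. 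Identifying $T^*E_{\dd}$ with $E_{\overline{C_n}, \dd}$ via the trace pairing, annihilation of $x^*$ on $E_{\dd}^{\underline{F}}$ is equivalent (by transposition with respect to the flag) to $x^*(F_j) \subset F_{j-1}$ for every $j$, i.e., to $(x, x^*)$ being $*$-semi-nilpotent with respect to $\underline{F}$; annihilation on $\{[a, x] : a \in \mathfrak{g}_{\dd}\}$ amounts to $\langle x^*, [a, x] \rangle = 0$ for every $a \in \mathfrak{g}_{\dd}$, which by the very definition of the moment map given in Section \ref{representationvariety} is precisely $\mu_{\dd}(x, x^*) = 0$. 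Combining both gives $(x, x^*) \in \tilde{\Lambda}_{\dd}$.

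The main obstacle is the explicit conormal computation, and in particular keeping track of the interaction between the base (flag-variety) and fiber directions so as to extract the moment map equation and $*$-semi-nilpotency on the nose. The analysis is exactly parallel to the corresponding step in the proof of Theorem \ref{ssls}, with one crucial difference: since the flag-type is now arbitrary rather than discrete, $E_{\dd}^{\underline{F}}$ contains more than just strictly flag-lowering maps, so the dual condition on $x^*$ is only $*$-semi-nilpotency rather than full nilpotency. This is precisely what causes the bound to land in the larger variety $\tilde{\Lambda}_{\dd}$ rather than $\Lambda_{\dd}$, in agreement with the description of $\tilde{\Lambda}_{\dd}$ given in Proposition \ref{extendednilvar}.
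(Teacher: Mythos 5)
Your proposal is correct and follows essentially the same route as the paper, which proves this by the same adaptation of Lusztig's argument (reduce to the generators $(\pi_{\underline{\dd}})_*\underline{\C}$, use properness of $\pi_{\underline{\dd}}$ together with the pushforward estimate of Proposition \ref{pushforward}, and observe that the resulting conormal condition yields the moment map equation and $*$-semi-nilpotency, the non-discrete flag-type being exactly why one lands in $\tilde{\Lambda}_{\dd}$ rather than $\Lambda_{\dd}$). Your explicit identification of the annihilator of the flag-preserving directions with the strictly flag-lowering maps $x^*(F_j)\subset F_{j-1}$ is the correct unwinding of the step the paper leaves implicit.
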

\begin{proof}
  The proof is an easy adaptation of \cite[Section 13]{MR1088333}.
  \end{proof}

\subsection{Explicit description}\label{explicitdescription}
We give a description of the simple perverse sheaves in the category $\tilde{\mathcal{P}}_{\dd}$ in the spirit of \cite{MR1215594} and \cite{MR2371959}. Let $\dd\in\N^{\Z/n\Z}$. We consider the Fourier-Sato transform $\Phi$ which reverse all the arrows of $C_n$. The Fourier-Sato transforms of perverse sheaves on $E_{C_n,\dd}$ are perverse sheaves on $E_{C_n,\dd}^{\op}$. Moreover, the Fourier-Sato transform of a $G_{\dd}$-equivariant perverse sheaf is again $G_{\dd}$-equivariant.
\begin{lemma}\label{fouriernilp}
 The Fourier transforms of the simple perverse sheaves in $\tilde{\mathcal{P}}_{\dd}$ are intersection cohomology complexes $\ICC(\OO,\underline{\C})$ of nilpotents orbits $\OO\subset E_{C_n,\dd}^{\nil}$.
\end{lemma}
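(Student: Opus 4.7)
The plan is to leverage the two descriptions of $\tilde{\Lambda}_{\dd}$---one from each of the two projections of $T^*E_{C_n,\dd}\simeq E_{\overline{C}_n,\dd}$---bridged by the Fourier-Sato transform. The key inputs are Theorem \ref{ssextendednilvar} (singular support of sheaves in $\tilde{\mathscr{Q}}_{\dd}$ lies in $\tilde{\Lambda}_{\dd}$) and Proposition \ref{extendednilvarnil} (from the second projection, $\tilde{\Lambda}_{\dd}$ is a union of conormal bundles to nilpotent orbits of $E_{C_n,\dd}^{\op}$).

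Let $\mathscr{F}$ be a simple object of $\tilde{\mathcal{P}}_{\dd}$. By Theorem \ref{ssextendednilvar}, $SS(\mathscr{F})\subset\tilde{\Lambda}_{\dd}$. Since the Fourier-Sato transform $\Phi$ is an equivalence of $G_{\dd}$-equivariant derived categories that preserves perversity, simplicity, and $G_{\dd}$-equivariance, $\Phi\mathscr{F}$ is again a simple $G_{\dd}$-equivariant perverse sheaf, now on $E_{C_n,\dd}^{\op}$. Under the canonical symplectic identification $T^*E_{C_n,\dd}\simeq T^*E_{C_n,\dd}^{\op}\simeq E_{\overline{C}_n,\dd}$ that exchanges the two factors (the very identification used to define $\tilde{\Lambda}_{\dd}$ from both sides), the singular support is preserved by $\Phi$: this is the standard compatibility recalled in the appendix on the Fourier-Sato transform. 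Consequently $SS(\Phi\mathscr{F})\subset\tilde{\Lambda}_{\dd}$ when viewed from the projection $(x,x^*)\mapsto x^*$.

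Applying Proposition \ref{extendednilvarnil}, we obtain the inclusion
\[
SS(\Phi\mathscr{F})\;\subset\;\bigcup_{\OO\subset E_{C_n,\dd}^{\op,\nil}}T^*_{\OO}E_{C_n,\dd}^{\op},
\]
and in particular $\supp(\Phi\mathscr{F})\subset E_{C_n,\dd}^{\op,\nil}$, which consists of only finitely many $G_{\dd}$-orbits. Each such orbit is equivariantly simply-connected, because the stabilizer of a representation $M$ in $G_{\dd}$ is the group of units of the associative $\C$-algebra $\End(M)$, hence a non-empty Zariski open subset of an affine space, which is connected. Therefore a simple $G_{\dd}$-equivariant perverse sheaf whose support meets this nilpotent locus must be of the form $\ICC(\OO,\underline{\C})$ for a single nilpotent orbit $\OO\subset E_{C_n,\dd}^{\op}$, yielding the claimed description of $\Phi\mathscr{F}$.

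The only real subtlety is the Fourier-Sato identification of singular supports under the swap of factors; this is a standard formal property and is the content of the appendix. The finiteness and equivariant simple-connectedness of nilpotent orbits then leaves no room for anything beyond intersection cohomology complexes with trivial local system coefficients.
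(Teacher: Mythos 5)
Your argument is correct and follows essentially the same route as the paper: apply Theorem \ref{ssextendednilvar} to get $SS(\mathscr{F})\subset\tilde{\Lambda}_{\dd}$, use the Fourier--Sato compatibility with singular support (Theorem \ref{ssfourier}) to see that $\supp\Phi\mathscr{F}$ lies in the nilpotent locus of $E_{C_n,\dd}^{\op}$, and conclude from the finiteness and equivariant simple-connectedness of nilpotent orbits that $\Phi\mathscr{F}=\ICC(\OO,\underline{\C})$. Your invocation of Proposition \ref{extendednilvarnil} and the explicit connectedness-of-stabilizer argument merely make explicit what the paper's shorter proof leaves implicit.
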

\begin{proof}
 Let $\mathscr{F}$ be a simple perverse sheaf in $\tilde{\mathcal{P}}_{\dd}$. Its singular support is contained in $\tilde{\Lambda}_{\dd}$ by Theorem \ref{ssextendednilvar}. The support of $\Phi\mathscr{F}$ is the projection of $SS(\mathscr{F})$ to $E_{C_n,\dd}^{\op}$ by Theorem \ref{ssfourier}. It is contained in $E_{C_n,\dd}^{\nil,\op}$. This proves that $\Phi\mathscr{F}=\ICC(\OO,\underline{\C})$ for some nilpotent orbit $\OO\subset E_{C_n,\dd}^{\op}$.
\end{proof}
We now give a combinatorial parametrization of nilpotent orbits of cyclic quiver.

\begin{lemma}\label{nilporbit}
 Nilpotent orbits in $E_{C_n,\dd}$ are parametrized by pairs $(N,\lambda)$ where $N$ is aperiodic and $\lambda$ is a partition such that $\dim N+\lvert\lambda\rvert\delta=\dd$. The nilpotent orbit corresponding to $(N,\lambda)$ is the orbit of the representation 
 \[
N\oplus \bigoplus_{\substack{r\geq 1\\i\in\Z/n\Z}}I_{i,\lambda_r}.
\]
\end{lemma}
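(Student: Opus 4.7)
The plan is to use the explicit parametrization of nilpotent representations of $C_n$ by multipartitions recalled in Section \ref{cyclicquivers}, and to show that the data of a multipartition $\mm : \Z/n\Z \to \mathscr{P}$ is equivalent to the data of an aperiodic multipartition together with a plain partition $\lambda$, by repeatedly peeling off the ``homogeneous'' summand $\bigoplus_{i\in\Z/n\Z}I_{i,l}$ whenever all its constituents occur in $M$.

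More precisely, first I would translate the statement into combinatorics. A nilpotent orbit in $E_{C_n,\dd}$ is, by Krull--Schmidt and the classification of indecomposable nilpotent representations of $C_n$, the orbit $\OO_{N_{\mm}}$ of a unique multipartition $\mm$ of total dimension $\dd$. Writing $\mu_i(l) = \sharp\{r : \mm(i)^r = l\}$ for the multiplicity of $I_{i,l}$ in $N_{\mm}$, the aperiodicity condition reads $\min_{i\in\Z/n\Z}\mu_i(l) = 0$ for every $l \geq 1$.

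Next I would define, for a general multipartition $\mm$, the integers $p(l) := \min_{i\in\Z/n\Z}\mu_i(l)$ for $l \geq 1$, and the partition $\lambda = \lambda(\mm)$ with exactly $p(l)$ parts equal to $l$ (a finite sequence because $\mm$ has finite support). Setting $\mu'_i(l) = \mu_i(l) - p(l) \geq 0$ produces a new multipartition $\mm'$, and by construction $\min_i \mu'_i(l) = 0$ for all $l$, so $N_{\mm'}$ is aperiodic. The direct sum decomposition
\[
 N_{\mm} \simeq N_{\mm'} \oplus \bigoplus_{\substack{r\geq 1\\ i\in\Z/n\Z}} I_{i,\lambda_r}
\]
follows at once from these multiplicity identities and the structure of $N_{\mm}$ described in Section \ref{cyclicquivers}. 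Dimensions add up to $\dd$ because the right-hand piece has dimension $|\lambda|\delta$.

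Finally I would check the bijection is one-to-one. Given a pair $(N,\lambda)$ with $N$ aperiodic, the reconstructed multipartition has $\mu_i(l) = \mu^{N}_i(l) + m_\lambda(l)$, where $m_\lambda(l)$ is the multiplicity of $l$ as a part of $\lambda$; so $\min_i \mu_i(l) = m_\lambda(l)$ since $\min_i \mu^N_i(l) = 0$, which recovers $\lambda$, and then $N$ is determined. This gives uniqueness of the decomposition, hence a bijection between nilpotent orbits and pairs $(N,\lambda)$ with $\dim N + |\lambda|\delta = \dd$. The only mildly delicate step is keeping track of the bookkeeping between the ``row'' and ``column'' descriptions of the partitions $\mm(i)$, but no real obstacle arises since everything is completely combinatorial.
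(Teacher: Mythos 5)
Your proposal is correct and follows essentially the same route as the paper: split a nilpotent representation (equivalently, its multipartition) into an aperiodic part and a completely periodic part encoded by a single partition $\lambda$, then check dimensions. You merely make explicit, via the multiplicities $\mu_i(l)$ and the minima $p(l)=\min_i\mu_i(l)$, the existence and uniqueness of this splitting, which the paper's proof asserts more briefly.
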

\begin{proof}
 Let $M$ be a nilpotent representation of $C_n$ of dimension $\dd$. We can write it $M=N\oplus P$ where $N$ is aperiodic and $P$ is completely periodic, meaning that $P=N_{\mathbf{m}}$ for a multipartition $\mathbf{m}:\Z/n\Z\rightarrow \mathscr{P}$ such that $\mathbf{m}(i)=\mathbf{m}(j)$ for any $i,j\in\Z/n\Z$. Therefore, the data of $\mathbf{m}$ is equivalent to the data of a partition $\lambda=\mathbf{m}(0)$. Moreover, the total dimension of $P$ is $\sum_{i\in\Z/n\Z}(\dim P)_i=n\lvert\lambda\rvert$ and by periodicity, $(\dim P)_i=(\dim P)_j$ for any $i,j\in\Z/n\Z$. As a consequence, $\lvert\lambda\rvert=(\dim P)_0$ and $\dim N+\lvert\lambda\rvert\delta=\dd$. Putting these facts together, we obtain the lemma.
\end{proof}

We are ready to describe extended Lusztig perverse sheaves. Let $(N,\mu)$ be a pair with $N$ the isoclass of a nilpotent aperiodic representation and $\mu$ regular semisimple such that $\dim N+\delta\dim \mu=\dd$. Let $d\in\N$ such that $\dim\mu=d\delta$. We can describe te stratum $\Xi(N,\mu)$:
\[
 \Xi(N,\mu)=\{x\in E_{C_n,\dd}\mid x\simeq N\oplus\bigoplus_{j=1}^dJ_{1}(x_j) \text{ for some $(x_1,\hdots,x_d)\in (\C^*)^d\setminus\Delta$}\}.
\]
Let $\tilde{\Xi}(N,\mu)=\{(x,x_1,\hdots,x_r)\in E_{C_n,\dd}\times ((\C^*)^d\setminus \Delta)\mid x\simeq N\oplus\bigoplus_{j=1}^d J_1(x_j)\}$. The map 
\[
\begin{matrix}
\pi_{N,\mu} &:& \tilde{\Xi}(N,\mu)&\rightarrow& \Xi(N,\mu)\\
&&(x,\underline{x})&\mapsto&x
\end{matrix}
\]
 is a $\mathfrak{S}_d$ cover. Therefore, we obtain a family $(\mathscr{L}_{N,\lambda})_{\lambda\in\mathscr{P}_d}$ of local systems on $\Xi(N,\mu)$ indexed by partition of $d$. The main theorem of this section is the following.
 
 \begin{theorem}\label{explicitcyclic}
  The simple perverse sheaves in $\tilde{\mathcal{P}}_{\dd}$ are the intersection cohomology complexes $\ICC(\Xi(N,\mu),\mathscr{L}_{N,\lambda})$ for $(N,\mu)$ and $\lambda$ such that $N$ is nilpotent aperiodic, $\mu$ is regular semisimple, $\dim\mu=d$, $\dim N+\dim \mu=\dd$ and $\lambda$ is a partition of $d$.
 \end{theorem}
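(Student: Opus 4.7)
The strategy combines Lemma \ref{fouriernilp} with an explicit construction of each candidate IC sheaf as a summand of a pushforward from a non-discrete flag-type variety.

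First, I would use Lemma \ref{fouriernilp} to turn the classification into a problem about nilpotent orbits on the opposite quiver: the Fourier-Sato transform $\Phi$ injects the set of isomorphism classes of simple objects of $\tilde{\mathcal{P}}_{\dd}$ into $\{\ICC(\OO,\underline{\C}) : \OO \subset E_{C_n,\dd}^{\op}\text{ nilpotent orbit}\}$. By Lemma \ref{nilporbit}, such orbits are parametrized by pairs $(N,\lambda)$ with $N$ aperiodic and $\lambda$ a partition satisfying $\dim N + \lvert\lambda\rvert\delta = \dd$. This matches the parametrization in the statement: a regular semisimple $\mu : \mathscr{P} \to \N$ of dimension $d\delta$ is uniquely determined (it takes the value $d$ on the partition $(1)$ and vanishes elsewhere), so the triple $(N,\mu,\lambda)$ is the same data as a pair $(N,\lambda)$ with $\lvert\lambda\rvert = d$.

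Second, for each triple $(N,\mu,\lambda)$ with $d=\lvert\lambda\rvert$, I would realize $\ICC(\Xi(N,\mu), \mathscr{L}_{N,\lambda})$ as a direct summand of $(\pi_{\underline{\dd}})_*\underline{\C}$ for the non-discrete flag-type $\underline{\dd}$ obtained by concatenating a discrete flag-type $\underline{\dd}_N$ resolving $\overline{\OO_N}$ (Theorem \ref{resolutioncyclic}) with $d$ copies of $\delta$. The image of $\pi_{\underline{\dd}}$ is $\overline{\Xi(N,\mu)}$, and over the open stratum $\Xi(N,\mu)$ this proper map factors, via the product decomposition of Lemma \ref{isocyclic}, as a composition of a birational resolution of the aperiodic factor with the $\mathfrak{S}_d$-cover $\pi_{N,\mu} : \tilde{\Xi}(N,\mu) \to \Xi(N,\mu)$ used to define the $\mathscr{L}_{N,\lambda}$. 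The decomposition of the regular representation of $\mathfrak{S}_d$ then yields $(\pi_{N,\mu})_* \underline{\C} = \bigoplus_{\lambda\in\mathscr{P}_d} V_\lambda \otimes \mathscr{L}_{N,\lambda}$ on $\Xi(N,\mu)$, and the decomposition theorem exhibits each $\ICC(\Xi(N,\mu), \mathscr{L}_{N,\lambda})$ as a direct summand of $(\pi_{\underline{\dd}})_*\underline{\C}$ (up to shift). Hence each such IC sheaf lies in $\tilde{\mathcal{P}}_\dd$.

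Third, I would conclude by a counting argument. The IC sheaves $\ICC(\Xi(N,\mu), \mathscr{L}_{N,\lambda})$ are pairwise non-isomorphic: different pairs $(N,\mu)$ give different supports, and for fixed $(N,\mu)$ the local systems $\mathscr{L}_{N,\lambda}$ are distinct for different $\lambda\in\mathscr{P}_d$. Their number equals that of nilpotent orbits in $E_{C_n,\dd}^{\op}$ (Lemma \ref{nilporbit}), which by the first step is an upper bound on the number of isomorphism classes of simples in $\tilde{\mathcal{P}}_\dd$. The counts agree, so the constructed list is exhaustive.

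The main obstacle is in the second step: verifying that the pushforward along the non-discrete flag-type $\underline{\dd}$ produces exactly the local systems $\mathscr{L}_{N,\lambda}$ via the decomposition theorem. This requires a careful factorization of $\pi_{\underline{\dd}}$ over $\Xi(N,\mu)$: the contribution of the first block $\underline{\dd}_N$ is a birational resolution of $\overline{\OO_N}$ and therefore reduces (up to shift) to an isomorphism over the relevant open set, while the contribution of the $d$ copies of $\delta$ is exactly the $\mathfrak{S}_d$-cover that defines the $\mathscr{L}_{N,\lambda}$. The disentanglement of these two contributions is controlled by Lemma \ref{isocyclic}.
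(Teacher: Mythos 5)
Your proposal is correct and follows essentially the same route as the paper: the same flag-type $(\underline{\dd}_N,\delta,\hdots,\delta)$ built from Theorem \ref{resolutioncyclic}, whose restriction over $\Xi(N,\mu)$ is the $\mathfrak{S}_d$-cover producing the $\mathscr{L}_{N,\lambda}$, combined with Lemma \ref{fouriernilp} and the parametrization of nilpotent orbits (Lemma \ref{nilporbit}) for the exhaustiveness count. Your extra care in factoring $\pi_{\underline{\dd}}$ over the open stratum via Lemma \ref{isocyclic} only makes explicit what the paper leaves implicit.
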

\begin{proof}
 Let $\dd_N=\dim N$. Let $\underline{\dd}_N$ be a discrete flag-type given by Theorem \ref{resolutioncyclic} for the orbit $\OO_N$. We define the flag-type $\underline{\dd}=(\underline{\dd}_N,\delta,\hdots,\delta)$ of $\dd$ (there is $d$ copies of $\delta$). Then we obtain the projective morphism $\pi_{\underline{\dd}}:\tilde{\FF_{\underline{\dd}}}\rightarrow E_{\dd}$ whose image is $\overline{\Xi(N,\mu)}$. Moreover, the restriction of $\pi_{\underline{\dd}}$ to
 \[
  \pi_{\underline{\dd}}^{-1}(\Xi(N,\mu))\rightarrow \Xi(N,\mu)
 \]
is a $\mathfrak{S}_d$-covering. Hence,
\[
 \bigoplus_{\lambda\in\mathscr{P}_d}\ICC(\mathscr{L}_{N,\lambda})
\]
is a direct factor of $(\pi_{\underline{\dd}})_*\underline{\C}$. This proves that the perverse sheaves defined in the theorem are extended Lusztig sheaves.

Now, all the perverse sheaves defined are pairwise non-isomorphic. Combining Lemma \ref{fouriernilp} and the fact that this sheaves are parametrized by the same set as nilpotent orbits (Lemma \ref{nilporbit}), it follows that the theorem gives a complete description of the extended Lusztig category.

\end{proof}
We will call the local systems $\mathscr{L}_{N,\lambda}$ which appear during this process \emph{extended Lusztig local systems}.

A a corollary of the proof of Theorem \ref{explicitcyclic}, we obtain the following result.
\begin{cor}\label{fouriernilp}
 The Fourier-Sato transforms of the simple perverse sheaves in $\tilde{\mathcal{P}}_{\dd}$ are exactly the intersection cohomology complexes $\ICC(\OO,\underline{\C})$ for nilpotents orbits $\OO\subset E_{C_n,\dd}^{\nil}$.
\end{cor}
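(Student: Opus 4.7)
The plan is to deduce this corollary from Theorem \ref{explicitcyclic} together with Lemma \ref{nilporbit} by a simple cardinality count, exploiting the fact that the Fourier-Sato transform is an equivalence on simple perverse sheaves.

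The inclusion ``$\subseteq$'' is essentially already in hand from the arguments inside the proof of Theorem \ref{explicitcyclic}. Indeed, for $\mathscr{F}$ simple in $\tilde{\mathcal{P}}_{\dd}$, Theorem \ref{ssextendednilvar} yields $SS(\mathscr{F})\subset \tilde{\Lambda}_{\dd}$. Combining this with the description of $\tilde{\Lambda}_{\dd}$ in terms of the projection to the cotangent fibre given by Proposition \ref{extendednilvarnil}, and with Theorem \ref{ssfourier} identifying $\supp(\Phi\mathscr{F})$ with the projection of $SS(\mathscr{F})$ to the dual factor, one obtains that $\supp(\Phi\mathscr{F})$ lies in the nilpotent locus. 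Because every nilpotent orbit of the cyclic quiver has a connected stabilizer in $G_{\dd}$ (the endomorphism ring of any representation is an open subset of its connected automorphism group), the only $G_{\dd}$-equivariant simple perverse sheaf supported on the closure of such an orbit $\OO$ is the constant-coefficient complex $\ICC(\OO,\underline{\C})$.

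For the reverse inclusion, I would compare the parametrizations of both sides. By Lemma \ref{nilporbit}, nilpotent orbits of $E_{C_n,\dd}$ are in bijection with pairs $(N,\lambda)$ where $N$ is an aperiodic nilpotent representation and $\lambda$ is a partition, subject to $\dim N+\lvert\lambda\rvert\delta=\dd$. By Theorem \ref{explicitcyclic}, isomorphism classes of simple objects of $\tilde{\mathcal{P}}_{\dd}$ are parametrized by triples $(N,\mu,\lambda)$, with $N$ aperiodic nilpotent, $\mu$ regular semisimple, and $\lambda$ a partition of $d=\dim\mu/\delta$; since a regular semisimple $\mu$ is specified by the single integer $d$, this reduces to the same indexing set. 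The Fourier-Sato transform $\Phi$ is an equivalence of derived categories preserving perversity and simplicity, so the assignment $\mathscr{F}\mapsto\Phi\mathscr{F}$ is injective on isomorphism classes of simples. Together with the equality of cardinalities in each fixed $\dd$, this injection must be a bijection onto the target set of $\ICC(\OO,\underline{\C})$, proving the corollary. The argument is essentially bookkeeping and no serious obstacle is expected; the substantive geometric content was already packaged into Theorem \ref{explicitcyclic} and Proposition \ref{extendednilvar}.
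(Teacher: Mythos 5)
Your proposal is correct and follows essentially the same route as the paper: the forward inclusion is the paper's Lemma \ref{fouriernilp} (singular support in $\tilde{\Lambda}_{\dd}$, Theorem \ref{ssfourier} to see $\supp\Phi\mathscr{F}$ is nilpotent, connected stabilizers forcing trivial coefficients), and the reverse inclusion is the same counting argument the paper extracts from the proof of Theorem \ref{explicitcyclic}, matching the parametrization of simples by pairs $(N,\lambda)$ with that of nilpotent orbits from Lemma \ref{nilporbit} and using injectivity of $\Phi$ on simple objects. Only a cosmetic slip: the automorphism group is a (connected, because Zariski-open and dense in a vector space) open subset of the endomorphism algebra, not the other way around.
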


 Consider the map
\[
 \chi_{N,\mu}:\Xi(\mu)\rightarrow S^d(\C^*)\setminus\Delta
\]
defined in Section \ref{opensubsets}. We have a cartesian diagram
\begin{equation}
 \xymatrix{
 \tilde{\Xi}(N,\mu)\ar[r]^{\pi_{N,\mu}}\ar[d]_{\tilde{\chi}_{N,\mu}}&\Xi(N,\mu)\ar[d]^{\chi_{N,\mu}}\\
 (\C^*)^d\setminus\Delta\ar[r]^{\pi_d}&S^d(\C^*)\setminus\Delta
 }
\end{equation}
\begin{lemma}\label{pbcyclic}
 A $G_{\dd}$-equivariant local system $\mathscr{L}$ on $\Xi(N,\mu)$ is the pull-back of a local system $\mathscr{L}'$ on $S^d(\C^*)\setminus\Delta$. The local system $\mathscr{L}'$ is unique up to isomorphism. Moreover, $\mathscr{L}$ is an extended Lusztig local system if and only if $\pi_d^*\mathscr{L}'$ is trivial.
\end{lemma}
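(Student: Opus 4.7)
The plan is to reduce to the case $N=0$, establish descent along $\chi_\mu$ via the Borel construction, and then characterize extended Lusztig local systems by proper base change on the cartesian square.

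First, using Lemma~\ref{isocyclic}, $\Xi(N,\mu) \simeq (\OO_N \times \Xi(\mu)) \times^{G_{\dd_N} \times G_{d\delta}} G_{\dd}$, so a $G_{\dd}$-equivariant local system on $\Xi(N,\mu)$ corresponds to a $G_{\dd_N}\times G_{d\delta}$-equivariant local system on $\OO_N \times \Xi(\mu)$. Since $\OO_N$ is a single $G_{\dd_N}$-orbit with connected stabilizer $\Aut(N)$, every $G_{\dd_N}$-equivariant local system on $\OO_N$ is constant, and the map $\chi_{N,\mu}$ factors through the projection onto $\Xi(\mu)$ followed by $\chi_\mu$. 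Hence it suffices to show that $\chi_\mu^{*}$ induces an equivalence between local systems on $S^d(\C^{*})\setminus\Delta$ and $G_{d\delta}$-equivariant local systems on $\Xi(\mu)$.

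Next, for the descent, I would observe that the fiber of $\chi_\mu$ over $\{x_1,\ldots,x_d\}$ consists of all representations isomorphic to $\bigoplus_j J_1(x_j)$, which is a single $G_{d\delta}$-orbit with stabilizer $\Aut(\bigoplus_j J_1(x_j)) \simeq (\C^{*})^d$ (the $J_1(x_j)$ are pairwise non-isomorphic indecomposables with $\End=\C$). This stabilizer being connected, the induced map of Borel constructions $[\Xi(\mu)/G_{d\delta}] \to S^d(\C^{*})\setminus\Delta$ has fibers homotopy equivalent to $B((\C^{*})^d)$, which are simply connected. Once $\chi_\mu$ is known to be a locally trivial fibration, the long exact sequence of homotopy groups then yields the required equivalence, giving existence and uniqueness of $\mathscr{L}'$.

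For the characterization step, proper base change applied to the cartesian square gives $(\pi_{N,\mu})_{*}\underline{\C} \simeq \chi_{N,\mu}^{*}(\pi_d)_{*}\underline{\C}$. Since $\pi_d$ is a $\mathfrak{S}_d$-Galois cover, $(\pi_d)_{*}\underline{\C}$ decomposes as $\bigoplus_{\lambda \in \mathscr{P}_d} (\mathscr{L}'_\lambda)^{\oplus \dim V_\lambda}$ where $\mathscr{L}'_\lambda$ corresponds to the irreducible $\mathfrak{S}_d$-representation $V_\lambda$. By the uniqueness established above, the extended Lusztig local systems $\mathscr{L}_{N,\lambda}$ of Theorem~\ref{explicitcyclic} are exactly $\chi_{N,\mu}^{*}\mathscr{L}'_\lambda$. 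Finally, for any local system $\mathscr{L}'$ on the base, $\pi_d^{*}\mathscr{L}'$ is trivial if and only if the monodromy of $\mathscr{L}'$ factors through the finite quotient $\pi_1(S^d(\C^{*})\setminus\Delta) \twoheadrightarrow \mathfrak{S}_d$, if and only if $\mathscr{L}'$ is a direct sum of the $\mathscr{L}'_\lambda$, which (by uniqueness) is equivalent to $\mathscr{L}=\chi_{N,\mu}^{*}\mathscr{L}'$ being an extended Lusztig local system.

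The main obstacle will be the descent step: one must verify genuine local triviality of $\chi_\mu$ rather than a mere pointwise orbit description, so that the Borel-construction argument yields a bona fide fibration with the expected homotopy long exact sequence. I expect this to follow from the explicit realization of $\Xi(\mu)$ as a $G_{d\delta}/(\C^{*})^d$-bundle over $S^d(\C^{*})\setminus\Delta$, constructed by picking, over a small neighbourhood of $\{x_1,\ldots,x_d\}$, the representation $\bigoplus_j J_1(x_j)$ in a consistent ordering of the eigenvalues; this is a standard application of the implicit function theorem to the characteristic-polynomial map in the relevant GIT context.
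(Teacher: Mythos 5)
Your proposal is correct, but it follows a genuinely different route from the paper's argument, which is deferred to the appendix (Lemma \ref{locsysreglocus}, modelled on the Lie-algebra statement Lemma \ref{locsysg}). The paper works with the $\mathfrak{S}_d$-covering $\pi_{N,\mu}$: it first shows $\pi_{N,\mu}^*\mathscr{L}$ descends along $\tilde{\chi}_{N,\mu}$ (using an associated-bundle presentation with a section, the analogue of $\tilde{\mathfrak{g}}^{rss}\simeq\mathfrak{t}^{rss}\times^TG$, together with smoothness of $\tilde{\chi}_{N,\mu}$ proved via principal-bundle base change), and then recovers $\mathscr{L}=\chi_{N,\mu}^*\mathscr{L}'$ from indecomposability, smooth base change and the splitting of $\mathscr{L}$ off $(\pi_{N,\mu})_*\pi_{N,\mu}^*\mathscr{L}$; the triviality criterion for extended Lusztig local systems is then quoted from the construction in Theorem \ref{explicitcyclic}. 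You instead reduce to $\Xi(\mu)$ by the induction equivalence and Lemma \ref{isocyclic}, and prove that $\chi_\mu$ is an equivariant $\pi_1$-equivalence by a homotopy argument: fibres are single orbits with connected stabilizer $(\C^*)^d$, so the Borel construction fibres over $S^d(\C^*)\setminus\Delta$ with simply connected fibre $B(\C^*)^d$, and the long exact sequence gives the equivalence (and hence uniqueness, which the paper's proof leaves implicit); for the last part you use proper base change on the cartesian square and the isotypic decomposition of $(\pi_d)_*\underline{\C}$, essentially the same mechanism as the paper but run in the opposite direction. The one point you rightly flag — genuine local triviality of $\chi_\mu$ — does hold and is easy: over a product of small disjoint discs in $S^d(\C^*)\setminus\Delta$ one has the section $\underline{y}\mapsto\bigoplus_jJ_1(y_j)$, and the orbit map gives a trivialization with fibre $G_{d\delta}/(\C^*)^d$; this replaces the section trick in the paper's proof of Lemma \ref{locsysg}. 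What your route buys is a uniform treatment of existence, uniqueness and the monodromy criterion from a single $\pi_1$-isomorphism, at the cost of invoking the Borel-construction formalism; the paper's route stays entirely within sheaf-theoretic descent and Krull--Schmidt, which is why it transports verbatim between the affine and cyclic cases. One shared imprecision: the final equivalence should be read for simple (or indecomposable) $\mathscr{L}$, or with ``extended Lusztig'' meaning a direct sum of the $\mathscr{L}_{N,\lambda}$, since a sum of two distinct $\mathscr{L}_{N,\lambda}$ also has $\pi_d^*\mathscr{L}'$ trivial.
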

\begin{proof}
 We postpone the proof to Section \ref{locsysrssaff}.
\end{proof}

\subsection{Microlocal characterization of sheaves in the extended Hall category}
The following result is a statement of the main theorem of this paper, Theorem \ref{mainresult}, for cyclic quivers and the extended Hall category.

\begin{theorem}\label{miccharec}
 Let $\mathscr{F}\in\Perv_{G_{\dd}}(E_{\dd})$ be a simple $G_{\dd}$-equivariant perverse sheaf on $E_{\dd}$ such that $SS(\mathscr{F})\subset \tilde{\Lambda}_{\dd}$. Then, $\mathscr{F}$ is in $\tilde{\mathcal{P}}_{\dd}$.
\end{theorem}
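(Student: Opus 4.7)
The plan is to leverage the Fourier-Sato transform $\Phi$ in exactly the way it was used for cyclic quivers in Section \ref{proofcq}, but replacing the nilpotent variety $\Lambda_{\dd}$ by its extended version $\tilde{\Lambda}_{\dd}$ and the description of Lusztig sheaves in Proposition \ref{lusztigcyclicq} by the description of the extended Hall category in Corollary \ref{fouriernilp}. The crucial structural input is the second description of $\tilde{\Lambda}_{\dd}$ from Proposition \ref{extendednilvarnil}, which identifies $\tilde{\Lambda}_{\dd}$ with the union of conormal bundles to nilpotent orbits when viewed through the projection $p:T^*E_{C_n,\dd}\to E_{C_n,\dd}^{\op}$, $(x,x^*)\mapsto x^*$.

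First I would apply $\Phi$ to $\mathscr{F}$. Since $\mathscr{F}$ is simple and $G_{\dd}$-equivariant, so is $\Phi\mathscr{F}$, now regarded as a perverse sheaf on $E_{C_n,\dd}^{\op}$. By Theorem \ref{ssfourier}, the support of $\Phi\mathscr{F}$ equals the image $p(SS(\mathscr{F}))$, which by hypothesis lies inside $p(\tilde{\Lambda}_{\dd})$. Proposition \ref{extendednilvarnil} identifies this image with the nilpotent locus $E_{C_n,\dd}^{\nil,\op}$.

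Next I would use that $E_{C_n,\dd}^{\nil,\op}$ decomposes into finitely many $G_{\dd}$-orbits (parametrized by multipartitions of $\dd$ as in Section \ref{cyclicquivers}), each of which is equivariantly simply connected: the stabilizer of a representation $M$ is the unit group $\Aut(M)$ of the finite-dimensional $\C$-algebra $\End(M)$, and is therefore a Zariski open subset of an affine space, hence irreducible and connected. Consequently $\Phi\mathscr{F}\simeq\ICC(\OO,\underline{\C})$ for a unique nilpotent orbit $\OO\subset E_{C_n,\dd}^{\nil,\op}$.

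To conclude, I would invoke Corollary \ref{fouriernilp}, which asserts that every such $\ICC(\OO,\underline{\C})$ is the Fourier-Sato transform of a simple object $\mathscr{G}\in\tilde{\mathcal{P}}_{\dd}$. Applying $\Phi^{-1}$ (involutivity of the Fourier-Sato transform up to the canonical shift) then yields $\mathscr{F}\simeq\mathscr{G}$, so $\mathscr{F}\in\tilde{\mathcal{P}}_{\dd}$. I do not expect any serious obstacle: the substantive content has been absorbed into Proposition \ref{extendednilvar}, Proposition \ref{extendednilvarnil}, and Corollary \ref{fouriernilp}, and the remaining argument is essentially a one-line reduction to the case of nilpotent orbits on the Fourier-dual side, parallel to the aperiodic-orbit argument of Section \ref{proofcq}.
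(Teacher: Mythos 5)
Your argument follows the same route as the paper's own proof: apply the Fourier--Sato transform, use Theorem \ref{ssfourier} together with the description of $\tilde{\Lambda}_{\dd}$ (your appeal to Proposition \ref{extendednilvarnil} is exactly the relevant reformulation) to see that $\Phi\mathscr{F}$ has nilpotent support, identify $\Phi\mathscr{F}$ with $\ICC(\OO,\underline{\C})$ for a nilpotent orbit $\OO$ using finiteness of orbits and connectedness of stabilizers, and conclude with Corollary \ref{fouriernilp}. The only substantive omission is the very first step: you apply $\Phi$ to $\mathscr{F}$ without checking that $\mathscr{F}$ is monodromic. In the framework of this paper the Fourier--Sato transform is only defined on $D^b_{mon}$, and Theorem \ref{ssfourier}, which you need to control both the support and the perversity/simplicity of $\Phi\mathscr{F}$, is stated for monodromic complexes; $G_{\dd}$-equivariance alone does not give monodromicity, since the simultaneous weight-one scaling of all the arrows of $C_n$ is not realized inside $G_{\dd}$.

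The gap is easily filled, and this is precisely how the paper starts: by Proposition \ref{extendednilvar}, $\tilde{\Lambda}_{\dd}$ is a union of closures of conormal bundles to the strata $\Xi(N,\mu)$, which are $\C^*$-invariant (Proposition \ref{partcyclic}), so Corollary \ref{cormono} applies to the hypothesis $SS(\mathscr{F})\subset\tilde{\Lambda}_{\dd}$ and shows that $\mathscr{F}$ is monodromic. With that sentence added, your proof coincides with the one in the paper.
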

\begin{proof}
 Since the strata $\Xi(N,\mu)$ defined is Section \ref{stratcyclic} are $\C^*$-invariant, by Corollary \ref{cormono}, $\mathscr{F}$ is monodromic. By Theorem \ref{ssfourier} and the definition of $\tilde{\Lambda}_{\dd}$, its Fourier-Sato transform $\Phi\mathscr{F}$ is a simple perverse sheaf on $E_{C_n,\dd}^{\op}$ with nilpotent support. Therefore, $\mathscr{F}=\ICC(\OO,\underline{\C})$ for some nilpotent orbit $\OO\subset E_{C_n,\dd}^{\op}$. By Theorem \ref{fouriernilp}, $\mathscr{F}$ is in $\tilde{\mathcal{P}}_{\dd}$.
\end{proof}

\subsection{Restriction of perverse sheaves to a neighbourhood of nilpotent representations}
Define the restriction of the extended nilpotent variety
\[
 \tilde{\Lambda}_{\dd}^{<1}=\bigcup_{(N,\mu)}\overline{T^*_{\Xi^{<1}(N,\mu)}E_{\dd}^{<1}}.
\]
Let
\[
 j_{\dd}^{<1}:E_{\dd}^{<1}\rightarrow E_{\dd}
\]
be the inclusion. 
\begin{lemma}\label{equivalence}
 The restriction of perverse sheaves $(j_{\dd}^{<1})^*$ induces an equivalence of categories
 \[
  (j_{\dd}^{<1})^*:\Perv_{G_{\dd}}(E_{\dd},\tilde{\Lambda}_{\dd})\rightarrow \Perv_{G_{\dd}}(E_{\dd}^{<1},\tilde{\Lambda}_{\dd}^{<1}).
 \]

\end{lemma}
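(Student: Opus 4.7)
The plan is to match simple objects on both sides, using Theorems \ref{miccharec} and \ref{explicitcyclic} to classify simples of the source as $\ICC(\overline{\Xi(N,\mu)},\mathscr{L}_{N,\lambda})$ indexed by $(N,\mu,\lambda)$. The functor $(j_{\dd}^{<1})^*$ is well-defined since open pull-back is $t$-exact, preserves equivariance, and satisfies $SS((j_{\dd}^{<1})^*\mathscr{F})=SS(\mathscr{F})\cap T^*E_{\dd}^{<1}\subset\tilde{\Lambda}_{\dd}^{<1}$ by construction. On simples, every stratum $\Xi(N,\mu)$ meets $E_{\dd}^{<1}$: for $x\in\Xi(N,\mu)$, the eigenvalues of $\varphi(tx)=t^n\varphi(x)$ scale by $|t|^n$, so for small $|t|$ the point $tx$ lies in $\Xi^{<1}(N,\mu)$ (the stratum being $\C^*$-invariant by Proposition \ref{partcyclic}). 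Therefore the restriction equals $\ICC(\overline{\Xi(N,\mu)}\cap E_{\dd}^{<1},\mathscr{L}_{N,\lambda}|_{\Xi^{<1}(N,\mu)})$, which by the $\pi_1$-equivalence $j_{N,\mu}$ of Section \ref{opensubsets} is simple, with distinct triples giving non-isomorphic restrictions and preserved endomorphism algebras. Combined with semisimplicity of the source (a direct-summand category built from the decomposition theorem), this yields full faithfulness.

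The main step is essential surjectivity. Let $\mathscr{G}$ be a simple object of the target. The singular support condition combined with $\tilde{\Lambda}_{\dd}^{<1}=\bigcup\overline{T^*_{\Xi^{<1}(N,\mu)}E_{\dd}^{<1}}$ forces $\supp\mathscr{G}$ to be the closure in $E_{\dd}^{<1}$ of some stratum $\Xi^{<1}(N,\mu)$, with $\mathscr{G}$ restricting on that open stratum to a shifted simple equivariant local system $\mathscr{L}'$. The $\pi_1$-equivalence produces a unique simple equivariant local system $\mathscr{L}$ on $\Xi(N,\mu)$ with $\mathscr{L}|_{\Xi^{<1}(N,\mu)}=\mathscr{L}'$. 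Setting $\mathscr{F}:=\ICC(\overline{\Xi(N,\mu)},\mathscr{L})$ algebraically on $E_{\dd}$, one has $(j_{\dd}^{<1})^*\mathscr{F}=\mathscr{G}$ by construction. It remains to verify $SS(\mathscr{F})\subset\tilde{\Lambda}_{\dd}$, placing $\mathscr{F}$ in the source, whereupon Theorem \ref{miccharec} automatically forces $\mathscr{L}=\mathscr{L}_{N,\lambda}$ for some $\lambda$.

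For the singular support assertion I exploit the diagonal $\C^*$-scaling on $E_{\dd}$. Since all strata $\Xi(N,\mu)$ are $\C^*$-invariant (Proposition \ref{partcyclic}), the sheaf $\mathscr{F}$ is $\C^*$-monodromic by Corollary \ref{cormono}, and both $SS(\mathscr{F})$ and $\tilde{\Lambda}_{\dd}$ are $\C^*$-stable subsets of $T^*E_{\dd}$. Although $E_{\dd}^{<1}$ is not $\C^*$-invariant, it is stable under the contracting semigroup $|t|\leq 1$, and every $\C^*$-orbit in $E_{\dd}$ meets $E_{\dd}^{<1}$ by the scaling argument above. Thus for any $(x,\xi)\in SS(\mathscr{F})$, the $\C^*$-orbit through $(x,\xi)$ in $T^*E_{\dd}$ meets $T^*E_{\dd}^{<1}$; at the intersection point, the singular support agrees with $SS(\mathscr{G})\subset\tilde{\Lambda}_{\dd}^{<1}\subset\tilde{\Lambda}_{\dd}$, and $\C^*$-invariance of $\tilde{\Lambda}_{\dd}$ propagates back to give $(x,\xi)\in\tilde{\Lambda}_{\dd}$. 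The main technical subtlety is precisely this $\C^*$-monodromicity argument across the analytic (rather than Zariski) boundary of $E_{\dd}^{<1}$; it works because the $\C^*$-action, the strata, and $\tilde{\Lambda}_{\dd}$ are all algebraically defined and $\C^*$-stable, so the singular support propagates uniformly from the algebraic pieces living inside the analytic open.
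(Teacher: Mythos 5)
Your construction of the quasi-inverse is essentially the paper's: the singular support hypothesis forces a simple object of the target to be $\ICC(\mathscr{L}^{<1})$ for a local system on some stratum $\Xi^{<1}(N,\mu)$, the $\pi_1$-isomorphism $\Xi^{<1}(N,\mu)\hookrightarrow\Xi(N,\mu)$ extends the local system, and one then checks $SS(\ICC(\mathscr{L}))\subset\tilde{\Lambda}_{\dd}$ using the $\C^*$-action. Your scaling/propagation argument in the last paragraph is a correct and more explicit version of the paper's one-line assertion that $\C^*$-equivariance of $\mathscr{L}$ yields the bound on the singular support. One small repair: monodromicity of $\mathscr{F}=\ICC(\mathscr{L})$ should be quoted from Lemma \ref{lemmamono} ($\mathscr{L}$ is locally constant on the $\C^*$-invariant stratum, hence its intermediate extension is monodromic), not from Corollary \ref{cormono}, whose hypothesis on $SS(\mathscr{F})$ is precisely what you are in the course of proving.

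The one step that does not stand as written is full faithfulness via ``semisimplicity of the source''. By the convention of Section \ref{notations}, $\Perv_{G_{\dd}}(E_{\dd},\tilde{\Lambda}_{\dd})$ is the \emph{full abelian} subcategory of $\Perv_{G_{\dd}}(E_{\dd})$ cut out by the singular support condition; it is closed under extensions and is not semisimple in general, hence is not the same abelian category as $\tilde{\mathcal{P}}_{\dd}$ even though the simple objects agree by Theorem \ref{miccharec}. For instance, already for $C_2$ and $\dd=(1,1)$, with $D=\{x_b=0\}\subset E_{\dd}\simeq\C^2$ and $j$ the inclusion of its complement, $j_!\underline{\C}[2]$ is a $G_{\dd}$-equivariant perverse sheaf, a non-split extension of $\underline{\C}_{E_{\dd}}[2]$ by $\underline{\C}_{D}[1]$, and its singular support (zero section union conormal of $D$) lies in $\tilde{\Lambda}_{\dd}$. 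So matching simple objects and their endomorphisms does not by itself give an equivalence of these categories, and your essential surjectivity is likewise only argued for simple $\mathscr{G}$. To be fair, the paper's own proof has exactly the same scope: it constructs the quasi-inverse only on objects of the form $\ICC(\mathscr{L}^{<1})$ and says nothing about morphisms or extensions, and the sole application of the lemma (Section \ref{proofaffineq}) needs only the statement for such objects. So your attempt reaches the same level of completeness as the paper, but the semisimplicity justification is incorrect and should be dropped, either by restricting the claim to the simple/IC objects or by proving full faithfulness of $(j_{\dd}^{<1})^*$ by a separate argument.
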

\begin{proof}
 We construct a quasi inverse to $(j_{\dd}^{<1})^*$. Let $\mathscr{F}^{<1}\in\Perv_{G_{\dd}}(E_{\dd}^{<1},\tilde{\Lambda}_{\dd}^{<1})$. There exists a stratum $\Xi^{<1}(N,\mu)$ and a local system $\mathscr{L}^{<1}$ on it such that $\mathscr{F}=\ICC(\mathscr{L})$. But since $\Xi(N,\mu)$ and $\Xi^{<1}(N,\mu)$ have isomorphic fundamental groups, $\mathscr{L}^{<1}$ can be extended to a local system $\mathscr{L}$ on $\Xi(N,\mu)$. We set $\mathscr{F}=\ICC(\mathscr{L})$. Moreover, since $\mathscr{L}$ is $\C^*$-equivariant, $\mathscr{F}$ is also $\C^*$-equivariant, which implies that $SS(\mathscr{F})\subset \tilde{\Lambda}_{\dd}$.
\end{proof}

\section{Neighbourhood of non-homogeneous tubes in the representation spaces of affine quivers}\label{neighb}
 We recall the construction of the Hall functor and the Hall morphism appearing in \cite[Section 3]{MR2371959}. Let $Q$ be an affine quiver. Let $D\subset \PP^1(\C)$ be the subset indexing the non-homogeneous tubes of $Q$. In the sequel, we assume that $D$ is non-empty. Let $\CC$ be a non-homogeneous tube of $Q$ of period $p>1$ (Theorem \ref{ringelth}) corresponding to $t\in D$. Let $N_0,\hdots,N_{p-1}$ be a full set of representatives of simple regular representations in the tube $\CC$ ordered such that for any $s\in \Z/p\Z$, $\Ext^1(N_s,N_{s+1})\simeq \C$. Let us fix for each $s\in \Z/p\Z$ a nontrivial extension:
 \[
  0\rightarrow N_{s+1}\rightarrow E_s\rightarrow N_s\rightarrow 0.
 \]
As vector spaces, $E_s=N_{s+1}\oplus N_{s}$. The extension $E_s$ gives linear maps
\[
 z_{\alpha,s} : (N_s)_i\rightarrow (N_{s+1})_j
\]
for any $\alpha:i\rightarrow j\in\Omega$. The representation $N_s$ of $Q$ is given by the linear maps $a_{s,\alpha}:(N_s)_i\rightarrow (N_s)_j$ for $\alpha:i\rightarrow j\in\Omega$.
Write $\dd_s=\dim N_s$ for $s\in\Z/p\Z$. We have the map between dimension lattices
\[
\begin{matrix}
 \dim &:& \Z^{\Z/p\Z}&\rightarrow& \Z^{I}\\
 &&\ee=(e_s)&\mapsto&\tilde{\ee}=\sum_{s\in\Z/p\Z}e_s\dd_s.
\end{matrix}
\]
Let $\ee=(e_s)_{s\in\Z/p\Z}\in \Z^{\Z/p\Z}$. Let $V=(V_s)_{s\in\Z/p\Z}$ be a fixed $\Z/p\Z$-graded vector space. Let $W$ be the underlying $I$-graded vector space of $\bigoplus_{s\in\Z/p\Z}V_s\otimes N_s$. We construct a fully faithful functor
\[
\begin{matrix}
 F &:& \Rep_{C_p}(k)&\rightarrow& \Rep_{Q}(k)\\
 &&(V_s,v_s)_{s\in \Z/p\Z}&\mapsto&(W,x)
\end{matrix}
\]
as follows: for $\alpha:i\rightarrow j\in\Omega$,
\[
 \begin{matrix}
  x_{\alpha} &:& W_i&\rightarrow& W_j\\
 \end{matrix}
\]
is defined for any $v\otimes n\in V_s\otimes (N_s)_i$ by
\[
 x_{\alpha}(v\otimes n)=v\otimes a_{s,\alpha}(n)+v_s(v)\otimes z_{\alpha,s}(n).
\]
At the level of representation varieties, the same formulas give a morphism:
\[
 \iota_E:E_{C_p}(V)\rightarrow E_{Q}(W).
\]
Define now the closed embedding of algebraic groups:
\[
\begin{matrix}
 \iota_G&:&GL(V)&\rightarrow& GL(W)\\
 &&(g_s)_{s\in\Z/p\Z}&\mapsto&(\sum_{s\in \Z/p\Z}g_s\otimes \id_{(N_s)_i})_{i\in I}.
\end{matrix}
\]
We obtain a locally closed immersion
\[
 j_V:E_{C_p}(V)\times^{GL(V)}GL(W)\rightarrow E_{Q}(W).
\]
We let $\dd=\dim V$ so that $\tilde{\dd}=\dim W$ and we see $\iota_E$, $\iota_G$ and $j_{\dd}:=j_V$ as morphism between the corresponding representation varieties:
\[
 \iota_E:E_{C_p,\dd}\rightarrow E_{Q,\tilde{\dd}},
\]
\[
 \iota_G:G_{C_p,\dd}\rightarrow G_{W,\tilde{\dd}},
\]
and
\[
 j_{\dd}:E_{C_p,\dd}\times^{G_{C_p,\dd}}G_{Q,\tilde{\dd}}\rightarrow E_{Q,\tilde{\dd}}.
\]
We will also need restrictions of $\iota_E$ and $j_{\dd}$:
\[
 \iota_E^{<1}:E_{C_p,\dd}^{<1}\rightarrow E_{Q,\tilde{\dd}}
\]
and
\[
 j_{\dd}^{<1}:E_{C_p,\dd}^{<1}\times^{G_{C_p,\dd}}G_{Q,\tilde{\dd}}\rightarrow E_{Q,\tilde{\dd}}.
\]

From \cite[Lemma 3.2 and Lemma 3.3]{MR2371959}, we have the following proposition.
\begin{proposition}\label{identiftubes}
 $F$ is an exact fully faithful functor. It takes value in the full subcategory of regular representations of $Q$. By restriction, it induces a equivalence of categories
 \[
  \Rep^{\nil}_{C_p}(k)\rightarrow \CC.
 \]
 (Recall that $\CC$ is the chosen non-homogeneous tube of $Q$). Let $\Rep_Q^{reg\CC}(k)$ be the full subcategory of objects of $\Rep_Q(k)$ isomorphic to a direct sum $N\oplus R$ where $N$ is an object of $\CC$ and $R$ is regular homogeneous. Let $\Rep_{C_p}^{\CC}(k)$ be the full subcategory of objects of $\Rep_{C_p}(k)$ whose image by $F$ belongs to the subcategory $\Rep_{Q}^{reg\CC}(k)$ of regular representations of $Q$ whose non-homogeneous direct summands lie in $\CC$. Then $F$ restricts to an equivalence of categories
 \[
  \Rep_{C_p}^{\CC}(k)\rightarrow \Rep_Q^{reg\CC}(k).
 \]
 \end{proposition}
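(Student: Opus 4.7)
\medskip

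\noindent\textbf{Proof plan.} The plan is to verify the four assertions in order: exactness and full-faithfulness of $F$, the fact that $F$ lands in regular representations, the equivalence $\Rep^{\nil}_{C_p}(k) \simeq \CC$, and finally the extension to homogeneous summands. The underlying vector-space functor $(V_s)_{s\in\Z/p\Z}\mapsto W=\bigoplus_s V_s\otimes N_s$ is visibly exact, and the formula for $x_\alpha$ is $k$-linear in the data $(V_s,v_s)$. Functoriality of $F$ and its exactness follow directly by inspection of the definition, since a morphism $(f_s):(V_s,v_s)\to (V'_s,v'_s)$ of $C_p$-representations induces the $I$-graded map $\bigoplus_s f_s\otimes \id_{N_s}$ which, by bilinearity of the defining formula and compatibility $f_{s+1}\circ v_s = v'_s\circ f_s$, intertwines the actions of $x_\alpha$ and $x'_\alpha$.

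For full faithfulness, I would compute both $\Hom$-spaces directly. On the source side a morphism is simply a tuple $(f_s:V_s\to V'_s)$ commuting with $v_s$. On the target side, a morphism $F(V,v)\to F(V',v')$ is a collection of maps $\bigoplus_s V_s\otimes (N_s)_i\to \bigoplus_s V'_s\otimes (N_s)_i$ intertwining the $x_\alpha$. Using that $\End_Q(N_s)=k$ and $\Hom_Q(N_s,N_{s'})=0$ for $s\ne s'$ (simple regulars in different $\tau$-orbits have no morphisms, and within one $\tau$-orbit the only nontrivial morphism between simples is the identity), the target morphism decomposes as $\bigoplus_s f_s\otimes \id_{N_s}+N$ where $N$ accounts for possible morphisms $N_s\to N_{s+1}$ factoring through the extension classes $z_{\alpha,s}$; the condition that this commutes with $x_\alpha$ forces $N=0$ and recovers exactly the commutation with $v_s$. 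This is the main computational step and the place where one uses essentially that $\Hom_Q(N_s,N_{s'})=\delta_{s,s'}k$ and $\Ext^1_Q(N_s,N_{s+1})=k$ (both are properties of simple regular modules in a non-homogeneous tube).

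Next, to show that $F$ takes values in regular representations, I would proceed by induction on $\dim V$. Any $(V,v)$ sits in a short exact sequence with proper subobject and quotient of smaller dimension; by exactness of $F$ and since the category of regular representations is extension-closed in $\Rep_Q(k)$, it suffices to verify the base case that $F$ applied to a simple $C_p$-representation $S_i$ (i.e.\ $V_i=k$, other $V_s=0$, all $v_s=0$) yields $N_i$, which is immediate from the formula. For the equivalence $\Rep_{C_p}^{\nil}(k)\simeq\CC$, the functor $F$ restricted to nilpotent representations is exact, fully faithful, sends simples to simples bijectively ($S_i\leftrightarrow N_i$), and sends the universal length-two extension $S_{i}\to ?\to S_{i-1}$ to the nontrivial extension $E_{s}$ by the very construction of the formula (this is where the classes $z_{\alpha,s}$ enter). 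Since both categories are uniserial abelian categories of finite length with the same Loewy data (period $p$, one-dimensional $\Ext^1$ between consecutive simples), full-faithfulness combined with essential surjectivity on simples forces essential surjectivity on every object by induction on length.

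Finally, for the equivalence $\Rep_{C_p}^{\CC}(k)\to\Rep_Q^{reg\CC}(k)$, the hard part is already done: an object of $\Rep_{C_p}^{\CC}(k)$ decomposes uniquely as nilpotent part plus a part whose image under $F$ is regular homogeneous, and by construction $F$ respects this decomposition since morphisms between nilpotent and invertible objects vanish on both sides. The main obstacle I anticipate is the commutation computation for full-faithfulness, where one must carefully separate the "diagonal" contribution $\bigoplus_s f_s\otimes\id_{N_s}$ from possible off-diagonal contributions coming from the nontrivial extensions $E_s$, and rule out the latter using the prescribed structure of $\Hom$ and $\Ext^1$ inside the tube; all remaining verifications are either formal (exactness, functoriality) or reduce to this by induction on the length of a composition series.
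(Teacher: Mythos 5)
Note first that the paper does not prove this proposition at all: it is quoted from the reference [Lemmas 3.2 and 3.3 of the cited paper], so your proposal is an attempt at a self-contained proof, and as such it has genuine gaps. The first is in your argument that $F$ lands in regular representations. You induct on dimension with base case "the simple $C_p$-representations $S_i$", but the simple objects of $\Rep_{C_p}(k)$ are not exhausted by the nilpotent simples: any invertible representation such as $J_1(x)$, $x\in k^*$ (and, over a non-closed field, the simples attached to irreducible polynomials) is simple in $\Rep_{C_p}(k)$ and is not an iterated extension of the $S_i$, so your dévissage never reaches it. Showing that $F(J_1(x))$ is regular --- indeed a regular homogeneous brick of dimension $\delta$ --- is precisely the nontrivial content here, and it is not addressed. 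The same omission resurfaces at the end: the equivalence onto $\Rep_Q^{reg\CC}(k)$ requires essential surjectivity onto the regular homogeneous part, i.e. that every homogeneous simple (hence every homogeneous tube) is hit by $F$ up to isomorphism (this is the content of the remark following the proposition), and your last paragraph only discusses the vanishing of cross-morphisms between the nilpotent and invertible parts, not this surjectivity.

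The second gap is the full-faithfulness computation, which you yourself identify as the main step. The claimed normal form $\bigoplus_s f_s\otimes\id_{N_s}+N$ is unjustified: a morphism $F(V,v)\to F(V',v')$ is an $I$-graded linear map whose components $V_s\otimes(N_s)_i\to V'_{s'}\otimes(N_{s'})_i$ are merely linear and need not be of the form $f\otimes\phi$ with $\phi$ a $Q$-module map, so one cannot invoke $\Hom_Q(N_s,N_{s'})=\delta_{s,s'}k$ to force that shape; proving such a decomposition is essentially equivalent to full faithfulness itself. The workable route (and what the cited lemmas amount to) is: exactness of $F$, plus the verification that $F$ induces isomorphisms on $\Hom$ and $\Ext^1$ between all relevant simples --- the nilpotent ones $S_i\mapsto N_i$ \emph{and} the invertible ones, whose images must be shown to be pairwise non-isomorphic homogeneous bricks with no morphisms or extensions to and from the $N_s$ --- followed by a five-lemma dévissage along filtrations by these simples. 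Your proposal supplies neither the $\Hom$/$\Ext^1$ computations for the homogeneous images nor the dévissage in that generality, so the key step remains open; the parts that are correct (exactness, functoriality, and the identification of the nilpotent subcategory with the tube once full faithfulness and the $\Ext^1$-isomorphisms are granted) are the routine ones.
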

\begin{remark}
 The image of F contains all homogeneous regular tubes and misses exactly one inhomogeneous tube (if $Q$ has three inhomogeneous tubes, that is $Q$ is of type $D$ or $E$).
\end{remark}

 At the level of representation varieties, we consider the subvariety $E_{Q,\tilde{\dd}}^{\{t\}}$ of $E_{Q,\tilde{\dd}}$ defined in Section \ref{ringelstrat}. It is an open subvariety contained in the image of $j_{\dd}$. The subvariety $j_{\dd}^{-1}(E_{Q,\tilde{\dd}}^{\{t\}})=\iota_E^{-1}(E_{Q_{\tilde{\dd}}}^{\{t\}})\times^{G_{\dd}}G_{\tilde{\dd}}$ is open. The open subset $\iota_E^{-1}(E_{Q,\tilde{\dd}}^{\{t\}})$ consists of the elements $x\in E_{C_p,\dd}$ which avoid exactly one nonzero eigenvalue if $Q$ has three non-homogeneous tubes, and coincide with $E_{C_p,\dd}$ otherwise. In this case, by choosing properly the extensions $E_s$, we can assume that the avoided eigenvalue is $1$. We let
 \[
  D'=
\left\{
\begin{aligned}
 &\{0\} \text{ if $Q$ has one or two non-homogeneous tubes}\\
 &\{0,1\}\text{ else}.
\end{aligned}
\right.
 \]

 In dimension $\delta$, we obtain a cartesian diagram
\[
 \xymatrix{
 E_{C_p,\delta}^{\C\setminus D'}\times^{G_{C_p,\delta}}G_{Q,\delta}\ar[r]\ar[d]&E_{Q,\delta}^{\reghom}\ar[d]\\
 \A^1\setminus \{0,1\}\ar[r]&\PP^1\setminus D
 }
\]
whose horizontal arrows are isomorphisms.

We also have the cartesian diagram
\[
 \xymatrix{
 E_{C_p,\delta}^{\C\setminus\{1\}}\times^{G_{C_p,\delta}}G_{Q,\delta}\ar[r]\ar[d]&E_{Q,\delta}^{\{t\}}\ar[d]\\
 (\A^1\setminus \{1\})\ar[r]&(\PP^1\setminus D)\cup\{t\}
 }
\]
whose horizontal rows are isomorphisms. If $N=F(N')$ is a non-homogeneous regular representation of $Q$ for some nilpotent representation $N'$ of $C_p$, and $\mu$ is regular semisimple such that $\dd=\dim N+(\dim\mu)\delta$, we obtain a cartesian square: 
\[
 \xymatrix{
 \Xi(N',\mu)\ar[r]\ar[d]&\Xi(N,\mu)\ar[d]\\
 S^d(\A^1\setminus\{1\})\setminus\Delta\ar[r]&S^d((\PP^1\setminus D)\cup\{t\})\setminus\Delta
 }
\]
 where $d=\dim \mu$.
 
We will also need the open immersion
\[
 j_{\dd}^{<1}:E_{C_p,\dd}^{<1}\times^{G_{C_p,\dd}}G_{Q,\tilde{\dd}}\rightarrow E_{Q,\tilde{\dd}}^{reg}.
\]
A crucial property is given by the following proposition whose proof is obvious from Proposition \ref{Ringelstrat}.

\begin{proposition}\label{restnil}
 The restriction of the Lusztig nilpotent variety $\Lambda_{\dd}$ by $j_{\dd}^{<1}$ is
 \[
  (j_{\dd}^{<1})^*\Lambda_{\dd}=\bigcup_{(N,\mu)}\overline{T^*_{\Xi^{<1}(N,\mu)\times^{G_{C_p,\dd}}G_{Q,\tilde{\dd}}}(E_{C_p,\dd}^{<1}\times^{G_{C_p,\dd}}G_{Q,\tilde{\dd}})}.
 \]
Consequently, if $\mathscr{F}$ is a perverse sheaf on $E_{Q,\tilde{\dd}}$ such that $SS(\mathscr{F})\subset\Lambda_{\tilde{\dd}}$, then $\mathscr{G}:=(\iota_E^{<1})^*\mathscr{F}[\dim G_{C_p,\dd}-\dim G_{Q,\tilde{\dd}}]$ is a perverse sheaf on $E_{C_p,\dd}^{<1}$ such that $SS(\mathscr{G})\subset \tilde{\Lambda}_{\dd}^{<1}$. 
\end{proposition}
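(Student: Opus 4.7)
The plan is to treat the two assertions in turn. The first is a direct translation of Ringel's description $\Lambda_{\tilde{\dd}}=\bigcup_{(P,I,H,\mu)}\overline{T^*_{\Xi(P,I,H,\mu)}E_{Q,\tilde{\dd}}}$ (Proposition \ref{Ringelstrat}) through the Hall-functor embedding, while the second propagates perversity and the singular-support condition through the factorisation $\iota_E^{<1}=j_{\dd}^{<1}\circ s$ via a non-characteristic pullback.

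For the first assertion I use that $j_{\dd}^{<1}$ is an open immersion landing in the regular locus $E_{Q,\tilde{\dd}}^{reg}$, so only strata with $P=I=0$ can meet its image. Moreover, the open subset $E_{C_p,\dd}^{<1}\times^{G_{C_p,\dd}}G_{Q,\tilde{\dd}}$ parametrises, up to the free $G_{Q,\tilde{\dd}}$-twist, regular representations whose inhomogeneous part lies in the tube $\CC$ and whose homogeneous eigenvalues avoid $D'$. The equivalence of Proposition \ref{identiftubes}, combined with the cartesian squares set up just before the statement, then matches such strata bijectively with strata of the form $\Xi^{<1}(N,\mu)\times^{G_{C_p,\dd}}G_{Q,\tilde{\dd}}$ (with $N=F^{-1}(H)$ nilpotent aperiodic and $\mu$ regular). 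Since open immersions pull conormal bundles back to conormal bundles, the first assertion drops out.

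For the second assertion I factor $\iota_E^{<1}=j_{\dd}^{<1}\circ s$, where $s\colon E_{C_p,\dd}^{<1}\to E_{C_p,\dd}^{<1}\times^{G_{C_p,\dd}}G_{Q,\tilde{\dd}}$, $x\mapsto [x,e]$, is the closed immersion of the fibre over the identity coset in the smooth fibration $[x,g]\mapsto gG_{C_p,\dd}$, of codimension $c:=\dim G_{Q,\tilde{\dd}}-\dim G_{C_p,\dd}$. Pullback by the open immersion is harmless: $\mathscr{F}':=(j_{\dd}^{<1})^*\mathscr{F}$ is perverse and $SS(\mathscr{F}')\subset (j_{\dd}^{<1})^*\Lambda_{\tilde{\dd}}$. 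The remaining task is to check that $s$ is non-characteristic for this singular-support locus: using the canonical splitting $T_{[x,e]}(E_{C_p,\dd}^{<1}\times^{G_{C_p,\dd}}G_{Q,\tilde{\dd}})\simeq T_xE_{C_p,\dd}^{<1}\oplus T_e(G_{Q,\tilde{\dd}}/G_{C_p,\dd})$, the image of $ds$ is the first summand and $\ker(ds^*)$ is dual to the second; by the first assertion, the conormal bundles composing $(j_{\dd}^{<1})^*\Lambda_{\tilde{\dd}}$ lie entirely in the cotangent of the first summand, so they intersect $\ker(ds^*)$ only in the zero section. The non-characteristic pullback theorem of Kashiwara--Schapira then yields that $\mathscr{G}=s^*\mathscr{F}'[-c]=s^*\mathscr{F}'[\dim G_{C_p,\dd}-\dim G_{Q,\tilde{\dd}}]$ is perverse on $E_{C_p,\dd}^{<1}$, and that $SS(\mathscr{G})\subset ds^*\bigl((j_{\dd}^{<1})^*\Lambda_{\tilde{\dd}}\bigr)=\tilde{\Lambda}_{\dd}^{<1}$.

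The main, and really only, obstacle is the verification of the non-characteristic condition for $s$; but this reduces to the elementary tangent-space computation above once one has the explicit description of $(j_{\dd}^{<1})^*\Lambda_{\tilde{\dd}}$ from the first part. All other steps are bookkeeping with the cartesian diagrams already in place, and the shift $-c$ is simply the relative dimension of the closed immersion $s$, matching the shift announced in the statement. Notably, no equivariance hypothesis on $\mathscr{F}$ is required: everything is extracted from the inclusion $SS(\mathscr{F})\subset\Lambda_{\tilde{\dd}}$.
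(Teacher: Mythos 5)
Your proof is correct, and for the first assertion it follows the paper's (essentially unwritten) route: the paper declares this part obvious from Proposition \ref{Ringelstrat}, and your argument is exactly that -- restrict Ringel's decomposition of $\Lambda_{\tilde{\dd}}$ to the image of the open immersion, note that only strata with $P=I=0$ and inhomogeneous part in the tube $\CC$ can meet it, and transport them through the identification of Proposition \ref{identiftubes} and the cartesian squares; the indexing of the pairs $(N,\mu)$ is left at the same level of (im)precision as in the paper itself. Where you genuinely diverge is the second assertion. The paper, as one sees from how Proposition \ref{restnil} is invoked in Section \ref{proofaffineq}, passes from $E_{C_p,\dd}^{<1}\times^{G_{C_p,\dd}}G_{Q,\tilde{\dd}}$ to the slice $E_{C_p,\dd}^{<1}$ via the equivariant induction equivalence and Lemma \ref{indcv}, which presupposes that the restricted sheaf is $G_{Q,\tilde{\dd}}$-equivariant; you instead factor $\iota_E^{<1}=j_{\dd}^{<1}\circ s$ and apply the non-characteristic restriction theorem to the closed immersion $s$, which requires no equivariance and therefore matches the statement as written (no equivariance of $\mathscr{F}$ is assumed). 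Two small points should be made explicit. First, the splitting of $T_{[x,e]}\bigl(E_{C_p,\dd}^{<1}\times^{G_{C_p,\dd}}G_{Q,\tilde{\dd}}\bigr)$ is not canonical, only local, coming from a local section of $G_{Q,\tilde{\dd}}\rightarrow G_{Q,\tilde{\dd}}/G_{C_p,\dd}$; second, the non-characteristic condition must be checked on the \emph{closures} of the conormal bundles, not only over the strata themselves. Both are handled at once by that local trivialization: near the slice the ambient space becomes $E_{C_p,\dd}^{<1}\times U$ and, because the strata are $G_{Q,\tilde{\dd}}$-invariant, they become products $\Xi^{<1}(N,\mu)\times U$, so the closures of their conormals have vanishing component in the $U$-direction, while $\ker(ds^*)$ consists of covectors vanishing in the $E_{C_p,\dd}^{<1}$-direction; the intersection is the zero section, as you claim. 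With that, the shift bookkeeping and the identity $ds^*\bigl((j_{\dd}^{<1})^*\Lambda_{\tilde{\dd}}\bigr)=\tilde{\Lambda}_{\dd}^{<1}$ are fine. In sum, your route buys a proof of the second assertion valid without any equivariance hypothesis, at the price of invoking the Kashiwara--Schapira non-characteristic pullback theorem, whereas the paper's route is shorter given its equivariant machinery but implicitly uses equivariant descent.
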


\section{Proof of the main theorem in the affine case}\label{proofaffineq}

\subsection{Local systems on a stratum of the cyclic quiver}
We record the following results, which are an easy consequence of the action of $\C^*$ on all the representation varieties and stratifications considered.
 
\begin{lemma}
 Let $\Xi(N,\mu)\subset E_{C_p,\dd}$ be a stratum. Then the inclusion $\Xi^{<1}(N,\mu)\rightarrow \Xi(N,\mu)$ induces an isomorphism between the fundamental groups $\pi_1(\Xi(N,\mu))$ and $\pi_1(\Xi^{<1}(N,\mu))$.
\end{lemma}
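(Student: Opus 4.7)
The key idea is to exploit the $\C^*$-action on $E_{C_p,\dd}$ by scalar multiplication, $(t, (x_i)) \mapsto (t x_i)$, which preserves every stratum $\Xi(N,\mu)$ by Proposition \ref{partcyclic}. Under this action, $\varphi(t \cdot x) = t^p \varphi(x)$, so eigenvalues of $\varphi$ scale by $t^p$; in particular the $\R_{>0}$-action shrinks the spectrum toward the origin. The plan is to use the $\R_{>0}$-action to exhibit a product decomposition of $\Xi(N,\mu)$ under which $\Xi^{<1}(N,\mu)$ becomes an obvious strong deformation retract.

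First I dispose of the trivial case $\mu = 0$. Then $\Xi(N,0) = \OO_N$ consists entirely of nilpotent representations, so $\varphi(x)$ is nilpotent and $\chi(x) = 0$ for all $x \in \Xi(N,0)$. Hence $\Xi(N,0) \subset E_{C_p,\dd}^{<1}$, so $\Xi^{<1}(N,0) = \Xi(N,0)$ and the inclusion is a homeomorphism.

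Assume now $\mu \neq 0$. For any $x \in \Xi(N,\mu)$, the decomposition $x \simeq N \oplus R$ has a non-nilpotent summand $R$ (the regular part parametrized by $\mu$), so $\chi(x)$ contains at least one nonzero point. Define
\[
 \lambda : \Xi(N,\mu) \to \R_{>0}, \quad \lambda(x) = \max\{|\alpha| : \alpha \in \chi(x)\}.
\]
This function is continuous (spectral radius composed with the polynomial map $\varphi$) and strictly positive, and it satisfies $\lambda(t \cdot x) = t^p \lambda(x)$ for $t \in \R_{>0}$. By the very definition of $E_{C_p,\dd}^{<1}$, we have $\Xi^{<1}(N,\mu) = \lambda^{-1}((0,1))$. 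Set $S := \lambda^{-1}(\{1\}) \subset \Xi(N,\mu)$.

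The map
\[
 \Phi : S \times \R_{>0} \to \Xi(N,\mu), \quad (y,s) \mapsto s \cdot y
\]
is a homeomorphism: it is well-defined (since the $\R_{>0}$-action preserves $\Xi(N,\mu)$), continuous, and admits the continuous inverse $x \mapsto (\lambda(x)^{-1/p}\cdot x,\, \lambda(x)^{1/p})$. Under $\Phi$, the subspace $\Xi^{<1}(N,\mu)$ is identified with $S \times (0,1)$, and the inclusion $\Xi^{<1}(N,\mu) \hookrightarrow \Xi(N,\mu)$ corresponds to $S \times (0,1) \hookrightarrow S \times \R_{>0}$. Since both $(0,1)$ and $\R_{>0}$ are contractible, projection to $S$ is a homotopy equivalence in each case, and consequently the inclusion is a homotopy equivalence; in particular it induces an isomorphism of fundamental groups. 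The only point requiring care is the verification that $\Phi$ is well-defined, which reduces to the $\C^*$-invariance of strata from Proposition \ref{partcyclic}; everything else is formal.
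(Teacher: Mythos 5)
Your proof is correct and follows essentially the same route as the paper, which simply records the lemma as an easy consequence of the $\C^*$-scaling action preserving the strata $\Xi(N,\mu)$ (Proposition \ref{partcyclic}); your spectral-radius function and the resulting product decomposition $S\times\R_{>0}\simeq\Xi(N,\mu)$ are just a careful implementation of that contraction argument, including the correct handling of the nilpotent case $\mu=0$.
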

Therefore, there is a canonical bijective correspondance between isomorphism classes of local systems $\mathscr{L}$ on $\Xi(N,\mu)$ and $\mathscr{L}^{<1}$ on $\Xi^{<1}(N,\mu)$.

\begin{lemma}
Let $\mathscr{L}^{<1}$ be a local system on $\Xi^{<1}(N,\mu)$ such that $SS(j^{<1}_{!*}\mathscr{L}^{<1})=\bigcup_{S\in\mathcal{S}^{<1}}\overline{T^*_SE_{C_p,\dd}^{<1}}$ where $\mathcal{S}^{<1}$ is a set of strata of the form $\Xi^{<1}(N,\mu)$. Then, if $\mathscr{L}$ denotes the local system corresponding to $\mathscr{L}^{<1}$ and $\mathcal{S}$ the set of strata corresponding to $\mathcal{S}^{<1}$, 
\[
 SS(j_{!*}\mathscr{L})=\bigcup_{S\in\mathcal{S}}\overline{T^*_SE_{C_p,\dd}}.
\]

\end{lemma}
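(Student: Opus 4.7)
The strategy is to show that both sides of the claimed equality are closed $\C^{*}$-invariant subsets of $T^{*}E_{C_{p},\dd}$ which coincide on the open subset $T^{*}E_{C_{p},\dd}^{<1}$; since every point $x\in E_{C_{p},\dd}$ can be rescaled into $E_{C_{p},\dd}^{<1}$ by the scalar multiplication action (the eigenvalues of $\chi(x)$ scale by $t^{n}$ under $x\mapsto tx$), this will force equality everywhere.

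For the comparison on the open subset, I would use that the intermediate extension commutes with restriction to an open subvariety, so that $(j_{!*}\mathscr{L})|_{E_{C_{p},\dd}^{<1}} = j_{!*}^{<1}\mathscr{L}^{<1}$, together with the functoriality of the singular support under open immersions. Combined with the hypothesis, this yields
\[
 SS(j_{!*}\mathscr{L}) \cap T^{*}E_{C_{p},\dd}^{<1} \;=\; SS(j_{!*}^{<1}\mathscr{L}^{<1}) \;=\; \bigcup_{S\in\mathcal{S}^{<1}} \overline{T^{*}_{S}E_{C_{p},\dd}^{<1}} \;=\; \Bigl(\bigcup_{S\in\mathcal{S}}\overline{T^{*}_{S}E_{C_{p},\dd}}\Bigr) \cap T^{*}E_{C_{p},\dd}^{<1},
\]
where the last identity follows from the general fact that $\overline{A}\cap V = \overline{A\cap V}$ for $V$ open, applied stratum by stratum.

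For the $\C^{*}$-invariance, I would work with the composite action $(x,\xi)\mapsto (tx,\xi)$ on $T^{*}E_{C_{p},\dd}$, obtained by combining the canonical cotangent lift $(x,\xi)\mapsto (tx,t^{-1}\xi)$ of the scaling action on the base with the conical fiber scaling. Each $\overline{T^{*}_{S}E_{C_{p},\dd}}$ for a $\C^{*}$-invariant stratum $S$ is invariant under this composite action, since $T_{tx}S = t\cdot T_{x}S$ as linear subspaces of $E_{C_{p},\dd}$; this handles the right-hand side. For the left-hand side I would show that $j_{!*}\mathscr{L}$ is $\C^{*}$-monodromic: the hypothesis on $SS(j_{!*}^{<1}\mathscr{L}^{<1})$ exhibits it as a union of conormal bundles to $\C^{*}$-invariant strata, hence invariant under the composite action, and Corollary \ref{cormono} translates this into monodromicity of $j_{!*}^{<1}\mathscr{L}^{<1}$, and thus of $\mathscr{L}^{<1}$ on $\Xi^{<1}(N,\mu)$. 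The preceding lemma then transfers this monodromicity from $\mathscr{L}^{<1}$ to $\mathscr{L}$ via the isomorphism $\pi_{1}(\Xi^{<1}(N,\mu))\stackrel{\sim}{\to}\pi_{1}(\Xi(N,\mu))$, whence $j_{!*}\mathscr{L}$ itself is $\C^{*}$-monodromic and $SS(j_{!*}\mathscr{L})$ is invariant under the composite action.

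The main obstacle is the transfer of monodromicity from $\mathscr{L}^{<1}$ to $\mathscr{L}$, i.e.\ checking that the monodromy representation of $\mathscr{L}$ factors through $\pi_{1}(\Xi(N,\mu)/\C^{*})$ once the same holds for $\mathscr{L}^{<1}$. This is essentially formal given that the $\pi_{1}$-equivalence of the preceding lemma is induced by the $\R_{>0}$-retraction and hence intertwines the two $\C^{*}$-actions, but it is the step that genuinely uses the geometric compatibility between the inclusion $\Xi^{<1}(N,\mu)\hookrightarrow\Xi(N,\mu)$ and the scalar action. Once this is in place, saturating the open equality under the composite $\C^{*}$-action gives the desired identity on all of $T^{*}E_{C_{p},\dd}$.
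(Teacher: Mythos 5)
Your argument is correct, and it is essentially the argument the paper intends: the paper records this lemma without proof, as an ``easy consequence of the action of $\C^*$'', and your scheme (agreement of the two sides over $T^*E_{C_p,\dd}^{<1}$ via compatibility of intermediate extension and of singular support with restriction to an open subset, invariance of both sides under $(x,\xi)\mapsto(tx,\xi)$, and the fact that scaling moves any point of $E_{C_p,\dd}$ into $E_{C_p,\dd}^{<1}$ since the eigenvalues of $\chi$ scale by a power of $t$) is exactly that. Two remarks. The step you single out as the main obstacle is a non-issue: a local system on the $\C^*$-stable stratum $\Xi(N,\mu)$ (Proposition \ref{partcyclic}) is locally constant, hence locally constant on $\C^*$-orbits, so $j_{!*}\mathscr{L}$ is monodromic directly by Lemma \ref{lemmamono}; no transfer from $\mathscr{L}^{<1}$ through the $\pi_1$-isomorphism is needed. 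Conversely, the step you treat as immediate --- that monodromicity of $j_{!*}\mathscr{L}$ forces $SS(j_{!*}\mathscr{L})$ to be stable under the composite action --- is the one deserving a line: for a monodromic constructible complex $\mathscr{F}$ one has $m_t^*\mathscr{F}\simeq\mathscr{F}$ for all $t\in\C^*$ (local constancy in the $t$-direction plus connectedness of $\C^*$), so $SS(\mathscr{F})$ is stable under the symplectic lift $(x,\xi)\mapsto(tx,t^{-1}\xi)$, and since the singular support of a $\C$-constructible complex is a union of closures of conormal bundles it is also stable under complex scaling of the fibers; combining the two gives stability under $(x,\xi)\mapsto(tx,\xi)$ (for the right-hand side one can instead argue directly from $T_{tx}S=T_xS$, as you do).
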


\subsection{Proof of Theorem \ref{mainresult} for affine quivers}
Let $\mathscr{F}$ be a $G_{\dd}$-equivariant perverse sheaf on $E_{\dd}^{reg}$ whose singular support is contained in $\Lambda_{\dd}^{reg}$. Then, by Corollary \ref{restrictionth},  it is supported on $\overline{\Xi(N,\mu)}$ for some non-homogeneous regular representation $N$ and $\mu$ regular. Let $d=\dim\mu$. Our argument proceeds in two steps: first we prove that $\mu$ is regular semisimple; then, since $\Xi(N,\mu)$ is smooth and $\SS(\mathscr{F}_{\Xi(N,\mu)})=T^*_{\Xi(N,\mu)}E_{\dd}$, by Lemma \ref{sysloczerosection}, $\mathscr{F}=\ICC(\mathscr{L})$ for a local system $\mathscr{L}$ on $\Xi(N,\mu)$; we then prove that $\mathscr{L}$ is a Lusztig local system on $\Xi(N,\mu)$.

Let $t\in D$ be a non-homogeneous tube. of period $p>1$. Write $N=N_t\oplus N'$ where $N_t$ is in the tube indexed by $t$ and none of the direct summands of $N'$ belongs to the tube indexed by $t$. Write $\dd'=\dim N_t+\dim\mu\delta$. By the isomorphism
\[
 (\OO_{N'}\times E_{\dd'}^{\{t\}})\times^{G_{\dd_{N'}}\times G_{\dd'}}G_{\dd}\rightarrow E_{[N'],\dd'}^{\{t\}},
\]
we obtain a perverse sheaf $\mathscr{G}$ on $E_{\dd'}^{\{t\}}$ such that $SS(\mathscr{G})\subset (\Lambda_{\dd'})_{E_{\dd'}^{\{t\}}}$ (see Lemma \ref{lemma2}). By Proposition \ref{identiftubes}, there exists $\ee\in\N^{\Z/p\Z}$ such that $\tilde{\ee}=\dd'$.

Let $i_{\ee}^{<1}:E_{C_p,\ee}^{<1}\rightarrow E_{Q,\tilde{\ee}}$ be the composition of $j_{\ee}^{<1}$ with the closed immersion $E_{C_p\ee}^{<1}\rightarrow E_{C_p,\ee}^{<1}\times^{G_{C_p,\ee}}G_{Q,\tilde{\ee}}$. Then
\[
 (i_{\ee}^{<1})^*\mathscr{G}[\dim G_{C_p,\dd}-\dim G_{Q,\tilde{\ee}}]
\]
is a perverse sheaf on $E_{C_p,\ee}^{<1}$ and by Proposition \ref{restnil} and Lemma \ref{indcv}, $SS(\mathscr{G})\subset \tilde{\Lambda}_{\ee}^{<1}$. By Lemma \ref{equivalence}, it can be extended to a perverse sheaf $\mathscr{G}'$ on $E_{C_p,\ee}$ such that $SS(\mathscr{G}')\subset \tilde{\Lambda}_{\ee}$. By Theorem \ref{miccharec}, it is a Lusztig sheaf in the extended category. Therefore, by Theorem \ref{explicitcyclic}, $\mu$ has to be regular semisimple. Now, as stated above, $\mathscr{F}=\ICC(\mathscr{L})$ for some local system $\mathscr{L}$ on $\Xi(N,\mu)$. By Lemma \ref{locsysreg}, there exists a local system $\tilde{\mathscr{L}}$ on $S^d\PP_1^{\hom}\setminus\Delta$ such that $\mathscr{L}\simeq \chi_{N,\mu}^*\tilde{\mathscr{L}}$, where $\chi_{N,\mu}$ is the support map whose construction is sketched in Section \ref{quotientreg}. We have to prove that $\pi_d^*\tilde{\mathscr{L}}$ is trivial (recall that $\pi_d:(\PP_1^{\hom})^d\setminus\Delta\rightarrow S^d\PP_1^{\hom}\setminus\Delta$ is the $\mathfrak{S}_d$-covering).

Again by Lemma \ref{sysloczerosection}, the perverse sheaf $\mathscr{G}'$ is given by a local system $\mathscr{L}'$ on $\Xi(N_t',\mu)$, where $N'_t$ is a representation of $C_p$ such that $F(N'_t)\simeq N$ and $F$ is the functor of Section \ref{neighb}. Moreover, by Lemma \ref{pbcyclic}, $\pi_{N'_t,\mu}^*\mathscr{L}'$ is the trivial local system. By Lemma \ref{pbcyclic}, $\mathscr{L}'={\chi}_{N'_t,\mu}\tilde{\mathscr{L}'}$ for some local system $\tilde{\mathscr{L}'}$ on $S^d(\C^*)\setminus \Delta$. By the cartesian diagram
\[
 \xymatrix{
 \tilde{\Xi}(N'_t,\mu)\ar[r]^{\pi_{N'_t,\mu}}\ar[d]_{\tilde{\chi}_{N'_t,\mu}}&\Xi(N'_t,\mu)\ar[d]^{\chi_{N'_t,\mu}}\\
(\C^*)^d\setminus\Delta\ar[r]^{\pi_d}&S^d(\C^*)\setminus\Delta
 },
\]
see also Lemma \ref{pbcyclic}, $\pi^*_{d}\tilde{\mathscr{L}'}$ is the trivial local system.

By the cartesian diagram
\[
 \xymatrix{
 \Xi^{<1}(N'_t,\mu)\ar[r]^{h}\ar[d]_{\chi_{N'_t,\mu}}&\Xi(N,\mu)\ar[d]^{\chi_{N,\mu}}\\
 S^dD^*(0,1)\ar[r]^{i_d}&S^d\PP_1^{\hom}\setminus\Delta
 }
\]
where $i_d$ is the natural injection, and the upper horizontal arrow $h$ is the composition of the injection $\Xi^{<1}(N'_t,\mu)\rightarrow \Xi(N'_t,\mu)$ and the injection given by $\iota_E$, we have that $h^*\mathscr{L}=\mathscr{L}'$, and therefore, the hypothesis of Lemma \ref{triviallocal} is verified for $\tilde{\mathscr{L}}$ and $t\in D$. By repeating this for all $t\in D$, it shows that $\mathscr{L}$ is trivialized by $\pi_d$ at all points of $D$. By Lemma \ref{triviallocal}, $(\pi_d)^*\mathscr{L}$ is trivial. By Lemma \ref{locsysreg}, $\mathscr{L}$ is a Lusztig local system.

\section{The case of quivers with loops}
\label{quiversloops}
In this section, we show that the class of simple perverse sheaves defined by Lusztig on representation spaces of quivers is determined by a nilpotent condition on the singular support for the $g$-loop quiver. The method is completely different than that for finite type or affine quivers.

\subsection{Smallness of some morphisms for negative quivers}

Let $Q$ be quiver. For a flag-type $\underline{\dd}$, we let $E_{\underline{\dd}}$ be the image of the proper morphism $\pi_{\underline{\dd}}$ and $E_{\underline{\dd}}^{\nil}$ the image of $\pi_{\underline{\dd}}^{\nil}$. For convenience, we let $E_{\underline{\dd}}^{\flat}=E_{\underline{\dd}}$ if $\flat=1$ or $\flat=\emptyset$ and $E_{\underline{\dd}}^{\flat}=E_{\underline{\dd}}^{\nil}$ if $\flat=\nil$ or $\flat=(\nil,1)$. Technically speaking, if $\flat=1$ or $\flat=(\nil,1)$, $E_{\underline{\dd}}^{\flat}$ exists only if $\underline{\dd}$ is a discrete flag-type. These are closed irreducible subvarieties of $E_{Q,\dd}$. Define $\mathscr{C}_{\dd}$ as the set of flag-types of dimension $\dd$ and $\mathscr{C}^{1}_{\dd}$ as the set of discrete flag-types of dimension $\dd$. To ease the notation, for $\flat\in\{\nil,\emptyset,(\nil,1),1\}$, we let $\mathscr{C}^{\flat}_{\dd}=\mathscr{C}_{\dd}$ if $\flat=\nil$ or $\flat=\emptyset$ and $\mathscr{C}^{\flat}_{\dd}=\mathscr{C}^1_{\dd}$ else and the same remark as before applies. Similarly, $\pi_{\underline{\dd}}^{\flat}=\pi_{\underline{\dd}}$ if  $\flat=\emptyset$ or $\flat=1$ and $\pi_{\underline{\dd}}^{\flat}=\pi_{\underline{\dd}}^{\nil}$ if $\flat =\nil$ or $\flat=(\nil,1)$.

\subsubsection{A quadratic form}
Let $\dd\in\N^I$ be a dimension vector and $\underline{\dd}=(\dd_1,\hdots,\dd_r)$ a flag-type of dimension $\dd$ (that is, $\sum_{j=1}^r\dd_j=\dd$). In this section, we reformulate the main result of \cite{MR1261904} in the case where the flag type $\underline{\dd}$ is not necessarily discrete and for both morphisms $\pi_{\underline{\dd}}$ and $\pi_{\underline{\dd}}^{\nil}$ (Lusztig considered the case of $\pi_{\underline{\dd}}$). Recall that $\mathcal{F}_{\underline{\dd}}$ denotes the flag variety of flags of type $\underline{\dd}$ inside $\C^{\dd}$. As in \cite{MR1074310} in the non-graded case or in \cite{MR1261904} for quivers and discrete flag-types, the relative position of two flags $\underline{F}$ and $\underline{F'}$ of type $\underline{\dd}$ is encoded in an array $z=(z^{pq})_{\substack{1\leq p\leq r\\1\leq q\leq r}}$, such that for any $1\leq s\leq r$,
\begin{equation}
\label{fteq}
\sum_{1\leq p\leq r}z^{ps}=\sum_{1\leq q\leq r}z^{sq}=\dd_s
\end{equation}
For any $1\leq p,q\leq r$, $z^{pq}=(z_i^{pq})_{i\in I}\in\N^I$ is defined by
\[
 z^{pq}=\dim \left(\frac{F_{p-1}+F_p\cap F'_q}{F_{p-1}+F_p\cap F'_{q-1}}\right).
\]
By splitting the refined flag
\[
 (F_{p-1}+F_p\cap F'_q)_{1\leq p,q\leq r}
\]
(where tuples $(p,q)$ are lexicographically ordered: $(p,q)\leq (p',q')\iff p<p' \text{ or }(p=p' \text{ and }q\leq q'$)), we see that $\underline{F}$ and $\underline{F'}$ are in relative position $z$ for some $z$ as above if and only there exists vector spaces $V^{pq}$ such that $\dim V^{pq}=z^{pq}$, $\C^{\dd}=\bigoplus_{1\leq p,q\leq r}V^{pq}$ and for any $1\leq s\leq r$,
\[
 F_s=\bigoplus_{1\leq q\leq r}V^{sq},\quad F'_s=\bigoplus_{1\leq p\leq r}V^{ps}.
\]
Of course, all this applies for $I$-graded vector spaces (where $I$ will denote the vertices of the quiver $Q$). We let $\Theta(\underline{\dd})$ be the set of all possible relative positions between two flags of type $\underline{\dd}$, that is the set of arrays $z=(z^{pq})_{1\leq p,q\leq r}$ satisfying the equalities \eqref{fteq}. The variety $\mathcal{F}_{\underline{\dd}}\times \mathcal{F}_{\underline{\dd}}$ is then stratified by the relative position:
\[
 \mathcal{F}_{\underline{\dd}}\times \mathcal{F}_{\underline{\dd}}=\bigsqcup_{z\in \Theta(\underline{\dd})}(\mathcal{F}_{\underline{\dd}}\times \mathcal{F}_{\underline{\dd}})_z
\]
where $(\mathcal{F}_{\underline{\dd}}\times \mathcal{F}_{\underline{\dd}})_z$ denotes the locally closed subvariety of pairs $(\underline{F},\underline{F'})$ in relative position $z$. Recall the proper morphism $\pi_{\underline{\dd}}^{\sharp}:\tilde{\mathcal{F}}_{\underline{\dd}}\rightarrow E_{\dd}$ for $\sharp\in\{\emptyset,\nil\}$. We have a morphism $\varphi_{\underline{\dd}}:\tilde{\mathcal{F}}^{\sharp}_{\underline{\dd}}\times_{E_{\dd}}\tilde{\mathcal{F}}^{\sharp}_{\underline{\dd}}\rightarrow\mathcal{F}_{\underline{\dd}}\times \mathcal{F}_{\underline{\dd}}$. Therefore, we can stratify $\tilde{\mathcal{F}}^{\sharp}_{\underline{\dd}}\times_{E_{\dd}}\tilde{\mathcal{F}}^{\sharp}_{\underline{\dd}}$ by the pull-back of the stratification of $\mathcal{F}_{\underline{\dd}}\times \mathcal{F}_{\underline{\dd}}$:
\[
 \tilde{\mathcal{F}}^{\sharp}_{\underline{\dd}}\times_{E_{\dd}}\tilde{\mathcal{F}}^{\sharp}_{\underline{\dd}}=\bigsqcup_{z\in\Theta(\underline{\dd})}(\tilde{\mathcal{F}}^{\sharp}_{\underline{\dd}}\times_{E_{\dd}}\tilde{\mathcal{F}}^{\sharp}_{\underline{\dd}})_z.
\]
In first lecture, the reader can directly jump to the statement of Corollary \ref{formulasmall} and then to Section \ref{smallnegquiv}.

\begin{lemma}
\label{dimbase}
 The stratum $(\mathcal{F}_{\underline{\dd}}\times\mathcal{F}_{\underline{\dd}})_z$ is a connected $G_{\dd}$-homogeneous space of dimension
 \[
 \dim G_{\dd}-\sum_{\substack{i\in I\\1\leq t\leq p\leq r\\1\leq s\leq q\leq r}}z_{i}^{pq}z_i^{ts}= \sum_{\substack{i\in I\\1\leq t,p\leq r}}(\dd_p)_i(\dd_t)_i-\sum_{\substack{i\in I\\1\leq t\leq p\leq r\\1\leq s\leq q\leq r}}z_{i}^{pq}z_i^{ts}.
 \]

\end{lemma}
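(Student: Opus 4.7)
The plan is to exhibit $(\mathcal{F}_{\underline{\dd}}\times\mathcal{F}_{\underline{\dd}})_z$ as a quotient $G_{\dd}/\mathrm{Stab}$ and then count the dimension of the stabilizer block-by-block using the decomposition into the subspaces $V^{pq}$.

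First I would check transitivity. As recalled just above the statement, giving a pair of flags $(\underline{F},\underline{F'})$ of type $\underline{\dd}$ in relative position $z$ is equivalent to giving an $I$-graded direct sum decomposition $\C^{\dd}=\bigoplus_{p,q}V^{pq}$ with $\dim V^{pq}=z^{pq}$ such that $F_s=\bigoplus_q V^{sq}$ and $F'_s=\bigoplus_p V^{ps}$. Given two such decompositions $(V^{pq})$ and $({V'}^{pq})$ with the same prescribed dimension vectors, one builds $g\in G_{\dd}$ by choosing, at each vertex $i\in I$, linear isomorphisms $V^{pq}_i\to {V'}^{pq}_i$ and summing them. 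This proves that $G_{\dd}$ acts transitively on the stratum, which is in particular connected since $G_{\dd}$ is.

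Next, fix such a decomposition $\C^{\dd}=\bigoplus_{p,q}V^{pq}$ and compute the stabilizer $S_z=\mathrm{Stab}_{G_{\dd}}(\underline{F},\underline{F'})$. Writing $g\in G_{\dd}$ in block form $g=(g_{(p,q),(t,s)})_{(p,q),(t,s)}$ with $g_{(p,q),(t,s)}:V^{ts}\to V^{pq}$, the condition $gF_{s_0}\subset F_{s_0}$ for every $s_0$ forces $g_{(p,q),(t,s)}=0$ whenever $p>t$; likewise $gF'_{s_0}\subset F'_{s_0}$ forces $g_{(p,q),(t,s)}=0$ whenever $q>s$. Thus $S_z$ consists precisely of the block ``bi-lower-triangular'' endomorphisms where only blocks with $p\leq t$ and $q\leq s$ may be nonzero. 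Counting coordinates vertex by vertex gives
\[
\dim S_z=\sum_{i\in I}\ \sum_{\substack{1\leq t\leq p\leq r\\ 1\leq s\leq q\leq r}}z_i^{pq}z_i^{ts},
\]
which via $\dim(\mathcal{F}_{\underline{\dd}}\times\mathcal{F}_{\underline{\dd}})_z=\dim G_{\dd}-\dim S_z$ yields the first displayed formula.

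Finally, to obtain the second form of the dimension, I would simply note that
\[
\dim G_{\dd}=\sum_{i\in I}\dd_i^2=\sum_{i\in I}\Bigl(\sum_{p}(\dd_p)_i\Bigr)\Bigl(\sum_{t}(\dd_t)_i\Bigr)=\sum_{\substack{i\in I\\ 1\leq t,p\leq r}}(\dd_p)_i(\dd_t)_i,
\]
using the constraint $\sum_{p}\dd_p=\dd$. No step is really an obstacle here; the only point requiring minimal care is organizing the two ``triangularity'' conditions so that the stabilizer index set becomes exactly the one appearing in the statement. The indexing choice $p\leq t$, $q\leq s$ (equivalently $t\leq p$, $s\leq q$ after renaming) is dictated by the convention $F_s=\bigoplus_q V^{sq}$ versus $F'_s=\bigoplus_p V^{ps}$ introduced before the lemma.
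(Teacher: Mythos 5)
Your proof is correct, and it essentially reconstructs the standard orbit--stabilizer argument that the paper itself does not spell out (its proof is just a citation to Lusztig \cite[\S 17]{MR1261904}): transitivity via blockwise isomorphisms between two decompositions $\bigoplus_{p,q}V^{pq}$, stabilizer equal to the ``bi-triangular'' blocks $p\leq t$, $q\leq s$ of total dimension $\sum_{i}\sum_{p\leq t,\,q\leq s}z_i^{pq}z_i^{ts}$, which after renaming the index pairs is exactly the sum in the statement, and the identity $\dim G_{\dd}=\sum_i\bigl(\sum_p(\dd_p)_i\bigr)\bigl(\sum_t(\dd_t)_i\bigr)$ for the second form. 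One small point of care: the paper's displayed convention reads literally as $F_s=\bigoplus_q V^{sq}$, which cannot be the flag step itself; your stabilizer computation (blocks vanish for $p>t$) implicitly and correctly uses the intended reading $F_s=\bigoplus_{p\leq s,\,q}V^{pq}$, $F'_s=\bigoplus_{p,\,q\leq s}V^{pq}$, and it would be worth stating that reading explicitly so the triangularity conditions are unambiguous.
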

\begin{proof}
 See \cite[\S 17.]{MR1261904}.
\end{proof}

\begin{lemma}
\label{dimfiber}
 The morphism $\varphi_{\underline{\dd}}$ restricts to a fiber bundle $(\tilde{\mathcal{F}}^{\sharp}_{\underline{\dd}}\times_{E_{\dd}}\tilde{\mathcal{F}}^{\sharp}_{\underline{\dd}})_z\rightarrow (\mathcal{F}_{\underline{\dd}}\times\mathcal{F}_{\underline{\dd}})_z$ with fibers of dimension
 \[
  \sum_{\alpha:i\rightarrow j\in\Omega}\sum_{\substack{1\leq t\leq p\leq r\\1\leq s\leq q\leq r}}z_i^{pq}z_{j}^{ts}
 \]
if $\sharp=\emptyset$ and
\[
  \sum_{\alpha:i\rightarrow j\in\Omega}\sum_{\substack{1\leq t< p\leq r\\1\leq s< q\leq r}}z_i^{pq}z_{j}^{ts}
 \]
 if $\sharp=\nil$.
\end{lemma}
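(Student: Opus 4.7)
The proof is essentially a careful index-bookkeeping argument combined with the standard adapted decomposition attached to a pair of flags in prescribed relative position.

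The plan is to pass to a concrete model. Fix $(\underline{F},\underline{F'}) \in (\mathcal{F}_{\underline{\dd}}\times\mathcal{F}_{\underline{\dd}})_z$ and choose (as recalled in the discussion preceding the lemma) an $I$-graded decomposition
\[
\C^{\dd} = \bigoplus_{1\leq p,q\leq r} V^{pq},\qquad \dim V^{pq}=z^{pq},
\]
adapted to both flags, so that $F_s = \bigoplus_{p\leq s,\,q} V^{pq}$ and $F'_s = \bigoplus_{p,\,q\leq s} V^{pq}$. Any $x\in E_{\dd}$ then admits, for each arrow $\alpha:i\to j\in\Omega$, a block decomposition
\[
x_\alpha = (x_\alpha^{pq,ts})_{p,q,t,s},\qquad x_\alpha^{pq,ts}:V^{ts}_i \to V^{pq}_j,
\]
and the space of such blocks has dimension $z_i^{ts}\,z_j^{pq}$.

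Next, I would translate the two stabilization conditions into vanishing conditions on the blocks. The conditions $x(F_s)\subset F_s$ for all $s$ are equivalent to $x_\alpha^{pq,ts}=0$ whenever there exists $s$ with $t\leq s<p$; taking $s=t$, this forces $p\leq t$. Conversely, if $p\leq t$ for every nonzero block, then for any $s\geq t$ one has $p\leq t\leq s$, so stability is preserved. Analogously, $x(F'_s)\subset F'_s$ for all $s$ is equivalent to $q\leq s$. Hence, in the case $\sharp=\emptyset$, the fiber is parametrized freely by the block components with $p\leq t$ and $q\leq s$, giving total dimension
\[
\sum_{\alpha:i\to j\in\Omega}\ \sum_{\substack{1\leq p\leq t\leq r\\ 1\leq q\leq s\leq r}} z_i^{ts}\,z_j^{pq},
\]
which, after swapping the names of $(p,q)$ and $(t,s)$, is exactly the stated formula. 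In the nilpotent case $\sharp=\nil$, $x(F_s)\subset F_{s-1}$ forces $p<t$ (by the same argument with $s=t$), and analogously $x(F'_s)\subset F'_{s-1}$ forces $q<s$; the dimension count then becomes the strict-inequality sum claimed.

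Finally, I would upgrade this pointwise computation to a fiber-bundle structure. By Lemma \ref{dimbase}, the base stratum $(\mathcal{F}_{\underline{\dd}}\times\mathcal{F}_{\underline{\dd}})_z$ is a single $G_{\dd}$-orbit, and the morphism $\varphi_{\underline{\dd}}$ restricted to $(\tilde{\mathcal{F}}^{\sharp}_{\underline{\dd}}\times_{E_{\dd}}\tilde{\mathcal{F}}^{\sharp}_{\underline{\dd}})_z$ is $G_{\dd}$-equivariant; its fiber over one point is the linear subspace of $E_{\dd}$ just computed. Standard descent (picking a local section of the $G_{\dd}$-orbit and using the linearity of the fiber) then shows that $\varphi_{\underline{\dd}}$ is a locally trivial fiber bundle of the asserted rank. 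The only real subtlety — and the step I expect to require the most care — is making sure the weak/strict inequality dichotomy between $\sharp=\emptyset$ and $\sharp=\nil$ is correctly tracked through the index relabeling; there is no geometric obstacle beyond linear algebra once the adapted decomposition is in place.
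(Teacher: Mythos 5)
Your proof is correct: the adapted decomposition $\C^{\dd}=\bigoplus V^{pq}$, the block-vanishing criteria $p\leq t$, $q\leq s$ (weak) versus $p<t$, $q<s$ (nilpotent), and the equivariant local-triviality argument over the homogeneous stratum all check out, and after the relabeling they reproduce exactly the stated dimensions. The paper itself gives no details here—it simply cites Lusztig's \S 17 computation and remarks that the nilpotency condition makes the inequalities strict—so your argument is precisely the standard one behind that reference, written out in full.
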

Note that for $z_{\underline{\dd}}=(\dd_p\delta_{pq})_{1\leq p,q\leq r}$, $(\tilde{\mathcal{F}}^{\sharp}_{\underline{\dd}}\times_{E_{\dd}}\tilde{\mathcal{F}}^{\sharp}_{\underline{\dd}})_{z_{\underline{\dd}}}\simeq \tilde{\mathcal{F}}^{\sharp}_{\underline{\dd}}$ is the diagonal copy of $\tilde{\mathcal{F}}^{\sharp}_{\underline{\dd}}$ inside $\tilde{\mathcal{F}}^{\sharp}_{\underline{\dd}}\times_{E_{\dd}}\tilde{\mathcal{F}}^{\sharp}_{\underline{\dd}}$.
\begin{proof}
  See \cite[\S 17.]{MR1261904}. Note that for $\sharp=\nil$, the inequalities in the sum are strict as a consequence of the nilpotency condition.
\end{proof}

\begin{lemma}
\label{techlemmaeq}
 Let $z=(z^{pq})_{1\leq p,q\leq r}$ be a matrix such that for any $1\leq s\leq r$,
 \[
  \sum_{1\leq p\leq r}z^{ps}=\sum_{1\leq q\leq r}z^{sq}.
 \]
Then,
\begin{equation}
\label{firsteq}
 \sum_{\substack{1\leq t<p\leq r\\1\leq q\leq s\leq r}}z^{pq}z^{ts}=\sum_{\substack{1\leq t\leq p\leq r\\1\leq q<s\leq r}}z^{pq}z^{ts}
\end{equation}
or, equivalently,
\begin{equation}
\label{secondeq}
 \sum_{\substack{1\leq t<p\leq r\\1\leq q\leq r }}z^{pq}z^{tq}=\sum_{\substack{1\leq p\leq r\\1\leq q<s\leq r}}z^{pq}z^{ps}.
\end{equation}
\end{lemma}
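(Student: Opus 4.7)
The plan is to prove the equivalence of \eqref{firsteq} and \eqref{secondeq} first (it is purely combinatorial and does not use the hypothesis on row/column sums), and then to establish \eqref{secondeq} by a symmetry argument that reduces the claim to the identity $\sum_s T_s^2 = \sum_s T_s^2$ after one applies the hypothesis, where $T_s:=\sum_p z^{ps}=\sum_q z^{sq}$.

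First, to see that \eqref{firsteq} is equivalent to \eqref{secondeq}, I would decompose the indexing set $\{t<p,\,q\leq s\}$ as $\{t<p,\,q<s\}\sqcup\{t<p,\,q=s\}$ and the set $\{t\leq p,\,q<s\}$ as $\{t<p,\,q<s\}\sqcup\{t=p,\,q<s\}$. The common block $\{t<p,\,q<s\}$ contributes the same quantity to both sides of \eqref{firsteq}, so after cancellation \eqref{firsteq} becomes
\[
\sum_{t<p,\,q=s}z^{pq}z^{ts}=\sum_{t=p,\,q<s}z^{pq}z^{ts},
\]
which, upon substituting $s=q$ on the left and $t=p$ on the right, is exactly \eqref{secondeq}.

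Next, to prove \eqref{secondeq}, I would fix $q$ and use the elementary identity
\[
2\sum_{t<p}z^{pq}z^{tq}=\Bigl(\sum_p z^{pq}\Bigr)^2-\sum_p (z^{pq})^2=T_q^2-\sum_p(z^{pq})^2,
\]
which follows from expanding $(\sum_p z^{pq})^2$. Summing over $q$, the left-hand side of \eqref{secondeq} becomes $\tfrac12\sum_q T_q^2-\tfrac12\sum_{p,q}(z^{pq})^2$. By the same calculation with the roles of the row-index and column-index swapped, the right-hand side equals $\tfrac12\sum_p T_p^2-\tfrac12\sum_{p,q}(z^{pq})^2$. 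Since the indices $p$ and $q$ are dummy indices and $T_s$ is defined so that row sum equals column sum, $\sum_q T_q^2=\sum_p T_p^2$, and the two sides agree.

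There is no genuine obstacle here: the hypothesis that row sums equal column sums is used exactly once, to identify the two quadratic expressions $\sum_q(\sum_p z^{pq})^2$ and $\sum_p(\sum_q z^{pq})^2$. The main subtlety to keep straight is the bookkeeping of the cases $q=s$ versus $t=p$ that produces the equivalence between the two forms of the identity; this is an accounting issue rather than a mathematical one. Note that the $z^{pq}$ can be taken to be elements of any commutative semiring (and in the intended application they live in $\N^I$ and the identity is used componentwise), so commutativity of multiplication is all that is required.
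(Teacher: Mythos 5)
Your proof is correct and takes essentially the same route as the paper's: the same cancellation of the common block $\{t<p,\ q<s\}$ to pass between \eqref{firsteq} and \eqref{secondeq}, and the same expansion of squared column and row sums together with the hypothesis (the paper phrases it as $2L+\sum_{p,q}(z^{pq})^2=\sum_{p,q,t}z^{pq}z^{tq}$ rather than dividing by two). The only caveat is your closing remark: since the argument uses subtraction and cancellation of the factor $2$, it does not extend to arbitrary commutative semirings (the identity already fails over the tropical semiring), but this is harmless because in the intended application the entries are componentwise natural numbers, where the computation over $\Z$ applies.
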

\begin{proof}
 The first equality \eqref{firsteq} can be deduced from the second one \eqref{secondeq} by adding on both side $\sum_{\substack{1\leq t<p\leq r\\1\leq q< s\leq r}}z^{pq}z^{ts}$. To prove \eqref{secondeq}, note that $z^{pq}z^{tq}$ is symmetric in $p,t$. Let $L= \sum_{\substack{1\leq t<p\leq r\\1\leq q\leq r }}z^{pq}z^{tq}$ be the left-hand side of \eqref{secondeq} and $R=\sum_{\substack{1\leq p\leq r\\1\leq q<s\leq r}}z^{pq}z^{ps}$ be its right-hand side. Then
 \[
  \begin{aligned}
   2L+\sum_{\substack{1\leq p\leq r\\1\leq q\leq r}}(z^{pq})^2
   &=\sum_{1\leq p,q,t\leq r}z^{pq}z^{tq}\\
   &=\sum_{1\leq q\leq r}\left(\sum_{1\leq p\leq r}z^{pq}\right)\left(\sum_{1\leq t\leq r}z^{tq}\right)\\
   &=\sum_{1\leq q\leq r}\left(\sum_{1\leq p\leq r}z^{qp}\right)\left(\sum_{1\leq t\leq r}z^{qt}\right)\\
   &=\sum_{1\leq p,q,t\leq r}z^{qp}z^{qt}
  \end{aligned}
 \]
and,
\[
 2R+\sum_{\substack{1\leq p \leq r\\1\leq q\leq r}}(z^{pq})^2=\sum_{1\leq p,q,s\leq r}z^{pq}z^{ps}
\]
so that $L=R$.

\end{proof}

\begin{cor}
\label{formulasmall}
 We have the following formulas:
 \[
  \dim\tilde{\mathcal{F}}_{\underline{\dd}}-\dim(\tilde{\mathcal{F}}_{\underline{\dd}}\times_{E_{\dd}}\tilde{\mathcal{F}}_{\underline{\dd}})_z=\sum_{\alpha:i\rightarrow j\in\Omega}
\sum_{\substack{1\leq t\leq p\leq r\\1\leq q<s\leq r}}z_i^{pq}z_j^{ts}-\sum_{i\in I}\sum_{\substack{1\leq t\leq p\leq r\\1\leq q<s\leq r}}z_i^{pq}z_i^{ts}
\]
and
\[
  \dim\tilde{\mathcal{F}}^{\nil}_{\underline{\dd}}-\dim(\tilde{\mathcal{F}}^{\nil}_{\underline{\dd}}\times_{E_{\dd}}\tilde{\mathcal{F}}^{\nil}_{\underline{\dd}})_z=\sum_{\alpha:i\rightarrow j\in\Omega}
\sum_{\substack{1\leq t< p\leq r\\1\leq q\leq s\leq r}}z_i^{pq}z_j^{ts}-\sum_{i\in I}\sum_{\substack{1\leq t< p\leq r\\1\leq q\leq s\leq r}}z_i^{pq}z_i^{ts}
\]
\end{cor}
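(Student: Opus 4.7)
The plan is a direct dimension count combining Lemmas \ref{dimbase} and \ref{dimfiber} with the identification $\tilde{\mathcal{F}}^{\sharp}_{\underline{\dd}} \simeq (\tilde{\mathcal{F}}^{\sharp}_{\underline{\dd}}\times_{E_{\dd}}\tilde{\mathcal{F}}^{\sharp}_{\underline{\dd}})_{z_{\underline{\dd}}}$ noted just after Lemma \ref{dimfiber}, where $z_{\underline{\dd}}=(\dd_p\delta_{pq})$ is the diagonal stratum. Adding base and fiber dimensions gives an expression for $\dim(\tilde{\mathcal{F}}^{\sharp}_{\underline{\dd}}\times_{E_{\dd}}\tilde{\mathcal{F}}^{\sharp}_{\underline{\dd}})_z$; specialising to $z=z_{\underline{\dd}}$ (for which $(z_{\underline{\dd}})_i^{pq}(z_{\underline{\dd}})_j^{ts}=(\dd_p)_i(\dd_t)_j\,\delta_{pq}\delta_{ts}$) produces $\dim\tilde{\mathcal{F}}^{\sharp}_{\underline{\dd}}$. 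The $\dim G_{\dd}$ cancels in the difference $\dim\tilde{\mathcal{F}}^{\sharp}_{\underline{\dd}} - \dim(\tilde{\mathcal{F}}^{\sharp}_{\underline{\dd}}\times_{E_{\dd}}\tilde{\mathcal{F}}^{\sharp}_{\underline{\dd}})_z$, leaving a vertex contribution $\sum_{i\in I}\sum_{t\leq p,\,s\leq q}\bigl[z_i^{pq}z_i^{ts}-(\dd_p)_i(\dd_t)_i\delta_{pq}\delta_{ts}\bigr]$ and an edge contribution $\sum_{\alpha:i\to j}\sum\bigl[(\dd_p)_i(\dd_t)_j\delta_{pq}\delta_{ts} - z_i^{pq}z_j^{ts}\bigr]$ summed over the range of Lemma \ref{dimfiber} (weak inequalities when $\sharp=\emptyset$, strict when $\sharp=\nil$).

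The key combinatorial tool is the master identity
\[
\sum_{1\leq s,\,q\leq r} z_i^{pq}z_j^{ts} = \Bigl(\sum_q z_i^{pq}\Bigr)\Bigl(\sum_s z_j^{ts}\Bigr) = (\dd_p)_i(\dd_t)_j,
\]
a direct consequence of the defining constraint on $z$. Partitioning $(s,q)$ as $\{s\leq q\}\sqcup\{q<s\}$ and substituting (noting that the Kronecker-$\delta$ terms contribute exactly $(\dd_p)_i(\dd_t)_i$ in the vertex sum, since $s=t\leq q=p$ is forced for $t\leq p$) collapses the vertex contribution to $-\sum_i\sum_{t\leq p,\, q<s}z_i^{pq}z_i^{ts}$. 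Applied identically to the edge contribution for $\sharp=\emptyset$, whose range is also $\{t\leq p,\,s\leq q\}$, the same substitution yields $+\sum_{\alpha:i\to j}\sum_{t\leq p,\, q<s}z_i^{pq}z_j^{ts}$, proving the first formula.

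For $\sharp=\nil$ the edge range is $\{t<p,\,s<q\}$; I use instead the three-fold partition $\{s<q\}\sqcup\{s=q\}\sqcup\{q<s\}$, which gives $\sum_{s<q}z_i^{pq}z_j^{ts}=(\dd_p)_i(\dd_t)_j - \sum_q z_i^{pq}z_j^{tq} - \sum_{q<s}z_i^{pq}z_j^{ts}$ and converts the edge contribution to $+\sum_{\alpha:i\to j}\sum_{t<p,\, q\leq s}z_i^{pq}z_j^{ts}$. To harmonise the vertex range $\{t\leq p,\,q<s\}$ with the edge range $\{t<p,\,q\leq s\}$ I invoke Lemma \ref{techlemmaeq}, which gives precisely $\sum_i\sum_{t\leq p,\,q<s}z_i^{pq}z_i^{ts} = \sum_i\sum_{t<p,\,q\leq s}z_i^{pq}z_i^{ts}$, completing the proof. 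The only step that is not a mechanical substitution using the sum-to-$\dd_s$ constraints on $z$ is this final appeal to Lemma \ref{techlemmaeq}.
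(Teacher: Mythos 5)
Your proposal is correct and takes essentially the same route as the paper's proof: both deduce the stratum dimensions from Lemmas \ref{dimbase} and \ref{dimfiber} (with the diagonal stratum $z_{\underline{\dd}}$ recovering $\dim\tilde{\mathcal{F}}^{\sharp}_{\underline{\dd}}$), subtract, use the row/column-sum constraints on $z$ to trade the summation range $\{s\leq q\}$ for its complement $\{q<s\}$, and then invoke Lemma \ref{techlemmaeq} to align the vertex range $\{t\leq p,\,q<s\}$ with the edge range $\{t<p,\,q\leq s\}$ in the nilpotent case. Your three-fold partition $\{s<q\}\sqcup\{s=q\}\sqcup\{q<s\}$ for $\sharp=\nil$ is only a cosmetic variant of the paper's two-fold splitting.
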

\begin{proof}
 By Lemmas \ref{dimbase} and \ref{dimfiber},
 \begin{equation}
 \label{eq1}
  \dim\tilde{\mathcal{F}}_{\underline{\dd}}=\sum_{\alpha:i\rightarrow j\in\Omega}\sum_{1\leq t\leq p\leq r}(\dd_p)_i(\dd_t)_j+\sum_{\substack{i\in I\\1\leq t,p\leq r}}(\dd_p)_i(\dd_t)_i-\sum_{\substack{i\in I\\1\leq t\leq p\leq r}}(\dd_p)_i(\dd_t)_i
 \end{equation}
 and
 \begin{equation}
 \label{eq2}
 \dim(\tilde{\mathcal{F}}_{\underline{\dd}}\times_{E_{\dd}}\tilde{\mathcal{F}}_{\underline{\dd}})_z=\sum_{\alpha:i\rightarrow j\in\Omega}\sum_{\substack{1\leq t\leq p\leq r\\1\leq s\leq q\leq r}}z_i^{pq}z_{j}^{ts}+\sum_{\substack{i\in I\\1\leq t,p\leq r}}(\dd_p)_i(\dd_t)_i-\sum_{\substack{i\in I\\1\leq t\leq p\leq r\\1\leq s\leq q\leq r}}z_{i}^{pq}z_i^{ts}.
\end{equation}
Note that
\begin{equation}
\label{eq3}
 \sum_{\substack{1\leq t\leq p\leq r\\1\leq s\leq q\leq r}}z_i^{pq}z_{j}^{ts}+\sum_{\substack{1\leq t\leq p\leq r\\1\leq q<s\leq r}}z_i^{pq}z_{j}^{ts}=\sum_{\substack{1\leq t\leq p\leq r\\1\leq q,s\leq r}}z^{pq}_iz^{ts}_j=\sum_{1\leq t\leq p\leq r}(\dd_p)_i(\dd_t)_j.
\end{equation}
Consequently, by substracting \eqref{eq1} and \eqref{eq2}, using \eqref{eq3}, we obtain the first formula of the corollary.

For the second formula, again by Lemmas \ref{dimbase} and \ref{dimfiber},
\begin{equation}
 \label{eq4}
  \dim\tilde{\mathcal{F}}^{\nil}_{\underline{\dd}}=\sum_{\alpha:i\rightarrow j\in\Omega}\sum_{1\leq t< p\leq r}(\dd_p)_i(\dd_t)_j+\sum_{\substack{i\in I\\1\leq t,p\leq r}}(\dd_p)_i(\dd_t)_i-\sum_{\substack{i\in I\\1\leq t\leq p\leq r}}(\dd_p)_i(\dd_t)_i
 \end{equation}
 and
 \begin{equation}
 \label{eq5}
 \dim(\tilde{\mathcal{F}}^{\nil}_{\underline{\dd}}\times_{E_{\dd}}\tilde{\mathcal{F}}^{\nil}_{\underline{\dd}})_z=\sum_{\alpha:i\rightarrow j\in\Omega}\sum_{\substack{1\leq t< p\leq r\\1\leq s< q\leq r}}z_i^{pq}z_{j}^{ts}+\sum_{\substack{i\in I\\1\leq t,p\leq r}}(\dd_p)_i(\dd_t)_i-\sum_{\substack{i\in I\\1\leq t\leq p\leq r\\1\leq s\leq q\leq r}}z_{i}^{pq}z_i^{ts}.
\end{equation}
the only difference with \eqref{eq1} and \eqref{eq2} is the strict inequalities in the range of summation of the first sums of \eqref{eq4} and \eqref{eq5}. Therefore, by substrating \eqref{eq4} and \eqref{eq5},
\[
\begin{aligned}
  \dim\tilde{\mathcal{F}}^{\nil}_{\underline{\dd}}-\dim(\tilde{\mathcal{F}}^{\nil}_{\underline{\dd}}\times_{E_{\dd}}\tilde{\mathcal{F}}^{\nil}_{\underline{\dd}})_z
  &=\sum_{\alpha:i\rightarrow j\in\Omega}\sum_{\substack{1\leq t< p\leq r\\1\leq q\leq s\leq r}}z_i^{pq}z_j^{ts}-\sum_{\substack{i\in I\\1\leq t\leq p\leq r\\1\leq q<s\leq r}}z_i^{pq}z_i^{ts}\\
&=\sum_{\alpha:i\rightarrow j\in\Omega}\sum_{\substack{1\leq t< p\leq r\\1\leq q\leq s\leq r}}z_i^{pq}z_j^{ts}-\sum_{\substack{i\in I\\1\leq t< p\leq r\\1\leq q\leq s\leq r}}z_i^{pq}z_i^{ts}+\underbrace{\sum_{\substack{i\in I\\1\leq t< p\leq r\\1\leq q\leq s\leq r}}z_i^{pq}z_i^{ts}-\sum_{\substack{i\in I\\1\leq t\leq p\leq r\\1\leq q<s\leq r}}z_i^{pq}z_i^{ts}}_{=0 \text{ by Lemma \ref{techlemmaeq}}}\\
&=\sum_{\alpha:i\rightarrow j\in\Omega}\sum_{\substack{1\leq t< p\leq r\\1\leq q\leq s\leq r}}z_i^{pq}z_j^{ts}-\sum_{\substack{i\in I\\1\leq t< p\leq r\\1\leq q\leq s\leq r}}z_i^{ps}z_i^{ts}
  \end{aligned}
\]
where the last equality follows from Lemma \ref{techlemmaeq}. We are done.
\end{proof}

\subsubsection{Smallness for negative quivers}
\label{smallnegquiv}
A quiver is called \emph{negative} provided it carries at least two loops at each vertex.
\begin{proposition}
 \label{smallnessmor}
 Let $Q$ be a negative quiver. For any flag-type $\underline{\dd}$ of dimension $\dd$, the morphisms $\pi_{\underline{\dd}}$ and $\pi_{\underline{\dd}}^{\nil}$ are small and are an isomorphism over a non-empty open subset of their image.
\end{proposition}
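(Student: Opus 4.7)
The plan is to derive both the smallness and the birationality of $\pi_{\underline{\dd}}^{\sharp}$ ($\sharp\in\{\emptyset,\nil\}$) from the dimension formula in Corollary~\ref{formulasmall}. Recall that a proper surjective morphism $\pi:X\to Y$ between irreducible varieties with $X$ smooth is small \emph{and} birational as soon as the diagonal copy of $X$ is the unique top-dimensional irreducible component of $X\times_YX$: every off-diagonal stratum then has strictly smaller dimension (smallness), while a degree $\deg\pi\geq 2$ would force at least one off-diagonal top-dimensional component of $X\times_YX$. Applied to $X=\tilde{\mathcal{F}}^{\sharp}_{\underline{\dd}}$ and $Y=E_{\underline{\dd}}^{\flat}$, the task reduces to showing that for every relative position $z\in\Theta(\underline{\dd})$ with $z\neq z_{\underline{\dd}}$, the stratum $(\tilde{\mathcal{F}}^{\sharp}_{\underline{\dd}}\times_{E_{\dd}}\tilde{\mathcal{F}}^{\sharp}_{\underline{\dd}})_z$ has strictly smaller dimension than $\tilde{\mathcal{F}}^{\sharp}_{\underline{\dd}}$.

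I rewrite Corollary~\ref{formulasmall} as
\[
\dim\tilde{\mathcal{F}}^{\sharp}_{\underline{\dd}}-\dim(\tilde{\mathcal{F}}^{\sharp}_{\underline{\dd}}\times_{E_{\dd}}\tilde{\mathcal{F}}^{\sharp}_{\underline{\dd}})_z=\sum_{\alpha:i\to j\in\Omega}Q^{\sharp}_{ij}(z)-\sum_{i\in I}Q^{\sharp}_{ii}(z),
\]
where $Q^{\sharp}_{ij}(z)=\sum z_i^{pq}z_j^{ts}$ is a nonnegative quadratic form in the entries of $z$, the summation range being the one prescribed by the corollary according to $\sharp$. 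Since the quiver is negative, each vertex $i$ carries $m_i\geq 2$ loops, and these loops contribute $m_i\cdot Q^{\sharp}_{ii}(z)$ to the first sum. Keeping only the loop contributions already yields
\[
\dim\tilde{\mathcal{F}}^{\sharp}_{\underline{\dd}}-\dim(\tilde{\mathcal{F}}^{\sharp}_{\underline{\dd}}\times_{E_{\dd}}\tilde{\mathcal{F}}^{\sharp}_{\underline{\dd}})_z\ \geq\ \sum_{i\in I}(m_i-1)\,Q^{\sharp}_{ii}(z)\ \geq\ \sum_{i\in I}Q^{\sharp}_{ii}(z)\ \geq\ 0.
\]

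The hard part is the purely combinatorial claim that $\sum_{i\in I}Q^{\sharp}_{ii}(z)=0$ forces $z=z_{\underline{\dd}}$; this is where the real difficulty lies, and the two cases $\sharp\in\{\emptyset,\nil\}$ have to be treated separately because of the slight difference in their index ranges. In each case, vanishing of $Q^{\sharp}_{ii}(z)$ imposes a monotonicity (``staircase'') condition on the nonzero entries of the matrix $(z_i^{pq})_{p,q}$: for $\sharp=\emptyset$, across any two distinct columns the rows of the nonzero entries are strictly ordered in the same direction as the columns, while for $\sharp=\nil$ each column contains at most one nonzero entry and the rows of nonzeros are weakly ordered like the columns. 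Combined with the marginal identities $\sum_pz_i^{pq}=(\dd_q)_i$ and $\sum_qz_i^{pq}=(\dd_p)_i$, a short induction on $\min\{s:(\dd_s)_i>0\}$ then forces $z_i^{pq}=(\dd_p)_i\delta_{pq}$ for every vertex $i\in I$, which is precisely the equality $z=z_{\underline{\dd}}$.

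Granting the combinatorial claim, every stratum indexed by $z\neq z_{\underline{\dd}}$ has positive codimension in $\tilde{\mathcal{F}}^{\sharp}_{\underline{\dd}}$, so the diagonal is the unique top-dimensional irreducible component of $\tilde{\mathcal{F}}^{\sharp}_{\underline{\dd}}\times_{E_{\dd}}\tilde{\mathcal{F}}^{\sharp}_{\underline{\dd}}$. By the criterion recalled at the outset, this simultaneously yields the smallness of $\pi_{\underline{\dd}}^{\sharp}$ and the fact that it is an isomorphism over a dense open subset of its image $E_{\underline{\dd}}^{\flat}$.
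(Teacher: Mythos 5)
Your proposal is correct, and on the smallness half it follows essentially the paper's route: the same dimension formula of Corollary \ref{formulasmall}, the same observation that the $g_i\geq 2$ loops at each vertex leave a contribution $(g_i-1)Q^{\sharp}_{ii}(z)$ after the subtraction, and then positivity of $Q^{\sharp}_{ii}(z)$ for $z\neq z_{\underline{\dd}}$ — the paper gets this by a direct lower bound (choosing $p'$ minimal and bounding below by $(\dd_{p'})_i z_i^{p'q'}$), whereas you characterize the vanishing locus completely: your staircase conditions (at most one nonzero entry per row with strictly increasing rows along columns for $\sharp=\emptyset$, at most one per column with the analogous monotonicity for $\sharp=\nil$) are the correct consequences of $Q^{\sharp}_{ii}(z)=0$, and the induction on the first index with $(\dd_s)_i>0$, using the marginal identities, does force $z=z_{\underline{\dd}}$. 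Where you genuinely diverge is the birationality statement. The paper treats it by a separate argument, stressing that smallness alone only gives generic finiteness: it reduces to the $g$-loop quiver and invokes, for $\pi^{\nil}_{\underline{\dd}}$, birationality of generalized Springer maps in type $A$ (\cite[\S 2.7]{MR679767}), and, for $\pi_{\underline{\dd}}$, a generic element whose first loop is regular semisimple. You instead exploit the fact that your codimension estimate holds for \emph{every} $z\neq z_{\underline{\dd}}$, so the diagonal is the unique top-dimensional component of $\tilde{\mathcal{F}}^{\sharp}_{\underline{\dd}}\times_{E_{\dd}}\tilde{\mathcal{F}}^{\sharp}_{\underline{\dd}}$; since over $\C$ a proper map of generic degree $d\geq 2$ produces $d^2>d$ points in general fibers of the fiber product and hence an off-diagonal component of full dimension, this forces degree one, and properness then gives the isomorphism over a dense open subset of the image. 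This buys a self-contained and uniform treatment of both $\sharp=\emptyset$ and $\sharp=\nil$ for all negative quivers, with no Springer-theoretic input and no reduction to $S_g$; what it costs is that you must actually prove (or cite) the slightly stronger fiber-product criterion ``unique top-dimensional component $\Rightarrow$ small \emph{and} generically one-to-one'' (the degree argument uses characteristic zero, i.e.\ separability, to guarantee distinct points in the generic fiber), and you should write out the staircase-plus-margins induction, since the two cases are transposes of one another but not literally identical.
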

\begin{proof}
 Write $\underline{\dd}=(\dd_1,\hdots,\dd_r)$. For the smallness of $\pi_{\underline{\dd}}$, we use Corollary \ref{formulasmall}. We have to prove that for any $z\in\Theta(\underline{\dd})\setminus\{ z_{\underline{\dd}}\}$,
 \[
  \dim\tilde{\mathcal{F}}_{\underline{\dd}}-\dim(\tilde{\mathcal{F}}_{\underline{\dd}}\times_{E_{\dd}}\tilde{\mathcal{F}}_{\underline{\dd}})_z>0.
 \]
For any vertex $i\in I$, we let $g_i\geq 2$ be the number of loops of $Q$ at $i$. Let $z\in \Theta(\underline{\dd})\setminus \{z_{\underline{\dd}}\}$. Then there exists $p'\neq q'$ and $i\in I$ such that $z^{p'q'}_i\neq 0$. Choose $p'$ minimal, so that for any $1\leq p< p'$ and $q\neq p$, $z^{pq}=0$. We therefore have $z^{pp}=\dd_p$ for any $1\leq p<p'$. Since we have $\sum_{p=1}^rz^{pq}=\dd_q$, for $1\leq q<p'$, $z^{qq}+\sum_{1\leq p\leq r, p\neq q}z^{pq}=\dd_q$. Consequently, $z^{pq}=0$ for any $1\leq q<p'$, $1\leq p\leq r$ and $p\neq q$. We have therefore $q'>p'$. We have
\[
\begin{aligned}
 \dim\tilde{\mathcal{F}}_{\underline{\dd}}-\dim(\tilde{\mathcal{F}}_{\underline{\dd}}\times_{E_{\dd}}\tilde{\mathcal{F}}_{\underline{\dd}})_z
 &\geq \sum_{i\in I}(g_i-1)\sum_{\substack{t,p,q,s\\1\leq t\leq p\leq r\\1\leq q<s\leq r}}z_i^{pq}z_i^{ts}\text{ (forgetting arrows which are not loops)}\\
 &\geq (g_i-1)\sum_{\substack{p,s\\p'\leq p\leq r\\p'<s\leq r}}z_i^{pp'}z_i^{p's}\\
 &\geq (g_i-1)\sum_{\substack{p,\\ p'\leq p\leq r}}z_i^{pp'}z_i^{p'q'}\text{ (specializing to $t=q=p'$)}\\
 &=(g_i-1)(\dd_{p'})_iz_i^{p'q'}\text{ (summing over $p$)}\\
 &>0.
\end{aligned}
\]
The same reasoning shows that
\[
 \dim\tilde{\mathcal{F}}^{\nil}_{\underline{\dd}}-\dim(\tilde{\mathcal{F}}^{\nil}_{\underline{\dd}}\times_{E_{\dd}}\tilde{\mathcal{F}}^{\nil}_{\underline{\dd}})_z>0
\]
for any $z\in\Theta(\underline{\dd})\setminus\{z_{\underline{\dd}}\}$ and hence that $\pi_{\underline{\dd}}^{\nil}$ is small.

To prove that $\pi_{\underline{\dd}}$ (resp. $\pi_{\underline{\dd}}^{\nil}$) is an isomorphism over an open subset of its image $E_{\underline{\dd}}$ (resp. $E_{\underline{\dd}}^{\nil}$), it suffices to show that a sufficiently general element $x\in E_{\underline{\dd}}$ (resp. $x\in E_{\underline{\dd}}^{\nil}$) admits a unique filtration of type $\underline{\dd}$. It suffices to do this when $Q=S_g$ is a quiver with one vertex and $g\geq 2$ loops. Since $\pi_{\underline{\dd}}^{\nil}$ is small, its fiber over a general element of $E_{S_g,\underline{\dd}}^{\nil}$ is finite, but this is not sufficient. However, in type $A$, the generalized Springer resolutions $T^*(G/P)\rightarrow \mathfrak{g}$ are always birational onto their image (\cite[\S 2.7]{MR679767}). It shows that $\pi_{\underline{\dd}}^{\nil}$ is also birational onto its image.  For $\pi_{\underline{\dd}}$, a general element of $E_{S_g,\underline{\dd}}$ is of the form $(x_1,\hdots,x_g)$ with $x_1$ regular semisimple. Hence, $x_1$ stabilizes a finite number of flags of type $\underline{\dd}$ and choosing $x_2$ general enough, there will be a unique flag of type $\underline{\dd}$ stabilized by $(x_1,\hdots,x_g)$.
 
\end{proof}

\subsection{Lusztig sheaves}
\label{lusztigsheavesloops}
Let $\dd\in\N^I$. We define \emph{four} classes of $G_{\dd}$-equivariant perverse sheaves on the representation spaces $E_{Q,\dd}$ of an arbitrary quiver.
\begin{enumerate}
 \item $\mathcal{P}_{Q,\dd}^{\nil}$ is the semisimple category of $\Perv_{G_{\dd}}(E_{Q,\dd})$ generated by perverse sheaves appearing (with a possible shift) as a direct summand of $(\pi^{\nil}_{Q,\underline{\dd}})_*\underline{\C}$ where $\underline{\dd}$ is some flag type of dimension $\dd$,
  \item $\mathcal{P}_{Q,\dd}$ is the semisimple category of $\Perv_{G_{\dd}}(E_{Q,\dd})$ generated by perverse sheaves appearing (with a possible shift) as a direct summand of $(\pi_{Q,\underline{\dd}})_*\underline{\C}$ where $\underline{\dd}$ is some flag type of dimension $\dd$,
   \item $\mathcal{P}_{Q,\dd}^{\nil,1}$ is the semisimple category of $\Perv_{G_{\dd}}(E_{Q,\dd})$ generated by perverse sheaves appearing (with a possible shift) as a direct summand of $(\pi^{\nil}_{Q,\underline{\dd}})_*\underline{\C}$ where $\underline{\dd}$ is some \emph{discrete} flag type of dimension $\dd$,
    \item $\mathcal{P}_{Q,\dd}^{1}$ is the semisimple category of $\Perv_{G_{\dd}}(E_{Q,\dd})$ generated by perverse sheaves appearing (with a possible shift) as a direct summand of $(\pi_{Q,\underline{\dd}})_*\underline{\C}$ where $\underline{\dd}$ is some \emph{discrete} flag type of dimension $\dd$.
\end{enumerate}
All sheaves in this categories will be called \emph{Lusztig sheaves}, as perverse sheaves defined in this way were first considered by Lusztig.
\begin{remark}\label{relationsperverse}
We have the following inclusions between these categories:
\[
 \mathcal{P}_{Q,\dd}^1\subset \mathcal{P}_{Q,\dd},\quad \mathcal{P}_{Q,\dd}^{\nil,1}\subset \mathcal{P}_{Q,\dd}^{\nil}.
\]
Moreover, if $Q$ is acyclic, all these categories coincide:
\[
 \mathcal{P}_{Q,\dd}^{\nil}=\mathcal{P}_{Q,\dd}=\mathcal{P}_{Q,\dd}^{\nil,1}=\mathcal{P}_{Q,\dd}^{1},
\]
and if $Q$ has no cycles apart from loops, the categories in $(1)$ and $(3)$ one the one side and $(2)$ and $(4)$ on the other side coincide:
\[
 \mathcal{P}_{Q,\dd}^{\nil}=\mathcal{P}_{Q,\dd}^{\nil,1},\quad \mathcal{P}_{Q,\dd}=\mathcal{P}_{Q,\dd}^{1}.
\]
If $Q$ has no loops, the categories $(3)$ and $(4)$ coincide: $\mathcal{P}_{Q,\dd}^{\nil,1}=\mathcal{P}_{Q,\dd}^1$.
\end{remark}

Let $\underline{\dd}$ be a flag-type of dimension $\dd$. Define
\[
 L_{Q,\underline{\dd}}^{\nil}=(\pi_{Q,\underline{\dd}}^{\nil})_*\underline{\C},
 \quad
 L_{Q,\underline{\dd}}=(\pi_{Q,\underline{\dd}})_*\underline{\C}.
 \]
These sheaves are related thanks to the Fourier-Sato transform with the corresponding sheaves on the representation space of the opposite quiver.
\begin{lemma}
\label{tfourier}
 Let $\Phi:D^b_{G_{\dd}}(E_{Q,\dd})\rightarrow D^b_{G_{\dd}}(E_{Q^{\op},\dd})$ be the Fourier-Sato transform reversing all arrows of $Q$. Then,
 \[
  \Phi(L_{Q,\underline{\dd}})=L_{Q^{\op},\underline{\dd}}^{\nil},\quad \Phi(L_{Q,\underline{\dd}}^{\nil})=L_{Q^{\op},\underline{\dd}}.
 \]
\end{lemma}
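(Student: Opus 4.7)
The plan is to prove both identities simultaneously, noting that they are equivalent under exchanging $Q$ and $Q^{\op}$ and applying $\Phi$ twice. The heart of the argument is a relative version of the Fourier–Sato computation on complementary subbundles of a trivial vector bundle over the flag variety $\mathcal{F}_{\underline{\dd}}$. More precisely, let $q:E_{Q,\dd}\times\mathcal{F}_{\underline{\dd}}\to\mathcal{F}_{\underline{\dd}}$ be the trivial vector bundle of rank $\dim E_{Q,\dd}$, with dual $q^{\vee}:E_{Q^{\op},\dd}\times\mathcal{F}_{\underline{\dd}}\to\mathcal{F}_{\underline{\dd}}$ (the identification via the trace pairing $\sum_{\alpha}\Tr(x_\alpha y_{\bar\alpha})$). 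Inside these sit the two subbundles
\[
W=\{(x,\underline{F}):xF_j\subset F_j\ \forall j\}=\tilde{\mathcal{F}}_{Q,\underline{\dd}},\qquad W'=\{(y,\underline{F}):yF_j\subset F_{j-1}\ \forall j\}=\tilde{\mathcal{F}}^{\nil}_{Q^{\op},\underline{\dd}}.
\]
The first step is to check fiberwise that $W'=W^{\perp}$: at a fixed flag $\underline{F}$, choose a graded basis adapted to $\underline{F}$; then the condition $xF_j\subset F_j$ makes each $x_\alpha$ block upper-triangular with respect to the induced block decomposition, while the condition $yF_j\subset F_{j-1}$ makes each $y_{\bar\alpha}$ strictly block lower-triangular. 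A direct trace computation on each $\Hom(V_i,V_j)\times\Hom(V_j,V_i)$ shows that these two cones of matrices are exact annihilators of one another, which identifies $W^{\perp}$ with $W'$ over $\mathcal{F}_{\underline{\dd}}$. Moreover $\rank W+\rank W'=\dim E_{Q,\dd}$, consistently with duality.

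Next, one applies the relative Fourier–Sato transform $\Phi_{\mathcal{F}_{\underline{\dd}}}$ associated to the dual pair $(q,q^{\vee})$. The standard computation (for a conical subbundle) yields $\Phi_{\mathcal{F}_{\underline{\dd}}}((i_W)_*\underline{\C}_W)=(i_{W^{\perp}})_*\underline{\C}_{W^{\perp}}$, up to a cohomological shift determined solely by the ranks of $W$ and $W^{\perp}$. The Fourier–Sato transform commutes (up to the same shift) with proper pushforward along $\mathcal{F}_{\underline{\dd}}$: for the projections $\pi_1:E_{Q,\dd}\times\mathcal{F}_{\underline{\dd}}\to E_{Q,\dd}$ and $\pi_1^{\vee}:E_{Q^{\op},\dd}\times\mathcal{F}_{\underline{\dd}}\to E_{Q^{\op},\dd}$, which are proper since $\mathcal{F}_{\underline{\dd}}$ is projective, one has $\Phi\circ(\pi_1)_*=(\pi_1^{\vee})_*\circ\Phi_{\mathcal{F}_{\underline{\dd}}}$. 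Factoring $\pi_{Q,\underline{\dd}}=\pi_1\circ i_W$ and $\pi^{\nil}_{Q^{\op},\underline{\dd}}=\pi_1^{\vee}\circ i_{W^{\perp}}$, one obtains
\[
\Phi(L_{Q,\underline{\dd}})=\Phi\bigl((\pi_1)_*(i_W)_*\underline{\C}_W\bigr)=(\pi_1^{\vee})_*(i_{W^{\perp}})_*\underline{\C}_{W^{\perp}}=L^{\nil}_{Q^{\op},\underline{\dd}},
\]
up to a global shift.

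The main obstacle is bookkeeping of shifts and normalization conventions in the Fourier–Sato formalism, which depend on the chosen convention for $\Phi$ (open or closed half-space, convention for the Tate twist). The sheaves involved are naturally $\C^{\times}$-conic because the two subbundles $W,W'$ are invariant under the scaling action on the fibers of the vector bundles (the flag conditions are scale-invariant), so one lies in the monodromic setting where $\Phi$ is an equivalence. Once one fixes the convention used in Appendix, the rank relation $\rank W+\rank W^{\perp}=\dim E_{Q,\dd}$ combined with $\dim\tilde{\mathcal{F}}_{Q,\underline{\dd}}+\dim\tilde{\mathcal{F}}^{\nil}_{Q^{\op},\underline{\dd}}=\dim E_{Q,\dd}+2\dim\mathcal{F}_{\underline{\dd}}$ shows that the a priori shift is zero, giving the identity on the nose. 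The second identity of the lemma is then obtained by applying $\Phi$ once more, using $\Phi^2=\id$ in the monodromic category.
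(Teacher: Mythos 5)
Your overall strategy is the right one and is essentially the standard argument that the paper's one-line citation refers to: identify $\tilde{\mathcal{F}}_{Q,\underline{\dd}}$ and $\tilde{\mathcal{F}}^{\nil}_{Q^{\op},\underline{\dd}}$ as subbundles of the dual trivial bundles $E_{Q,\dd}\times\mathcal{F}_{\underline{\dd}}$ and $E_{Q^{\op},\dd}\times\mathcal{F}_{\underline{\dd}}$ over the flag variety, check by the block-triangular trace computation that they are exact annihilators of one another (this check is correct: the two spaces pair to zero and have complementary dimensions), and then use the compatibility of the Fourier--Sato transform with the proper pushforward along $\mathcal{F}_{\underline{\dd}}$ together with the fiberwise computation for a subbundle. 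All of that is sound.

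The gap is in your shift bookkeeping. With the normalization of the appendix, $\Phi(\mathscr{F})=\check{q}_!q^*\mathscr{F}[\rank E]$, the fiberwise transform of the \emph{unshifted} constant sheaf on a subbundle $W$ of complex rank $w$ is $\underline{\C}_{W^{\perp}}[\rank E-2w]$ (the $-2w$ comes from $H_c^{*}(\C^{w})$), so after pushing forward you obtain $L^{\nil}_{Q^{\op},\underline{\dd}}$ shifted by $\rank W^{\perp}-\rank W=\dim\tilde{\mathcal{F}}^{\nil}_{Q^{\op},\underline{\dd}}-\dim\tilde{\mathcal{F}}_{Q,\underline{\dd}}$. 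The two relations you invoke ($\rank W+\rank W^{\perp}=\dim E_{Q,\dd}$ and $\dim\tilde{\mathcal{F}}_{Q,\underline{\dd}}+\dim\tilde{\mathcal{F}}^{\nil}_{Q^{\op},\underline{\dd}}=\dim E_{Q,\dd}+2\dim\mathcal{F}_{\underline{\dd}}$) hold identically and therefore cannot force this shift to vanish; it is nonzero whenever $\rank W\neq\rank W^{\perp}$ (for the $g$-loop quiver in dimension $d$ with the complete flag type it equals $-gd$). The equality is exact for the perverse-normalized complexes $(\pi^{\flat}_{\underline{\dd}})_*\underline{\C}[\dim\tilde{\mathcal{F}}^{\flat}_{\underline{\dd}}]$, and holds only up to this shift for the $L$'s as defined in the paper --- which is harmless for the way the lemma is used, since the categories $\mathcal{P}^{\flat}$ are stable under shifts, but your claim that the identity holds ``on the nose'' is not justified. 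A second, minor point: $\Phi^2=\id$ is not literally true; the composition of the two transforms is the pullback by the antipodal map $a(x)=-x$ (with this normalization), and one should add the remark that $a^*\cong\id$ on monodromic complexes, or simply deduce the second identity by running the same annihilator argument with $W=\tilde{\mathcal{F}}^{\nil}_{Q,\underline{\dd}}$, whose annihilator is $\tilde{\mathcal{F}}_{Q^{\op},\underline{\dd}}$.
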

\begin{proof}
  It is completely similar to the proof of Lemma 2.2 of \cite{MR3218317}.
 
\end{proof}

\subsubsection{Explicit description of Lusztig sheaves for negative quivers}

\begin{proposition}\label{explicitdesc}
Let $Q$ be a negative quiver. Then for $\flat\in\{\nil,\emptyset,(1,\nil),1\}$, the simple objects of the category $\mathcal{P}_{\dd}^{\flat}$ are the perverse sheaves $\ICC(E_{\underline{\dd}}^{\flat})$ for $\underline{\dd}\in\mathscr{C}_{\dd}^{\flat}$.
\end{proposition}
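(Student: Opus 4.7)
My plan is to derive the proposition as an essentially immediate consequence of Proposition \ref{smallnessmor}, using only the standard fact that, for a proper small morphism $\pi : X \to Y$ with $X$ smooth and irreducible which is birational onto its image $Z = \overline{\pi(X)}$, one has
\[
\pi_* \underline{\C}_X[\dim X] \simeq \ICC(Z,\underline{\C}).
\]
This is a direct application of the decomposition theorem: smallness forces the pushforward to be a perverse sheaf concentrated in degree $0$ of the perverse $t$-structure; birationality and the fact that $X$ is smooth irreducible force the restriction to a dense open subset of $Z$ to be the constant sheaf $\underline{\C}[\dim Z]$; and smallness then prevents any other simple summands from appearing.

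First I would observe that for every flag-type $\underline{\dd}$ and every $\sharp \in \{\emptyset, \nil\}$, the variety $\tilde{\FF}_{\underline{\dd}}^{\sharp}$ is smooth and irreducible, being an affine bundle over the connected smooth projective flag variety $\FF_{\underline{\dd}}$, so its image $E_{\underline{\dd}}^{\sharp}$ is a closed irreducible subvariety of $E_{Q,\dd}$. Now Proposition \ref{smallnessmor} gives exactly what we need: for a negative quiver $Q$, the proper morphism $\pi_{\underline{\dd}}^{\sharp}$ is small and birational onto $E_{\underline{\dd}}^{\sharp}$. Combined with the smoothness of the source, the standard fact above applies and yields
\[
L_{Q,\underline{\dd}}^{\sharp}[\dim \tilde{\FF}_{\underline{\dd}}^{\sharp}] = (\pi_{\underline{\dd}}^{\sharp})_* \underline{\C}_{\tilde{\FF}_{\underline{\dd}}^{\sharp}}[\dim \tilde{\FF}_{\underline{\dd}}^{\sharp}] \simeq \ICC(E_{\underline{\dd}}^{\sharp}, \underline{\C}).
\]
In particular, up to shift, $L_{Q,\underline{\dd}}^{\sharp}$ is already a simple perverse sheaf and has no other direct summand.

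The proposition then follows in both directions. Every $\ICC(E_{\underline{\dd}}^{\flat}, \underline{\C})$ with $\underline{\dd} \in \mathscr{C}_{\dd}^{\flat}$ lies in $\mathcal{P}_{Q,\dd}^{\flat}$ since, up to shift, it is equal to $L_{Q,\underline{\dd}}^{\sharp}$ for the value of $\sharp$ corresponding to $\flat$. Conversely, by definition $\mathcal{P}_{Q,\dd}^{\flat}$ is the semisimple subcategory generated (up to shift) by the direct summands of $L_{Q,\underline{\dd}}^{\sharp}$ as $\underline{\dd}$ ranges over $\mathscr{C}_{\dd}^{\flat}$; by the smallness/birationality computation above, the only such summand (up to shift) is precisely $\ICC(E_{\underline{\dd}}^{\flat},\underline{\C})$, so every simple object of $\mathcal{P}_{Q,\dd}^{\flat}$ is of the asserted form.

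There is essentially no further obstacle: all the hard work has already been done in Proposition \ref{smallnessmor}. One only needs to keep track of the bookkeeping between $\flat \in \{\nil, \emptyset, (\nil,1), 1\}$ and $\sharp \in \{\nil, \emptyset\}$, and to note that the map $\underline{\dd} \mapsto \ICC(E_{\underline{\dd}}^{\flat})$ from $\mathscr{C}_{\dd}^{\flat}$ to isomorphism classes of simple objects is not claimed to be injective (and indeed will typically fail to be so when distinct flag-types have the same saturated closure).
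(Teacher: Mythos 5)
Your proposal is correct and follows exactly the paper's route: the paper's proof of Proposition \ref{explicitdesc} simply invokes Proposition \ref{smallnessmor}, and your write-up just makes explicit the standard consequence of smallness plus birationality (with smooth irreducible source) that $(\pi_{\underline{\dd}}^{\flat})_*\underline{\C}$ is, up to shift, the single simple perverse sheaf $\ICC(E_{\underline{\dd}}^{\flat},\underline{\C})$. The bookkeeping between $\flat$ and the nilpotent/non-nilpotent maps and the remark on non-injectivity of $\underline{\dd}\mapsto\ICC(E_{\underline{\dd}}^{\flat})$ are both fine.
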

\begin{proof}
 This is an immediate consequence of Proposition \ref{smallnessmor}.
\end{proof}

\subsection{Lusztig nilpotent varieties and singular support of Lusztig sheaves}

\subsubsection{The notions of nilpotency for representations of the double quiver}
\label{nilpotency}
Let $Q$ be a quiver and $\overline{Q}$ the doubled quiver (that is, for any $\alpha:i\rightarrow j\in\Omega$, add an arrow $\alpha^*:j\rightarrow i$). Recall that a representation of $\overline{Q}$ is denoted by $\bar{x}=(x,x^*)$ where $x=(x_{\alpha})_{\alpha\in\Omega}$ and $x^*=(x^*_{\alpha})_{\alpha\in\Omega}$. Following \cite{bozec2017number}, a representation $(x,x^*)$ is called
\begin{enumerate}
 \item semi-nilpotent if there exists a flag of $I$-graded vector spaces $(0\subset F_1\subset\hdots\subset F_r=\C^{\dd})$ such that for any $\alpha\in\Omega$, $1\leq j\leq r$,
 \[
  x_{\alpha}F_j\subset F_{j-1},\quad x_{\alpha}^*F_j\subset F_{j},
 \]
 \item $*$-semi-nilpotent if there exists a flag of $I$-graded vector spaces $(0\subset F_1\subset\hdots\subset F_r=\C^{\dd})$ such that for any $\alpha\in\Omega$, $1\leq j\leq r$,
 \[
  x_{\alpha}F_j\subset F_{j},\quad x_{\alpha}^*F_j\subset F_{j-1},
 \]
  \item strongly semi-nilpotent if there exists a \emph{discrete} flag of $I$-graded vector spaces $(0\subset F_1\subset\hdots\subset F_r=\C^{\dd})$ such that for any $\alpha\in\Omega$, $1\leq j\leq r$,
 \[
  x_{\alpha}F_j\subset F_{j-1},\quad x_{\alpha}^*F_j\subset F_{j},
 \]
  \item $*$-strongly semi-nilpotent if there exists a \emph{discrete} flag of $I$-graded vector spaces $(0\subset F_1\subset\hdots\subset F_r=\C^{\dd})$ such that for any $\alpha\in\Omega$, $1\leq j\leq r$,
 \[
  x_{\alpha}F_j\subset F_{j},\quad x_{\alpha}^*F_j\subset F_{j-1}.
 \]
\end{enumerate}
\begin{remark}
\label{interactionnilp}
 These notions of nilpotency interact as follows. For a representation $\bar{x}=(x,x^*)$ of $\overline{Q}$,
 \[
  (x,x^*) \text{ strongly semi-nilpotent}\implies (x,x^*)\text{  semi-nilpotent}
 \]
and
 \[
  (x,x^*) \text{ is $*$-strongly semi-nilpotent}\implies (x,x^*)\text{ is $*$-semi-nilpotent}.
 \]
 Moreover, if $Q$ is acyclic, all these notions of nilpotency coincide, and if $Q$ has no cycles apart form loops, being strongly semi-nilpotent is the same as being semi-nilpotent and being $*$-strongly semi-nilpotent is the same as being $*$-semi-nilpotent. Lastly, if $Q$ has no loops, being strongly semi-nilpotent is the same as being $*$-strongly semi-nilpotent.
\end{remark}

\subsubsection{The nilpotent varieties}
Related to the four categories of perverse sheaves defined in Section \ref{lusztigsheavesloops} and to the four notions of nilpotency defined in Section \ref{nilpotency}, we define four different \emph{nilpotent varieties} in the representation space of the doubled quiver $\overline{Q}$. For this purpose, recall the moment map
\[
\begin{matrix}
 \mu_{\dd}&:&E_{\overline{Q},\dd}&\rightarrow&\mathfrak{gl}_{\dd}\\
 &&(x,x^*)&\mapsto&\sum_{\alpha\in\Omega}[x_{\alpha},x^*_{\alpha}].
 \end{matrix}
\]
The nilpotent varieties are defined as follows.
\begin{enumerate}
 \item $\Lambda_{Q,\dd}^{\nil}=\{(x,x^*)\in\mu_{\dd}^{-1}(0)\mid (x,x^*) \text{ is semi-nilpotent}\}$,
 \item $\Lambda_{Q,\dd}=\{(x,x^*)\in\mu_{\dd}^{-1}(0)\mid (x,x^*) \text{ is $*$-semi-nilpotent}\}$,
 \item $\Lambda_{Q,\dd}^{\nil,1}=\{(x,x^*)\in\mu_{\dd}^{-1}(0)\mid (x,x^*) \text{ is strongly semi-nilpotent}\}$,
 \item $\Lambda_{Q,\dd}^{1}=\{(x,x^*)\in\mu_{\dd}^{-1}(0)\mid (x,x^*) \text{ is $*$-strongly semi-nilpotent}\}$. 
\end{enumerate}
\begin{remark}
\label{interactionsnilvar}
Using Remark \ref{interactionnilp}, we have the following inclusions between the nilpotent varieties. For a general quiver $Q$,
\[
 \Lambda_{Q,\dd}^{1}\subset \Lambda_{Q,\dd},\quad\Lambda_{Q,\dd}^{\nil,1}\subset \Lambda_{Q,\dd}^{\nil},
\]
if $Q$ is acyclic, all the nilpotent varieties coincide and if $Q$ has no cycles apart from loops,
\[
 \Lambda_{Q,\dd}^{1}= \Lambda_{Q,\dd},\quad\Lambda_{Q,\dd}^{\nil,1}= \Lambda_{Q,\dd}^{\nil}.
\]
Lastly, if $Q$ has no loops,
\[
 \Lambda_{Q,\dd}^{\nil,1}=\Lambda_{Q,\dd}^{1}.
\]

\end{remark}

\begin{proposition}
 The varieties $\Lambda_{\dd}^{\flat}$, $\flat\in\{\nil,\emptyset,(\nil,1),1\}$, are closed, Lagrangian, conical subvarieties of $E_{\overline{Q},\dd}\simeq T^*E_{Q,\dd}$.
\end{proposition}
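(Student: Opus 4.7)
The three properties (closed, conical, Lagrangian) are addressed separately; only Lagrangianity is substantive.

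For \emph{closedness}, $\mu_{\dd}^{-1}(0)$ is Zariski closed since $\mu_{\dd}$ is polynomial. Each of the four semi-nilpotency loci is the image in $E_{\overline{Q},\dd}$ of a closed subvariety of $E_{\overline{Q},\dd}\times\mathcal{F}_{\underline{\dd}}$ under the projection to the first factor, which is closed because $\mathcal{F}_{\underline{\dd}}$ is projective. To reduce to a single flag type (rather than an infinite union), one refines any witnessing flag to a complete discrete graded flag by choosing, on each quotient $F_j/F_{j-1}$, a graded flag stable under the induced endomorphism (possible over $\C$); the complementary condition $xF_j\subset F_{j-1}$ (or $x^*F_j\subset F_{j-1}$) is preserved by refinement for free. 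For \emph{conicity}, the cotangent scaling $(x,x^*)\mapsto(x,tx^*)$ preserves $\mu_{\dd}^{-1}(0)$ since the moment map is linear in $x^*$, and all four flag inclusions involving $x^*$ are invariant under scaling $x^*$.

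The \emph{Lagrangian} property is the only real content. For $\flat=\emptyset$ and $\flat=\nil$, it holds for arbitrary quivers by the results of \cite{bozec2017number}, extending Lusztig \cite[\S 12]{MR1088333} and the cyclic case \cite[Theorem 1.4]{MR3569998} already invoked in the proof of Proposition \ref{extendednilvar}. For $\flat\in\{1,(\nil,1)\}$, the inclusions $\Lambda_{\dd}^{1}\subset\Lambda_{\dd}$ and $\Lambda_{\dd}^{\nil,1}\subset\Lambda_{\dd}^{\nil}$ of Remark \ref{interactionsnilvar} imply that the symplectic form already vanishes on the smooth locus of $\Lambda_{\dd}^{\flat}$, since it does so on the ambient Lagrangian. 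What remains is the dimension equality: every irreducible component of $\Lambda_{\dd}^{\flat}$ has dimension exactly $\dim E_{Q,\dd}$. I would obtain this from the description of irreducible components in \cite{bozec2017number}, where each component arises as the closure of the image, under the proper map $\pi_{\underline{\dd}}^{\flat}$ from the smooth variety $\tilde{\mathcal{F}}_{\underline{\dd}}^{\flat}$ with $\underline{\dd}$ a discrete flag type, of a conormal-bundle piece; the dimension count is an Euler-form computation in the spirit of Corollary \ref{formulasmall}.

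The main obstacle is precisely this dimension estimate for the strong nilpotent varieties $\Lambda_{\dd}^{1}$ and $\Lambda_{\dd}^{\nil,1}$: closedness and conicity follow from formal properties (projectivity of flag varieties, linearity of the moment map), but the Lagrangian property genuinely depends on the combinatorics of discrete filtrations carried out in \cite{bozec2017number}, and no shortcut via the loop-free theory is available.
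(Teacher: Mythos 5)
Your overall route is the same as the paper's: closedness and conicity are formal, and the substantive Lagrangian property is delegated to \cite{bozec2017number} (the paper's proof consists of exactly this, citing \S 1.1 of \emph{op.\ cit.}). Two points in your write-up deserve correction, one of which is a genuine error. The refinement step in your closedness argument --- ``one refines any witnessing flag to a complete discrete graded flag by choosing, on each quotient $F_j/F_{j-1}$, a graded flag stable under the induced endomorphism (possible over $\C$)'' --- is false in general. The induced datum on $F_j/F_{j-1}$ is not a single endomorphism but a representation of a subquiver of $\overline{Q}$ (the reversed arrows, resp.\ the original arrows), and such a representation need not admit a filtration with one-dimensional vertex-supported quotients: for a cyclic quiver an invertible representation has no discrete composition series, and at a vertex with several loops the induced endomorphisms need not have a common eigenvector. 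If your refinement were possible, semi-nilpotent would coincide with strongly semi-nilpotent for every quiver, contradicting the genuine distinction between $\Lambda_{\dd}^{\nil}$ and $\Lambda_{\dd}^{\nil,1}$ recorded in Remark \ref{interactionnilp}. Fortunately the step is unnecessary: there are only finitely many flag-types of dimension $\dd$ (each step is a nonzero element of $\N^I$, so the length is bounded by $\sum_i\dd_i$), so each semi-nilpotency locus is a \emph{finite} union of images of closed subvarieties of $E_{\overline{Q},\dd}\times\mathcal{F}_{\underline{\dd}}$ under proper projections, hence closed; intersecting with $\mu_{\dd}^{-1}(0)$ gives closedness of $\Lambda_{\dd}^{\flat}$.

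Second, in your sketch of the dimension bound for $\flat\in\{1,(\nil,1)\}$ you describe the irreducible components of $\Lambda_{\dd}^{\flat}$ as closures of images of conormal pieces under the maps $\pi_{\underline{\dd}}^{\flat}$; for a general quiver this is not what \cite{bozec2017number} provides --- that identification is essentially Conjecture \ref{conjequa}, proved in the paper only for negative quivers (Proposition \ref{truenil}), whereas the components in \emph{op.\ cit.} are produced by a different, inductive stratification. Since the Lagrangian property (equivalently, isotropy plus purity of dimension $\dim E_{Q,\dd}$) is exactly what the cited reference establishes for all four varieties, the clean version of your argument is simply to quote it, as the paper does; your isotropy remark via the inclusions $\Lambda_{\dd}^{1}\subset\Lambda_{\dd}$ and $\Lambda_{\dd}^{\nil,1}\subset\Lambda_{\dd}^{\nil}$ is correct but only yields half of the statement.
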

\begin{proof}
 That these varieties are closed and conical is immediate from their definitions. That they are Lagrangian is already mentioned in \cite[\S 1.1]{bozec2017number}.
\end{proof}

\begin{remark}
\label{eqnilvar}
 We have directly from the definitions and the natural identifications $T^*E_{Q,\dd}\simeq E_{\overline{Q},\dd}=E_{\overline{Q^{\op}},\dd}\simeq T^*E_{Q^{\op},\dd}$ the equalities
 \[
  \Lambda_{Q^{\op},\dd}^{\nil}=\Lambda_{Q,\dd}\quad\text{and}\quad\Lambda_{Q^{\op},\dd}^{\nil,1}=\Lambda_{Q,\dd}^{1}.
 \]
\end{remark}

\subsubsection{The singular support of Lusztig sheaves}
\begin{proposition}
\label{unionirrcomp}
 The singular support of sheaves of the category $\mathcal{P}^{\flat}$ ($\flat\in\{\nil,\emptyset,(\nil,1),1\}$) is a union of irreducible components of $\Lambda_{\dd}^{\flat}$.
\end{proposition}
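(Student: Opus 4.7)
The plan is to reduce the statement, for each $\flat \in \{\nil,\emptyset,(\nil,1),1\}$, to the single inclusion
\[
SS\bigl((\pi^{\flat}_{\underline{\dd}})_* \underline{\C}\bigr) \subseteq \Lambda^{\flat}_{\dd}
\]
for every flag-type $\underline{\dd} \in \mathscr{C}^{\flat}_{\dd}$. Since every simple object of $\mathcal{P}^{\flat}_{\dd}$ appears (up to shift) as a direct summand of some such push-forward, and since singular support of a direct sum of constructible complexes is the union of the singular supports, this reduction is immediate.

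For the inclusion itself, I would apply the proper push-forward estimate for singular supports (Proposition \ref{pushforward} of the appendix, in the form used in the proof of Theorem \ref{ssls}): since $\pi^{\flat}_{\underline{\dd}}$ is proper and $\tilde{\FF}^{\flat}_{\underline{\dd}}$ is smooth, with $SS(\underline{\C}) = T^*_{\tilde{\FF}^{\flat}_{\underline{\dd}}} \tilde{\FF}^{\flat}_{\underline{\dd}}$ the zero section, one obtains
\[
SS\bigl((\pi^{\flat}_{\underline{\dd}})_* \underline{\C}\bigr) \subseteq \bigl\{(x,x^*)\in T^*E_{\dd} : \exists\, (x,\underline{F})\in \tilde{\FF}^{\flat}_{\underline{\dd}} \text{ with } (\pi^{\flat}_{\underline{\dd}})^* x^* = 0 \text{ at } (x,\underline{F})\bigr\}.
\]
The remaining task is to identify this right-hand side with a subset of $\Lambda^{\flat}_{\dd}$ via a tangent/conormal computation on the closed embedding $\tilde{\FF}^{\flat}_{\underline{\dd}} \hookrightarrow E_{\dd}\times \mathcal{F}_{\underline{\dd}}$.

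The key local calculation is standard: differentiating the flag-stability defining equations of $\tilde{\FF}^{\flat}_{\underline{\dd}}$ at $(x,\underline{F})$ and dualizing identifies the conormal directions $x^*$ with elements of $E_{\overline{Q},\dd}$ satisfying (i) the moment map relation $\mu_{\dd}(x,x^*) = 0$ and (ii) a flag condition on $x^*$ which is complementary (via the $\bar{\alpha}$-swap) to the flag condition on $x$ built into $\tilde{\FF}^{\flat}_{\underline{\dd}}$: if $xF_j \subseteq F_j$ then $x^*_{\bar{\alpha}}F_j \subseteq F_{j-1}$, and if $xF_j \subseteq F_{j-1}$ then $x^*_{\bar{\alpha}}F_j \subseteq F_j$. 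Combined with the flag condition on $x$ itself, the existence of a flag $\underline{F}$ with these properties is precisely the semi-nilpotency condition defining $\Lambda^{\flat}_{\dd}$; in the decorations $\flat\in\{(\nil,1),1\}$ the flag $\underline{F}$ is moreover discrete, which yields the strong semi-nilpotency variants.

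To conclude, I recall that for any perverse sheaf $\mathscr{F}$, $SS(\mathscr{F})$ is a closed, conical, Lagrangian subvariety of $T^*E_{\dd}$ of pure dimension $\dim E_{\dd}$; since $\Lambda^{\flat}_{\dd}$ is also Lagrangian of the same dimension, the inclusion $SS(\mathscr{F}) \subseteq \Lambda^{\flat}_{\dd}$ forces $SS(\mathscr{F})$ to be a union of irreducible components of $\Lambda^{\flat}_{\dd}$. The main obstacle is purely bookkeeping in the conormal computation of step three: one must carefully track how the four possible flag-stability conditions on $\tilde{\FF}^{\flat}_{\underline{\dd}}$ dualize to the four \emph{(co-)semi-nilpotency} or \emph{strongly (co-)semi-nilpotency} conditions of Section \ref{nilpotency}, and in particular verify that restricting to \emph{discrete} flag-types in the $\flat\in\{1,(\nil,1)\}$ cases is exactly what upgrades semi-nilpotency to its strong counterpart, so that the four cases match in pairs as expected from Remark \ref{interactionsnilvar}.
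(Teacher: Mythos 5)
Your proposal is correct and follows essentially the same route as the paper: the paper's proof simply reduces to the inclusion $SS\bigl((\pi^{\flat}_{\underline{\dd}})_*\underline{\C}\bigr)\subset\Lambda^{\flat}_{\dd}$ and cites the standard argument of Lusztig (\cite[Corollary 13.6]{MR1088333}), which is exactly the proper push-forward estimate plus conormal computation you spell out, together with the Lagrangian-dimension argument for ``union of irreducible components.'' You merely make explicit the details the paper leaves to the reference, including the correct pairing of the flag conditions ($\emptyset\leftrightarrow *$-semi-nilpotent, $\nil\leftrightarrow$ semi-nilpotent, with discrete flag-types giving the strong variants).
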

\begin{proof}
 It suffices to show that the singular support of $(\pi_{\underline{\dd}}^{\flat})_*\underline{\C}$ is a subvariety of $\Lambda_{\dd}^{\flat}$. This is a standard argument similar to the one of \cite[Corollary 13.6]{MR1088333}.
\end{proof}

\subsubsection{Explicit description of the nilpotent varieties}
 Recall that for a closed subvariety $Z\subset X$ inside a smooth variety $X$, we let $T^*_ZX=\overline{T^*_UX}$ where $U$ is some smooth and dense open subset of $Z$.

\begin{proposition}
\label{inclusionnil}
 For any $\dd\in\N^I$ and $\flat\in\{\nil,\emptyset,(\nil,1),1\}$, we have the inclusion
 \[
  \bigcup_{\underline{\dd}\in\mathscr{C}_{\dd}^{\flat}}T^*_{E_{\underline{\dd}}^{\flat}}E_{\dd}\subset \Lambda_{\dd}^{\flat}.
 \]
\end{proposition}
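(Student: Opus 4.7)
The plan is to fix $\flat\in\{\nil,\emptyset,(\nil,1),1\}$ and a flag-type $\underline{\dd}\in\mathscr{C}_{\dd}^{\flat}$, and to show that any $(x,x^{*})$ in the conormal $T^{*}_{E_{\underline{\dd}}^{\flat}}E_{\dd}$ over a smooth point $x$ of $E_{\underline{\dd}}^{\flat}$ already lies in $\Lambda_{\dd}^{\flat}$. The general case follows because $\Lambda_{\dd}^{\flat}$ is closed and $T^{*}_{E_{\underline{\dd}}^{\flat}}E_{\dd}$ is by definition the closure of the conormal over the smooth locus. Since $E_{\underline{\dd}}^{\flat}=\pi_{\underline{\dd}}^{\flat}(\tilde{\mathcal{F}}_{\underline{\dd}}^{\flat})$ is the image of a $G_{\dd}$-equivariant morphism, it is $G_{\dd}$-invariant, so the orbit map at $x$ factors through $T_{x}E_{\underline{\dd}}^{\flat}$ and $x^{*}$ annihilates the infinitesimal $G_{\dd}$-action on $E_{\dd}$ at $x$. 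The annihilator of the infinitesimal action is precisely the zero locus of $\mu_{\dd}$, hence $\mu_{\dd}(x,x^{*})=0$; this takes care of the moment map condition.

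Next, pick a preimage $(x,\underline{F})\in(\pi_{\underline{\dd}}^{\flat})^{-1}(x)$ and recall that $\tilde{\mathcal{F}}_{\underline{\dd}}^{\flat}\subset E_{\dd}\times\mathcal{F}_{\underline{\dd}}$ is a smooth vector subbundle over the smooth flag variety $\mathcal{F}_{\underline{\dd}}$, whose fiber at $\underline{F}$ is
\[
E^{\flat}(\underline{F}):=\bigl\{y\in E_{\dd}\ :\ y_{\alpha}F^{p}_{i}\subset F^{p-\epsilon(\flat)}_{j}\ \forall\,p,\ \forall\,\alpha\colon i\to j\bigr\},
\]
where $\epsilon(\flat)=0$ when $\flat\in\{\emptyset,1\}$ and $\epsilon(\flat)=1$ when $\flat\in\{\nil,(\nil,1)\}$. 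The derivative $d\pi_{\underline{\dd}}^{\flat}\colon T_{(x,\underline{F})}\tilde{\mathcal{F}}_{\underline{\dd}}^{\flat}\to T_{x}E_{\dd}$ takes values in $T_{x}E_{\underline{\dd}}^{\flat}$, and $x^{*}$ annihilates this subspace by assumption. Consequently the covector $(x^{*},0)$ on $E_{\dd}\times\mathcal{F}_{\underline{\dd}}$ annihilates $T_{(x,\underline{F})}\tilde{\mathcal{F}}_{\underline{\dd}}^{\flat}$, and hence lies in the conormal $T^{*}_{\tilde{\mathcal{F}}_{\underline{\dd}}^{\flat}}(E_{\dd}\times\mathcal{F}_{\underline{\dd}})$ at $(x,\underline{F})$. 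Testing against the vertical tangent vectors, for which $\dot{F}=0$ and $\dot{y}$ runs over the fiber $E^{\flat}(\underline{F})$, we obtain the crucial relation $x^{*}\in E^{\flat}(\underline{F})^{\perp}$ for the trace pairing on $E_{\overline{Q},\dd}$.

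The main technical step is the explicit identification of $E^{\flat}(\underline{F})^{\perp}$. Choosing an $I$-graded basis of $\C^{\dd}$ compatible with $\underline{F}$ turns the flag condition into a block-matrix constraint, and a direct computation of the orthogonal under the trace pairing gives
\[
E^{\emptyset}(\underline{F})^{\perp}=\bigl\{x^{*}\ :\ x^{*}_{\alpha}F^{p}_{j}\subset F^{p-1}_{i}\ \forall\,p,\,\alpha\colon i\to j\bigr\},\qquad E^{\nil}(\underline{F})^{\perp}=\bigl\{x^{*}\ :\ x^{*}_{\alpha}F^{p}_{j}\subset F^{p}_{i}\bigr\},
\]
with the analogous descriptions in the discrete flag cases $\flat\in\{1,(\nil,1)\}$. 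Combining this with the fact that $(x,\underline{F})\in\tilde{\mathcal{F}}_{\underline{\dd}}^{\flat}$ forces $x$ to preserve (resp.\ nilpotently lower) the flag, the flag $\underline{F}$ witnesses exactly the semi-nilpotency required: $*$-semi-nilpotent when $\flat=\emptyset$, semi-nilpotent when $\flat=\nil$, $*$-strongly semi-nilpotent when $\flat=1$, and strongly semi-nilpotent when $\flat=(\nil,1)$ (the discreteness of $\underline{\dd}$ in the last two cases is what upgrades semi- to strongly semi-nilpotent). Together with $\mu_{\dd}(x,x^{*})=0$ this places $(x,x^{*})$ in $\Lambda_{\dd}^{\flat}$.

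The only step where something really has to be checked, rather than invoked, is the trace-pairing dualisation $E^{\flat}(\underline{F})^{\perp}=\{x^{*}:x^{*}_{\alpha}F^{p}_{j}\subset F^{p-1}_{i}\text{ or }F^{p}_{i}\}$: this is a straightforward linear-algebra computation, but it is the single point at which the four decorations $\flat$ produce different conclusions, so it is also where the bookkeeping between the flag condition on $x$ and the dual flag condition on $x^{*}$ must be kept straight.
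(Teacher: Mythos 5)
Your proof is correct, but it takes a genuinely different route from the paper's. The paper argues sheaf-theoretically: by Proposition \ref{explicitdesc} (which rests on the smallness of $\pi_{\underline{\dd}}^{\flat}$ for negative quivers, Proposition \ref{smallnessmor}), $\ICC(E_{\underline{\dd}}^{\flat})$ belongs to $\mathcal{P}_{\dd}^{\flat}$; by Proposition \ref{unionirrcomp} its singular support lies in $\Lambda_{\dd}^{\flat}$; and the conormal variety of the support of a simple perverse sheaf is always contained in its singular support. You instead compute the conormal variety directly, with no perverse-sheaf input: the moment-map equation $\mu_{\dd}(x,x^{*})=0$ comes from the $G_{\dd}$-invariance of $E_{\underline{\dd}}^{\flat}$ (the covector kills the tangent space to the orbit through a smooth point), and the required (strong/$*$-) semi-nilpotency is witnessed by a flag $\underline{F}$ in the fibre of $\pi_{\underline{\dd}}^{\flat}$ over $x$, via the trace-pairing computation of $E^{\flat}(\underline{F})^{\perp}$ — your stated orthogonals (flag-preserving dualizes to strictly flag-lowering and vice versa) are the correct ones, and you rightly isolate this as the only point where the four decorations $\flat$ diverge, with discreteness of $\underline{\dd}$ upgrading semi- to strongly semi-nilpotent. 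In effect you re-prove by hand, for the conormal of the image, the microlocal estimate $SS(f_{*}\mathscr{F})\subset pr_2((df)^{*-1}(SS(\mathscr{F})))$ for the proper map $\pi_{\underline{\dd}}^{\flat}$ (Proposition \ref{pushforward}) that underlies Proposition \ref{unionirrcomp}. What your route buys: it needs neither the decomposition theorem nor the negativity/smallness hypothesis behind Proposition \ref{explicitdesc}, so it works verbatim for an arbitrary quiver and arbitrary (resp.\ discrete) flag-types, and it uses the closedness of $\Lambda_{\dd}^{\flat}$ only in the final closure step, where it is legitimate. What the paper's route buys is brevity, since Propositions \ref{explicitdesc} and \ref{unionirrcomp} are needed elsewhere in any case.
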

\begin{proof}
 By Proposition \ref{explicitdesc}, for any $\underline{\dd}\in\mathscr{C}_{\dd}^{\flat}$, $\ICC(E_{\underline{\dd}}^{\flat})\in\mathcal{P}_{\dd}^{\flat}$; by Proposition \ref{unionirrcomp}, $SS(\ICC(E_{\underline{\dd}}^{\flat}))\subset \Lambda_{\dd}^{\flat}$; and $T^*_{E_{\underline{\dd}}^{\flat}}E_{\dd}\subset SS(\ICC(E_{\underline{\dd}}^{\flat}))$. This proves the inclusion of the lemma.
\end{proof}
\begin{conj}
\label{conjequa}
 If $Q$ is a negative quiver, then the inclusion of Proposition \ref{inclusionnil} is an equality. 
\end{conj}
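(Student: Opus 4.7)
The conjecture asserts that each irreducible component of $\Lambda_{\dd}^{\flat}$ is of the form $T^*_{E_{\underline{\dd}}^{\flat}} E_{\dd}$ for some $\underline{\dd} \in \mathscr{C}_{\dd}^{\flat}$. Since $\Lambda_{\dd}^{\flat}$ is a conical Lagrangian in $T^* E_{\dd}$, each of its irreducible components is the closure of a conormal bundle $T^*_S E_{\dd}$ for a smooth, locally closed, irreducible $S \subset E_{\dd}$, recovered as the smooth locus of the image of that component under the projection $T^* E_{\dd} \to E_{\dd}$. The task is thus to identify these $S$ with strata of the form $E_{\underline{\dd}}^{\flat}$.

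The essential tool is Proposition \ref{smallnessmor}: on a negative quiver the proper morphism $\pi_{\underline{\dd}}^{\flat}$ is small and an isomorphism over a dense open subset of $E_{\underline{\dd}}^{\flat}$. Consequently $E_{\underline{\dd}}^{\flat}$ is irreducible of dimension $\dim \tilde{\mathcal{F}}_{\underline{\dd}}^{\flat}$, so $T^*_{E_{\underline{\dd}}^{\flat}} E_{\dd}$ is an irreducible Lagrangian subvariety of $T^* E_{\dd}$, and Proposition \ref{inclusionnil} places it inside $\Lambda_{\dd}^{\flat}$; hence it is one of the irreducible components. The map $\underline{\dd} \mapsto T^*_{E_{\underline{\dd}}^{\flat}} E_{\dd}$ from $\mathscr{C}_{\dd}^{\flat}$ to irreducible components of $\Lambda_{\dd}^{\flat}$ is therefore well defined, and the conjecture amounts to its surjectivity.

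For $\flat \in \{(\nil, 1), 1\}$, where $\mathscr{C}_{\dd}^{\flat}$ consists of discrete flag-types, I would invoke Bozec's bijection \cite{bozec2017number} between irreducible components of $\Lambda_{\dd}^{\flat}$ and discrete combinatorial flag-type data. The plan is to match Bozec's parametrization with our map by comparing generic points: starting from Bozec's combinatorial datum one reads off a discrete flag-type $\underline{\dd}$ and verifies that a generic point of the corresponding component of $\Lambda_{\dd}^{\flat}$ projects to a generic element of $E_{\underline{\dd}}^{\flat}$. Specializing to $Q = S_g$ with $g \geq 2$, this yields the conjecture in these cases and, combined with Proposition \ref{explicitdesc}, produces the bijection between simple objects of $\mathcal{P}_{\dd}^{\flat}$ and irreducible components of $\Lambda_{\dd}^{\flat}$ alluded to in Section \ref{stepsloops}, which is what the second main theorem of the paper ultimately exploits.

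The main obstacle lies in the cases $\flat \in \{\nil, \emptyset\}$ where non-discrete flag-types are admitted and no analogue of Bozec's bijection is currently available, as explicitly flagged in Section \ref{stepsloops}. My strategy would be to extend Bozec's combinatorics to non-discrete flag-types by a refinement procedure that inserts intermediate steps into a filtration while preserving the nilpotency and commutation conditions, leveraging the at-least-two loops hypothesis at each vertex to ensure that such refinements persist generically. Once such an extension is in place, the well-definedness argument above again reduces the conjecture to surjectivity of a map coming from Proposition \ref{smallnessmor}. Absent such an extension, the conjecture remains open in these cases, which is precisely the status reflected in the paper.
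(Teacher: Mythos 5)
Your reduction of the statement is sound and matches the paper's framing: since each $E_{\underline{\dd}}^{\flat}$ is irreducible (image of the irreducible $\tilde{\mathcal{F}}_{\underline{\dd}}^{\flat}$ under a proper map), Proposition \ref{inclusionnil} makes $T^*_{E_{\underline{\dd}}^{\flat}}E_{\dd}$ an irreducible component of the Lagrangian $\Lambda_{\dd}^{\flat}$, and the conjecture is exactly surjectivity of $\underline{\dd}\mapsto T^*_{E_{\underline{\dd}}^{\flat}}E_{\dd}$ onto the set of components. You also correctly reflect the status of the statement: it is a conjecture, proved in the paper only for $\flat\in\{(\nil,1),1\}$ (Proposition \ref{truenil}), with the non-discrete cases open precisely because no analogue of Bozec's result is available there; your speculative ``refinement of filtrations'' strategy for $\flat\in\{\nil,\emptyset\}$ is not in the paper and is not needed to match its content.

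Where your proposal has a genuine gap is in how Bozec's theorem is invoked for the discrete cases. The result cited in the paper (\cite[Theorem 3.13]{MR3569998}) is a bijection between isomorphism classes of simple perverse sheaves in $\mathcal{P}_{\dd}^{\nil,1}$ and irreducible components of $\Lambda_{\dd}^{\nil,1}$ --- not a bijection between components and ``discrete combinatorial flag-type data'' that you could match against your map by inspecting generic points. As written, your matching step (reading off a flag-type from Bozec's datum and checking that a generic point of the component projects to a generic point of $E_{\underline{\dd}}^{\flat}$) is the entire nontrivial content and is left unverified. The paper closes this by a counting argument that avoids any explicit matching: by Proposition \ref{explicitdesc} the simple objects of $\mathcal{P}_{\dd}^{\nil,1}$ are exactly the $\ICC(E_{\underline{\dd}}^{\nil})$, the assignment $\ICC(E_{\underline{\dd}}^{\nil})\mapsto T^*_{E_{\underline{\dd}}^{\nil}}E_{\dd}$ is an injection into the (finite) set of irreducible components, and Bozec's bijection forces the two finite sets to have the same cardinality, hence the injection is onto. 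Finally, the case $\flat=1$ is not handled in the paper by a separate appeal to Bozec but by the identification $\Lambda_{Q^{\op},\dd}^{\nil,1}=\Lambda_{Q,\dd}^{1}$ of Remark \ref{eqnilvar} (together with Lemma \ref{tfourier}); your proposal lumps the two discrete cases together without this reduction, which matters for negative quivers other than $S_g$ where $Q^{\op}\neq Q$.
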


\begin{proposition}
\label{truenil}
 Conjecture \ref{conjequa} is true if $Q$ is a negative quiver and $\flat=1$ or $\flat=(\nil,1)$.
\end{proposition}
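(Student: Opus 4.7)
The plan is to establish the reverse inclusion to that of Proposition \ref{inclusionnil} via a counting argument on irreducible components. One direction is in place: each $T^*_{E_{\underline{\dd}}^{\flat}} E_{\dd}$, for $\underline{\dd} \in \mathscr{C}_{\dd}^{\flat}$, is an irreducible Lagrangian contained in $\Lambda_{\dd}^{\flat}$ by Proposition \ref{inclusionnil}; since $\Lambda_{\dd}^{\flat}$ is itself Lagrangian of pure dimension $\dim E_{\dd}$, each such $T^*_{E_{\underline{\dd}}^{\flat}} E_{\dd}$ is automatically a full irreducible component of $\Lambda_{\dd}^{\flat}$. What remains is to show that these exhaust all components.

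Since $Q$ is negative, Proposition \ref{explicitdesc} applies and identifies the isomorphism classes of simple objects of $\mathcal{P}_{\dd}^{\flat}$ with the set of distinct closed subvarieties $\{E_{\underline{\dd}}^{\flat} : \underline{\dd} \in \mathscr{C}_{\dd}^{\flat}\}$, via $E_{\underline{\dd}}^{\flat} \mapsto \ICC(E_{\underline{\dd}}^{\flat})$. The further assignment $E_{\underline{\dd}}^{\flat} \mapsto T^*_{E_{\underline{\dd}}^{\flat}} E_{\dd}$ is then injective: the projection of a conormal bundle recovers its base, so distinct supports yield distinct Lagrangian components. Composing, I obtain an injective map from isomorphism classes of simples in $\mathcal{P}_{\dd}^{\flat}$ to the irreducible components of $\Lambda_{\dd}^{\flat}$.

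The final input, which turns this injection into a bijection, is Bozec's counting result \cite{bozec2017number}: for the discrete flag-type cases $\flat = 1$ and $\flat = (\nil,1)$, the number of isomorphism classes of simple objects in $\mathcal{P}_{\dd}^{\flat}$ equals the number of irreducible components of $\Lambda_{\dd}^{\flat}$. Combined with the injection above, this forces every irreducible component of $\Lambda_{\dd}^{\flat}$ to be of the form $T^*_{E_{\underline{\dd}}^{\flat}} E_{\dd}$, yielding the desired reverse inclusion. The hard part is exactly this appeal to Bozec's combinatorial enumeration: without it one would have to verify surjectivity by hand through a direct geometric analysis of the irreducible components of $\Lambda_{\dd}^{\flat}$, which is substantially more delicate for quivers with loops and, as noted in the introduction, is not available for non-discrete flag-types — this is why only the cases $\flat \in \{1,(\nil,1)\}$ can be treated by this method.
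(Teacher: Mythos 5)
Your argument is correct and is essentially the paper's own proof: both rest on the injection sending each simple $\ICC(E_{\underline{\dd}}^{\flat})$ (Proposition \ref{explicitdesc}) to the Lagrangian component $T^*_{E_{\underline{\dd}}^{\flat}}E_{\dd}$ of $\Lambda_{\dd}^{\flat}$ (Proposition \ref{inclusionnil} plus equidimensionality), combined with Bozec's bijection between simples and irreducible components to force surjectivity. The only cosmetic difference is that the paper first reduces $\flat=1$ to $\flat=(\nil,1)$ via Remark \ref{eqnilvar} before invoking Bozec, whereas you quote the counting statement for both cases directly, which amounts to the same reduction through the opposite quiver and the Fourier--Sato transform.
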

\begin{proof}
 By Remark \ref{eqnilvar}, it suffices to prove Conjecture \ref{conjequa} for $\flat=(\nil,1)$. In this case, Bozec showed that the set of isomorphism classes of simple perverse sheaves in the category $\mathcal{P}_{\dd}^{\nil,1}$ is in bijection with the irreducible components of $\Lambda_{\dd}^{\nil,1}$ (see for example \cite[Theorem 3.13]{MR3569998}). Since by Proposition \ref{inclusionnil} each of the $T^*_{E_{\underline{\dd}}^{\nil,1}}E_{\dd}$ is an irreducible component of $\Lambda_{\dd}^{\nil,1}$, this proves Proposition \ref{truenil}.
\end{proof}

\subsection{Microlocal characterization of Lusztig sheaves}

\subsubsection{Conjectures}
\begin{conj}
\label{conjec}
 Let $Q$ be a quiver. Any irreducible $G_{\dd}$-equivariant perverse sheaf on $E_{\dd}$ whose singular support is contained in $\Lambda_{\dd}^{\flat}$ ($\flat\in\{\nil,\emptyset,(\nil,1),1\}$ is in the category $\mathcal{P}_{\dd}^{\flat}$.
\end{conj}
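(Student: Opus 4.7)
The plan is to reduce the four-parameter conjecture to two cases by means of the Fourier--Sato transform $\Phi$ reversing all the arrows of $Q$. By Lemma \ref{tfourier}, $\Phi$ exchanges $\mathcal{P}_{Q,\dd}^{\nil}$ with $\mathcal{P}_{Q^{\op},\dd}^{\emptyset}$ and $\mathcal{P}_{Q,\dd}^{(\nil,1)}$ with $\mathcal{P}_{Q^{\op},\dd}^{1}$; by Remark \ref{eqnilvar} the same pairing holds for the nilpotent varieties. Since $\Phi$ preserves simple equivariant perverse sheaves and is compatible with the singular support via Theorem \ref{ssfourier}, it suffices to prove Conjecture \ref{conjec} in the two cases $\flat=\nil$ and $\flat=(\nil,1)$.

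For the discrete case $\flat=(\nil,1)$, the plan is to exploit Bozec's bijection (the input to the proof of Proposition \ref{truenil}) between isomorphism classes of simple objects of $\mathcal{P}_{\dd}^{\nil,1}$ and irreducible components of $\Lambda_{\dd}^{\nil,1}$. Given an irreducible $G_{\dd}$-equivariant $\mathscr{F}$ with $SS(\mathscr{F})\subset\Lambda_{\dd}^{\nil,1}$, conicity and the Lagrangian property force $T^*_{\supp\mathscr{F}}E_{\dd}$ to be one of the irreducible components of $\Lambda_{\dd}^{\nil,1}$, hence by the bijection the component attached to some Lusztig sheaf $\mathscr{G}\in\mathcal{P}_{\dd}^{\nil,1}$ sharing the support of $\mathscr{F}$. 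The identification $\mathscr{F}\simeq \mathscr{G}$ is then to be obtained by comparing the two equivariant local systems $\mathscr{F}_{|U}$ and $\mathscr{G}_{|U}$ on a dense smooth $G_{\dd}$-stable open $U\subset\supp\mathscr{F}$, using Lemma \ref{sysloczerosection} to turn the singular-support condition into the statement that both restrictions are local systems supported on the same stratum, and then using Bozec's equivariant characters (which account for possibly disconnected stabilizers at loops) to rule out any further freedom.

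For the non-discrete case $\flat=\nil$, the plan is to argue by induction on a good total order refinement of the partial order on flag-types, using that the induction and restriction functors of Section \ref{inductionrestriction} preserve both the classes $\mathcal{P}^{\nil}$ and the property that the singular support lies in $\Lambda^{\nil}$. The idea is to choose a flag-type $\underline{\dd}$ such that $\supp\mathscr{F}\subset E_{\underline{\dd}}^{\nil}$ is generically smooth and $\pi_{\underline{\dd}}^{\nil}$ is generically a fiber bundle over it, so that a direct summand of $(\pi_{\underline{\dd}}^{\nil})_*\underline{\C}$ restricts to $\mathscr{F}$ generically; passing through Fourier--Sato to the $\emptyset$ side, where the singular-support condition translates into a semi-nilpotency condition on the support of $\Phi\mathscr{F}$, should allow one to kill the remaining possibilities for the generic local system on the open stratum.

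The principal obstacle, already manifest in the paper's treatment of $S_g$-quivers with $g\geq 2$, is the absence of any Bozec-type bijection for general (non-discrete) flag-types: even Conjecture \ref{conjequa}, describing the irreducible components of $\Lambda_{\dd}^{\flat}$ for $\flat\in\{\nil,\emptyset\}$ as the closures $T^*_{E_{\underline{\dd}}^{\flat}}E_{\dd}$, is presently open. Any complete proof in the $\nil$ case would seemingly require either establishing Conjecture \ref{conjequa} or an alternative method that bypasses enumerating irreducible components, perhaps via an explicit (not necessarily small) resolution of each stratum closure producing the required simple perverse sheaf as a summand of $(\pi_{\underline{\dd}}^{\nil})_*\underline{\C}$; for negative quivers this is supplied by Proposition \ref{smallnessmor}, and for affine and finite-type quivers by Auslander--Reiten-theoretic resolutions, but the wild loop-free case remains inaccessible by these methods.
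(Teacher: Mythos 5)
The statement you are addressing is stated in the paper as a \emph{conjecture}: the paper itself does not prove it for general quivers, but only in special cases (finite type and affine quivers via Theorem \ref{maintheorem}, cyclic quivers via Theorem \ref{miccharec}, the Jordan quiver via Springer theory, and $g$-loop quivers with $g\geq 2$ via Theorem \ref{secondmain}). Your Fourier--Sato reduction of the four cases to the two cases $\flat=\nil$ and $\flat=(\nil,1)$ is correct and is exactly Remark \ref{equivconj}, and your final paragraph correctly locates the open territory (no Bozec-type parametrization for non-discrete flag-types, Conjecture \ref{conjequa} open). But what you offer in between is not a proof, even in the discrete case.

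The central gap is in your use of Bozec's bijection. A bijection between isomorphism classes of simple objects of $\mathcal{P}_{\dd}^{\nil,1}$ and irreducible components of $\Lambda_{\dd}^{\nil,1}$ does not by itself exclude the existence of \emph{further} simple equivariant perverse sheaves whose singular support is a union of those same components: several non-isomorphic simple perverse sheaves can share a support and differ by the local system on a dense open stratum, and the singular-support hypothesis alone does not pin the local system down. Your appeal to ``Bozec's equivariant characters \ldots\ to rule out any further freedom'' names no mechanism; nothing of this kind exists in the paper, and this is precisely where all the work lies. In the cases the paper actually settles, this step is carried out by hard, case-specific geometry: for $g$-loop quivers one needs smallness of $\pi_{\underline{\dd}}^{\nil}$ (Proposition \ref{smallnessmor}), the codimension-$\geq 2$ estimates of Lemmas \ref{codimtwo}, \ref{smallmapcodim} and \ref{lemmaunique}, and simple connectedness of $\tilde{\mathcal{F}}_{\underline{\dd}}^{\nil}$ (Lemma \ref{scfv}) to force the generic local system to be trivial; for affine quivers one needs the Auslander--Reiten/Ringel stratification, the reduction Lemmas \ref{lemma1}--\ref{lemma2}, the cyclic-quiver model of a neighbourhood of a non-homogeneous tube, and the analysis of Lusztig local systems via $\chi_{N,\mu}$ and Lemma \ref{triviallocal}. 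Moreover, for a general quiver your claim that conicity and the Lagrangian property force $\supp\mathscr{F}$ to be some $E_{\underline{\dd}}^{\nil}$ is unavailable: the identification of the irreducible components of $\Lambda_{\dd}^{\nil,1}$ with the varieties $T^*_{E_{\underline{\dd}}^{\nil}}E_{\dd}$ is exactly Proposition \ref{truenil}, proved only for negative quivers, and the analogous descriptions for finite type, cyclic and affine quivers rest on their explicit representation theory. Your sketch for $\flat=\nil$ (induction on flag-types, generic fiber-bundle structure of $\pi_{\underline{\dd}}^{\nil}$) is likewise only a heuristic: $\pi_{\underline{\dd}}^{\nil}$ need not be generically small or birational outside the negative case, and no argument is given that the generic summand restricts to $\mathscr{F}$. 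So the proposal should be read as a correct survey of the state of the conjecture rather than a proof of it.
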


\begin{remark}
\label{equivconj}
 By Lemma \ref{tfourier}, Remark \ref{eqnilvar} and Lemma \ref{ssfourier}, this conjecture is true for $\flat=\nil$ if and only if it is true for $\flat=\emptyset$ and similarly, it is true for $\flat=(\nil,1)$ if and only if $\flat=1$.
\end{remark}
\begin{remark}[State of Conjecture \ref{conjec}]
 \begin{enumerate}
  \item By Theorem \ref{maintheorem}, Remark \ref{relationsperverse} and Remark \ref{interactionsnilvar}, Conjecture \ref{conjec} is true for any finite type or affine acyclic quiver.
  \item By Theorem \ref{maintheorem}, it is true for cyclic quivers for $\flat=(\nil,1)$ and $\flat=1$ (these two cases coincide). By Theorem \ref{miccharec}, Conjecture \ref{conjec} is true for cyclic quivers and $\flat=\emptyset$. By Remark \ref{equivconj}, it is also true for cyclic quivers and $\flat=\nil$.
  \item For the Jordan quiver, Conjecture \ref{conjec} is true, as a consequence of Springer theory for $\mathfrak{gl}_n$ (for any $n$). We briefly explain it in Section \ref{jordanquiver} below.
  \item In Section \ref{negativequivers}, we prove Conjecture \ref{conjec} for $g$-loops quivers with $g\geq 2$.
  \item The conjecture is still open for wild quivers which are not a $g$-loop quiver.
 \end{enumerate}
\end{remark}

\subsubsection{The situation for the Jordan quiver}\label{jordanquiver}
For the Jordan quiver, Conjecture \ref{equivconj} is true. Let $d\in \N$. We have
\[
 \Lambda_{d}=\{(x,x^*)\in \mathfrak{gl}_d^2\mid [x,x^*]=0 \text{ and $x^*$ is nilpotent}\}.
\]
There are two projections $\pi_j:\Lambda_d\rightarrow\mathfrak{gl}_d$, $j=1,2$. We identify $T^*\mathfrak{gl}_d$ with $E_{\overline{Q},d}$ using the trace pairing. From the point view of $\pi_2$,
\[
 \Lambda_d=\bigsqcup_{\OO\subset \mathfrak{gl}_d}T^*_{\OO}\mathfrak{gl}_d
\]
where the sum runs over nilpotent orbits $\OO\subset \mathfrak{gl}_d$.

From the point of view of $\pi_1$,
\[
 \Lambda_{d}=\bigsqcup_{\mu}\overline{T^*_{\Xi(\mu)}\mathfrak{gl}_d}
\]
where $\mu$ is regular (see Section \ref{stratjordan} for the definition of the strata $\Xi(\mu)$).
Moreover, the Fourier transform gives a bijection between intersection cohomology complexes of nilpotent orbits and perverse sheaves appearing as direct summands of the Springer sheaf, that is the pushforward of the constant sheaf by the Grothendieck-Springer resolution (which is $\pi_{\underline{\dd}}$ for $\underline{\dd}=(1,\hdots,1)\in\N^d$). Therefore, if $\mathscr{F}$ is a $\GL_d$-equivariant perverse sheaf on $\mathfrak{gl}_d$ with singular support in $\Lambda_d^*$, then its Fourier transform is the intersection cohomology of a nilpotent orbit, which means that $\mathscr{F}$ is a direct summand of the Springer sheaf.

\subsubsection{The case $g$-loops quivers}
\label{negativequivers}
In this section, we assume that $Q$ is a $g$-loops quivers with $g\geq 2$.

\begin{theorem}
\label{secondmain}
 Let $Q$ be a $g$-loop quiver. Let $\mathscr{F}$ be an irreducible perverse sheaf on $E_{Q,\dd}$ such that $SS(\mathscr{F})\subset \Lambda_{\dd}^{\nil,1}$. Then, $\mathscr{F}\in\mathcal{P}_{\dd}^{\nil,1}$.
\end{theorem}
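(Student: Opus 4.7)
The plan is to combine Proposition \ref{truenil} (describing the irreducible components of $\Lambda_{\dd}^{\nil,1}$), the smallness of $\pi_{\underline{\dd}}^{\nil}$ from Proposition \ref{smallnessmor}, and a codimension-two argument that makes essential use of the hypothesis $g\geq 2$. First, by Proposition \ref{truenil}, $\Lambda_{\dd}^{\nil,1}=\bigcup_{\underline{\dd}'\in\mathscr{C}_{\dd}^{1}}T^{*}_{E_{\underline{\dd}'}^{\nil}}E_{\dd}$. Since $SS(\mathscr{F})$ is a closed Lagrangian contained in this union, it is a union of some of these components, and as $\supp\mathscr{F}$ is irreducible (being the support of a simple perverse sheaf), the projection forces $\supp\mathscr{F}=E_{\underline{\dd}}^{\nil}$ for a unique maximal $\underline{\dd}$; all other $\underline{\dd}'$ whose conormal contributes to $SS(\mathscr{F})$ satisfy $E_{\underline{\dd}'}^{\nil}\subsetneq E_{\underline{\dd}}^{\nil}$. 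Thus $\mathscr{F}=\ICC(E_{\underline{\dd}}^{\nil},\mathscr{L})$ for an irreducible local system $\mathscr{L}$ on the smooth locus of $E_{\underline{\dd}}^{\nil}$, and in view of Proposition \ref{explicitdesc} (which relies on smallness) the theorem reduces to showing $\mathscr{L}$ is trivial.

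Next I would introduce the open subset $V\subset E_{\underline{\dd}}^{\nil}$ obtained by removing both the exceptional locus of $\pi_{\underline{\dd}}^{\nil}$ and $\bigcup_{\underline{\dd}'\neq\underline{\dd}}E_{\underline{\dd}'}^{\nil}$. Two codimension estimates are needed, both consequences of $g\geq 2$. For the exceptional locus, smallness gives the standard inequality $\dim\{y:\dim(\pi_{\underline{\dd}}^{\nil})^{-1}(y)\geq k\}+2k<\dim E_{\underline{\dd}}^{\nil}$ for all $k\geq 1$, yielding codimension $\geq 3$ downstairs and $\geq 2$ upstairs. For the strictly contained $E_{\underline{\dd}'}^{\nil}$'s, the explicit formula $\dim E_{\underline{\dd}}^{\nil}=(g+1)\sum_{i<j}d_{i}d_{j}$ (a consequence of the birationality of $\pi_{\underline{\dd}}^{\nil}$ and the fibration structure) shows that any strict containment drops the dimension by at least $g+1\geq 3$. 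Combining these with the bound $\dim(\pi_{\underline{\dd}}^{\nil})^{-1}(E_{\underline{\dd}'}^{\nil})\leq\max_{k}(\dim\{x\in E_{\underline{\dd}'}^{\nil}:\dim\pi^{-1}(x)\geq k\}+k)$ also produces codimension $\geq 2$ for the preimages upstairs. Hence $V$ has codimension-$2$ complement in $E_{\underline{\dd}}^{\nil}$, and $V':=(\pi_{\underline{\dd}}^{\nil})^{-1}(V)$ has codimension-$2$ complement in $\tilde{\FF}_{\underline{\dd}}^{\nil}$.

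On $V$, no component of $SS(\mathscr{F})$ other than $T^{*}_{E_{\underline{\dd}}^{\nil}}E_{\dd}$ contributes, so $\mathscr{F}|_{V}$ is a shifted local system isomorphic to $\mathscr{L}|_{V}$. Since $\pi_{\underline{\dd}}^{\nil}:V'\to V$ is an isomorphism and $\tilde{\FF}_{\underline{\dd}}^{\nil}$ is an affine fibration over the simply connected partial flag variety $\mathcal{F}_{\underline{\dd}}$ (hence itself simply connected), the codimension-$2$ condition yields $\pi_{1}(V)=\pi_{1}(V')=1$. Therefore $\mathscr{L}|_{V}$ is trivial, and by the standard fact that removing a codimension-$2$ closed subvariety from a smooth complex variety preserves the fundamental group, $\mathscr{L}$ itself is trivial. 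We conclude $\mathscr{F}=\ICC(E_{\underline{\dd}}^{\nil})\in\mathcal{P}_{\dd}^{\nil,1}$. The main obstacle is the second of the two codimension estimates above: one must combine the quadratic-form bookkeeping from Corollary \ref{formulasmall} with an analysis of fiber dimensions to ensure that the preimages upstairs of properly contained $E_{\underline{\dd}'}^{\nil}$ are themselves of codimension $\geq 2$, a property which fails decisively for $g=1$ and explains why the Jordan quiver case must be handled by the entirely different Springer-theoretic argument of Section \ref{jordanquiver}.
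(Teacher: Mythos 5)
Your overall strategy is the same as the paper's (identify $\supp\mathscr{F}=E_{\underline{\dd}}^{\nil}$ via Proposition \ref{truenil}, delete a bad locus of codimension two, use birationality of the small map $\pi_{\underline{\dd}}^{\nil}$ and simple connectedness of $\tilde{\mathcal{F}}_{\underline{\dd}}^{\nil}$ to force the local system to be trivial, and conclude by Proposition \ref{explicitdesc}), but there is a genuine gap in your first codimension estimate. You claim that smallness of $\pi_{\underline{\dd}}^{\nil}$, via the inequality $\dim\{y:\dim(\pi_{\underline{\dd}}^{\nil})^{-1}(y)\geq k\}+2k<\dim E_{\underline{\dd}}^{\nil}$ for $k\geq 1$, gives codimension $\geq 3$ downstairs (and $\geq 2$ upstairs) for the exceptional locus. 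That inequality only controls the loci over which the fibers are positive-dimensional; it says nothing about the locus $Z'$ where the fiber is finite but contains at least two points, which is part of the locus you must remove before you can speak of an isomorphism. A finite birational morphism (e.g.\ a normalization that is $2:1$ over a divisor) is automatically small, so smallness plus birationality simply does not imply that $Z'$ has codimension $\geq 2$. This is exactly where the hypothesis $g\geq 2$ has to be exploited nontrivially: in the paper's Lemma \ref{lemmaunique} one stratifies $\tilde{\mathcal{F}}_{\underline{\dd}}^{\nil}\times_{E_{\dd}}\tilde{\mathcal{F}}_{\underline{\dd}}^{\nil}$ by relative position and uses Corollary \ref{formulasmall}, which shows the off-diagonal strata have codimension a positive multiple of $g-1$. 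For $g\geq 3$ this already gives codimension $\geq 2$, but for $g=2$ there exist off-diagonal strata of codimension exactly one, and one must show by an explicit analysis of the corresponding relative positions (two adjacent one-dimensional steps swapped) that their image in $E_{\underline{\dd}}^{\nil}$ lies in a strictly smaller $E_{\underline{\dd}'}^{\nil}$ for a coarsened flag-type, and only then invoke Lemma \ref{codimtwo} and Lemma \ref{smallmapcodim}. Your proposal skips this step entirely, and indeed you identify the ``main obstacle'' as the second estimate (preimages of strictly smaller $E_{\underline{\dd}'}^{\nil}$), which is in fact the easy one: the dimension of $E_{\underline{\dd}}^{\nil}$ is a multiple of $g+1$, so strict inclusions drop dimension by at least $3$, and pulling back along a small map preserves codimension $\geq 2$ by a general lemma.

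Two minor points. First, your closing appeal to the fact that removing a codimension-two subvariety from a \emph{smooth} variety preserves $\pi_1$ is misapplied, since $E_{\underline{\dd}}^{\nil}$ need not be smooth; but this step is unnecessary: once $\mathscr{F}|_V$ is a shifted trivial local system on a dense open subset $V$ of the support, $\mathscr{F}=\ICC(E_{\underline{\dd}}^{\nil},\underline{\C})$ follows directly. Second, your reduction of the triviality upstairs is fine: transporting the local system through the isomorphism over $V$, extending across the codimension-two complement in the smooth simply connected $\tilde{\mathcal{F}}_{\underline{\dd}}^{\nil}$ is exactly the paper's argument. The proof becomes complete once the treatment of $Z'$ is repaired along the lines of Lemma \ref{lemmaunique}.
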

\begin{remark}
 Recall that since $Q$ has only one vertex, $\Lambda_{\dd}^{\nil,1}=\Lambda_{\dd}^{\nil}$ and $\mathcal{P}_{\dd}^{\nil,1}=\mathcal{P}_{\dd}^{\nil}$.
\end{remark}

\begin{remark}
\label{remarkequiva}
 To prove this theorem, we surprisingly do not need to assume that $\mathscr{F}$ is $G_{\dd}$-equivariant. It happens to be a consequence of the property on the singular support.
\end{remark}

To prove this theorem, we need several lemmas.

\begin{lemma}
\label{codimtwo}
 Let $Q=S_g$ be the $g$-loop quiver. Let $\underline{\dd}$ and $\underline{\dd}'$ be two flag-types of dimension $\dd$ such that $E_{\underline{\dd}'}^{\nil}\subsetneq E_{\underline{\dd}}^{\nil}$. Then the codimension of $E_{\underline{\dd}'}^{\nil}$ in $E_{\underline{\dd}}^{\nil}$ is at least two.
\end{lemma}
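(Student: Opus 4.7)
The plan is to reduce the statement to an explicit dimension count. The key inputs are the dimension formula for $\tilde{\mathcal{F}}_{\underline{\dd}}^{\nil}$ derived in the proof of Corollary \ref{formulasmall} (formula \eqref{eq4}) and the birationality part of Proposition \ref{smallnessmor}.

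First I would specialize formula \eqref{eq4} to $Q = S_g$. Since $|I|=1$ and $\sharp\Omega = g$, writing $\underline{\dd} = (\dd_1,\ldots,\dd_r)$ with $\sum_p \dd_p = \dd$ and setting
\[
 n(\underline{\dd}) := \sum_{1 \leq t < p \leq r} \dd_t \dd_p \in \N,
\]
the formula collapses to
\[
 \dim \tilde{\mathcal{F}}_{\underline{\dd}}^{\nil}
 = g\, n(\underline{\dd}) + \sum_{1 \leq t,p \leq r} \dd_t \dd_p - \sum_{1 \leq t \leq p \leq r} \dd_t \dd_p
 = (g+1)\, n(\underline{\dd}),
\]
since the last two sums differ by exactly $n(\underline{\dd})$. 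In particular, $\dim \tilde{\mathcal{F}}_{\underline{\dd}}^{\nil}$ is a multiple of $g+1$.

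Next I would apply Proposition \ref{smallnessmor}, which asserts that $\pi_{\underline{\dd}}^{\nil}$ is birational onto its image (this is where $g \geq 2$ enters). Hence $\dim E_{\underline{\dd}}^{\nil} = \dim \tilde{\mathcal{F}}_{\underline{\dd}}^{\nil} = (g+1)\, n(\underline{\dd})$, and likewise for $\underline{\dd}'$. The variety $\tilde{\mathcal{F}}_{\underline{\dd}}^{\nil}$ is an affine bundle over the smooth connected flag variety $\mathcal{F}_{\underline{\dd}}$, hence irreducible; therefore its image $E_{\underline{\dd}}^{\nil}$ under the proper morphism $\pi_{\underline{\dd}}^{\nil}$ is a closed irreducible subvariety of $E_{S_g,\dd}$, and similarly for $E_{\underline{\dd}'}^{\nil}$.

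Finally, since both subvarieties are closed and irreducible, the strict inclusion $E_{\underline{\dd}'}^{\nil} \subsetneq E_{\underline{\dd}}^{\nil}$ forces $\dim E_{\underline{\dd}'}^{\nil} < \dim E_{\underline{\dd}}^{\nil}$, equivalently $n(\underline{\dd}') < n(\underline{\dd})$; as $n(\underline{\dd}), n(\underline{\dd}') \in \N$, we get $n(\underline{\dd}) - n(\underline{\dd}') \geq 1$. Consequently,
\[
 \dim E_{\underline{\dd}}^{\nil} - \dim E_{\underline{\dd}'}^{\nil}
 = (g+1)\bigl(n(\underline{\dd}) - n(\underline{\dd}')\bigr)
 \geq g+1 \geq 3,
\]
which is more than enough to ensure codimension $\geq 2$. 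There is no real obstacle in this approach: the lemma is essentially a divisibility statement made transparent by the explicit dimension formula, and the hypothesis $g \geq 2$ is used exactly once, to invoke the birationality of $\pi_{\underline{\dd}}^{\nil}$ from Proposition \ref{smallnessmor}. The argument even yields the stronger bound $\geq g+1$.
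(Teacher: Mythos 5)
Your proof is correct and follows essentially the same route as the paper: both use Proposition \ref{smallnessmor} to identify $\dim E_{\underline{\dd}}^{\nil}$ with $\dim\tilde{\mathcal{F}}_{\underline{\dd}}^{\nil}$, then specialize formula \eqref{eq4} to $S_g$ to see that this dimension is $(g+1)\sum_{t<p}\dd_t\dd_p$, a multiple of $g+1\geq 3$, so a strict inclusion forces codimension at least $g+1$. Your write-up just makes the divisibility computation and the irreducibility/strictness step more explicit than the paper does.
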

\begin{proof}
 Since $\pi_{\underline{\dd}}^{\nil}$ is small (Proposition \ref{smallnessmor}), it suffices to show that for $\underline{\dd}$ and $\underline{\dd}'$ as in the lemma, $\dim \tilde{\mathcal{F}}_{\underline{\dd}}^{\nil}-\dim\tilde{\mathcal{F}}_{\underline{\dd}'}^{\nil}\geq 2$. However, in this case, \eqref{eq4} reads
 \[
  \dim\tilde{\mathcal{F}}_{\underline{\dd}}^{\nil}=(g+1)\sum_{1\leq t<p\leq r}(\dd_p)_i(\dd_t)_j
 \]
and is therefore a multiple of $g+1\geq 3$. Therefore the codimension is at least $3$.
\end{proof}

We let $Z''=\bigcup_{\substack{\underline{\dd}'\in\mathscr{C}_{\dd}^{1}\\E_{\underline{\dd}'}\subsetneq E_{\underline{\dd}}}}E_{\underline{\dd}'}^{\nil}$. By the previous lemma, it is of codimension at least two (in fact three) in $E_{\underline{\dd}}^{\nil}$.

\begin{lemma}\label{smallmapcodim}
 Let $\pi:X\rightarrow Y$ be a small map. Assume that $Y$ is irreducible. If $Z\subset Y$ is a closed subvariety of codimension at least two, then $\pi^{-1}(Z)\subset X$ is also a closed subvariety of codimension at least two.
\end{lemma}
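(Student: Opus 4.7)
The plan is to use the defining fiber-dimension estimate for small maps and stratify $Z$ according to the dimension of the fibers of $\pi$.

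First I would recall that smallness of $\pi:X\to Y$ (proper, with $X$ equidimensional) means that $\pi$ is generically finite, so $\dim X=\dim Y$, and that for every integer $r\geq 1$ the locus
\[
 Y_r=\{y\in Y\mid\dim\pi^{-1}(y)\geq r\}
\]
has codimension strictly greater than $2r$ in $Y$, i.e.\ $\dim Y_r<\dim Y-2r$. Then I would decompose $Z$ as $Z=Z_0\sqcup\bigsqcup_{r\geq 1}(Z\cap Y_r^{\circ})$, where $Y_r^{\circ}=Y_r\setminus Y_{r+1}$ is the locally closed set of points whose fiber has dimension exactly $r$, and $Z_0=Z\setminus Y_1$ is the open part of $Z$ over which $\pi$ has finite fibers.

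Next, I would bound the dimension of $\pi^{-1}$ of each piece. For the finite-fiber part, $\dim\pi^{-1}(Z_0)\leq\dim Z_0\leq\dim Z\leq\dim Y-2=\dim X-2$ by the hypothesis on $Z$. For $r\geq 1$, the fiber-dimension theorem gives
\[
 \dim\pi^{-1}(Z\cap Y_r^{\circ})\leq\dim(Z\cap Y_r^{\circ})+r\leq\dim Y_r+r<\dim Y-2r+r=\dim Y-r\leq\dim Y-1,
\]
and since dimensions are integers a strict inequality $<\dim Y-1$ implies $\leq\dim Y-2=\dim X-2$. Taking the maximum over $r$ gives $\dim\pi^{-1}(Z)\leq\dim X-2$, i.e.\ $\pi^{-1}(Z)$ has codimension at least two in $X$.

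There is no real obstacle: everything is a direct bookkeeping exercise with the smallness inequality. The only point requiring minor care is ensuring that the bound $\dim Y_r<\dim Y-2r$ is strict (so that the integer refinement can be applied), which is exactly what the standard definition of a small map provides.
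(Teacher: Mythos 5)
Your proof is correct and follows essentially the same route as the paper: you stratify $Z$ by fiber dimension and bound $\dim\pi^{-1}$ of each piece using the smallness inequality together with the fiber-dimension theorem. The only cosmetic difference is that you phrase smallness via the loci $Y_r=\{y\mid\dim\pi^{-1}(y)\geq r\}$ with $\operatorname{codim}Y_r>2r$, whereas the paper works with a stratification over which $\pi$ is a topological fibration; the dimension count is identical.
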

\begin{proof}
 Since $\pi$ is small, one can find a stratification by locally closed subvarieties $Y=\bigsqcup_{S\in\mathcal{S}}S$ such that for any $S\in \mathcal{S}$, the restriction $\pi_S:\pi^{-1}(S)\rightarrow S$ is a topological fibration and for any $x\in S$,
 \[
  \dim S+2\dim \pi^{-1}(x)<\dim X
 \]
 unless $S$ is the dense stratum, for which we have the equality of the dimensions. We have $Z=\bigsqcup_{S\in\mathcal{S}}Z\cap S$ and $\pi^{-1}(Z)=\bigsqcup_{S\in\mathcal{S}}\pi^{-1}(Z\cap S)$. If $S$ is the dense stratum, $\dim\pi^{-1}(Z\cap S)=\dim Z\cap S$, so $\overline{\pi^{-1}(Z\cap S)}$ is of codimension at least two in $X$. If $S$ is not the dense stratum, but $\pi:\pi^{-1}(S)\rightarrow S$ has finite fibers, $\dim(\pi^{-1}(Z\cap S))=\dim Z\cap S\leq \dim Z$. Therefore, $\overline{\pi^{-1}(Z\cap S)}$ is of codimension at least two in X. Lastly, if $S$ is a stratum over which the fibers of $\pi$ have dimension at least one, then by the smallness of $\pi$, for any $x\in Z\cap S$,
 \[
 \begin{aligned}
  \dim\pi^{-1}(Z\cap S)&=\dim Z\cap S+\dim \pi^{-1}(x)\\
  &\leq \dim S+\dim \pi^{-1}(x)\\
  &<\dim X-\dim \pi^{-1}(x)\\
  &<\dim X-1.
  \end{aligned}
 \]
This last equality allows us to conclude.

\end{proof}

\begin{lemma}
\label{lemmaunique}
 Let $Z'$ be the set of $x\in E_{\underline{\dd}}^{\nil}$ such that $(\pi_{\underline{\dd}}^{\nil})^{-1}(x)$ contains at least two points and $\overline{Z'}$ its Zariski closure (so that $\pi_{\underline{\dd}}^{\nil}:\tilde{\mathcal{F}}_{\underline{\dd}}^{\nil}\setminus(\pi_{\underline{\dd}}^{\nil})^{-1}(\overline{Z'})\rightarrow E_{\underline{\dd}}^{\nil}\setminus \overline{Z'}$ is an isomorphism). Then $(\pi_{\underline{\dd}}^{\nil})^{-1}(\overline{Z'})$ is of codimension at least two in $\tilde{\mathcal{F}}_{\underline{\dd}}^{\nil}$.
\end{lemma}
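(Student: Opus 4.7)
The plan is to bound $\dim \pi^{-1}(\overline{Z'})$ by analyzing the fiber product $X \times_Y X$, where $X := \tilde{\mathcal{F}}_{\underline{\dd}}^{\nil}$, $Y := E_{\underline{\dd}}^{\nil}$ and $\pi := \pi_{\underline{\dd}}^{\nil}$. Since $\pi$ is proper and birational by Proposition \ref{smallnessmor}, we have $\pi^{-1}(Z') = p_1\bigl(\bigsqcup_{z \neq z_{\underline{\dd}}} (X \times_Y X)_z\bigr)$, and the first task is to show $\dim p_1((X \times_Y X)_z) \leq \dim X - 2$ for every $z \neq z_{\underline{\dd}}$. By Corollary \ref{formulasmall}, $\dim X - \dim (X \times_Y X)_z = (g-1) S_z$ with $S_z \geq 1$, so the inequality is immediate whenever $(g-1) S_z \geq 2$.

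The main obstacle is the borderline case $g = 2$ and $S_z = 1$, in which $z$ must come from a simple transposition swapping two adjacent rank-one pieces of the flag, so that $(X \times_Y X)_z$ has codimension only one in $X$. Here I would resolve the issue by an explicit fibre computation for $p_1$: a point $(\underline{x}, \underline{F}) \in X$ lies in $p_1((X \times_Y X)_z)$ if and only if there exists an alternative preserved flag $\underline{F}'$ adjacent to $\underline{F}$, equivalently the $g$ conditions $x_i(F_{p+1}) \subset F_{p-1}$ all hold; the fibre of $p_1$ over such a point is then the $\mathbb{A}^1$ of subspaces $F'_p \neq F_p$ with $F_{p-1} \subset F'_p \subset F_{p+1}$. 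Each condition $x_i(F_{p+1}) \subset F_{p-1}$ cuts down $\mathfrak{n}_{\underline{F}}$ by codimension one and the $g$ conditions are independent, so the image has codimension at least $g \geq 2$; this is precisely where the hypothesis $g \geq 2$ enters. Combining the two cases yields $\dim \pi^{-1}(Z') \leq \dim X - 2$, and by surjectivity of $\pi$ onto $Z'$ also $\dim \overline{Z'} \leq \dim Y - 2$.

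It remains to pass from $\pi^{-1}(Z')$ to $\pi^{-1}(\overline{Z'})$, which I would do by stratifying $\overline{Z'}$ according to the dimension of $\pi^{-1}(y)$. The smallness of $\pi$ gives $\dim\{y \in Y : \dim \pi^{-1}(y) \geq r\} \leq \dim Y - 2r - 1$ for all $r \geq 1$, so each positive-fibre-dimension stratum of $\overline{Z'}$ contributes at most $\dim Y - r - 1 \leq \dim Y - 2$ to $\dim \pi^{-1}(\overline{Z'})$, while the finite-fibre part contributes at most $\dim \overline{Z'} \leq \dim Y - 2$. This gives $\dim \pi^{-1}(\overline{Z'}) \leq \dim X - 2$, as required.
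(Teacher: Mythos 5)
Your proof is correct, and it diverges from the paper's argument precisely at the delicate point, namely the $g=2$ strata with $\dim\tilde{\mathcal{F}}_{\underline{\dd}}^{\nil}-\dim(\tilde{\mathcal{F}}_{\underline{\dd}}^{\nil}\times_{E_{\dd}}\tilde{\mathcal{F}}_{\underline{\dd}}^{\nil})_z=1$. The paper handles these by showing that $p$ maps such a stratum into $E_{\underline{\dd}'}^{\nil}$ for the coarser flag-type $\underline{\dd}'$ obtained by merging the two rank-one steps, and then quotes Lemma \ref{codimtwo} (codimension a multiple of $g+1$) together with Lemma \ref{smallmapcodim} to pull the codimension-two bound back upstairs. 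You instead compute the image of $pr_1$ directly: the observation $F_m\cap F'_m=F_{m-1}$ shows that membership in $pr_1((\cdot)_z)$ is equivalent to the $g$ linear conditions $x_i(F_{m+1})\subset F_{m-1}$, which cut a corank-$g$ sub-vector-bundle of $\tilde{\mathcal{F}}_{\underline{\dd}}^{\nil}$ over $\mathcal{F}_{\underline{\dd}}$ (and the $\mathbb{A}^1$ of intermediate subspaces $F'_m$ is consistent with the stratum having codimension one). This is more elementary and self-contained, and it makes visible exactly where $g\geq 2$ is used, whereas the paper's route reuses structural lemmas about merged flag-types and small maps. Your endgame also differs slightly: you first deduce $\dim\overline{Z'}\leq\dim E_{\underline{\dd}}^{\nil}-2$ from the upstairs bound and then stratify $\overline{Z'}$ by fibre dimension (in effect inlining the proof of Lemma \ref{smallmapcodim}), while the paper only has codimension one for $\overline{Z'}$ a priori and therefore splits $\overline{Z'}=Z'\sqcup(\overline{Z'}\setminus Z')$, treating the boundary via constructibility and Lemma \ref{smallmapcodim}; both are fine. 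The only step you assert rather than verify is that $g=2$, $S_z=1$ forces $z$ to be the adjacent rank-one transposition pattern, but the paper is equally brief on this point, so I do not regard it as a gap.
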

\begin{proof}
 Consider the diagram
 \[
  \xymatrix{
  \tilde{\mathcal{F}}_{\underline{\dd}}^{\nil}\times_{E_{\dd}}\tilde{\mathcal{F}}_{\underline{\dd}}^{\nil}
  \ar[r]^(.65){pr_1}\ar[rd]_p&\tilde{\mathcal{F}}_{\underline{\dd}}^{\nil}\ar[d]^{\pi_{\underline{\dd}}^{\nil}}\\
  &E_{\underline{\dd}}^{\nil}
  }
 \]
of proper morphisms.
Then, $Z'=p(\tilde{\mathcal{F}}_{\underline{\dd}}^{\nil}\times_{E_{\dd}}\tilde{\mathcal{F}}_{\underline{\dd}}^{\nil}\setminus \tilde{\mathcal{F}}_{\underline{\dd}}^{\nil})$ (recall that $\tilde{\mathcal{F}}_{\underline{\dd}}^{\nil}$ is identified with its diagonal embedding in $\tilde{\mathcal{F}}_{\underline{\dd}}^{\nil}\times_{E_{\dd}}\tilde{\mathcal{F}}_{\underline{\dd}}^{\nil}$) and $(\pi_{\underline{\dd}}^{\nil})^{-1}(Z')=pr_1(\tilde{\mathcal{F}}_{\underline{\dd}}^{\nil}\times_{E_{\dd}}\tilde{\mathcal{F}}_{\underline{\dd}}^{\nil}\setminus \tilde{\mathcal{F}}_{\underline{\dd}}^{\nil})$. Moreover, since a general element of $E_{\underline{\dd}}^{\nil}$ admits a unique filtration of type $\underline{\dd}$ (by Proposition \ref{smallnessmor}), then $\overline{Z'}$ is a proper closed subset of $E_{\underline{\dd}}^{\nil}$ and hence is of codimension at least $1$ in $E_{\underline{\dd}}^{\nil}$. Now, $\overline{Z'}=Z'\sqcup(\overline{Z'}\setminus Z')$ and $\overline{Z'}\setminus Z'$ is of codimension at least two in $E_{\underline{\dd}}^{\nil}$. By Lemma \ref{smallmapcodim}, $(\pi_{\underline{\dd}}^{\nil})^{-1}(\overline{Z'}\setminus Z')$ is of codimension at least two in $\tilde{\mathcal{F}}_{\underline{\dd}}^{\nil}$. It remains therefore to show that $(\pi_{\underline{\dd}}^{\nil})^{-1}(Z')=pr_1(\tilde{\mathcal{F}}_{\underline{\dd}}^{\nil}\times_{E_{\dd}}\tilde{\mathcal{F}}_{\underline{\dd}}^{\nil}\setminus \tilde{\mathcal{F}}_{\underline{\dd}}^{\nil})$ is of codimension at least two in $\tilde{\mathcal{F}}_{\underline{\dd}}^{\nil}$. We write
\[
 \tilde{\mathcal{F}}_{\underline{\dd}}^{\nil}\times_{E_{\dd}}\tilde{\mathcal{F}}_{\underline{\dd}}^{\nil}\setminus \tilde{\mathcal{F}}_{\underline{\dd}}^{\nil}=\bigsqcup_{z\in\Theta(\underline{\dd})\setminus\{z_{\underline{\dd}}\}}(\tilde{\mathcal{F}}_{\underline{\dd}}^{\nil}\times_{E_{\dd}}\tilde{\mathcal{F}}_{\underline{\dd}}^{\nil})_z
\]
If $g\geq 3$, by the second formula of Corollary \ref{formulasmall} and the same argument as in the proof of Proposition \ref{smallnessmor}, we have that $\dim\tilde{\mathcal{F}}_{\underline{\dd}}^{\nil}-(\tilde{\mathcal{F}}_{\underline{\dd}}^{\nil}\times_{E_{\dd}}\tilde{\mathcal{F}}_{\underline{\dd}}^{\nil})_z$ is a non-zero multiple of $g-1$ and hence is $\geq 2$. Consequently, $\dim pr_1((\tilde{\mathcal{F}}_{\underline{\dd}}^{\nil}\times_{E_{\dd}}\tilde{\mathcal{F}}_{\underline{\dd}}^{\nil})_z)\leq \dim \tilde{\mathcal{F}}_{\underline{\dd}}^{\nil}-2$.

If $g=2$, we have to be more careful. If 
\[
 \dim\tilde{\mathcal{F}}_{\underline{\dd}}^{\nil}-(\tilde{\mathcal{F}}_{\underline{\dd}}^{\nil}\times_{E_{\dd}}\tilde{\mathcal{F}}_{\underline{\dd}}^{\nil})_z\geq 2
\]
then the argument above applies. However, it is possible to have
\[
 \dim\tilde{\mathcal{F}}_{\underline{\dd}}^{\nil}-(\tilde{\mathcal{F}}_{\underline{\dd}}^{\nil}\times_{E_{\dd}}\tilde{\mathcal{F}}_{\underline{\dd}}^{\nil})_z=1
\]
and by examining the formula
\[
  \dim\tilde{\mathcal{F}}_{\underline{\dd}}^{\nil}-(\tilde{\mathcal{F}}_{\underline{\dd}}^{\nil}\times_{E_{\dd}}\tilde{\mathcal{F}}_{\underline{\dd}}^{\nil})_z=(g-1)\sum_{\substack{1\leq t<p\leq r\\1\leq q\leq s\leq r}}z^{pq}z^{ts}
\]
we see that this happens if and only if there exists $1\leq i\leq r-1$ such that
\[
 z^{pq}=\left\{
 \begin{aligned}
  &1\text{ if $p=i$, $q=i+1$}\\
  &1 \text{ if $p=i+1$, $q=i$}\\
  &\dd_p\text{ if $p=q$}\\
  &0\text{ else}
 \end{aligned}
\right.
 \]
(In particular, this imposes $\dd_i=\dd_{i+1}=1$). If now $x\in p((\tilde{\mathcal{F}}_{\underline{\dd}}^{\nil}\times_{E_{\dd}}\tilde{\mathcal{F}}_{\underline{\dd}}^{\nil})_z)$ for such a $z$, there exists two flags $\underline{F}$ and $\underline{F'}$ of type $\underline{\dd}$ and in relative position $z$ such that for any $1\leq j\leq r$, $xF_j\subset F_{j-1}$ and $xF'_j\subset F'_{j-1}$. Using the particular form of $z$ described above, we have $F_{i+2}=F_{i+1}+F'_{i+1}$ and therefore $xF_j\subset F_{j-1}$ for $1\leq j\leq i+1$, $xF_{i+2}\subset F_i$ and $xF_{j}\subset F_{j-1}$ for $i+3\leq j\leq r$. We consider the new flag-type $\underline{\dd}'=(\dd'_1,\hdots,\dd'_{r-1})$ such that $\dd'_j=\dd_j$ if $1\leq j\leq i$, $\dd'_{i+1}=2$, $\dd'_j=\dd_{j+1}$ if $i+2\leq j\leq r-1$. Then, $E_{\underline{\dd}'}^{\nil}\subset E_{\underline{\dd}}^{\nil}$ and $p((\tilde{\mathcal{F}}_{\underline{\dd}}^{\nil}\times_{E_{\dd}}\tilde{\mathcal{F}}_{\underline{\dd}}^{\nil})_z)\subset E_{\underline{\dd}'}^{\nil}$. By Lemma \ref{codimtwo} and Lemma \ref{smallmapcodim}, we are done.
\end{proof}

We let now $Z=\overline{Z'}\cup Z''$.

\begin{cor}\label{coriso}
 The restriction $\pi_{\underline{\dd}}^{\nil}:(\pi_{\underline{\dd}}^{\nil})^{-1}(E_{\underline{\dd}}^{\nil}\setminus Z)\rightarrow E_{\underline{\dd}}^{\nil}\setminus Z$ is an isomorphism and $E_{\underline{\dd}}\setminus Z$ is open is $E_{\underline{\dd}}^{\nil}$.
\end{cor}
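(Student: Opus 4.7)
The plan is to deduce both assertions quickly from Lemma \ref{lemmaunique}, together with the finiteness of the set of discrete flag-types of a given dimension for a one-vertex quiver.

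First I would establish the openness of $E_{\underline{\dd}}^{\nil} \setminus Z$, equivalently that $Z = \overline{Z'} \cup Z''$ is closed in $E_{\underline{\dd}}^{\nil}$. The subset $\overline{Z'}$ is closed by construction. For $Z''$, the crucial observation is that $Q = S_g$ has a single vertex, so a discrete flag-type $\underline{\dd}'$ of dimension $\dd$ is nothing but a composition of the positive integer $\dd$, and there are only finitely many such compositions. Hence $Z'' = \bigcup_{\underline{\dd}'} E_{\underline{\dd}'}^{\nil}$ is a \emph{finite} union of subvarieties, each of which is closed in $E_{\dd}$ as the image of the proper morphism $\pi_{\underline{\dd}'}^{\nil}$. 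Consequently $Z''$ is closed, and so is $Z$.

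Second, I would invoke Lemma \ref{lemmaunique}, which already asserts that
\[
\pi_{\underline{\dd}}^{\nil}: \tilde{\mathcal{F}}_{\underline{\dd}}^{\nil} \setminus (\pi_{\underline{\dd}}^{\nil})^{-1}(\overline{Z'}) \longrightarrow E_{\underline{\dd}}^{\nil} \setminus \overline{Z'}
\]
is an isomorphism (this is the parenthetical remark in the statement of that lemma, which rests on properness combined with the fibers over $E_{\underline{\dd}}^{\nil} \setminus Z'$ being singletons by definition of $Z'$). Since $Z \supset \overline{Z'}$, one has $E_{\underline{\dd}}^{\nil} \setminus Z \subset E_{\underline{\dd}}^{\nil} \setminus \overline{Z'}$, and restricting the above isomorphism to the inverse image of the open subset $E_{\underline{\dd}}^{\nil} \setminus Z$ yields the claimed isomorphism.

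There is no substantial obstacle; the corollary is essentially a repackaging of Lemma \ref{lemmaunique} together with the elementary finiteness remark on discrete compositions of $\dd$.
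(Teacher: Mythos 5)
Your argument is correct and is essentially the paper's own (whose proof is simply that the statement is clear from what precedes): openness holds because $\overline{Z'}$ is closed and $Z''$ is a finite union---there are only finitely many discrete flag-types, i.e.\ compositions of $\dd$, for the one-vertex quiver---of the closed images $E_{\underline{\dd}'}^{\nil}$ of the proper maps $\pi_{\underline{\dd}'}^{\nil}$, while the isomorphism is just the restriction, over the smaller open set $E_{\underline{\dd}}^{\nil}\setminus Z$, of the isomorphism recorded parenthetically in Lemma~\ref{lemmaunique}. One minor caveat: properness plus singleton fibers does not by itself force an isomorphism (the normalization of a cuspidal curve is proper and bijective but not an isomorphism), so that parenthetical also relies on the birationality of $\pi_{\underline{\dd}}^{\nil}$ from Proposition~\ref{smallnessmor}; since you invoke the lemma as stated, this does not affect your proof.
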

\begin{proof}
It is clear form what preceeds.
\end{proof}

\begin{lemma}\label{scfv}
 For any $\underline{\dd}\in\mathscr{C}_{\dd}$, the partial flag variety $\mathcal{F}_{\underline{\dd}}$ is simply-connected, and hence $\tilde{\mathcal{F}}_{\underline{\dd}}^{\nil}$ is also simply-connected. Similarly, $\tilde{\mathcal{F}}_{\underline{\dd}}$ is simply-connected.
\end{lemma}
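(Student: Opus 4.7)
Since $Q=S_g$ has a single vertex, a flag-type $\underline{\dd}=(d_1,\hdots,d_r)$ is just a composition of the integer $\dd$, and $\mathcal{F}_{\underline{\dd}}$ is the classical partial flag variety parametrizing flags $(0=F_0\subset F_1\subset\hdots\subset F_r=\C^{\dd})$ with $\dim F_j/F_{j-1}=d_j$. The plan is to first show $\mathcal{F}_{\underline{\dd}}$ is simply-connected, and then deduce the statements for $\tilde{\mathcal{F}}_{\underline{\dd}}^{\nil}$ and $\tilde{\mathcal{F}}_{\underline{\dd}}$ by observing that they are (Zariski locally trivial) vector bundles over $\mathcal{F}_{\underline{\dd}}$.

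The simple-connectedness of $\mathcal{F}_{\underline{\dd}}=\GL_{\dd}/P_{\underline{\dd}}$ is a classical fact. One clean way to see it is by induction on $r$ using the projection $\mathcal{F}_{\underline{\dd}}\rightarrow \Gr(d_1,\dd)$ forgetting all but the first subspace: it is a Zariski-locally trivial fibration with fiber the partial flag variety of type $(d_2,\hdots,d_r)$ in $\C^{\dd-d_1}$, and the base $\Gr(d_1,\dd)$ is simply-connected (e.g.\ because it admits a cellular decomposition via the Schubert cells, all of which are affine spaces). Applying the long exact sequence of homotopy groups and the induction hypothesis yields $\pi_1(\mathcal{F}_{\underline{\dd}})=1$.

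For the total spaces, recall that $\tilde{\mathcal{F}}_{\underline{\dd}}$ (resp.\ $\tilde{\mathcal{F}}_{\underline{\dd}}^{\nil}$) is defined as the subvariety of $E_{\dd}\times\mathcal{F}_{\underline{\dd}}$ consisting of pairs $(x,\underline{F})$ with $x(F_j)\subset F_j$ (resp.\ $x(F_j)\subset F_{j-1}$) for all $j$. The second projection exhibits both varieties as $G$-equivariant algebraic vector bundles over $\mathcal{F}_{\underline{\dd}}$ (with fibers of the dimensions computed in \eqref{eq1} and \eqref{eq4}); indeed the fiber over a flag $\underline{F}$ is a linear subspace of $E_{\dd}$ cut out by the linear stability conditions, and the bundle is locally trivial over the standard affine charts of $\mathcal{F}_{\underline{\dd}}$. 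Any vector bundle $E\rightarrow B$ is homotopy equivalent to its base, so $\pi_1(\tilde{\mathcal{F}}_{\underline{\dd}})\simeq \pi_1(\tilde{\mathcal{F}}_{\underline{\dd}}^{\nil})\simeq \pi_1(\mathcal{F}_{\underline{\dd}})=1$, which gives the lemma.

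There is no real obstacle here — the lemma is essentially a citation of two standard facts (simple-connectedness of partial flag varieties and homotopy invariance under vector bundles). The only thing to check is that the forgetful map $\tilde{\mathcal{F}}_{\underline{\dd}}^{(\nil)}\rightarrow \mathcal{F}_{\underline{\dd}}$ is genuinely a vector bundle, which is immediate since the stability conditions on $x$ are linear in the fiber and depend algebraically on the moving flag.
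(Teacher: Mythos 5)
Your proposal is correct and follows essentially the same route as the paper: simple-connectedness of the partial flag variety via affine (Schubert/Bruhat) cell decompositions, then transfer to $\tilde{\mathcal{F}}_{\underline{\dd}}^{(\nil)}$ via the projection to $\mathcal{F}_{\underline{\dd}}$, which is a fiber bundle with contractible (vector space) fibers. The only cosmetic difference is that the paper cites the affine cell decomposition of the partial flag variety directly instead of running your induction through the Grassmannian, and notes the bundle structure without spelling out local triviality; both are standard and your version is fine.
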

\begin{proof}
 The partial flag variety $\mathcal{F}_{\dd}$ is a partial flag variety of a reductive algebraic group and hence admits a cell decomposition where the cells are affine spaces. Therefore, it is simply connected. Now, the second projection $\tilde{\mathcal{F}}_{\underline{\dd}}^{\nil}\rightarrow\mathcal{F}_{\underline{\dd}}, (x,\underline{F})\mapsto \underline{F}$ is a fiber bundle. Hence, $\tilde{\mathcal{F}}_{\underline{\dd}}^{\nil}$ is also simply connected.
\end{proof}

\begin{proof}[Proof of Theorem \ref{secondmain}]
 Let $\mathscr{F}$ be an irreducible perverse sheaf on $E_{Q,\dd}$ such that $SS(\mathscr{F})\subset \Lambda_{Q,\dd}^{\nil,1}$. Let $p_{\dd}:E_{\overline{Q},\dd}\rightarrow E_{Q,\dd}$ be the cotangent bundle map. Since $p_{\dd}(SS(\mathscr{F}))=\supp(\mathscr{F})$, by Proposition \ref{truenil}, there exists a flag-type $\underline{\dd}\in\mathscr{C}^{1}_{\dd}$ such that $\supp(\mathscr{F})=E_{\underline{\dd}}^{\nil}$ and moreover, any other irreducible component of $SS(\mathscr{F})$ is of the form $T^*_{E_{Q,\underline{\dd}'}^{\nil}}E_{Q,\dd}$ for some flag-type $\underline{\dd}'\in\mathscr{C}_{\dd}^{\nil}$ such that $E_{Q,\underline{\dd}'}^{\nil}\subset E_{Q,\underline{\dd}}$. Let $Z$ be as before Corollary \ref{coriso}. By Corollary \ref{coriso}, the open subset $E_{\underline{\dd}}^{\nil}\setminus Z$ is smooth (being isomorphic to an open subset of $\tilde{\mathcal{F}}_{\underline{\dd}}^{\nil}$, which is smooth) and the restriction $\mathscr{F}_U$ of $\mathscr{F}$ to $U:=E_{\underline{\dd}}^{\nil}\setminus Z$ verifies $SS(\mathscr{F}_{U})=T^*_{U}U$. By Lemma \ref{sysloczerosection}, $\mathscr{F}_U=\mathscr{L}[s]$ for some local system $\mathscr{L}$ on $U$ and $s=\dim U$. Therefore, $(\pi_{\underline{\dd}}^{\nil})^*\mathscr{L}$ is a local system on $\tilde{\mathcal{F}}_{\underline{\dd}}^{\nil}\setminus(\pi_{\underline{\dd}}^{\nil})^{-1}(Z)$. By Lemmas \ref{codimtwo}, \ref{smallmapcodim} and \ref{lemmaunique}, $(\pi_{\underline{\dd}}^{\nil})^{-1}(Z)$ is of codimension at least two in $\tilde{\mathcal{F}}_{\underline{\dd}}^{\nil}$. Since the latter is smooth, $(\pi_{\underline{\dd}}^{\nil})^*\mathscr{L}$ can be extended to a local system on $\tilde{\mathcal{F}}_{\underline{\dd}}^{\nil}$. Since $\tilde{\mathcal{F}}_{\underline{\dd}}^{\nil}$ is simply-connected (Lemma \ref{scfv}), this extension is the trivial local system. Hence, by Lemma \ref{coriso}, $\mathscr{L}$ is the trivial local system on $U$. Therefore, $\mathscr{F}=\ICC(E_{\underline{\dd}}^{\nil})$. By Proposition \ref{explicitdesc}, $\mathscr{F}$ is indeed in $\mathcal{P}_{\dd}^{\nil,1}$.
\end{proof}

\appendix
\section{Local systems on the complement of a normal crossing divisor}
\begin{lemma}\label{slcomplsncd}
Let $X$ be a smooth irreducible curve and $d\geq 1$ an integer. Let $D\subset X$ be a finite set. Then $U=(X\setminus D)^d$ is the complement of a simple normal crossing divisor in $X^d$. 
\end{lemma}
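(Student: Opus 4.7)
The plan is to exhibit the complement $X^d \setminus U$ explicitly as a union of smooth divisors and then check transversality by choosing product local coordinates. First I would write
\[
 X^d \setminus U = \bigcup_{i=1}^d \bigcup_{a \in D} D_{i,a}, \qquad D_{i,a} = \pi_i^{-1}(a),
\]
where $\pi_i : X^d \to X$ is the $i$-th projection. Each $D_{i,a}$ is isomorphic to $X^{d-1}$ and hence is a smooth irreducible divisor in $X^d$, so the decomposition above is already the decomposition into irreducible components of the reduced complement (and all components are smooth).

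Next I would verify the simple normal crossing condition at an arbitrary point $p = (p_1, \ldots, p_d) \in X^d$. Let $S \subset \{1, \ldots, d\}$ be the set of indices $i$ with $p_i \in D$; then the components of the divisor passing through $p$ are exactly the $D_{i, p_i}$ for $i \in S$ (note that $D_{i,a} \cap D_{i,b} = \emptyset$ for distinct $a, b \in D$, so no two components with the same index $i$ can meet). Since $X$ is a smooth curve, pick a local uniformizer $t_i$ on $X$ at $p_i$ for each $i$. Pulling back along $\pi_i$, the functions $t_1, \ldots, t_d$ form a regular system of parameters of $\mathcal{O}_{X^d, p}$, and the divisor $D_{i, p_i}$ is cut out locally by the equation $t_i = 0$ whenever $i \in S$.

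Thus, near $p$, the total divisor $X^d \setminus U$ is defined by $\prod_{i \in S} t_i = 0$ in the local coordinates $(t_1, \ldots, t_d)$, which is exactly the standard local model of a simple normal crossing divisor. This finishes the proof.

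There is no real obstacle here: the content is entirely the observation that the complement decomposes along distinct factors of the product, so the natural product coordinates simultaneously put every local branch into coordinate form, and the disjointness $D_{i,a} \cap D_{i,b} = \emptyset$ for $a \neq b$ prevents the degenerate situation of two branches with the same local equation.
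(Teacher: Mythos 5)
Your proof is correct and follows essentially the same approach as the paper: choose local uniformizers on $X$ at each coordinate of the point, so that the complement is locally cut out by the product of the coordinates whose entries lie in $D$. Your version merely adds the (harmless, slightly more explicit) global description of the components $D_{i,a}=\pi_i^{-1}(a)$ and their pairwise disjointness for fixed $i$.
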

\begin{proof}
 Let $x=(x_1,\hdots,x_d)\in X^d$. By symmetry of the question, we can assume that there exists $r\geq 1$ such that $x_1,\hdots,x_r\in D$ and $x_{r+1},\hdots,x_d\in X\setminus D$. If $y_i$ ($1\leq i\leq d$) is a local coordinate of $X$ around $x_i$, such that $x$ corresponds to $y_1=\hdots=y_d=0$, a local equation for $X\setminus U$ in a neighbourhood of $x$ is $\prod_{i=1}^ry_i$.
\end{proof}

\begin{lemma}\label{lemext}
 Let $D\subset \PP^1(\C)$ be a finite subset and $d\geq 1$. Let $\mathscr{L}$ be a local system on $U=(\PP^1(\C)\setminus D)^d\setminus \Delta$. Then $\mathscr{L}$ extends to a local system on $Y=\PP^1(\C)^d\setminus \Delta$ if and only if for any $x\in D$, there exists an analytic neighbourhood $V$ of $(x,\hdots,x)\in\PP^1(\C)^d$ such that $\mathscr{L}_{V\cap U}$ extends to $V\cap Y$. 
\end{lemma}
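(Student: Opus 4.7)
The ``only if'' direction is immediate since an extension to all of $Y$ restricts to an extension on any $V\cap Y$. For the converse, my plan is to recognise $Y\setminus U$ as a simple normal crossing divisor in the smooth manifold $Y$, reformulate extendability of $\mathscr{L}$ in terms of the vanishing of its monodromy around each irreducible component, and then extract this vanishing from the local hypothesis at the deep points $(x,\dots,x)$ by a connectedness propagation.

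First, I would set up the geometry. Writing $H_{x,i}=\{p\in\PP^1(\C)^d:p_i=x\}$ for $(x,i)\in D\times\{1,\dots,d\}$, Lemma \ref{slcomplsncd} yields that $\bigcup_{x,i}H_{x,i}$ is a simple normal crossing divisor in $\PP^1(\C)^d$. Taking the trace on the open subset $Y$, one has $Y\setminus U=\bigcup_{x,i}(H_{x,i}\cap Y)$ as an SNC divisor in the smooth manifold $Y$, and each component $H_{x,i}\cap Y$ is irreducible as an open dense subset of the irreducible $H_{x,i}\simeq\PP^1(\C)^{d-1}$.

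Second, I would invoke the standard criterion: a local system on the complement of an SNC divisor in a smooth analytic variety extends across it if and only if its monodromy around each irreducible component is trivial, this monodromy being a well-defined conjugacy class as soon as the open stratum of the component (the locus not lying on any other component) is connected. For $H_{x,i}\cap Y$, the open stratum is naturally identified, by forgetting the $i$-th coordinate, with the configuration space of $d-1$ pairwise distinct points in $\PP^1(\C)\setminus D$, which is connected.

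Finally, since $(x,\dots,x)$ lies on every $H_{x,i}$ simultaneously, a single neighbourhood $V$ of this point controls all $d$ monodromies attached to the vertex $x\in D$. After shrinking $V$ so that it avoids the divisors $H_{x',i}$ for $x'\ne x$, I can find points $p\in V$ with $p_i=x$ and the remaining $p_j$ pairwise distinct and outside $D$; such a $p$ lies in the open stratum of $H_{x,i}\cap Y$. Extendability of $\mathscr{L}|_{V\cap U}$ to $V\cap Y$ forces the local monodromy of $\mathscr{L}$ around $H_{x,i}$ at $p$ to be trivial, and the connectedness from the previous step propagates this triviality to the whole component. Varying $i\in\{1,\dots,d\}$ and $x\in D$ kills every monodromy, yielding the desired global extension. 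The main obstacle is packaging the SNC extension criterion together with the connectedness of open strata; once these two ingredients are in place the hypothesis supplies exactly what is needed, and the choice of the deep diagonal point $(x,\dots,x)$ --- lying on all $d$ relevant components at once --- is what makes the argument so efficient.
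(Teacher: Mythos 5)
Your argument is correct and follows essentially the same route as the paper: both reduce to the simple normal crossing structure of $Y\setminus U$ (via Lemma \ref{slcomplsncd}) and observe that each branch meets a neighbourhood of some deep diagonal point $(x,\hdots,x)$, so the local extension hypothesis kills the monodromy around every branch and the extension criterion applies. Your write-up merely makes explicit the identification of the branches $H_{x,i}\cap Y$, the connectedness of their open strata, and the monodromy-propagation step that the paper leaves implicit.
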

\begin{proof}
 The direct implication is trivial. For the other implication, for any $x\in D$, let $V_x$ a neighbourhood of $(x,\hdots,x)\in\PP^1(\C)^d$ such that $\mathscr{L}_{V_x\cap U}$ extends to $V_x\cap Y$. Then any branch of the simple normal crossing divisor $Y\setminus U$ of $Y$ intersects at least one of the $V_x$. Therefore, the monodromies around all branches of $Y\setminus U$ are trivial and $\mathscr{L}$ extends to $Y$.
\end{proof}
We recall that $D(0,1)\subset \C$ denotes the open unit disk and $D^*(0,1)$ the punctured unit disk. Let $\pi : D^*(0,1)^d\setminus\Delta\rightarrow S^dD^*(0,1)\setminus\Delta$. It is a $\mathfrak{S}_d$-covering.
\begin{lemma}\label{lemextd}
 Let $\mathscr{L}$ be a local system on $S^dD^*(0,1)\setminus\Delta$. Then $\mathscr{L}$ can be extended to $S^dD(0,1)\setminus\Delta$ if and only if $\pi^*\mathscr{L}$ can be extended to $D(0,1)^d\setminus\Delta$.
\end{lemma}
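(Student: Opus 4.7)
The strategy is to exhibit an unramified $\mathfrak{S}_d$-covering $\tilde{\pi}:D(0,1)^d\setminus\Delta\to S^d D(0,1)\setminus\Delta$ extending $\pi$, and to reduce the statement to Galois descent of local systems along $\tilde{\pi}$. Indeed, the symmetric group acts freely on $D(0,1)^d\setminus\Delta$ (this is precisely the complement of the big diagonal), so $\tilde{\pi}$ is a finite \'etale $\mathfrak{S}_d$-covering restricting to $\pi$ over $S^d D^*(0,1)\setminus\Delta$. Recall that local systems on $S^dD(0,1)\setminus\Delta$ correspond to $\mathfrak{S}_d$-equivariant local systems on $D(0,1)^d\setminus\Delta$ via $\tilde{\pi}^*$.

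For the direct implication, if $\tilde{\mathscr{L}}$ is a local system on $S^dD(0,1)\setminus\Delta$ restricting to $\mathscr{L}$ on $S^d D^*(0,1)\setminus\Delta$, then $\tilde{\pi}^*\tilde{\mathscr{L}}$ is a local system on $D(0,1)^d\setminus\Delta$ whose restriction to $D^*(0,1)^d\setminus\Delta$ equals $\pi^*\mathscr{L}$ (since $\tilde{\pi}$ restricts to $\pi$), so $\pi^*\mathscr{L}$ extends.

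For the reverse implication, assume $\pi^*\mathscr{L}$ admits an extension $\mathscr{M}$ to $D(0,1)^d\setminus\Delta$. The first key step is uniqueness of such an extension: both spaces are connected complex manifolds, the inclusion $D^*(0,1)^d\setminus\Delta\hookrightarrow D(0,1)^d\setminus\Delta$ is surjective on fundamental groups (loops around the newly added divisors $\{z_i=0\}\setminus\Delta$ become null-homotopic in the larger space), hence any local system on $D(0,1)^d\setminus\Delta$ is determined by its restriction to the dense open $D^*(0,1)^d\setminus\Delta$. The second key step is to upgrade $\mathscr{M}$ to a $\mathfrak{S}_d$-equivariant local system. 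For each $\sigma\in\mathfrak{S}_d$, the local system $\sigma^*\mathscr{M}$ also restricts to $\sigma^*\pi^*\mathscr{L}=\pi^*\mathscr{L}$ on $D^*(0,1)^d\setminus\Delta$, and so by uniqueness it is canonically isomorphic to $\mathscr{M}$; the cocycle condition for these isomorphisms holds on the dense open (where it is given by the tautological equivariant structure of $\pi^*\mathscr{L}$), hence holds everywhere by uniqueness applied to $\mathrm{End}$-local systems. Galois descent along $\tilde{\pi}$ then produces a local system $\tilde{\mathscr{L}}$ on $S^dD(0,1)\setminus\Delta$ with $\tilde{\pi}^*\tilde{\mathscr{L}}\simeq\mathscr{M}$ as equivariant local systems; restricting to $S^dD^*(0,1)\setminus\Delta$ and using faithfully flat descent along $\pi$ gives $\tilde{\mathscr{L}}|_{S^dD^*(0,1)\setminus\Delta}\simeq\mathscr{L}$, as required.

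The only mildly subtle point is the uniqueness of the extension, which rests on the surjectivity on $\pi_1$; this is a standard fact, and everything else is formal Galois descent, so I expect no serious obstacle.
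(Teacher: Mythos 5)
Your proof is correct, but it follows a genuinely different route from the paper's. The paper works downstairs: writing $\tilde{\pi}:D(0,1)^d\setminus\Delta\rightarrow S^dD(0,1)\setminus\Delta$ for the $\mathfrak{S}_d$-covering extending $\pi$, it pushes the given extension $\mathscr{M}$ of $\pi^*\mathscr{L}$ forward along $\tilde{\pi}$; this is again a local system, its restriction to $S^dD^*(0,1)\setminus\Delta$ is $\pi_*\pi^*\mathscr{L}$, of which $\mathscr{L}$ is a direct summand (coefficients are $\C$, so the adjunction unit splits), and the corresponding direct summand of $\tilde{\pi}_*\mathscr{M}$ is the desired extension of $\mathscr{L}$. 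You instead work upstairs: you use surjectivity of $\pi_1$ under the open inclusion (hence full faithfulness of restriction of local systems) to get uniqueness of the extension, upgrade $\mathscr{M}$ to an $\mathfrak{S}_d$-equivariant local system by extending the tautological equivariant structure of $\pi^*\mathscr{L}$, and then descend along the free quotient. Note that both arguments ultimately lean on the same ``removing a complex-codimension-one analytic subset is $\pi_1$-surjective'' fact: you use it for uniqueness and the cocycle condition, while the paper needs it (implicitly) to know that the summand $\mathscr{L}$ of the restriction is the restriction of a summand defined on all of $S^dD(0,1)\setminus\Delta$. What each buys: the paper's argument is shorter and checks no equivariance, at the small cost of invoking the splitting of $\pi_*\pi^*$ (harmless over $\C$); your descent argument is longer but more canonical, produces an extension $\tilde{\mathscr{L}}$ with $\tilde{\pi}^*\tilde{\mathscr{L}}$ literally the chosen $\mathscr{M}$, and would go through for coefficient rings in which $d!$ need not be invertible. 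One small slip in your write-up: the surjectivity of $\pi_1(D^*(0,1)^d\setminus\Delta)\rightarrow\pi_1(D(0,1)^d\setminus\Delta)$ holds because loops and homotopies in the larger space can be perturbed off the codimension-one locus where some coordinate vanishes; the parenthetical you give (loops around the added divisor becoming null-homotopic) describes the kernel rather than proving surjectivity, though the fact you use is correct and standard.
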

\begin{proof}
 The direct implication is obvious. Let $U=S^dD^*(0,1)\setminus\Delta$, $X=S^dD(0,1)\setminus\Delta$, $\tilde{U}=D^*(0,1)^d\setminus\Delta$, $\tilde{X}=D(0,1)^d\setminus\Delta$. Let $\mathscr{L}'$ the extension of $\pi^*\mathscr{L}$ to $\tilde{X}$. Since $\tilde{\pi}:\tilde{X}\rightarrow X$ is a $\mathfrak{S}_d$-covering, $\pi_*\mathscr{L}'$ is a local system on $X$. Moreover, $\pi_*\mathscr{L}'_{U}=\pi_{*}\pi^*\mathscr{L}$ and therefore, $\mathscr{L}$ is a direct summand of $\pi_*\mathscr{L}'_{U}$. The corresponding direct summand of $\pi_*\mathscr{L}'$ gives the extension of $\mathscr{L}$ to $X$.
\end{proof}

 Let $D\subset \PP^1(\C)$ be a nonempty finite subset. Let $\mathscr{L}$ be a local system on $S^d(\PP^1(\C)\setminus D)\setminus\Delta$. We let $U=S^d(\PP^1(\C)\setminus D)\setminus\Delta$, $X=S^d\PP^1(\C)\setminus\Delta$, $\tilde{U}=(\PP^1(\C)\setminus D)^d\setminus \Delta$ and $\tilde{X}=\PP^1(\C)^d\setminus \Delta$. We have the diagram
 \[
  \xymatrix{
  \tilde{U}\ar[d]_{\pi}\ar[r]&\tilde{X}\ar[d]^{\pi'}\\
  U\ar[r]&X
  }
 \]
\begin{lemma}\label{triviallocal}
The local system $\pi^*\mathscr{L}$ is trivial if and only if for any $x\in D$, there exists a neighbourhood $V\subset \PP^1(\C)$ of $x$ such that, with $V_d=S^d(V\setminus \{x\})\setminus \Delta$ and $\tilde{V}_d=(V\setminus \{x\})^d\setminus\Delta$, $\pi_V : \tilde{V}_d\rightarrow V_d$,
\[
 (\pi_V)^*\mathscr{L}_{V_d}
\]
is trivial on $\tilde{V}_d$.
\end{lemma}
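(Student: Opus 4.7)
The forward direction is immediate: any restriction of a trivial local system is trivial. For the converse, set $\tilde{\mathscr{L}} := \pi^*\mathscr{L}$ on $\tilde{U} = (\PP^1(\C) \setminus D)^d \setminus \Delta$. My plan is to extend $\tilde{\mathscr{L}}$ to a local system on the simply connected space $\PP^1(\C)^d$, which then forces triviality and hence proves the converse. For each $x \in D$, choose a disk-shaped neighborhood $W \subset \PP^1(\C)$ satisfying the hypothesis, small enough that $W \cap D = \{x\}$. Then $W^d \cap \tilde{U} = (W \setminus \{x\})^d \setminus \Delta = \tilde{W}_d$, and by hypothesis $\tilde{\mathscr{L}}|_{\tilde{W}_d}$ is trivial, so it extends (as the constant sheaf) to $W^d \setminus \Delta$. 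Applying Lemma \ref{lemext}, $\tilde{\mathscr{L}}$ extends to a local system on $\tilde{X} := \PP^1(\C)^d \setminus \Delta$, still denoted $\tilde{\mathscr{L}}$; by uniqueness of extension across the connected open $W^d \setminus \Delta$, the restriction of this global extension to $W^d \setminus \Delta$ is again trivial.

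The next step will be to extend $\tilde{\mathscr{L}}$ from $\tilde{X}$ across the divisor $\Delta \subset \PP^1(\C)^d$; for this it suffices to show that the monodromy around each irreducible branch $\Delta_{ij} = \{x_i = x_j\}$ is trivial. Fix any $x \in D$ (possible as $D \neq \emptyset$) and the associated $W$, and pick $p \in \Delta_{ij} \cap W^d$ lying in the smooth locus of $\Delta_{ij}$ in $\PP^1(\C)^d$, i.e.\ with $x_i = x_j \in W \setminus \{x\}$ and the remaining coordinates in $W \setminus \{x\}$, pairwise distinct and distinct from $x_i$. A small transverse disk $B \subset W^d$ at $p$ then satisfies $B \setminus \{p\} \subset W^d \setminus \Delta$, where $\tilde{\mathscr{L}}$ is already trivial. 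Hence the monodromy of $\tilde{\mathscr{L}}$ around $\Delta_{ij}$ at $p$ is trivial, and since the smooth locus of $\Delta_{ij}$ in $\PP^1(\C)^d$ is connected (being dense open in the irreducible variety $\Delta_{ij}$), the monodromy around $\Delta_{ij}$ is trivial at every smooth point of $\Delta_{ij}$.

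Consequently, $\tilde{\mathscr{L}}$ extends to a local system on $\PP^1(\C)^d$ minus the singular locus of $\Delta$, a codimension-$\geq 2$ complement in the smooth variety $\PP^1(\C)^d$, and by the standard codimension-$\geq 2$ extension principle for local systems this further extends uniquely to all of $\PP^1(\C)^d$. Since the latter is simply connected (being a finite product of copies of $\PP^1(\C) \simeq S^2$), the extension is trivial, and therefore so is $\pi^*\mathscr{L}$. The main hurdle is the second step: the hypothesis of triviality near the points $(x, \ldots, x)$ for $x \in D$ is needed precisely to place a transverse slice to every branch $\Delta_{ij}$ inside a region $W^d$ where triviality already holds, thereby converting finite local data into triviality of all diagonal monodromies.
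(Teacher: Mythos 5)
Your proof is correct, and it reaches the conclusion by a route that differs from the paper's in its second half. The first step is the same in substance: you verify the hypothesis of Lemma \ref{lemext} for $\pi^*\mathscr{L}$ at each point $(x,\hdots,x)$, $x\in D$, and extend to $\tilde{X}=\PP^1(\C)^d\setminus\Delta$ (the paper instead extends $\mathscr{L}$ itself on the symmetric side, invoking Lemma \ref{lemextd} as well, which you bypass entirely by staying on the products). For the triviality, the paper argues via fundamental groups of configuration-type spaces: the inclusion of an affine chart $\A^d(\C)\setminus\Delta$ is surjective on $\pi_1$, and the inclusion of a polydisk $D(x,r)^d\setminus\Delta$ around a single $x\in D$ is a $\pi_1$-isomorphism, so the triviality hypothesis at that one point propagates to all of $\tilde{X}$. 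You instead cross the diagonal: the hypothesis places, inside $W^d\setminus\Delta$ where the extension is trivial, a meridian of every branch $\Delta_{ij}$, so all diagonal monodromies vanish; you then extend over the smooth part of $\Delta$, across the codimension-two singular locus, and conclude from simple connectedness of $\PP^1(\C)^d$. The paper's version is shorter granted the two $\pi_1$ facts about configuration spaces; yours replaces them by the standard facts on extending local systems across a divisor with trivial local monodromy and across codimension-two sets, and it makes very explicit where the local hypothesis at $D$ is used. One small point to make explicit: your "uniqueness of extension" over $W^d\setminus\Delta$ (and the propagation of the trivial monodromy class along $\Delta_{ij}$) rests on surjectivity of $\pi_1$ for complements of analytic subsets of a connected complex manifold; this is standard, but it is the hinge of that step and deserves a sentence.
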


\begin{proof}
 By Lemma \ref{lemext} and Lemma \ref{lemextd}, $\mathscr{L}$ extends to $S^d\PP^1(\C)\setminus \Delta$. Let $y\in \PP^1(C)\setminus D$. Then, $\PP^1(\C)\setminus\{y\}\simeq \A^1(\C)$. The inclusion $\A^d(\C)\setminus\Delta\subset S^d\PP^1(\C)\setminus \Delta$ is surjective on the fundamental groups, so that it suffices to check that the restriction of $\pi^*\mathscr{L}$ to $\A^d(\C)\setminus\Delta$ is trivial. Now, for $x\in D$, let $D(x,r)\subset V$ be a small neighbourhood of $x$. Then the inclusion $D(x,r)^d\setminus\Delta\subset \A^d(\C)\setminus\Delta$ induces an isomorphism at the level of fundamental groups. Therefore, it suffices to prove that the restriction of $\pi^*\mathscr{L}$ to $D(x,r)^d\setminus \Delta$ is trivial. But this is clearly implied by the hypotheses of the lemma.
\end{proof}

\section{Equivariant perverse sheaves and local systems}
\subsection{Equivariant perverse sheaves}
We will quite often face the situation described in the following lemma.
\begin{lemma}\label{equiperverse}
Let $G$ be a connected algebraic group and $H\subset G$ a normal closed subgroup. Let $X$ be a $G$-variety on which $H$ acts trivially (\emph{i.e.} a $G/H$-variety). Then $\Perv_{G}(X)\simeq \Perv_{G/H}(X)$.
\end{lemma}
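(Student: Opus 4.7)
Plan: I will realize the equivalence through the forgetful functor $\For: \Perv_{G/H}(X) \to \Perv_G(X)$ that pulls back equivariance data along the quotient morphism $q := \pi \times \id_X: G \times X \to G/H \times X$, where $\pi : G \to G/H$ is the projection. Well-definedness of $\For$ is automatic since the $G$-action on $X$ factors through $G/H$.

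For full faithfulness of $\For$, I invoke the standard fact (see Bernstein--Lunts \cite{MR1299527}) that for a connected algebraic group $K$ acting on $X$, the forgetful functor $\Perv_K(X) \to \Perv(X)$ is fully faithful. Since both $G$ and $G/H$ are connected, applying this with $K = G$ and $K = G/H$ produces two fully faithful embeddings into $\Perv(X)$ that are compatible with $\For$, from which the full faithfulness of $\For$ follows.

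For essential surjectivity, let $\mathscr{F} \in \Perv(X)$ carry a $G$-equivariance structure $\alpha: a^*\mathscr{F} \xrightarrow{\sim} p^*\mathscr{F}$ on $G \times X$ satisfying the cocycle condition on $G \times G \times X$. Because $H$ acts trivially on $X$, both the action and projection maps factor through $q$: writing $\bar a, \bar p: G/H \times X \to X$ for the induced maps, we have $a = \bar a \circ q$ and $p = \bar p \circ q$, so $\alpha$ is an isomorphism between pullbacks along the principal $H$-bundle $q$. Descent along $q$ then implies that $\alpha$ descends to the required isomorphism $\bar\alpha: \bar a^*\mathscr{F} \to \bar p^*\mathscr{F}$ precisely when $\alpha$ is equivariant for the $H$-action on $G \times X$ by translation on the first factor. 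The cocycle condition applied to pairs $(h, g) \in H \times G$, combined with the identities $a \circ L_h = a$ and $p \circ L_h = p$ (which follow from triviality of the $H$-action on $X$ and normality of $H$), reduces this $H$-equivariance to the vanishing of the algebraic group homomorphism $\rho: H \to \Aut(\mathscr{F})$ obtained by restricting $\alpha$ to $H \times X$.

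The main technical obstacle is verifying the triviality of $\rho$. When $H$ is connected, this is immediate from the uniqueness of equivariance structures for the trivial action of the connected group $H$ on $X$: both $\rho$ and the identity furnish such structures, and full faithfulness of $\Perv_H(X) \to \Perv(X)$ forces them to agree. The general case is reduced, by semisimplicity of $\Perv_G(X)$, to the case where $\mathscr{F}$ is simple, so that $\End(\mathscr{F}) = \C$ and $\rho$ is a character $H \to \C^*$. The cocycle condition applied to triples $(g, h, g^{-1})$ shows that $\rho$ is invariant under the conjugation action of $G$ on the normal subgroup $H$; together with algebraicity of $\rho$ and connectedness of $G$, this forces $\rho = 1$.
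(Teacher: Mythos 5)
Your reduction of essential surjectivity of the inflation functor to the vanishing of the homomorphism $\rho:H\rightarrow\Aut(\mathscr{F})$ is indeed the crux, and your treatment of the case where $H$ is connected is correct (this is also the only case the paper actually uses: in the body of the text the lemma is applied to the diagonal $\C^*\subset G_{\dd}$ acting trivially on $E_{\dd}$). But the paragraph handling disconnected $H$ contains two genuine flaws. First, $\Perv_G(X)$ is not semisimple in general (there are nontrivial extensions between equivariant perverse sheaves), so you cannot reduce to simple $\mathscr{F}$ by invoking semisimplicity. Second, and more seriously, the final deduction is a non sequitur: $\rho$ is locally constant, hence factors through the finite group $\pi_0(H)$, and a conjugation-invariant character of $\pi_0(H)$ has no reason to be trivial — conjugation by the connected group $G$ acts trivially on $\pi_0(H)$ in any case, and ``algebraicity'' imposes no condition on a character of a finite group. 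No argument can close this gap, because the statement itself fails for disconnected $H$: take $G=\C^*$, $H=\mu_2$, $X=\C^*$ with the action $z\cdot x=z^2x$, so that $H$ acts trivially. Then $X\simeq G/\mu_2$ as a $G$-variety, so $\Perv_G(X)\simeq\Rep(\mu_2)$ has two simple objects, the shifted constant sheaf and the shifted rank-one local system $L$ with monodromy $-1$ (one checks $a^*L\simeq p^*L$, and $L$ carries a genuine equivariant structure coming from the sign character of the stabilizer $\mu_2$), whereas $G/H\simeq\C^*$ acts simply transitively on $X$, so $\Perv_{G/H}(X)$ has the shifted constant sheaf as its only simple object. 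For $\mathscr{F}=L[1]$ your $\rho$ is exactly the sign character of $\mu_2$.

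For comparison, the paper's proof is different in mechanics — it identifies both $\Perv_G(X)$ and $\Perv_{G/H}(X)$ with the full subcategory of $\Perv(X)$ of objects $\mathscr{F}$ with $a^*\mathscr{F}\simeq p^*\mathscr{F}$ (resp. $a'^*\mathscr{F}\simeq p'^*\mathscr{F}$), using smoothness of $\pi\times\id_X$ — but it carries the same implicit restriction: the asserted ``if and only if'' requires the fibers of $G\times X\rightarrow G/H\times X$, i.e.\ $H$, to be connected, and the example above gives $p^*\mathscr{F}\simeq a^*\mathscr{F}$ without $p'^*\mathscr{F}\simeq a'^*\mathscr{F}$. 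So your argument is on the same footing as the paper's precisely when $H$ is connected; you should either add that hypothesis (harmless for the applications in this paper) or delete the paragraph treating disconnected $H$, rather than present it as proved.
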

\begin{proof}
 By hypothesis, $G$ and $G/H$ are connected. We have the action map
 \[
  a:G\times X\rightarrow X
 \]
and its factorization
\[
 a':G/H\times X\rightarrow X
\]
together with the projections
\[
 p:G\times X\rightarrow X
\]
and
\[
 p':G/H\times X\rightarrow X.
\]
Let $\pi : G\rightarrow G/H$ be the projection. Then $a=a'\circ (\pi\times \id_X)$ and $p=p'\circ(\pi\times \id_X)$.
The forgetful functor $\Perv_G(X)\rightarrow \Perv(X)$ identifies $\Perv_G(X)$ with the full subcategory of $\Perv(X)$ of perverse sheaves $\mathscr{F}$ such that $p^*\mathscr{F}$ and $a^*\mathscr{F}$ are isomorphic. Similarly, $\Perv_{G/H}(X)$ is identified with the subcategory of $\Perv(X)$ of perverse sheaves $\mathscr{F}$ such that $p'^*\mathscr{F}$ and $a'^*\mathscr{F}$ are isomorphic. Since $\pi\times\id_X$ is smooth, given $\mathscr{F}\in \Perv(X)$, $p'^*\mathscr{F}$ and $a'^*\mathscr{F}$ are isomorphic if and only if $(\pi\times\id_X)^*p'^*\mathscr{F}=p^*\mathscr{F}$ and $(\pi\times\id_X)^*a'^*\mathscr{F}=a^*\mathscr{F}$ are isomorphic. Hence, $\Perv_{G}(X)$ and $\Perv_{G/H}(X)$ are both identified with the same full subcategory of $\Perv(X)$ and hence are equivalent.
\end{proof}

Let $H\subset G$ be a closed subgroup of an algebraic group $G$ and $X$ a $H$-variety. Let
\[
 \begin{matrix}
  i&:&X&\rightarrow &X\times^HG\\
  &&x&\mapsto&(x,e).
 \end{matrix}
\]
The following lemma is used many times in this paper without mention.
\begin{lemma}[Induction equivalence, {\cite{MR1299527}}]
 The functor
 \[
  i^*[\dim H-\dim G]:D^b_G(X\times^HG)\rightarrow D^b_H(X)
 \]
is a perverse equivalence of categories.
\end{lemma}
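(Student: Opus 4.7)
The plan is to factor $i$ through the diagram
\[
X \times^H G \xleftarrow{p} X \times G \xrightarrow{\pr_1} X,
\]
where $p$ is the quotient and $\pr_1$ the first projection, exploiting the commuting $G \times H$-action on $X \times G$ given by $(g_0, h) \cdot (x, g) = (h x, g_0 g h^{-1})$. Both actions on $X \times G$ are free, so $p$ is a $G$-equivariant $H$-torsor and $\pr_1$ is an $H$-equivariant $G$-torsor.

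First I would invoke standard Bernstein--Lunts descent for equivariant torsors: pullback along a smooth morphism of relative dimension $d$, shifted by $[d]$, is $t$-exact for the perverse $t$-structure, and when the morphism is a torsor this shifted pullback is an equivalence of the corresponding equivariant derived categories. This yields two perverse equivalences
\[
p^*[\dim H]\colon D^b_G(X \times^H G) \xrightarrow{\sim} D^b_{G \times H}(X \times G), \qquad \pr_1^*[\dim G]\colon D^b_H(X) \xrightarrow{\sim} D^b_{G \times H}(X \times G),
\]
and composing the first with a quasi-inverse of the second gives a perverse equivalence $D^b_G(X \times^H G) \simeq D^b_H(X)$.

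To identify this composite with $i^*[\dim H - \dim G]$, I would use the section $s \colon X \to X \times G$, $s(x) = (x, e)$, which satisfies $i = p \circ s$ and $\pr_1 \circ s = \id_X$. Given a $G$-equivariant perverse sheaf $\mathscr{F}$ on $X \times^H G$, write $p^*\mathscr{F}[\dim H] = \pr_1^*\mathscr{G}[\dim G]$ for the unique $H$-equivariant perverse $\mathscr{G}$ on $X$; applying $s^*$ and using $\pr_1 \circ s = \id_X$ yields $i^*\mathscr{F}[\dim H] = \mathscr{G}[\dim G]$, whence $\mathscr{G} = i^*\mathscr{F}[\dim H - \dim G]$. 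The only nontrivial input is the torsor descent equivalence itself, which is a standard fact in the Bernstein--Lunts formalism; the identification of the composite equivalence with $i^*[\dim H - \dim G]$ via the section $s$ is then purely formal.
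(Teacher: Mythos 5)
Your proof is correct: the paper itself gives no argument for this lemma, citing \cite{MR1299527}, and your two-torsor descent through $X\times G$ (quotient equivalences $p^*[\dim H]$ and $\pr_1^*[\dim G]$ for the commuting free $H$- and $G$-actions, then identification of the composite with $i^*[\dim H-\dim G]$ via the section $s(x)=(x,e)$) is exactly the standard Bernstein--Lunts induction-equivalence argument that the cited reference supplies. So you have, in effect, reproduced the intended proof rather than found a different one.
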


\subsection{Local systems on the regular semisimple locus of a reductive Lie algebra}
We let $G=\GL_d$, $\mathfrak{g}=\mathfrak{gl}_d$, $T\subset G$ is the maximal torus of diagonal matrices and $\mathfrak{t}$ its Lie algebra. We let $W=\mathfrak{S}_d$ be the Weyl group. We let $\mathfrak{g}^{rss}\subset \mathfrak{g}$ be the open subvariety of regular semisimple elements, $f':\tilde{\mathfrak{g}}^{rss}\rightarrow\mathfrak{g}^{rss}$ the $W$-covering induced by the Grothendieck-Springer simultaneous resolution, and $g$ the semisimplification map. The map $g'$ lifts $g$ in the sense that the following square is cartesian.
 \[
  \xymatrix{
  \tilde{\mathfrak{g}}^{rss}\ar[r]^{f'}\ar[d]_{g'}&\mathfrak{g}^{rss}\ar[d]^g\\
  \mathfrak{t}^{rss}\ar[r]^{f}&\mathfrak{t}^{rss}/W
  }.
 \]
\begin{lemma}\label{locsysg}
 A $G$-equivariant local system $\mathscr{L}$ on $\mathfrak{g}^{rss}$ is the pull-back of a local system $\mathscr{L}'$ on $\mathfrak{t}^{rss}/W$. The local system $\mathscr{L}'$ is unique up to isomorphism. In other words, $g$ is an equivariant $\pi_1$-equivalence. Moreover, $(f')^*\mathscr{L}$ is trivial if and only if $f^*\mathscr{L}'$ is trivial.
\end{lemma}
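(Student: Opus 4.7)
The plan is to exploit the Cartesian square to reduce the question to the simpler fibration $g'$. Concretely, I would first verify that $\tilde{\mathfrak{g}}^{rss}$ is isomorphic to $\mathfrak{t}^{rss}\times G/T$ via the map $(t,hT)\mapsto (\textup{Ad}(h)(t),hB_0h^{-1})$, where $B_0$ is the standard Borel containing $T$, and that under this isomorphism $g'$ becomes the first projection. The space $G/T$ is connected and simply connected (for instance because the natural map $G/T\to G/B_0$ onto the full flag variety is an affine bundle, and $G/B_0$ is simply connected by its cell decomposition). Hence $\pi_1(\tilde{\mathfrak{g}}^{rss})\simeq\pi_1(\mathfrak{t}^{rss})$, and $(g')^*$ is an equivalence of categories between local systems on $\mathfrak{t}^{rss}$ and local systems on $\tilde{\mathfrak{g}}^{rss}$.

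By the Cartesian square, $g$ becomes a trivial $G/T$-bundle after pull-back along the finite étale $W$-cover $f$, and is therefore étale-locally a trivial $G/T$-bundle with simply connected fibers. Once again the long exact sequence of homotopy groups gives an isomorphism $g_*\colon \pi_1(\mathfrak{g}^{rss})\xrightarrow{\sim}\pi_1(\mathfrak{t}^{rss}/W)$. Consequently, $g^*$ is an equivalence of categories between local systems on $\mathfrak{t}^{rss}/W$ and local systems on $\mathfrak{g}^{rss}$, which simultaneously supplies the existence of $\mathscr{L}'$ and its uniqueness up to isomorphism. Since $G$ acts trivially on $\mathfrak{t}^{rss}/W$, every local system there is automatically $G$-equivariant, so in the terminology of the paper $g$ is an equivariant $\pi_1$-equivalence.

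For the last statement, the Cartesian square identifies $(f')^*\mathscr{L}\simeq (f')^*g^*\mathscr{L}'\simeq (g')^*f^*\mathscr{L}'$. Because $(g')^*$ is an equivalence on local systems, it both preserves and reflects triviality, so $(f')^*\mathscr{L}$ is trivial if and only if $f^*\mathscr{L}'$ is. The main obstacle is establishing the explicit trivialization $\tilde{\mathfrak{g}}^{rss}\simeq\mathfrak{t}^{rss}\times G/T$ that identifies $g'$ with the first projection; this is a mild unpacking of the Grothendieck–Springer resolution over the regular semisimple locus, after which the remainder of the argument is routine descent theory.
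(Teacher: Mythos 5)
Your proof is correct, but it follows a genuinely different route from the paper's. The paper never computes any fundamental group: it uses the $G$-equivariance of $\mathscr{L}$ in an essential way, first identifying $G$-equivariant local systems on $\tilde{\mathfrak{g}}^{rss}\simeq \mathfrak{t}^{rss}\times^{T}G$ with local systems on $\mathfrak{t}^{rss}$ (induction equivalence, $T$ connected) to write $(f')^*\mathscr{L}=(g')^*\mathscr{L}''$, and then descends along $g$ by a Krull--Schmidt argument: $\mathscr{L}$ is a direct summand of $f'_*(f')^*\mathscr{L}\simeq f'_*(g')^*\mathscr{L}''\simeq g^*f_*\mathscr{L}''$ (smooth base change), so by indecomposability $\mathscr{L}\simeq g^*\mathscr{L}'$ for a summand $\mathscr{L}'$ of $f_*\mathscr{L}''$. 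You instead exploit the simple connectivity of $G/T$: the trivialization $\tilde{\mathfrak{g}}^{rss}\simeq\mathfrak{t}^{rss}\times G/T$ plus the fact that $g$ becomes this trivial bundle after the étale $W$-cover $f$ gives $\pi_1$-isomorphisms for both $g'$ and $g$, hence the stronger conclusion that \emph{every} local system on $\mathfrak{g}^{rss}$ (equivariant or not) is pulled back from $\mathfrak{t}^{rss}/W$, with equivariance then automatic; this is essentially a homotopy-theoretic incarnation of the paper's remark that $\mathfrak{g}^{rss}/G\simeq\mathfrak{t}^{rss}/W$ as stacks. What each buys: your argument is more transparent and yields a stronger statement, at the price of invoking the homotopy long exact sequence, which requires knowing that $g$ is locally trivial in the analytic topology -- you should say explicitly that this follows because the covering $f$ admits local analytic sections, so the Cartesian square trivializes $g$ over small opens of $\mathfrak{t}^{rss}/W$; the paper's argument is purely sheaf-theoretic (base change plus indecomposability), avoids any topology of $G/T$ beyond connectedness of $T$, but genuinely needs the equivariance hypothesis and does not show that non-equivariant local systems descend. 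Your treatment of the final equivalence ($(f')^*\mathscr{L}$ trivial iff $f^*\mathscr{L}'$ trivial) via $(f')^*g^*\simeq(g')^*f^*$ and the fact that the equivalence $(g')^*$ preserves and reflects triviality is fine and is also how one completes the paper's argument.
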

\begin{proof}
 Let $\mathscr{L}$ be a $G$-equivariant local system on $\mathfrak{g}^{rss}$. We can assume that $\mathscr{L}$ is indecomposable. Since $f'$ is a $W$ covering, $\mathscr{L}$ is a direct summand of $f'_*(f')^*\mathscr{L}$. Assume that $(f')^{*}\mathscr{L}=(g')^*\mathscr{L}''$ for some local system $\mathscr{L}''$ on $\mathfrak{t}^{rss}$. Then by smooth base-change, $f'_*(g')^*\mathscr{L}''\simeq g^*f_*\mathscr{L}''$. Therefore, $\mathscr{L}$ is a direct summand of $g^*f_*\mathscr{L}''$. Since $\mathscr{L}$ is indecomposable, there exists an indecomposable summand $\mathscr{L}'$ of $f_*\mathscr{L}''$ such that $\mathscr{L}$ is a direct summand of $g^*\mathscr{L}'$. Since $g$ is smooth, $g^*\mathscr{L}'$ is indecomposable. Therefore, $\mathscr{L}=g^*\mathscr{L}'$. Therefore, it suffices to prove the existence of $\mathscr{L}''$.

But $\tilde{\mathfrak{g}}^{rss}\simeq \mathfrak{t}^{rss}\times^{T}G$, the action of $T$ on $\mathfrak{t}^{rss}$ being trivial. Therefore, a $G$-invariant local system on $\tilde{\mathfrak{g}}^{rss}$ is the same thing as a $T$-equivariant local system on $\mathfrak{t}^{rss}$, that is (since $T$ is connected) a local system on $\mathfrak{t}^{rss}$. Let
\[
 \mathfrak{t}^{rss}\xrightarrow{i} \tilde{\mathfrak{g}}^{rss}\xrightarrow{g'} \mathfrak{t}^{rss}
\]
where the first row is the closed immersion and the second is $g'$. Let $\mathscr{L}$ be a local system on $\tilde{\mathfrak{g}}^{rss}$. Let $\mathscr{L}''=i^*\mathscr{L}$. Then, $\mathscr{L}=(g')^*\mathscr{L}''$ since $i^*\mathscr{L}=i^*(g')^*\mathscr{L}''$, because $g'\circ i=\id$. This ends the proof.
\end{proof}
\begin{remark}
 This lemma can also be deduced from the fact that the algebraic stacks $\mathfrak{g}^{rss}/G$ and $\mathfrak{t}^{rss}/W$ are isomorphic for any reductive group $G$.
\end{remark}

\subsection{Equivariant local systems on the regular semisimple strata of affine quivers}\label{locsysrssaff}
Let $Q$ be an affine acyclic quiver (resp. a cyclic quiver). Let $\Xi(N,\mu)\subset E_{\dd}$ be a regular semisimple stratum, that is $N$ is regular non-homogeneous (resp. nilpotent), $\mu$ is regular semisimple and $\dd=\dim N+\delta\dim\mu$. Let $d=\dim\mu$. Recall the map $\chi_{N,\mu}:\Xi(N,\mu)\rightarrow S^d\PP_1^{\hom}\setminus\Delta$ from Section \ref{quotientreg} (resp. $\chi_{N,\mu}:\Xi(N,\mu)\rightarrow S^d(\C^*)\setminus\Delta$ from Section \ref{opensubsets}). Recall also the $\mathfrak{S}_d$-covering $\pi_d:(\PP_1^{\hom})^d\setminus\Delta\rightarrow S^d\PP_1^{\hom}\setminus\Delta$ (resp. $\pi_d:(\C^*)^d\setminus\Delta\rightarrow S^d(\C^*)\setminus\Delta$). We have the following analog of Lemma \ref{locsysg}. 

\begin{lemma}\label{locsysreglocus}
 Any $G_{\dd}$-equivariant local system $\mathscr{L}$ on $\Xi(N,\mu)$ is the pull-back by $\chi_{N,\mu}$ of a local system $\mathscr{L}'$ on $S^d(\PP_1^{\hom})\setminus\Delta$ (resp. on $S^d(\C^*)\setminus\Delta$). Moreover, $\mathscr{L}'$ is determined up to isomorphism by $\mathscr{L}$ and $\mathscr{L}$ is a Lusztig local system (resp. an extended Lusztig local system) if and only of $\pi_d^*\mathscr{L}'$ is the trivial local system.
\end{lemma}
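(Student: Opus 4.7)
The plan is to mirror the argument of Lemma \ref{locsysg}: reduce, via Lemma \ref{isoreg} (resp.\ Lemma \ref{isocyclic}), to a descent problem along $\chi_\mu$ on the regular-semisimple piece $\Xi(\mu)$, and then compare via base change with the $\mathfrak{S}_d$-cover $\pi_d$.

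For the first assertion, Lemma \ref{isoreg} (resp.\ Lemma \ref{isocyclic}) and the induction equivalence identify the category of $G_{\dd}$-equivariant local systems on $\Xi(N,\mu)$ with that of $(G_{\dd_N}\times G_{d\delta})$-equivariant local systems on $\OO_N\times \Xi(\mu)$. Since the stabilizer of any point of $\OO_N$ under $G_{\dd_N}$ is the unit group of $\End(N)$, hence connected, $\OO_N$ is equivariantly simply connected in the sense of Section \ref{notations}; restricting to a fiber $\{x_0\}\times\Xi(\mu)$ and applying Lemma \ref{equiperverse} to absorb the (connected, trivially acting) residual stabilizer yields the category of $G_{d\delta}$-equivariant local systems on $\Xi(\mu)$. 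By Section \ref{quotientreg} (resp.\ Section \ref{opensubsets}), the diagonal $\C^*\subset G_{d\delta}$ acts trivially on $E_{d\delta}$, and $\chi_\mu:\Xi(\mu)\to S^d\PP_1^{\hom}\setminus\Delta$ (resp.\ $S^d(\C^*)\setminus\Delta$) is the geometric quotient for the induced $G_{d\delta}/\C^*$-action, whose stabilizer at a direct sum of $d$ pairwise non-isomorphic simples of dimension $\delta$ is the connected torus $(\C^*)^d/\C^*$. Consequently $\chi_\mu$ is an equivariant $\pi_1$-equivalence, and chaining through the preceding equivalences produces a unique local system $\mathscr{L}'$ with $\mathscr{L}\simeq\chi_{N,\mu}^*\mathscr{L}'$.

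For the second assertion, the remark following Theorem \ref{explicitaffine} characterizes Lusztig local systems on $\Xi(N,\mu)$ as those $\mathscr{L}$ for which $\pi_{N,\mu}^*\mathscr{L}$ is trivial; the same argument, applied to Theorem \ref{explicitcyclic}, characterizes extended Lusztig local systems in the cyclic case in the same way. Setting $\mathscr{L}=\chi_{N,\mu}^*\mathscr{L}'$, base change in the cartesian square
\[
\xymatrix{
\tilde{\Xi}(N,\mu)\ar[r]^{\pi_{N,\mu}}\ar[d]_{\tilde{\chi}_{N,\mu}}&\Xi(N,\mu)\ar[d]^{\chi_{N,\mu}}\\
(\PP_1^{\hom})^d\setminus\Delta\ar[r]^{\pi_d}&S^d\PP_1^{\hom}\setminus\Delta
}
\]
(and its cyclic analogue) gives $\pi_{N,\mu}^*\mathscr{L}\simeq\tilde{\chi}_{N,\mu}^*\pi_d^*\mathscr{L}'$. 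The morphism $\tilde{\chi}_{N,\mu}$ is a surjective smooth map whose fibers are, modulo the $\OO_N$-factor, single $G_{\dd}$-orbits and hence connected; it therefore surjects on fundamental groups and its pull-back detects triviality of local systems. Thus $\pi_{N,\mu}^*\mathscr{L}$ is trivial if and only if $\pi_d^*\mathscr{L}'$ is trivial, which finishes the proof.

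The principal obstacle is the descent step, i.e.\ the verification that $\chi_\mu$ is an equivariant $\pi_1$-equivalence: one must check that the $G_{d\delta}/\C^*$-stabilizers on $\Xi(\mu)$ are connected tori and that $\chi_\mu$ realizes $\Xi(\mu)$ as the honest geometric quotient away from the diagonal, both of which follow from the GIT description of $E_{d\delta}^{\reghom}$ recalled in Sections \ref{quotientreg} and \ref{opensubsets}.
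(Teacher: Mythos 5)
Your proposal is correct, but it takes a somewhat different route from the paper's. The paper proves this lemma by declaring it ``completely similar'' to Lemma \ref{locsysg}: one pulls $\mathscr{L}$ back along the $\mathfrak{S}_d$-covering $\pi_{N,\mu}$, descends $\pi_{N,\mu}^*\mathscr{L}$ along $\tilde{\chi}_{N,\mu}$ using the associated-bundle description $\tilde{\Xi}(\mu)\simeq Z\times^{(G_{\delta})^d}G_{d\delta}$ with $Z\rightarrow(\PP_1^{\hom})^d\setminus\Delta$ a principal $(G_{\delta}/\C^*)^d$-bundle (this is what the smoothness computation in the appendix is for), and then recovers a descent of $\mathscr{L}$ itself via smooth base change in the cartesian square together with the indecomposable-summand trick of Lemma \ref{locsysg}. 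You instead descend directly along $\chi_{N,\mu}$: you use Lemma \ref{isoreg} (resp.\ \ref{isocyclic}), the induction equivalence, equivariant simple-connectedness of $\OO_N$ and Lemma \ref{equiperverse} to reduce to $G_{d\delta}$-equivariant local systems on $\Xi(\mu)$, and then invoke the quotient description of Section \ref{quotientreg} (resp.\ \ref{opensubsets}) to claim $\chi_\mu$ is an equivariant $\pi_1$-equivalence; this avoids the summand/base-change mechanism entirely, and your proof of the triviality criterion (base change in the same cartesian square plus $\pi_1$-surjectivity of the smooth surjection $\tilde{\chi}_{N,\mu}$ with connected, single-orbit fibers) matches the intended use of the diagram. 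The one place where you are thinner than you should be is precisely the step you flag: ``geometric quotient with connected stabilizers'' does not by itself give an equivariant $\pi_1$-equivalence -- you also need $\chi_\mu$ to be a locally trivial fibration with connected (orbit) fibers so that the homotopy exact sequence applies and the equivariant structure kills the fiberwise monodromy. That local triviality is exactly what the principal-bundle fact $E_{\delta}^{\reghom}\rightarrow\PP_1^{\hom}$ (a $G_{\delta}/\C^*$-bundle), and its cyclic analogue, provides, i.e.\ the same geometric input the paper establishes for its own version; once you make that explicit, your argument is complete, and it is arguably more transparent since it never needs the indecomposability argument, while the paper's route is more robust in that it only requires descent on the cover $\tilde{\Xi}(N,\mu)$, where the bundle structure is explicit, rather than on the $\mathfrak{S}_d$-quotient downstairs.
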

\begin{proof}
 The proof is completely similar to that of Lemma \ref{locsysg} using the following facts (stated for acyclic affine quivers; the analogous facts for cyclic quivers are also true).
 
 We have a cartesian diagram
 \[
  \xymatrix{
  \tilde{\Xi}(N,\mu)\ar[r]^{\pi_{N,\mu}}\ar[d]_{\tilde{\chi}_{N,\mu}}&\Xi(N,\mu)\ar[d]^{\chi_{N,\mu}}\\
  (\PP_1^{\hom})^d\setminus\Delta\ar[r]^{\pi_{d}}&S^d\PP_1^{\hom}\setminus\Delta.
  }
 \]
 where $\pi_{N,\mu}$ and $\pi_d$ are $\mathfrak{S}_d$-coverings. Moreover, $\tilde{\chi}_{N,\mu}$ (and thus $\chi_{N,\mu}$) is smooth. This can be proved as follows. By the definition of $\chi_{N,\mu}$ in Section \ref{quotientreg}, it suffices to prove that $\tilde{\chi}_{\mu}:\tilde{\Xi}(\mu)\rightarrow (\PP_1^{\hom})^d\setminus\Delta$ is smooth. In dimension $\delta$, the morphism $E_{\delta}^{\reghom}\rightarrow \PP_1^{\hom}$ is a $G_{\delta}/\C^*$-principal bundle. Therefore it is smooth. Consider the following cartesian diagram (which defines $Z$):
 \[
  \xymatrix{
  Z\ar[r]\ar[d]_p&(E_{\delta}^{\reghom})^d\ar[d]\\
  (\PP_1^{\reghom})^d\setminus\Delta\ar[r]&(\PP_1^{\reghom})^d.
  }
 \]
By base-change, the vertical left-most map $p$ is smooth. Then, $\tilde{\Xi}(\mu)\simeq Z\times^{(G_{\delta})^d}G_{d\delta}$. In the commutative triangle
\[
 \xymatrix{
 Z\times G_{\delta}\ar[r]\ar[rd]_{p\circ \pr_1}&\tilde{\Xi}(\mu)\ar[d]^{\tilde{\chi}_{N,\mu}}\\
 &(\PP_1^{\reghom})^d
 },
\]
the horizontal arrow is a $(G_{\delta})^d$-principal bundle, hence is smooth and $p\circ pr_1$ is also smooth. As a consequence, $\tilde{\chi}_{N,\mu}$ is smooth.

\end{proof}

\section{Singular support of constructible complexes}\label{singularsupports}

Let $f : X\rightarrow Y$ be a morphism between smooth complex algebraic varieties. Then, we have the classical correspondence between cotangent bundles:
\[
 \xymatrix{
 &X\times_YT^*Y\ar[ld]_{pr_2}\ar[rd]^{(df)^*}&\\
 T^*Y&&T^*X
 }
\]
There are properties on the morphism $f$ that we recall here ensuring that the singular support satisfies natural functorialities with respect to these morphisms. We refer to the original references for proofs.

\subsection{Singular support of the pull-back by a smooth morphism}
For a proof of the following proposition, we refer to \cite{MR794557}.
\begin{proposition}[{\cite[Proposition 4.1.2]{MR794557}}]\label{pbsmooth}
 Suppose that $f : X\rightarrow Y$ is smooth. Let $\mathscr{F}$ be a constructible sheaf on $Y$. Then
 \[
  SS(f^*\mathscr{F})=(df)^*(pr_2^{-1}(SS(\mathscr{F}))).
 \]
\end{proposition}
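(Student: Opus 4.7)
The plan is to prove the two inclusions separately, relying on the standard microlocal machinery of Kashiwara--Schapira.

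First, for the inclusion $SS(f^*\mathscr{F})\subset (df)^*(pr_2^{-1}(SS(\mathscr{F})))$, I would invoke the general non-characteristic inverse image estimate: for \emph{any} morphism $g:X\to Y$ of smooth varieties and $\mathscr{F}\in D^b_c(Y)$, one has $SS(g^{-1}\mathscr{F})\subset (dg)^*(pr_2^{-1}(SS(\mathscr{F})))$ provided $g$ is non-characteristic for $SS(\mathscr{F})$, i.e.\ $(dg)^{-1}(T^*_XX)\cap pr_2^{-1}(SS(\mathscr{F}))\subset X\times_Y T^*_YY$. When $f$ is smooth, $df$ is a \emph{closed embedding} of $X\times_Y T^*Y$ into $T^*X$, so $(df)^{-1}(T^*_XX)$ coincides with the image of the zero section $X\times_Y T^*_YY$; thus $f$ is automatically non-characteristic for any closed conic subset of $T^*Y$, and the forward inclusion follows.

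For the reverse inclusion, I would use the local structure of smooth morphisms: étale-locally $f$ factors as the composition of an étale map with a projection $Y\times Z\to Y$. Since the singular support is a local invariant and behaves well under étale pullback (the étale case being essentially trivial because $df$ is then an isomorphism on cotangent fibers), it suffices to verify the statement for a projection $f:Y\times Z\to Y$. In that case $f^*\mathscr{F}\simeq \mathscr{F}\boxtimes \underline{\C}_Z$, and one computes directly
\[
SS(\mathscr{F}\boxtimes\underline{\C}_Z)=SS(\mathscr{F})\times T^*_ZZ,
\]
which is exactly $(df)^*(pr_2^{-1}(SS(\mathscr{F})))$ under the identification $T^*(Y\times Z)=T^*Y\times T^*Z$.

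The main conceptual step, and the place where one must be careful, is justifying that singular support behaves as expected under the étale-local factorization: both sides of the equality are closed conic subsets of $T^*X$, and the equality is tested on stalks of microlocalizations, so étale descent applies. The computational core is the product formula for a projection, which reduces to the elementary fact that $SS(\underline{\C}_Z)$ is the zero section of $T^*Z$. Since both these ingredients are standard (see \cite[\S 5.4]{MR794557}), the result follows; the brief indication in the excerpt simply defers to this reference.
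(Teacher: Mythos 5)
The paper does not actually prove this proposition: it is quoted verbatim from Kashiwara--Schapira and the text simply refers to \cite{MR794557} for the proof. Your sketch reconstructs the standard argument from that reference, and it is essentially sound: the inclusion $SS(f^*\mathscr{F})\subset (df)^*(pr_2^{-1}(SS(\mathscr{F})))$ does follow from the non-characteristic pullback estimate, since for $f$ smooth the map $(df)^*$ is injective on $X\times_Y T^*Y$ (its ``kernel'' is the zero section), so every closed conic subset of $T^*Y$ is automatically non-characteristic; and the reverse inclusion is indeed proved by localizing to reduce to a projection $Y\times Z\to Y$, where $f^*\mathscr{F}\simeq\mathscr{F}\boxtimes\underline{\C}_Z$. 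In the complex-analytic setting in which singular supports are taken here, you can even skip the \'etale factorization: a submersion is locally (in the classical topology) a projection, which makes the reduction immediate.

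One step is asserted too quickly: the equality $SS(\mathscr{F}\boxtimes\underline{\C}_Z)=SS(\mathscr{F})\times T^*_ZZ$ does \emph{not} reduce to the fact that $SS(\underline{\C}_Z)$ is the zero section. That fact, combined with the external-product estimate $SS(\mathscr{F}\boxtimes\mathscr{G})\subset SS(\mathscr{F})\times SS(\mathscr{G})$, gives only the inclusion you already have from the first half of the argument. The content of the reverse inclusion $SS(\mathscr{F})\times T^*_ZZ\subset SS(\mathscr{F}\boxtimes\underline{\C}_Z)$ requires its own (easy) verification, e.g.\ by testing with functions $\psi(y,z)=\varphi(y)$ depending only on the base variable, for which $\bigl(R\Gamma_{\{\psi\geq 0\}}(\mathscr{F}\boxtimes\underline{\C}_Z)\bigr)_{(y_1,z_1)}\simeq\bigl(R\Gamma_{\{\varphi\geq 0\}}\mathscr{F}\bigr)_{y_1}$, or by citing the precise statement in Kashiwara--Schapira (the proof of the smooth pullback formula, or the external-product equality there). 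With that point made explicit (or delegated to the reference, as the paper itself does), your argument is complete.
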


\begin{cor}\label{corpbsmooth}
 Let $f : X\rightarrow Y$ be a smooth morphism and $\mathscr{F}\in D^b(Y)$ a constructible complex. Write
 \[
  SS(\mathscr{F})=\bigcup_{S\in \mathcal{S}}\overline{T^*_SY}
 \]
 for some set $\mathcal{S}$ of locally closed subvariety of $Y$. Then
 \[
  SS(f^*\mathscr{F})=\bigcup_{S\in\mathcal{S}}\overline{T^*_{f^{-1}(S)}X}.
 \]
\end{cor}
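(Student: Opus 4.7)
The plan is to apply Proposition \ref{pbsmooth} directly and then unpack the two operations $pr_2^{-1}$ and $(df)^*$ one at a time, using the hypothesis that $f$ is smooth. Since both $pr_2^{-1}$ and $(df)^*$ distribute over unions, and $SS(\mathscr{F})=\bigcup_{S\in\mathcal{S}}\overline{T^*_SY}$, it suffices to prove the pointwise equality $(df)^*(pr_2^{-1}(\overline{T^*_SY}))=\overline{T^*_{f^{-1}(S)}X}$ for each locally closed smooth $S\subset Y$.

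First I would dispose of the closure bookkeeping. Because $f$ is smooth, so is the base change $pr_1:X\times_YT^*Y\to X$, and $pr_2:X\times_YT^*Y\to T^*Y$ is also smooth (it is the base change of $f$ along $T^*Y\to Y$). A smooth morphism is open, so $pr_2^{-1}$ commutes with taking closures. On the other hand, the conormal sequence for the smooth morphism $f$ gives the short exact sequence $0\to f^*T^*Y\to T^*X\to T^*_{X/Y}\to 0$, which exhibits $(df)^*:X\times_YT^*Y\hookrightarrow T^*X$ as a closed immersion; hence $(df)^*$ also commutes with closures. These two facts reduce the problem to showing $(df)^*(pr_2^{-1}(T^*_SY))=T^*_{f^{-1}(S)}X$ as subsets of $T^*X$, with the closure added at the end.

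For this set equality, I would use that $f$ smooth implies $f^{-1}(S)$ is smooth and that for every $x\in f^{-1}(S)$ the tangent space is $T_xf^{-1}(S)=(df_x)^{-1}(T_{f(x)}S)$, with $df_x$ surjective and so $(df_x)^*$ injective. Unwinding definitions, $pr_2^{-1}(T^*_SY)$ consists of pairs $(x,\eta)$ with $f(x)\in S$ and $\eta\in T^*_{f(x)}Y$ vanishing on $T_{f(x)}S$, and $(df)^*$ sends such a pair to $(x,(df_x)^*\eta)\in T^*X$. A covector $\xi\in T^*_xX$ lies in $T^*_{f^{-1}(S)}X$ iff $x\in f^{-1}(S)$ and $\xi$ vanishes on $(df_x)^{-1}(T_{f(x)}S)$; in particular it vanishes on $\ker(df_x)$, so $\xi=(df_x)^*\eta$ for a unique $\eta\in T^*_{f(x)}Y$, and the vanishing condition on $\xi$ translates precisely into $\eta|_{T_{f(x)}S}=0$. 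This gives the desired equality. Taking closures and unions over $S\in\mathcal{S}$ then yields the statement.

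The only delicate point is the manipulation with closures, but this is handled uniformly by the two observations that $pr_2$ is open (smoothness) and $(df)^*$ is a closed immersion (smoothness of $f$ via the conormal sequence); no substantial obstacle should remain beyond the linear-algebra identification of $(df)^*(pr_2^{-1}T^*_SY)$ with $T^*_{f^{-1}(S)}X$.
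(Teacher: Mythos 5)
Your argument is correct and is essentially the paper's own proof in expanded form: the paper also invokes Proposition \ref{pbsmooth}, observes that $(df)^*$ is a closed immersion, and identifies $pr_2^{-1}(T^*_SY)=X\times_YT^*_SY$ with $T^*_{f^{-1}(S)}X$ via $(df)^*$, which is exactly your fiberwise linear-algebra step; your explicit handling of closures (openness of $pr_2$, closedness of $(df)^*$) just fills in what the paper leaves implicit. The only cosmetic point is that for a non-smooth stratum $S$ one should replace $S$ by a smooth dense open subset (harmless, since $f^{-1}$ of such a subset is dense open in $f^{-1}(S)$ by smoothness of $f$), which matches the paper's convention for $T^*_SY$.
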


\begin{proof}
 In this case, $(df)^*$ is a closed immersion. Moreover, $pr_2^{-1}(T^*_SY)=X\times_YT^*_SY$ is isomorphic to $T^*_{f^{-1}(S)}X$ via $(df)^*$.
\end{proof}

\subsection{Singular support and the pushforward by a proper morphism}
The following proposition is taken from \cite[Proposition 5.4.4]{MR1074006}.
\begin{proposition}\label{pushforward}
 Let $X\rightarrow Y$ be a proper morphism of manifolds, $\mathscr{F}\in D^b(Y)$. Then
 \[
  SS(f_*\mathscr{F})\subset pr_2((df)^{*-1}(SS(\mathscr{F}))) 
 \]
and this inclusion is an equality if $f$ is a closed immersion.
\end{proposition}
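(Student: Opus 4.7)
The plan is to prove both assertions via the standard non-propagation characterization of the microsupport: a point $(y_0,\eta_0)\notin SS(\mathscr{G})$ if and only if for every $C^1$ function $\phi:Y\to \R$ with $\phi(y_0)=0$ and $d\phi(y_0)=\eta_0$, the stalk $(R\Gamma_{\{\phi\geq 0\}}\mathscr{G})_{y_0}$ vanishes (one uses any such $\phi$ since the condition is intrinsic). The whole proof will rest on interchanging this cutoff with the pushforward along $f$ via the proper base change theorem.

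For the inclusion, I would pick $(y_0,\eta_0)\notin pr_2((df)^{*-1}(SS(\mathscr{F})))$; unwinding the fiber product, this means that for every $x\in f^{-1}(y_0)$ (note $\mathscr{F}$ should be read as living on $X$) we have $(x,(df)^*_x\eta_0)\notin SS(\mathscr{F})$. Choose a test function $\phi$ at $y_0$ with $d\phi(y_0)=\eta_0$, and set $\psi=\phi\circ f$, so $d\psi(x)=(df)^*_x\eta_0$ for each $x\in f^{-1}(y_0)$. By the non-propagation characterization at each such $x$, the complex $R\Gamma_{\{\psi\geq 0\}}\mathscr{F}$ has vanishing stalk on an open neighbourhood of $x$ in $X$. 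The compactness of the fiber $f^{-1}(y_0)$, guaranteed by the properness of $f$, lets me pick finitely many of these neighbourhoods and shrink $y_0$'s neighbourhood downstairs so that the whole preimage is covered. Proper base change then identifies $(R\Gamma_{\{\phi\geq 0\}}(f_*\mathscr{F}))_{y_0}\simeq R\Gamma(f^{-1}(y_0),R\Gamma_{\{\psi\geq 0\}}\mathscr{F})=0$, which yields $(y_0,\eta_0)\notin SS(f_*\mathscr{F})$.

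For the equality in the closed immersion case, the reverse inclusion is what needs to be established. Suppose $f:X\hookrightarrow Y$ is a closed immersion, take $(y_0,\eta_0)\in pr_2((df)^{*-1}(SS(\mathscr{F})))$, and let $x_0\in f^{-1}(y_0)$ satisfy $(x_0,(df)^*_{x_0}\eta_0)\in SS(\mathscr{F})$. I would argue by contradiction: assume $(y_0,\eta_0)\notin SS(f_*\mathscr{F})$, so that for some test function $\phi$ on $Y$ with $d\phi(y_0)=\eta_0$ the stalk $(R\Gamma_{\{\phi\geq 0\}}(f_*\mathscr{F}))_{y_0}$ vanishes. Because $f$ is a closed immersion, $f^{-1}(y_0)=\{x_0\}$ and proper base change collapses to the identification $(R\Gamma_{\{\phi\geq 0\}}(f_*\mathscr{F}))_{y_0}\simeq (R\Gamma_{\{\phi\circ f\geq 0\}}\mathscr{F})_{x_0}$. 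Since $\phi\circ f$ is a test function at $x_0$ with differential $(df)^*_{x_0}\eta_0$, its vanishing contradicts $(x_0,(df)^*_{x_0}\eta_0)\in SS(\mathscr{F})$. The surjectivity of $(df)^*_{x_0}:T^*_{y_0}Y\to T^*_{x_0}X$ for a closed immersion ensures that every covector in $SS(\mathscr{F})$ lifts, so this argument captures all of $SS(\mathscr{F})$.

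The main obstacle is the careful combination of local vanishing data over the compact fiber $f^{-1}(y_0)$ with proper base change in the first part; the closed immersion case simplifies because the fibers are singletons, so no gluing is needed and the equivalence becomes essentially tautological once proper base change is applied.
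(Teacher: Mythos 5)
The paper offers no proof of this proposition (it is quoted from Kashiwara--Schapira, Prop.\ 5.4.4), so only your argument is at issue; note also that the statement's $\mathscr{F}\in D^b(Y)$ is a typo for $D^b(X)$, which you correctly read. Your first half follows the standard route and is essentially sound: pull the test function back, use $R\Gamma_{\{\phi\geq 0\}}Rf_*\mathscr{F}\simeq Rf_*R\Gamma_{\{\phi\circ f\geq 0\}}\mathscr{F}$ and proper base change over the compact fibre. Two repairs are needed there. First, the ``iff'' you call the non-propagation characterization is not the definition: $(y_0,\eta_0)\notin SS(\mathscr{G})$ means vanishing of $(R\Gamma_{\{\phi_1\geq 0\}}\mathscr{G})_{y_1}$ for \emph{all} $y_1$ and $\phi_1$ with $(y_1,d\phi_1(y_1))$ in a whole neighbourhood of $(y_0,\eta_0)$. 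So, having shown vanishing for test functions at $(y_0,\eta_0)$ itself, you must still observe that $pr_2\bigl((df)^{*-1}(SS(\mathscr{F}))\bigr)$ is closed (this is where properness of $f$ enters a second time, since $pr_2$ is a base change of $f$) and run the same vanishing argument uniformly at all nearby $(y_1,\eta_1)$; with that, part one is complete.

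The closed-immersion half has a genuine gap: the contradiction is run with the quantifiers reversed. From $(x_0,(df)^*_{x_0}\eta_0)\in SS(\mathscr{F})$ you cannot conclude that the specific function $\phi\circ f$ detects it; membership in the microsupport only guarantees that for every neighbourhood of the covector \emph{some} nearby point and \emph{some} test function have nonvanishing $R\Gamma_{\{\cdot\geq 0\}}$-stalk, not that every test function with the prescribed differential at that exact point does. Concretely, for $\mathscr{F}=\underline{\C}_{Z}$ with $Z=\{(x,y)\in\R^2\mid y\geq x^3\}$ one has $\bigl((0,0);dy\bigr)\in SS(\mathscr{F})$, yet $(R\Gamma_{\{y\geq 0\}}\mathscr{F})_{(0,0)}=0$ (the restriction from $U\cap Z$ to $U\cap Z\cap\{y<0\}$ is an isomorphism), so no contradiction arises from your chosen $\phi$. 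The correct argument goes the other way: assuming $(y_0,\eta_0)\notin SS(f_*\mathscr{F})$, use the definition of $SS(\mathscr{F})$ to produce $x_1$ near $x_0$ and $\psi_1$ with $\psi_1(x_1)=0$, $d\psi_1(x_1)$ close to $(df)^*_{x_0}\eta_0$ and $(R\Gamma_{\{\psi_1\geq 0\}}\mathscr{F})_{x_1}\neq 0$; then, since $X$ is a closed submanifold and $(df)^*$ is surjective, extend $\psi_1$ to $\phi_1$ on $Y$ with $\phi_1\circ f=\psi_1$ near $x_1$ and $d\phi_1(f(x_1))=\eta_1$ chosen close to $\eta_0$ among the covectors restricting to $d\psi_1(x_1)$; base change then gives $(R\Gamma_{\{\phi_1\geq 0\}}f_*\mathscr{F})_{f(x_1)}\neq 0$ with $(f(x_1),\eta_1)$ arbitrarily close to $(y_0,\eta_0)$, contradicting the neighbourhood form of $(y_0,\eta_0)\notin SS(f_*\mathscr{F})$. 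Your final remark about surjectivity of $(df)^*_{x_0}$ concerns only the set-level lifting of covectors and does not supply this extension step.
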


\subsection{Induction of singular supports}\label{lemmass}

Let $H\subset G$ be connected algebraic groups. Let $X$ be a $H$-variety. Let $i$ be the closed immersion
\[
 \begin{matrix}
  i&:&X&\rightarrow &X\times^HG\\
  &&x&\mapsto&(x,e)
 \end{matrix}.
\]
Then, we have a triangulated equivalence of categories preserving the categories of perverse sheaves (\cite[2.6]{MR1299527}):
\[
 i^{0}:=i^*[\dim H-\dim G] : D^b_G(X\times^HG)\rightarrow D^b_H(X).
\]
The inverse equivalence is described by the induction $\gamma_H^G$ as follows (\cite[1.4]{MR948107}). Consider the diagram:
\begin{equation}\label{inddiag}
 \xymatrix{
 &X\times G\ar[ld]_{pr_1}\ar[rd]^{\pi}\\
 X&&X\times^HG
 }
\end{equation}
Then for $\mathscr{F}$ a constructible $H$-equivariant complex on $X$, $pr_1^{*}\mathscr{F}$ is $H$-equivariant on $X\times G$, so there is a unique constructible complex (up to isomorphism) on $X\times^HG$, $\mathscr{G}$, such that $\pi^*\mathscr{G}\simeq pr_1^{*}\mathscr{F}$. We let $\gamma_H^G=\mathscr{G}[\dim G-\dim H]$.

\begin{lemma}\label{indcv}
 Let $X$ be an $H$-variety and $H\rightarrow G$ an injective group homomorphism. Let $\mathcal{F}\in D^b_H(X)$ and $\mathcal{G}\in D^b_G(X\times^HG)$ be two perverse sheaves corresponding to each other via the equivalence $D^b_H(X)\simeq D^b_G(X\times^HG)$. Then
 \[
  SS(\mathcal{F})=\bigsqcup_{S\in\mathcal{S}}T^*_SX
 \]
 is and only if
 \[
  SS(\mathcal{G})=\bigsqcup_{S\in\mathcal{S}}T^*_{S\times^HG}(X\times^HG).
 \]
 \end{lemma}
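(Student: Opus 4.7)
The plan is to run the singular-support computation through the induction diagram
\[
 X \xleftarrow{pr_1} X\times G \xrightarrow{\pi} X\times^H G
\]
of (\ref{inddiag}), exploiting that both $pr_1$ (a projection) and $\pi$ (a principal $H$-bundle, since $H$ acts freely on $X\times G$ by $h\cdot(x,g)=(hx,gh^{-1})$) are smooth surjections. By the very construction of the equivalence $D^b_H(X)\simeq D^b_G(X\times^H G)$ recalled in Section \ref{lemmass}, the correspondence $\mathcal{F}\leftrightarrow \mathcal{G}$ is characterized by the isomorphism $\pi^*\mathcal{G}\simeq pr_1^*\mathcal{F}[\dim G-\dim H]$, and a shift does not affect singular support.

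The first step is to apply Corollary \ref{corpbsmooth} to both sides of this isomorphism. Assuming $SS(\mathcal{F})=\bigsqcup_{S\in\mathcal{S}}T^*_SX$, the corollary yields
\[
 SS(pr_1^*\mathcal{F})=\bigcup_{S\in\mathcal{S}}\overline{T^*_{S\times G}(X\times G)}.
\]
Symmetrically, writing $SS(\mathcal{G})=\bigsqcup_{T\in\mathcal{T}}T^*_T(X\times^H G)$ as in Proposition \ref{unionirrcomp}, we get
\[
 SS(\pi^*\mathcal{G})=\bigcup_{T\in\mathcal{T}}\overline{T^*_{\pi^{-1}(T)}(X\times G)}.
\]
Since $\mathcal{F}$ is $H$-equivariant, $SS(\mathcal{F})$ is $H$-stable in $T^*X$; because $H$ is connected it fixes each irreducible component of the singular support, so we may (and do) take each stratum $S$ to be $H$-invariant. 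Similarly each $T\in\mathcal{T}$ is $G$-invariant.

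The second step is to match strata through the correspondence $S\leftrightarrow S\times^H G$. Because $S$ is $H$-invariant, $S\times^H G$ is a well-defined locally closed $G$-invariant subvariety of $X\times^H G$ and satisfies $\pi^{-1}(S\times^H G)=S\times G=pr_1^{-1}(S)$; conversely, every $G$-invariant $T\subset X\times^H G$ is of this form for $S:=i^{-1}(T)$, where $i\colon X\hookrightarrow X\times^H G$ sends $x\mapsto (x,e)$. Under $\pi$ being smooth, the natural map of cotangent correspondences identifies $\overline{T^*_{S\times^H G}(X\times^H G)}$ with $\overline{T^*_{S\times G}(X\times G)}$, and dually through $pr_1$. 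Equating the two expressions displayed above therefore gives the bijection $\mathcal{S}\leftrightarrow\mathcal{T}$, $S\leftrightarrow S\times^H G$, proving both implications of the lemma.

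I expect no serious obstacle: the main point is the bookkeeping that both smooth maps $pr_1$ and $\pi$ produce the \emph{same} subvariety $S\times G$ in $X\times G$ as preimage of the corresponding strata, so that the two decompositions of $SS(pr_1^*\mathcal{F})=SS(\pi^*\mathcal{G})$ force the stated correspondence. Connectedness of $H$ (used to ensure that $H$-equivariance forces $H$-invariance of each stratum, rather than merely that $H$ permutes them) is the only delicate ingredient; it is automatic in all applications, where $H$ is a product of general linear groups.
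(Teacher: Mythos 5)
Your proposal is correct and follows essentially the same route as the paper: pull both sheaves back to $X\times G$ through the induction diagram, use the smoothness of $pr_1$ and $\pi$ together with Corollary \ref{corpbsmooth}, and match strata via $S\leftrightarrow S\times^H G$ using the isomorphism $\pi^*\mathcal{G}\simeq pr_1^*\mathcal{F}$ (shifts being irrelevant for singular support). Your extra remarks on $H$-invariance of the strata (via connectedness of $H$) and the identity $\pi^{-1}(S\times^H G)=S\times G=pr_1^{-1}(S)$ simply make explicit the bookkeeping the paper leaves implicit.
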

\begin{proof}
  In the induction diagram $\eqref{inddiag}$, both $pr_1$ and $\pi$ are smooth. We only prove the direct implication, the proof of the converse being similar. Write
  \[
   SS(\mathscr{G})=\bigcup_{S'\in \mathcal{S}'}\overline{T^*_{S'}(X\times^HG)}
  \]
  for some $G$-invariant locally closed strata $S'\in\mathcal{S}'$ of $X\times^HG$. It is easily seen that by $G$-invariance, any $S'\in\mathcal{S}'$ can be written $S'=S'_Y\times^HG$ where $S'_Y\subset Y=i^{-1}(S')$ is $H$-invariant and locally closed in $Y$. Then, by Corollary \ref{corpbsmooth},
  \[
   SS(\pi^*(\mathscr{G}))=\bigcup_{S'\in \mathcal{S}'}\overline{T^*_{S'_Y\times G}(X\times G)}.
  \]
Moreover, $\pi^*\mathscr{G}\simeq pr_1^*\mathscr{F}$ and again by Corollary \ref{corpbsmooth},
\[
 SS(pr_1^*(\mathscr{F}))=\bigcup_{S\in \mathcal{S}}\overline{T^*_{S\times G}(X\times G)}.
\]
This ends the proof.
\end{proof}

\begin{lemma}\label{sysloczerosection}
 Let $\mathscr{F}$ be a perverse sheaf on a smooth irreducible variety $X$ such that $SS(\mathscr{F})=T^*_XX$. Then $\mathscr{F}=\mathscr{L}[\dim X]$ for some local system $\mathscr{L}$ on $X$.
\end{lemma}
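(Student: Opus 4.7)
The plan is to deduce the statement in two steps: first use the hypothesis $SS(\mathscr{F})=T^*_XX$ to force each cohomology sheaf $\mathcal{H}^i(\mathscr{F})$ to be a local system on $X$, then use the perversity t-structure conditions to concentrate $\mathscr{F}$ in a single cohomological degree.

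For the first step, I would invoke the standard fact (see \cite{MR1074006}) that, for a constructible complex on a smooth variety, the singular support is contained in the zero section $T^*_XX$ if and only if the complex is cohomologically locally constant, i.e.\ every cohomology sheaf $\mathcal{H}^i(\mathscr{F})$ is a local system on $X$. The direct argument goes through an adapted Whitney stratification $X=\bigsqcup_\alpha S_\alpha$ to which $\mathscr{F}$ is constructible: for each stratum $S_\alpha$ over which some $\mathcal{H}^i(\mathscr{F})$ fails to be locally constant in a neighborhood of $S_\alpha$, the conormal $T^*_{S_\alpha}X$ would contribute to $SS(\mathscr{F})$, contradicting $SS(\mathscr{F})=T^*_XX$.

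For the second step, let $n=\dim X$. Because $X$ is irreducible and each $\mathcal{H}^{-i}(\mathscr{F})$ is a local system on $X$, its support is either empty or all of $X$. The perversity bound $\dim\supp \mathcal{H}^{-i}(\mathscr{F})\leq i$ then reads $n\leq i$ whenever $\mathcal{H}^{-i}(\mathscr{F})\neq 0$, so $\mathcal{H}^{-i}(\mathscr{F})=0$ for $i<n$. Applying the same reasoning to the Verdier dual $\mathbb{D}\mathscr{F}$ (which is again perverse, and whose singular support is the antipodal of $SS(\mathscr{F})$, hence also the zero section, so its cohomology sheaves are also local systems on $X$) yields $\mathcal{H}^{-i}(\mathbb{D}\mathscr{F})=0$ for $i<n$; translating this back via biduality gives $\mathcal{H}^{-i}(\mathscr{F})=0$ for $i>n$. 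Thus only $\mathcal{H}^{-n}(\mathscr{F})$ survives and we set $\mathscr{L}:=\mathcal{H}^{-n}(\mathscr{F})$, a local system on $X$, so that $\mathscr{F}\simeq \mathscr{L}[n]$.

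The genuinely nontrivial input is the first step, which is a classical consequence of the Kashiwara--Schapira microlocal characterization of constructibility; the remainder is a purely formal application of the definition of the perverse t-structure on a smooth irreducible variety.
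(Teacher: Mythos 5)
Your proof is correct. For this lemma the paper itself gives no argument, only a one-line citation to \cite[Theorem 4.3.15 iv)]{MR2050072} applied to the trivial stratification of $X$; your two steps (the Kashiwara--Schapira characterization ``$SS(\mathscr{F})\subset T^*_XX$ iff all $\mathcal{H}^i(\mathscr{F})$ are local systems'', followed by the support condition for $\mathscr{F}$ and the cosupport condition via $\mathbb{D}\mathscr{F}$ on the irreducible variety $X$) are exactly the standard argument underlying that citation, so in substance you are supplying the proof rather than taking a different route. One small wording point: the passage from $\mathcal{H}^{-i}(\mathbb{D}\mathscr{F})=0$ for $i<\dim X$ back to $\mathcal{H}^{-i}(\mathscr{F})=0$ for $i>\dim X$ is not literally ``biduality'' but the identification $\mathcal{H}^{j}(\mathbb{D}\mathscr{F})\simeq\bigl(\mathcal{H}^{-j-2\dim X}(\mathscr{F})\bigr)^{\vee}$, which holds here because the cohomology sheaves are local systems with field coefficients (no higher local Ext sheaves); with that said, the step is fine, and a complex with cohomology concentrated in the single degree $-\dim X$ is indeed isomorphic to $\mathcal{H}^{-\dim X}(\mathscr{F})[\dim X]$.
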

\begin{proof}
 This is a particular case of \cite[Theorem 4.3.15 iv)]{MR2050072} in the case where $X$ is trivially stratified.
\end{proof}

\section{Fourier-Sato transform}\label{fouriersato}

\subsection{Monodromic sheaves}
Let $E\rightarrow X$ be a complex fiber bundle over an algebraic variety $X$. We consider the $\C^*$-action of weight $1$ contracting the fibers. We call a sheaf on $E$ \emph{monodromic} if it is locally constant on $\C^*$-orbits of $E$.

\begin{lemma}\label{lemmamono}
 Let $i : Y\rightarrow E$ be a $\C^*$-invariant locally closed subvariety. Let $\mathscr{F}\in D^b_{mon}(Y,\C)$ be a constructible monodromic complex. Then $i_{!*}\mathscr{F}$ is a constructible monodromic complex on $E$.
\end{lemma}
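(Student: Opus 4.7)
The plan is to deduce the lemma from two facts: first, that monodromicity is preserved under $\C^*$-equivariant six-functor operations, and second, that monodromic perverse sheaves form a Serre subcategory of $\Perv(E)$. Throughout, $Y$ being $\C^*$-invariant means the locally closed immersion $i:Y\to E$ is $\C^*$-equivariant with respect to the contracting action, and since the statement involves $i_{!*}$ I treat $\mathscr{F}$ as a monodromic perverse sheaf on $Y$ (the general constructible case follows by applying the argument perverse-cohomology by perverse-cohomology).

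First, I would fix the characterization: a constructible complex $\mathscr{G}$ on a $\C^*$-variety $Z$ is monodromic iff each cohomology sheaf $\mathcal{H}^k(\mathscr{G})$ is locally constant on $\C^*$-orbits, equivalently iff there is a $\C^*$-invariant algebraic stratification $\mathcal{S}$ of $Z$ adapted to $\mathscr{G}$ such that the restriction of each $\mathcal{H}^k(\mathscr{G})$ to every stratum is locally constant along the $\C^*$-orbits contained in that stratum. From this characterization it is immediate that the full subcategory $D^b_{mon}$ is triangulated and stable under $f_!,f_*,f^!,f^*$ for any $\C^*$-equivariant morphism $f$, because these operations preserve the compatibility of cohomology sheaves with a $\C^*$-invariant stratification (stratify both source and target $\C^*$-invariantly so that $f$ is stratified, then use proper/smooth base change on each stratum).

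Next, I would verify that monodromic perverse sheaves form a Serre subcategory of $\Perv(E)$. Let $0\to \mathscr{A}\to \mathscr{B}\to \mathscr{C}\to 0$ be a short exact sequence of perverse sheaves with $\mathscr{B}$ monodromic. Choose a $\C^*$-invariant stratification adapted to $\mathscr{B}$; after refining, $\mathscr{A}$ and $\mathscr{C}$ are constructible with respect to it. On each stratum $S$ the local systems $\mathcal{H}^k(\mathscr{A})|_S,\mathcal{H}^k(\mathscr{C})|_S$ are sub- and quotient-local systems of $\mathcal{H}^k(\mathscr{B})|_S$ (via the long exact sequence of cohomology sheaves), so they are again locally constant along the $\C^*$-orbits in $S$. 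Hence $\mathscr{A}$ and $\mathscr{C}$ are monodromic. Stability under extensions is automatic from the triangulated stability established in the previous step.

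Finally, I would conclude as follows. Since $i$ is $\C^*$-equivariant and $\mathscr{F}$ is monodromic, both $i_!\mathscr{F}$ and $i_*\mathscr{F}$ are monodromic complexes on $E$; taking perverse $\mathcal{H}^0$ (which is computed by truncation functors preserving the monodromic subcategory, again by $\C^*$-invariance of a well-chosen stratification), we obtain monodromic perverse sheaves ${}^{p}\mathcal{H}^0(i_!\mathscr{F})$ and ${}^{p}\mathcal{H}^0(i_*\mathscr{F})$. The intermediate extension $i_{!*}\mathscr{F}$ is, by definition, the image of the canonical map between them in $\Perv(E)$. By the Serre property, this image is monodromic, proving the lemma.

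The main obstacle is the Serre-subcategory step: one must be careful that a $\C^*$-invariant stratification can be chosen simultaneously adapted to all three terms of a short exact sequence of perverse sheaves, and that restricting the cohomology-sheaf exact sequence to a stratum and then to a $\C^*$-orbit preserves local-constancy. Everything else is essentially formal from $\C^*$-equivariance of $i$.
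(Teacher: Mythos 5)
The paper states Lemma \ref{lemmamono} without proof (it is treated as a standard fact), so there is no argument of the author's to compare yours against; on its own terms, your overall strategy --- write $i_{!*}\mathscr{F}$ as the image of ${}^p\mathcal{H}^0(i_!\mathscr{F})\to{}^p\mathcal{H}^0(i_*\mathscr{F})$ and invoke stability of monodromic complexes under $i_!$, $i_*$, perverse truncation and subquotients --- is the natural route and, correctly executed, it does prove the lemma. However, two steps are not right as written. First, your opening ``equivalently'' (monodromic iff there exists a $\C^*$-invariant stratification adapted to the complex) silently assumes the only genuinely nontrivial point of the whole argument: that local constancy of the cohomology sheaves along orbits forces constructibility with respect to an \emph{invariant} stratification (equivalently, invariance of the non-locally-constant loci). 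This is true but is precisely what needs an argument --- either by proving that invariance directly, or by a contraction-type proof of stability of orbit-wise local constancy under $Rj_*$ in the style of conic/monodromic sheaves in Kashiwara--Schapira. Every later step of yours (stability under the six operations for equivariant maps, truncation, the Serre property) leans on this, so it cannot be waved through as part of a definition.

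Second, the Serre-subcategory paragraph is incorrect as stated. The long exact sequence of cohomology sheaves does not exhibit $\mathcal{H}^k(\mathscr{A})|_S$ as a sub-local system of $\mathcal{H}^k(\mathscr{B})|_S$: it exhibits it as an extension of a subsheaf of $\mathcal{H}^k(\mathscr{B})|_S$ by a quotient of $\mathcal{H}^{k-1}(\mathscr{C})|_S$, so monodromicity of $\mathscr{B}$ alone does not propagate stratum by stratum in the way you claim. Moreover, after refining the stratification so that $\mathscr{A}$ and $\mathscr{C}$ become constructible, you no longer know the refined strata are $\C^*$-invariant; an orbit need not lie in a single refined stratum, and local constancy on the pieces of an orbit cut out by non-invariant strata is strictly weaker than local constancy along the orbit, so the conclusion does not give monodromicity. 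The fix is to work throughout with one fixed $\C^*$-invariant Whitney stratification $\mathcal{S}$ of $E$ such that $Y$ is a union of strata and $\mathscr{F}$ is constructible with respect to $\mathcal{S}|_Y$ (this exists once the first point is settled): then $i_!\mathscr{F}$, $i_*\mathscr{F}$, their perverse truncations and every subquotient of these in $\Perv(E)$ are $\mathcal{S}$-constructible --- perverse sheaves constructible with respect to a fixed stratification do form a Serre subcategory, since their simple constituents are intersection complexes of local systems on strata and extensions of $\mathcal{S}$-constructible complexes are $\mathcal{S}$-constructible --- and $\mathcal{S}$-constructibility with $\mathcal{S}$ invariant yields monodromicity of $i_{!*}\mathscr{F}$.
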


\begin{cor}\label{cormono}
 Let $\mathscr{F}\in D^b(E,\C)$ be an irreducible perverse sheaf whose singular support is the union of conormal bundles to $\C^*$-invariant subvarieties. Then $\mathscr{F}$ is monodromic.
\end{cor}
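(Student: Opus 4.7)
My plan is to deduce the corollary directly from Lemma \ref{lemmamono}, so the only real content is to extract a $\C^*$-invariant locally closed stratum on which $\mathscr{F}$ is (up to shift) a local system.

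\medskip

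\textbf{Step 1 (Shape of $\mathscr{F}$).} Since $\mathscr{F}$ is an irreducible perverse sheaf on $E$, standard structure theory gives $\mathscr{F} \simeq i_{!*}\mathscr{G}$, where $Y$ is a smooth, irreducible, locally closed subvariety which is open and dense in $\supp(\mathscr{F})$, $i : Y \hookrightarrow E$ is the inclusion, and $\mathscr{G} = \mathscr{L}[\dim Y]$ for some irreducible local system $\mathscr{L}$ on $Y$.

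\medskip

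\textbf{Step 2 ($\C^*$-invariance of the support).} Let $\pi_E : T^*E \to E$ denote the cotangent bundle projection. By hypothesis, we may write
\[
 SS(\mathscr{F}) = \bigcup_{j}\overline{T^*_{S_j}E},
\]
where the $S_j$ are $\C^*$-invariant locally closed subvarieties of $E$. Since $\supp(\mathscr{F}) = \pi_E(SS(\mathscr{F})) = \bigcup_j \overline{S_j}$, and each $\overline{S_j}$ is $\C^*$-invariant, the support of $\mathscr{F}$ is a $\C^*$-invariant closed subvariety of $E$.

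\medskip

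\textbf{Step 3 (Choosing $Y$ to be $\C^*$-invariant).} The $\C^*$-action on $E$ is by automorphisms (linear on fibers), so it preserves the smooth locus of any invariant closed subvariety. Consequently, the smooth locus of $\supp(\mathscr{F})$ is $\C^*$-invariant and open dense. Shrinking $Y$ to this smooth locus (which does not change $i_{!*}\mathscr{G}$), we may assume that $Y$ is $\C^*$-invariant. The local system $\mathscr{L}$ on $Y$ restricts to a local system on each $\C^*$-orbit of $Y$, so $\mathscr{G} = \mathscr{L}[\dim Y]$ is a monodromic complex on $Y$ in the sense of the definition preceding Lemma \ref{lemmamono}.

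\medskip

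\textbf{Step 4 (Applying Lemma \ref{lemmamono}).} Lemma \ref{lemmamono} applied to the $\C^*$-invariant locally closed immersion $i : Y \to E$ and the monodromic complex $\mathscr{G}$ on $Y$ shows that $i_{!*}\mathscr{G}$ is a constructible monodromic complex on $E$. Since $\mathscr{F} \simeq i_{!*}\mathscr{G}$, this gives the claim.

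\medskip

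There is no real obstacle here; the only point worth highlighting is the reduction in Step 3, which uses the compatibility of the $\C^*$-action with smoothness of $\supp(\mathscr{F})$. Everything else is a formal consequence of the preceding lemma together with the identification of $\supp(\mathscr{F})$ as the projection of $SS(\mathscr{F})$.
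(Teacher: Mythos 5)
There is a genuine gap in Step 3, and it is exactly at the one non-formal point of the corollary. Since $Y$ is open in $\supp\mathscr{F}$ and smooth, every point of $Y$ is already a smooth point of $\supp\mathscr{F}$, so replacing $Y$ by the smooth locus of the support is an \emph{enlargement}, not a shrinking — and it is not permitted: the restriction of $\mathscr{F}=i_{!*}(\mathscr{L}[\dim Y])$ to the whole smooth locus of its support need not be a shifted local system. For instance, for $E=\C$ with the scaling action and $\mathscr{F}$ the intermediate extension of a nontrivial rank-one local system on $\C^*$, the smooth locus of $\supp\mathscr{F}$ is all of $\C$, but $\mathscr{F}$ is not lisse there. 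So as written, Step 3 does not produce a $\C^*$-invariant $Y$ on which $\mathscr{F}$ is a local system, and Lemma \ref{lemmamono} cannot be invoked. The deeper problem is that your argument only ever uses the $\C^*$-invariance of $\supp\mathscr{F}$ (via Step 2), and that alone cannot suffice: the intermediate extension to $\C$ of a nontrivial local system on $\C\setminus\{1\}$ has $\C^*$-invariant support but is not monodromic (its singular support contains the conormal to the non-invariant point $\{1\}$, so it does not contradict the corollary — but it shows your reductions do not prove it).

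The missing ingredient is to use the invariance of \emph{all} the strata appearing in the singular support, not just of the support. Writing $SS(\mathscr{F})=\bigcup_j\overline{T^*_{S_j}E}$ with each $S_j$ $\C^*$-invariant, let $S_{j_0}$ be a stratum dense in $\supp\mathscr{F}$ and set $U=S_{j_0}\setminus\bigcup_{j\neq j_0}\overline{S_j}$ (intersected with the smooth locus if needed): this $U$ is open dense in $\supp\mathscr{F}$, smooth, and $\C^*$-invariant because each $\overline{S_j}$ is, and over $U$ the singular support of $\mathscr{F}|_U$ reduces to the zero section $T^*_UU$. Lemma \ref{sysloczerosection} then gives that $\mathscr{F}|_U$ is a shifted local system, and only now does Lemma \ref{lemmamono} apply to $\mathscr{F}=j_{!*}(\mathscr{F}|_U)$. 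This is precisely the route taken in the paper; your Steps 1, 2 and 4 are fine, but Step 3 must be replaced by this use of the stratified form of the hypothesis.
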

\begin{proof}
 The support of $\mathscr{F}$, $SS(\mathscr{F})\cap T^*_EE$ is a $\C^*$-invariant closed subvariety of $E$. There exists a smooth $\C^*$-invariant open subset $j : U\rightarrow \supp\mathscr{F}$ such that $SS(j^*\mathscr{F})=T^*_UU$. Then, $j^*\mathscr{F}$ is a local system on $U$. Then, by Lemma \ref{lemmamono}, $\mathscr{F}=j_{!*}(j^*\mathscr{F})$ is monodromic on $E$.
\end{proof}

\subsection{The Fourier-Sato transform}\label{definitionfs}

The definition and basic facts for the Fourier-Sato transform in the complex-analytic setting are given in \cite[2.7]{MR3218317}.

Let $X$ be a complex algebraic variety and $p : E\rightarrow X$ a complex vector bundle. Let $\check{p} : E^*\rightarrow X$ be its dual.

Consider the correspondence
\[
 \xymatrix{&\ar[ld]_{q}Q\ar[rd]^{\check{q}}&\\
 E&&E^*}
\]
where
\[
 Q=\{(x,y)\in E\times_XE^*\mid \Re(\langle x,y\rangle)\leq 0\}\subset E\times_XE^*.
\]

Then the Fourier-Sato transform is the the functor
\[
\begin{matrix}
 &\varPhi_E &:& D^b_{mon}(E,\C)&\rightarrow& D^b_{mon}(E^*,\C)\\
 &&&\mathscr{F}&\mapsto&\check{q}_!q^*\mathscr{F}[\rank E].
 \end{matrix}
\]
It is an equivalence of categories preserving perverse sheaves. Useful compatibilities between Fourier transform and other functors are stated in \cite[2.7 and Appendix A]{MR3218317}. The Fourier-Sato transform exists also in the equivariant setting. If $X$ is a $G$-variety for some algebraic group $G$ and $E\rightarrow X$ is a $G$-equivariant vector bundle, its dual is also an equivariant vector bundle and the Fourier-Sato transform gives a perverse equivalence of categories
\[
 \Phi_E : D^b_{G,mon}(E)\rightarrow D^b_{G,mon}(E^*).
\]

\subsection{Action on the singular support}\label{fssingularsupport}

The main result of this section is that the Fourier-Sato transform preserves the singular support.

Let $p : E\rightarrow X$ be a complex vector bundle and $\check{p} : E^*\rightarrow X$ its dual. We can identify $T^*E$ and $T^*E^*$ as complex fiber bundles (and also symplectic varieties) over $E^*$ as follows (\cite[V.5.5]{MR1074006}). First consider the relative cotangent bundle of $p$, $T^*(E/X)$. It is the cokernel of the monomorphism of vector bundles $E\times_{X}T^*X\rightarrow T^*E$. Then, we have a morphism $T^*(E/X)\rightarrow E^*$ given on fibers over $X$ by
\[
 T^*(E/X)_x\simeq E_x\times (E_x)^*\rightarrow (E_x)^*.
\]
Composing with $T^*E\rightarrow T^*(E/X)$, we obtain the map $p' : T^*E\rightarrow E^*$. Proposition 5.5.1 in \cite{MR1074006} gives the existence of an isomorphism of vector bundles over $E^*$
\[
\xymatrix{
T^*E\ar[rr]^{\Xi}\ar[rd]_{p'}&& T^*E^*\ar[ld]^{\check{p}}\\
&E^*&
 }.
\]
 It is given in local coordinates by $(x,y,\zeta,\xi)\mapsto (x,\xi,\zeta,-y)$, where $x$ is a local coordinate on $X$ and $y$ a local coordinate on the fiber $E_x$. In our context, Theorem 5.5.5 of \emph{loc. cit.} can be formulated as follows.

\begin{theorem}\label{ssfourier}
 Let $\mathscr{F}\in D^b_{mon}(E,\C)$. Then,
 \[
  \Xi(SS(\mathscr{F}))=SS(\varPhi(\mathscr{F})).
 \]

\end{theorem}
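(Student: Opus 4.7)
This statement (due to Kashiwara-Schapira, Theorem 5.5.5 of \emph{op. cit.}) is a microlocal incarnation of Fourier-Plancherel: convolution with the Fourier-Sato kernel on sheaves corresponds on singular supports to the linear symplectomorphism $\Xi \colon T^*E \to T^*E^*$. My plan is to derive it directly from the correspondence-theoretic definition of $\varPhi_E$ by tracking singular supports along the two legs of the kernel.

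First I would reduce to the case $X = \mathrm{pt}$. All functors in sight, as well as the definition of $\Xi$, are compatible with smooth base change along $X$, so after trivialising $E = X \times V$ Zariski-locally on $X$ it is enough to treat monodromic sheaves on a fixed complex vector space $V$ and to show that $SS(\varPhi\mathscr{F})\subset T^*V^*$ is the image under $(y,\zeta)\mapsto(\zeta,-y)$ of $SS(\mathscr{F})\subset T^*V\simeq V\times V^*$.

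Next I would analyse the correspondence
\[
 V \xleftarrow{q} Q \xrightarrow{\check{q}} V^*, \qquad Q = \{(x,y)\in V\times V^* : \reel\langle x,y\rangle \leq 0\}.
\]
Applying successively the smooth pull-back formula (Proposition \ref{pbsmooth}) --- valid because $q$ is the restriction of the smooth projection $V\times V^*\to V$ --- and the proper push-forward estimate for $\check{q}$ (Proposition \ref{pushforward}) to the definition $\varPhi\mathscr{F}=\check{q}_!q^*\mathscr{F}[\rank E]$, one gets an inclusion
\[
 SS(\varPhi\mathscr{F}) \subset \pr_{T^*V^*}\bigl((d\check{q})^{*-1}\,(dq)^*\,\pr_{T^*V}^{-1}SS(\mathscr{F})\bigr).
\]
A direct linear-algebra computation shows that over the interior $\{\reel\langle x,y\rangle<0\}$ of $Q$ this composition is precisely the graph of $\Xi$ restricted to $SS(\mathscr{F})$. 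The a priori extra contribution coming from the real boundary $\partial Q=\{\reel\langle x,y\rangle=0\}$ collapses because of monodromicity: as $\mathscr{F}$ is locally constant along $\C^*$-orbits, $SS(\mathscr{F})$ annihilates the Euler vector field, which kills any covector that would survive at the boundary.

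The reverse inclusion follows by the same argument applied to $\varPhi^{-1}$, using that $\varPhi\circ\varPhi = a^*$ up to a shift, where $a$ is the antipodal map, and that the corresponding composition $\Xi\circ\Xi$ realises the same antipodal Legendre twist on the Lagrangian side. The principal obstacle is the non-smoothness of $q$ and $\check{q}$ along $\partial Q$: this forces either the use of Kashiwara-Schapira's formalism of singular supports along closed subanalytic subsets, or, more cleanly, recasting $\varPhi$ as convolution with the kernel $\underline{\C}_Q[\rank E]$ on $E\times_X E^*$ and applying their composition-of-kernels formula for singular supports, which for monodromic (hence conical) sheaves delivers the equality of Lagrangian subvarieties at one stroke.
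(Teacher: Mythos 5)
The paper does not actually prove this statement: it is quoted from Kashiwara--Schapira (Theorem 5.5.5 of \cite{MR1074006}), and your plan is in effect an outline of how that theorem is proved there (a one-sided $SS$-estimate for the transform of a conic sheaf, then equality via invertibility of $\varPhi$). At that level the strategy is sound, but two of your intermediate steps would fail as written. First, Proposition \ref{pushforward} does not apply to $\check{q}$: its fibre over $y\in V^*$ is the closed half-space $\{x\mid \Re\langle x,y\rangle\leq 0\}$, so $\check{q}$ is not proper (nor proper on $\supp q^*\mathscr{F}$ in general), and there is no $SS$-estimate for $f_!$ along a non-proper map without extra hypotheses. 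Second, Proposition \ref{pbsmooth} does not apply to $q$ either, since $Q$ is a closed subanalytic subset with boundary, not a smooth variety; what one really manipulates is $\underline{\C}_Q\otimes \pr_1^*\mathscr{F}$ on $V\times V^*$, whose singular support acquires conormal directions along $\partial Q=\{\Re\langle x,y\rangle=0\}$. Your claim that monodromicity ``kills any covector that would survive at the boundary'' is exactly the point that has to be proved: it is true that for a monodromic sheaf $SS(\mathscr{F})$ is orthogonal to the Euler vector field, but the collapse of the boundary contributions is not a formal consequence of this; it is the substance of Kashiwara--Schapira's argument for conic sheaves (their \S 3.7 and \S 5.5).

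Your fallback, the composition-of-kernels formula, does not deliver the result ``at one stroke'' either: it again yields only an inclusion, and it carries a properness hypothesis on the composition of supports which fails here for the same reason and must itself be circumvented using conicity. What is correct and essential in your plan is the invertibility trick ($\varPhi\circ\varPhi=a^*$ up to shift) upgrading the one-sided estimate to the stated equality, which is indeed how equality is obtained in \emph{op.\ cit.} Since every genuinely delicate step ends up delegated to Kashiwara--Schapira's machinery anyway, the efficient course is the one the paper takes: invoke Theorem 5.5.5 of \cite{MR1074006} directly, after checking that monodromic complexes are conic in their sense.
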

\subsection{The Fourier-Sato transform for quivers}
We briefly describe how we will use the Fourier-Sato transform for quivers. Let $Q=(I,\Omega)$ be a quiver. Let $\Omega=\Omega_1\sqcup \Omega_2$ be a partition of the set of arrows. We obtain new quivers $Q_1=(I,\Omega_1)$ and $Q_2=(I,\Omega_2)$. Let $\overline{\Omega}_2$ be the opposite set of arrows (we reverse the direction of arrows in $\Omega_2$) and $\Omega'=\Omega_1\sqcup \overline{\Omega}_2$. Let $Q'=(I,\Omega')$. For $\dd\in\N^I$, we have two vector bundles over $E_{Q_1,\dd}$:
\[
 \xymatrix{
 E_{Q,\dd}\ar[rd]_{\pi}&&E_{Q',\dd}\ar[ld]^{\check{\pi}}\\
 &E_{Q_1,\dd}&
 }
\]
given by the projection on the corresponding direct factor of $E_{Q,\dd}$ (resp. $E_{Q',\dd}$). The trace maps identifies $\check{\pi}$ with the dual of $\pi$. The Fourier-Sato transform then gives a perverse equivalence of categories
\[
 \Phi:D^b_{G_{\dd},mon}(E_{Q,\dd})\rightarrow D^b_{G_{\dd},mon}(E_{Q',\dd}).
\]

\section*{Acknowledgements}
The author warmly thanks Olivier Schiffmann for his constant support, availability and useful comments concerning this work. I also benefited from discussions with Masaki Kashiwara and Anton Mellit. Thanks to Ben Webster for pointing out the reference \cite{MR3651581}, to George Lusztig for the paper \cite{MR1182165} and to Ivan Losev for helping me to understand how to provide an alternative proof of Theorem \ref{mainresult} by combining \cite{bl} and \cite{losev}, together with \cite{MR3651581} (see Remark \ref{discl}).

\bibliographystyle{alpha}
\bibliography{bibliographie}
\end{document}